\numberwithin{equation}{section}
\theoremstyle{plain}
	\newtheorem{theorem}{Theorem}[section]
	\newtheorem{lemma}[theorem]{Lemma}
	\newtheorem{proposition}[theorem]{Proposition}
	\newtheorem{corollary}[theorem]{Corollary}
\theoremstyle{definition}
	\newtheorem{definition}[theorem]{Definition}
	\newtheorem{remark}[theorem]{Remark}	
\newcommand{\N}{\mathbb{N}}
\newcommand{\R}{\mathbb{R}}
\newcommand{\G}{\mathbb{G}}
\newcommand{\M}{\mathbb{M}}
\newcommand{\E}{\mathcal{E}}
\newcommand{\de}{\partial}
\newcommand{\eps}{\varepsilon}
\newcommand{\longto}{\longrightarrow}
\newcommand{\weakto}{\rightharpoonup}
\newcommand{\Prob}{\mathscr{P}}
\newcommand{\F}{\mathsf{F}}
\newcommand{\Ch}{\mathsf{Ch}}
\newcommand{\Ent}{\mathsf{Ent}}
\newcommand{\D}{\mathrm{D}}
\newcommand{\m}{\mathfrak{m}}
\newcommand{\BE}{{\sf BE}}
\newcommand{\wBE}{{\sf BE}_w}
\newcommand{\CD}{{\sf CD}}
\newcommand{\RCD}{{\sf RCD}}
\newcommand{\EVI}{{\sf EVI}}
\newcommand{\MCP}{{\sf MCP}}
\newcommand{\dcc}{\mathsf{d}_{\mathrm{cc}}}
\newcommand{\leb}{\mathscr{L}}
\newcommand{\g}{\mathrm{g}}
\newcommand{\SU}{\mathbb{SU}}
\newcommand{\p}{\mathsf{p}}
\newcommand{\neu}{\mathsf{o}}
\newcommand{\HB}{\mathcal{H}}
\newcommand{\VB}{\mathcal{V}}
\def\Xint#1{\mathchoice
{\XXint\displaystyle\textstyle{#1}}%
{\XXint\textstyle\scriptstyle{#1}}%
{\XXint\scriptstyle\scriptscriptstyle{#1}}%
{\XXint\scriptscriptstyle\scriptscriptstyle{#1}}%
\!\int}
\def\XXint#1#2#3{{\setbox0=\hbox{$#1{#2#3}{\int}$ }
\vcenter{\hbox{$#2#3$ }}\kern-.6\wd0}}
\def\aint{\Xint-}
\let\svc\c
\DeclareRobustCommand\c{\ifmmode\mathsf{c}\else\expandafter\svc\fi}
\renewcommand{\phi}{\varphi}
\renewcommand{\rho}{\varrho}
\renewcommand{\theta}{\vartheta}
\renewcommand{\d}{\mathsf{d}}
\renewcommand{\P}{\mathsf{P}}
\renewcommand{\H}{\mathsf{H}}
\renewcommand{\L}{\mathscr{L}}
\renewcommand{\S}{\mathsf{S}}
\DeclareMathOperator{\dist}{dist}
\DeclareMathOperator{\diam}{diam}
\DeclareMathOperator{\supp}{supp}
\DeclareMathOperator{\Dom}{Dom}
\DeclareMathOperator{\vol}{Vol}
\DeclareMathOperator{\Ric}{Ric}
\DeclareMathOperator{\di}{d\!}
\DeclareMathOperator{\I}{I}
\DeclareMathOperator{\RI}{R}
\DeclareMathOperator{\Lip}{Lip}
\DeclareMathOperator{\AC}{AC}
\DeclareMathOperator{\Cont}{C}
\DeclareMathOperator{\Leb}{\mathrm{L}}
\DeclareMathOperator{\Sob}{W}
\DeclareMathOperator{\loc}{loc}
\DeclarePairedDelimiter{\scalar}{<}{>}                                     
\DeclarePairedDelimiter{\set}{\{}{\}}
\DeclarePairedDelimiter{\abs}{|}{|}
\newcommand{\mres}{\mathbin{\vrule height 1.6ex depth 0pt width
0.13ex\vrule height 0.13ex depth 0pt width 1.3ex}}
\newcommand{\closure}[2][3]{%
  {}\mkern#1mu\overline{\mkern-#1mu#2}}
\begin{document}

\title[Generalized Bakry--\'{E}mery curvature condition]{Generalized Bakry--\'{E}mery curvature condition and equivalent entropic inequalities in groups}

\author[G. Stefani]{Giorgio Stefani}
\address{Department Mathematik und Informatik, Universit\"at Basel, Spiegelgasse 1, CH-4051 Basel, Switzerland}
\email{giorgio.stefani@unibas.ch}

\date{\today}

\keywords{Bakry--\'{E}mery curvature condition, metric-measure space, Carnot group, $\SU(2)$ group, topological groups, Gamma Calculus, entropy, entropic inequalities, Wasserstein distance}

\subjclass[2010]{Primary 47D07, 28D20. Secondary 53C17}

\thanks{\textit{Acknowledgements}. 
The author thanks Luigi Ambrosio and Giuseppe Savaré for helpful discussions and many valuable suggestions about the subject.
The author also thanks the anonymous referee for precious comments and for pointing the reference~\cite{W20}. 
The author is partially supported by the ERC Starting Grant 676675 FLIRT -- \textit{Fluid Flows and Irregular Transport}.
The author is a member of INdAM and is partially supported by the INdAM--GNAMPA Project 2020 \textit{Problemi isoperimetrici con anisotropie} (n.\ prot.\ U-UFMBAZ-2020-000798 15-04-2020)%
}

\begin{abstract}
We study a generalization of the Bakry--\'{E}mery pointwise gradient estimate for the heat semigroup and its equivalence with some entropic inequalities along the heat flow and Wasserstein geodesics for metric-measure spaces with a suitable group structure. 
Our main result applies to Carnot groups of any step and to the $\SU(2)$ group.  
\end{abstract}

\maketitle

\section{Introduction}

\subsection{The Riemannian framework}

Let $(\M,\g)$ be a (complete and connected) $N$-di\-men\-sio\-nal smooth Riemannian manifold with Laplace--Beltrami operator~$\Delta$.
The celebrated \emph{Bochner formula} states that 
\begin{equation}\label{intro_eq:bochner}
\frac12\,
\Delta|\nabla f|^2_\g
=
\scalar*{\nabla\Delta f,\nabla f}_\g
+
||\mathrm{Hess} f||_2^2
+
\Ric(\nabla f,\nabla f)
\end{equation}
for all $f\in\Cont^\infty(\M)$.
Defining
\begin{equation}
\label{intro_eq:Gamma_def}
\begin{split}
\Gamma(f,g)
&=
\frac12\,\big(\Delta(fg)-f\,\Delta g-g\,\Delta f\big)
=
\scalar*{\nabla f,\nabla g}_\g,\\
\Gamma_2(f,g)
&=
\frac12\big(\Delta\Gamma(f,g)-\Gamma(f,\Delta g)-\Gamma(\Delta f,g)\big)
\end{split}
\end{equation} 
for all $f,g\in\Cont^\infty(\M)$, we can rewrite~\eqref{intro_eq:bochner} as 
\begin{equation*}
\frac12\,
\Delta\Gamma(f)
=
\Gamma(\Delta f,f)
+
||\mathrm{Hess} f||_2^2
+
\Ric(\nabla f,\nabla f),
\end{equation*} 
so that
\begin{equation*}
\Gamma_2(f)
=
||\mathrm{Hess} f||_2^2
+
\Ric(\nabla f,\nabla f)
\end{equation*}
for all $f\in\Cont^\infty(\M)$. 
Here and in the following, we write $\Gamma(f)=\Gamma(f,f)$ and $\Gamma_2(f)=\Gamma_2(f,f)$ for simplicity.
Using Cauchy--Schwartz inequality, we can estimate 
\begin{equation*}
||\mathrm{Hess} f||_2^2
\ge	
\frac1N\,
(\Delta f)^2,
\end{equation*}
thus the \emph{geometric} information
\begin{equation}\label{intro_eq:Ric_ge_K}
\Ric\ge K\
\text{for some}\ K\in\R	
\end{equation}
implies the \emph{analytical} information
\begin{equation}\label{intro_eq:CD_ineq}
\Gamma_2(f)
\ge
\frac1N\,
(\Delta f)^2
+
K\,\Gamma(f)
\end{equation}
for all $f\in\Cont^\infty(\M)$.
Nowadays, \eqref{intro_eq:CD_ineq} is the so-called and well-known \emph{Bakry--\'Emery curvature-dimension inequality} $\CD(K,N)$.
Remarkably, it is also possible to prove the converse implication, see~\cite{B94}*{Proposition~6.2}: if a Riemannian manifold $\M$ satisfies $\CD(K,N)$ for some $K\in\R$ and $N\in(0,+\infty)$, then $\dim\M\le N$ and $\Ric\ge K$.

Let us now 
drop the role of the dimension of $\M$ (which formally corresponds to the choice $N=+\infty$ in~\eqref{intro_eq:CD_ineq}) and focus on the lower bound on the Ricci tensor encoded by the $\CD(K,\infty)$ condition.
After the works of Bakry--\'Emery~\cite{BE85}, Otto--Villani~\cite{OV00}, Cordero-Erausquin--McCann--Schmucken\-schl\"ager~\cite{CMS01} and von Renesse--Sturm~\cite{vRS05}, the analytical condition~\eqref{intro_eq:CD_ineq} for $N=+\infty$ on a Riemannian manifold can be equivalently formulated in other three ways (at least, see~\cite{vRS05} for other equivalent statements): via the \emph{pointwise gradient estimate} for the heat flow, via the Wasserstein \emph{contractivity property} of the dual heat flow and via the \emph{$K$-convexity} of the entropy along geodesics in the Wasserstein space.

The \emph{heat kernel} $\p_t(x,y)$ of the Riemannian manifold $(\M,\g)$ is the fundamental solution of the heat differential operator $\de_t-\Delta$. 
The function $\p_t(x,y)$ is smooth in $(t,x,y)\in(0,+\infty)\times\M\times\M$, symmetric in $(x,y)$ and naturally defines the associated \emph{heat semigroup} $\P_t\colon\Cont^\infty_c(\M)\to\Cont^\infty(\M)$ as
\begin{equation*}
\P_t f(x)
=
\int_\M f(y)\,\p_t(x,y)\di\vol_\g(y),
\quad
x\in\M,
\end{equation*}
for all $f\in\Cont^\infty_c(\M)$.
Inequality~\eqref{intro_eq:CD_ineq} describes the behavior of the commutation between the gradient $\nabla$ and the heat semigroup $(\P_t)_{t>0}$.
More precisely, the $\CD(K,\infty)$ condition is equivalent to the \emph{Bakry--\'Emery pointwise gradient estimate}
\begin{equation}
\label{intro_eq:P_t_contractivity}
\Gamma(\P_t f)
\le 
e^{-2Kt}\,
\P_t\Gamma(f)
\end{equation}
for all $t>0$ and $f\in\Cont^\infty_c(\M)$.

The \emph{dual heat semigroup} $\H_t\colon\Prob(\M)\to\Prob(\M)$ is nothing but the extension of the heat semigroup to the space $\Prob(\M)$ of probability measures on~$\M$ and can be defined by setting
\begin{equation*}
\int_\M f\di\H_t\mu
=
\int_\M \P_t f\di\mu
\end{equation*}
for all $f\in\Cont_c^\infty(\M)$, whenever $\mu\in\Prob(\M)$.
The subset of $\Prob(\M)$ of probability measures with finite second moment
\begin{equation*}
\Prob_2(\M)
=
\set*{
\mu\in\Prob(\M)
:
\int_\M\d_\g^2(x,x_0)\di\mu(x)<+\infty\
\text{for some}\ 
x_0\in\M
}
\end{equation*}
endowed with the \emph{$2$-Wasserstein distance} $W_2$, given by
\begin{equation*}
\frac12\,
W_2^2(\mu,\nu)
=
\sup\set*{\int_\M\phi\di\mu+\int_\M\psi\di\nu : 
\phi(x)+\psi(y)
\le
\frac12\,\d_\g^2(x,y)\ \text{for all}\ x,y\in\M}
\end{equation*}
for all $\mu,\nu\in\Prob_2(X)$, is a complete and separable geodesic space.
The lower bound~\eqref{intro_eq:Ric_ge_K} on the Ricci tensor can be equivalently stated as a \emph{contractivity property} of the dual heat semigroup with respect to the $2$-Wasserstein distance, in the sense that
\begin{equation}\label{intro_eq:H_t_contractivity}
W_2(\H_t\mu,\H_t\nu)
\le
e^{-Kt}\,
W_2(\mu,\nu)	
\end{equation}
for all $t>0$ and $\mu,\nu\in\Prob_2(X)$. 

The (Boltzmann) \emph{entropy} with respect to the volume measure $\vol_\g$ is defined as 
\begin{equation*}
\Ent(\mu)
=
\begin{cases}
\displaystyle\int_\M f\log f\di\vol_\g & \text{if}\ \mu=f\,\vol_\g,\\[3mm]
+\infty & \text{otherwise},
\end{cases}
\end{equation*}
whenever $\mu\in\Prob_2(X)$. 
Note that, by the Bishop Volume Comparison Theorem, it actually holds that $\Ent(\mu)>-\infty$ for all $\mu\in\Prob_2(\M)$, see~\cite{E10}*{Lemma~4.1} for the proof.
The $\CD(K,\infty)$ condition can be equivalently reformulated as a convexity property of the entropy along all (constant speed, as usual) geodesics joining two measures in~$\Prob_2(\M)$. 
More precisely, if $[0,1]\in s\mapsto\mu_s\in\Prob_2(\M)$ is a geodesic joining $\mu_0,\mu_1\in\Prob_2(\M)$, then the following \emph{displacement $K$-convexity inequality}
\begin{equation}\label{intro_eq:K-displacement}
\Ent(\mu_s)
\le
(1-s)\,\Ent(\mu_0)
+
s\,\Ent(\mu_1)
-\frac K2\,s(1-s)\,W_2^2(\mu_0,\mu_1)
\end{equation}
holds for all $s\in[0,1]$.  

\subsection{The non-smooth framework: \texorpdfstring{$\CD(K,\infty)$}{CD(K,infty)} spaces}

The peculiar feature of inequality~\eqref{intro_eq:K-displacement} is that it can be stated uniquely in terms of the distance and the volume measure, no matter they come from the underlying smooth structure of the Riemannian manifold, and thus can be considered as a metric-measure definition of the lower bound on the Ricci tensor.

This observation has led Lott--Villani and Sturm in their groundbreaking works~\cites{LV09,S06.I,S06.II} to study the properties of very general metric-measure spaces $(X,\d,\m)$ satisfying the displacement $K$-convexity for some $K\in\R$.
Besides the many powerful consequences successively derived from their ideas, see~\cites{G10,LV07,R12} and the monograph~\cite{V09} for example, a key feature of the Lott--Sturm--Villani approach is that the displacement $K$-convexity of the entropy actually provides a metric-measure definition of the $\CD(K,\infty)$ condition that is stable under (metric-measure) Gromov--Hausdorff convergence.

As pointed out by Gigli~\cite{G10}, starting from the $\CD(K,\infty)$ condition, it is possible to prove that the \emph{metric gradient flow} (see the monograph~\cite{AGS08}) $(\mathsf S_t)_{t>0}$ of the entropy functional in $(\Prob_2(X),W_2)$ is an evolution semigroup on the convex subset of $\Prob_2(X)$ given by probability measures with finite entropy.
However, since also Finsler geometries (as in the flat case of $\R^N$ endowed with a non-Euclidean norm) can satisfy the $\CD(K,\infty)$ condition, the semigroup $(\mathsf S_t)_{t>0}$ can be non-linear in such a general setting.
Nevertheless, $(\mathsf S_t)_{t>0}$ can be extended to a continuous semigroup of contractions in $\Leb^2(X,\m)$ (and actually in any $\Leb^p$-space) which can be also characterized  as the gradient flow in $\Leb^2(X,\m)$ of the convex and $2$-homogeneous functional
\begin{equation}\label{intro_eq:Cheeger_energy}
\Ch(f)
=
\inf
\set*{
\liminf_{n\to+\infty}
\frac12
\int_X|\D f_n|^2\di\m
:
f_n\in\Lip_b(X),\ f_n\to f\ \text{in}\ \Leb^2(X,\m)
},
\end{equation} 
the \emph{Cheeger energy} of $f\in\Leb^2(X,\m)$, see the celebrated work~\cite{C99}.
Here
\begin{equation}\label{intro_eq:slope}
|\D f|(x)=\limsup\limits_{y\to x}\frac{|f(y)-f(x)|}{\d(y,x)}
\end{equation}
is the \emph{slope} at $x\in X$ of the bounded Lipschitz function $f\in\Lip_b(X)$.
Since the slope~\eqref{intro_eq:slope} plays the same role of the absolute value of the gradient in the smooth framework, it is natural to consider the gradient flow of the Cheeger energy as a metric-measure definition of the heat flow $(\P_t)_{t>0}$ in the non-smooth context.
The identification of the entropic semigroup and the heat flow has been proved in Euclidean spaces in~\cite{JKO98} by Jordan--Kinderleher--Otto (see also~\cite{O01}) and then extended to Riemannian manifolds~\cites{E10,V09}, Hilbert spaces~\cite{ASZ09}, Finsler spaces~\cite{OS09}, Alexandrov spaces~\cite{GKO13} and eventually to $\CD(K,\infty)$ spaces in the fundamental work~\cite{AGS14} of Ambrosio--Gigli--Savaré.
We refer the reader also to the works of Kuwada~\cites{K10,K13} for their key role in the understanding of the equivalence between the gradient estimate~\eqref{intro_eq:P_t_contractivity} and the $W_2$-contraction inequality~\eqref{intro_eq:H_t_contractivity} in the non-smooth framework.

Having a metric-measure notion of heat flow $(\P_t)_{t>0}$ at hand, it is then natural to see if the displacement $K$-convexity is still equivalent to suitable analogues of the Bakry--\'Emery inequality~\eqref{intro_eq:P_t_contractivity} and the $W_2$-contractivity property~\eqref{intro_eq:H_t_contractivity} in this abstract setting.
Building upon the non-smooth Calculus developed in~\cite{AGS14}, Ambrosio--Gigli--Savaré in~\cites{AGS14-2,AGS15} and Ambrosio--Gigli--Mondino--Rajala in~\cite{AGMR15} proved this equivalence under the additional assumption that the heat flow $(\P_t)_{t>0}$ is linear or, equivalently, that the Cheeger energy~\eqref{intro_eq:Cheeger_energy} is a Dirichlet (and thus \emph{quadratic}) form on $\Leb^2(X,\m)$, in order to naturally rule out Finsler geometries.
For this reason, such metric-measure spaces, forming a smaller family of $\CD(K,\infty)$ spaces remarkably still stable under Gromov--Hausdorff convergence, are said to have \emph{Riemannian} Ricci curvature bounded from below by $K\in\R$, or \emph{infinitesimally Hilbertian} $\CD(K,\infty)$ spaces, or $\RCD(K,\infty)$ spaces for short.
We refer the reader to~\cite{OS14} and to~\cites{GKO13,ZZ12} for strictly related results in Finsler and Alexandrov spaces respectively.

One of the most important results of~\cites{AGS14-2,AGMR15} is that $\RCD(K,\infty)$ spaces can be equivalently characterized as those metric-measure spaces for which the gradient flow $(\S_t)_{t>0}$ of the entropy in the Wasserstein space $(\Prob_2(X),W_2)$ satisfies the following \emph{Evolution Variational Inequality} with parameter $K\in\R$, $\EVI_K$ for short, 
\begin{equation}\label{intro_eq:EVI}
\frac{\di}{\di t}\,
\frac{W_2^2(\S_t\mu,\nu)}{2}
+\frac{K}{2}\,
W_2^2(\S_t\mu,\nu)
+
\Ent(\S_t\mu)
\le
\Ent(\nu)
\quad
\text{for a.e.}\ t>0
\end{equation}  
whenever $\mu,\nu\in\Prob_2(X)$.
Thus, in the infinitesimally Hilbertian case (and so in the particular case of smooth Riemannian manifolds), inequality~\eqref{intro_eq:EVI} provides an alternative equivalent metric-measure formulation of lower bound on the Ricci curvature.

The above analysis has been extended also to the finite dimensional case $N\in(0,+\infty)$, where however the equivalent metric-measure formulations of the lower bound on the Ricci curvature and the upper bound on the dimension become more involved.
We refer the reader to the seminal works of Erbar--Kuwada--Sturm~\cite{EKS15} and Kuwada~\cite{K15}, and to the more recent developments obtained in~\cites{BGG14,BGGK18,Ge15}.
The theory of $\CD(K,N)$ spaces has been extended also to the case of \emph{negative} dimension $N\in(-\infty,0)$, see~\cite{O16}. 
Finally, we refer the reader to the recent work~\cite{S19} by Sturm for a more general approach to curvature in the metric-measure setting.

\subsection{The sub-Riemannian framework}

Although the class of $\CD(K,N)$ spaces is very broad, a large and widely-studied family of spaces is left out, the sub-Riemannian ma\-ni\-folds.
For an introduction on the subject, we refer the reader to the papers~\cites{LeD17,S86,S89} and to the monographs~\cites{ABB20,M02,BLU07}.

A \emph{sub-Riemannian manifold} is a triple $(\M,\HB,\scalar*{\cdot,\cdot}_{\HB})$, where $\M$ is a (connected) smooth manifold, $\HB\subset T\M$ is a sub-bundle of the tangent bundle~$T\M$ and~$\scalar*{\cdot,\cdot}_{\HB}$ is a smoothly varying positive definite quadratic form on~$\HB$. 
Typically, the sub-bundle~$\HB$ is assumed to be \emph{bracket ge\-ne\-ra\-ting} and \emph{equiregular}, that is, at each point $x\in\M$ the directions in~$\HB_x$ together with all their Lie brackets generate the full tangent space~$T_x\M$, and the dimensions of the intermediate sub-bundles of commutators obtained at each step do not depend on the choice of $x\in\M$.

A sub-Riemannian manifold $(\M,\HB,\scalar*{\cdot,\cdot}_{\HB})$ naturally carries a metric notion, the so-called \emph{Carnot--Carathéodory} (CC for short) \emph{distance}, defined as
\begin{equation}
\label{intro_eq:CC_distance}
\dcc(x,y)
=
\inf\set*{
\int_0^1|\dot\gamma_t|_{\HB}^2\di t
:
\gamma\colon[0,1]\to\M\ \text{is horizontal},\ \gamma_0=x,\ \gamma_1=y
}
\end{equation}
for all $x,y\in\M$. 
A Lipschitz curve $\gamma\colon[0,1]\to\M$ is \emph{horizontal} if $\dot\gamma_t\in\HB_{\gamma_t}$ for a.e.\ $t\in[0,1]$.
The bracket generating assumption on $\HB$ ensures that the function in~\eqref{intro_eq:CC_distance} is a finite distance --- this is the celebrated Chow--Rashevskii Theorem, see~\cite{ABB20}*{Theorem~3.31} for example.

A sub-Riemannian manifold $(\M,\HB,\scalar*{\cdot,\cdot}_{\HB})$ can be endowed with the Hausdorff measure associated to~$\dcc$.
However, differently from the Riemannian case, the Hausdorff measure does not coincide in general with the volume induced by the  distribution~$\HB$, the \emph{Popp measure}~$\mu_{\HB}$. 
We refer the reader to~\cite{ABB20}*{Chapter~20} for the precise definitions and constructions. 
 
The $\CD(K,N)$ condition fails for the metric-measure space $(\M,\dcc,\mu_{\HB})$ and, for this reason, sub-Riemannian manifolds are said to have Ricci curvature unbounded from below. 
The first result in this direction was obtained by Juillet in~\cite{J09} for the \emph{Heisenberg group} $\mathbb H^N$, building upon some results on optimal transportation in $\mathbb H^N$ established by Ambrosio--Rigot in the seminal paper~\cite{AR04}.
Later, by exploiting a result of~\cite{BB16}, Ambrosio and the author in~\cite{AS19} showed that any non-commutative \emph{Carnot group} is not a $\CD(K,\infty)$ space.
Carnot groups, of which~$\R^N$ and~$\mathbb H^N$ are the simplest examples, are nilpotent Lie groups that, in some sense, capture the local infinitesimal behavior of sub-Riemannian manifolds. 
Precisely, by a famous result of Mitchell~\cite{M85}, Carnot groups are the \emph{tangent metric cones} to sub-Riemannian manifolds, see~\cite{LeD17}*{Section~4.1} and the references therein.
Recently, among other non-embedding results, Huang--Sun~\cite{HS20} proved that equiregular sub-Riemannian manifolds do not satisfy the $\CD(K,N)$ condition, and Juillet~\cite{J20} treated the case of rank-varying distributions.

Despite their intrinsic wild nature of non-$\CD$ spaces, in the last fifteen years sub-Riemanninan manifolds have become an active and promising research topic for the study of Optimal Transport, heat and entropy flows and generalized curvature notions beyond the well-established Riemannian and $\CD$ frameworks.

After the pioneering work~\cite{AR04} of Ambrosio--Rigot, the well-posedness of optimal transportation was studied in Heisenberg and $H$-type groups~\cites{FJ08,DR11,R05}, for non-holonomic distributions~\cites{AL09,KL09} and in more general sub-Riemannian manifolds~\cites{B19,FR10}.

The identification between the heat and the entropy semigroups was established first in the Heisenberg groups by Juillet in~\cite{J14} and then in all Carnot groups by Ambrosio and the author in~\cite{AS19}.

Several notions of \emph{curvature} in sub-Riemannian manifolds have been introduced in recent years, following either the \emph{Lagrangian} or the \emph{Eulerian} approach.

The \emph{Lagrangian} point of view has its roots in the study of Jacobi vector fields initiated in the fundamental works of Agrachev--Li--Zelenko~\cites{AZ02-I,AZ02-II,ZL09} (ALZ for short) and later developed in~\cites{ABR18,BR16,BR17,Grong18,Grong20,M17}.
Besides the numerous applications inspired by some classical results of Riemannian Geometry, see~\cites{ABR17,AL14,BI19,BR16,LLZ16,RS19} for example, a deep and powerful byproduct of the Lagrangian approach --- in the original spirit of Cordero-Erausquin--McCann--Schmucken\-schl\"ager~\cite{CMS01} --- is a precise control of the \emph{distorsion coefficients} in the \emph{sub-Riemannian interpolation inequalities}. 
These results were obtained for the first time by Balogh--Krist\'aly--Sipos in Heisenberg and corank-$1$ Carnot groups~\cites{BKS18,BKS19} via direct methods based on the special structure of these spaces.
The link with the ALZ theory of sub-Riemannian Jacobi fields was made manifest shortly after by Barilari--Rizzi in the more general context of \emph{ideal structures}~\cites{BR19,BR20}.

Sub-Riemannian interpolation inequalities have two interesting consequences: the \emph{Measure Contraction Property}, $\MCP$ for short, and the \emph{distorted} displacement convexity of the entropy along Wasserstein geodesics.

The $\MCP(K,N)$ condition, introduced for the first time by Ohta~\cite{O07}, keeps track of the distortion of the volume of a set when it is transported to a Dirac delta. 
Although for a Riemannian manifold the $\MCP(K,N)$ and the $\CD(K,N)$ conditions are equivalent (with~$N$ the \emph{topological} dimension of the manifold), the $\MCP(K,N)$ condition is in general weaker than the $\CD(K,N)$ condition. 
The first result in this direction was obtained by Juillet~\cite{J09} for the Heisenberg group $\mathbb H^N$ (see also~\cite{CM16}).
The same property was then proved for other Carnot groups~\cites{BR18,R16} and later established for more general sub-Riemannian manifolds in~\cites{BR19,BR20}.

Concerning the applications of the $\MCP(K,N)$ condition and the interpolation inequalities in the sub-Riemannian framework, we refer the reader to the work~\cite{Milman19}, where the author obtains $\Leb^p$-Poincaré and $\log$-Sobolev inequalities on domains via a quasi-convex relaxation of the $\CD(K,N)$ condition, namely, the \emph{quasi curvature-dimension condition $\mathsf{QCD}(Q,K,N)$}, where $Q\in[1,+\infty)$ is an additional `slack' parameter.

The entropy $\Ent_{\HB}$ with respect to the Popp measure $\mu_{\HB}$ of the sub-Riemannian manifolds $(\M,\dcc,\mu_{\HB})$ considered in~\cites{BKS18,BKS19,BR19,BR20} satisfies a \emph{distorted} displacement convexity inequality in the following sense: 
if $[0,1]\in s\mapsto\mu_s\in\Prob_2(\M)$ is the geodesic connecting two probability measures $\mu_0,\mu_1\in\Prob_2(\M)$ with compact support, then
\begin{equation}\label{intro_eq:strange-displacement}
\Ent_{\HB}(\mu_s)
\le
(1-s)\,\Ent_{\HB}(\mu_0)
+
s\,\Ent_{\HB}(\mu_1)
+w(s)
\end{equation}
for all $s\in[0,1]$, where $w\colon[0,1]\to[0,+\infty)$ is a function, concave and such that $w(0)=w(1)=1$, depending only on the lower bounds on the distorsion coefficients of $(\M,\dcc,\mu_{\HB})$ and compensating the lack of $K$-con\-ve\-xi\-ty of the function $s\mapsto\Ent_{\HB}(\mu_s)$.

Although not strictly related to the present work, for the sake of completeness we warn the reader that there are other lines of research in the Lagrangian direction for the definition of curvature in the sub-Riemannian context besides the ALZ approach. We refer the interested reader to~\cites{B12,BF13,H12} for generalizations of the notion of connection and to~\cite{Bere17} for the so-called \emph{Solov'ev method}.

The \emph{Eulerian} point of view arises from the fundamental work of Baudoin--Garofalo~\cite{BG17} (BG for short) and relies on a clever adaptation of the Bakry-\'Emery curvature-dimension inequality~\eqref{intro_eq:CD_ineq} to sub-Riemannian manifolds with \emph{transverse symmetries}.
Roughly said, the tangent space of the sub-Riemannian manifolds considered in~\cite{BG17} splits into the aforementioned subspace of \emph{horizontal} directions~$\HB$ and a subspace of \emph{vertical} directions~$\VB$.
To this splitting, it is possible to associate two $\Gamma$-operators, the usual horizontal one~$\Gamma^\HB$ associated to the CC distance and the horizontal Laplacian $\Delta_\HB$, and a new vertical one~$\Gamma^\VB$ which satisfies the commutation property
\begin{equation}
\label{intro_eq:commutation_prop_Gamma}
\Gamma^\HB(f,\Gamma^\VB(f))
=
\Gamma^\VB(f,\Gamma^\HB(f))
\end{equation}
for all $f\in\Cont^\infty(\M)$.
Property~\eqref{intro_eq:commutation_prop_Gamma} is typical of step~2 distributions~$\HB$, where $\VB=[\HB,\HB]$ and $[\VB,\HB]=0$.
Defining 
\begin{align*}
\Gamma_2^\HB(f,g)
&=
\frac12\big(
\Delta_\HB\Gamma^\HB(f,g)-\Gamma^\HB(f,\Delta_\HB g)-\Gamma^\HB(\Delta_\HB f,g)\big),\\
\Gamma_2^\VB(f,g)
&=
\frac12\big(
\Delta_\HB\Gamma^\VB(f,g)-\Gamma^\VB(f,\Delta_\HB g)-\Gamma^\VB(\Delta_\HB f,g)\big),
\end{align*}
for all $f,g\in\Cont^\infty(\M)$, as in the Riemannian case~\eqref{intro_eq:Gamma_def}, the \emph{generalized BG curvature-dimension inequality}, $\CD(K_\HB,K_\VB,\kappa,N)$ for short, with $K_\HB\in\R$, $K_\VB>0$, $\kappa\ge0$ and $N\in(0,+\infty]$, amounts to say that
\begin{equation}
\label{intro_eq:BG_CD}
\Gamma_2^\HB(f)
+
\eps\,\Gamma_2^\VB(f)
\ge
\frac1N\,(\Delta_\HB f)^2
+
\left(K_\HB-\frac\kappa\eps\right)\,\Gamma^\HB(f)
+
K_\VB\,\Gamma^\VB(f)
\end{equation} 
holds for all $f\in\Cont^\infty(\M)$ and all $\eps>0$.
In~\eqref{intro_eq:BG_CD}, the parameter $K_\HB\in\R$ plays the role of the lower bound on the `generalized Ricci tensor' and, if $\kappa>0$, then $\left(K_\HB-\frac\kappa\eps\right)\to-\infty$ as~$\eps\to0^+$, coherently with the non-$\CD$ nature of sub-Riemannian spaces.
The usual $\CD(K,N)$ condition is thus recovered when~$\Gamma^\VB=0$ and~$\kappa=0$, with~$K=K_\HB$.

The BG theory has been developed in several directions, see the numerous applications obtained in~\cites{B17,BB15,BBG14,BBGM14,BC15,BK14,BW14} and the generalizations made in~\cites{GT16-I,GT16-II}.
We refer the reader also to the recent works~\cites{FengLi20-1,FengLi20-2} where condition~\eqref{intro_eq:commutation_prop_Gamma} is dropped.
A simple but interesting consequence of~\eqref{intro_eq:BG_CD} is the following pointwise gradient bound for the heat flow~$(\P_t)_{t>0}$ associated to the horizontal Laplacian~$\Delta_\HB$, in analogy with~\eqref{intro_eq:P_t_contractivity}: there exists $\alpha\in\R$ such that
\begin{equation}\label{intro_eq:BG_P_t_contractivity}
\Gamma^\HB(\P_tf)
+
\Gamma^\VB(\P_tf)
\le
e^{-2\alpha t}\,
\big(
\P_t\Gamma^\HB(f)
+
\P_t\Gamma^\VB(f)
\big)
\end{equation} 
for all~$f\in\Cont^\infty_c(\M)$ and~$t\ge0$, see~\cite{BG17}*{Corollary~4.6}.

Pointwise gradient bounds for the heat flow~$(\P_t)_{t>0}$ associated to the horizontal Laplacian~$\Delta_\HB$, similar to~\eqref{intro_eq:BG_P_t_contractivity} but closer to the Riemannian one~\eqref{intro_eq:P_t_contractivity}, were proved for the first time by Driver--Melcher for the Heisenberg groups~\cite{DM05} and later generalized to all Carnot groups by Melcher~\cite{M08} (see also~\cite{GT19} for a different proof). 
Baudoin--Bonnefont obtained similar inequalities for the $\SU(2)$ group in~\cite{BB09}.
Stronger inequalities have been proved for the Heisenberg groups~\cites{BBBC08,L06}, $H$-type groups~\cite{HL10} and the Grushin plane~\cite{W14} with different techniques, and very recently Baudoin--Kelleher treated the case of metric graphs via the theory of differential forms on Dirichlet spaces~\cite{BK19} (concerning Dirichlet spaces, we also refer the reader to~\cite{CJKS20} for a strictly related, although slightly weaker, pointwise gradient bound).

In all the spaces quoted above, the heat flow~$(\P_t)_{t>0}$ satisfies an inequality of the form 
\begin{equation}\label{intro_eq:P_t_contractivity_weak}
\Gamma(\P_t f)
\le
Ce^{-2Kt}\,\P_t\Gamma(f)
\end{equation}
for all $f\in\Cont^\infty_c(\M)$ and~$t\ge0$, for some constants $C\ge1$ and $K\in\R$ (with $K=0$ for Carnot groups, coherently with their homogeneous nature).
Since~\eqref{intro_eq:P_t_contractivity_weak} reduces to~\eqref{intro_eq:P_t_contractivity} when $C=1$, by analogy with the $\CD$~framework the (optimal) parameter $K\in\R$ in~\eqref{intro_eq:P_t_contractivity_weak} can be thought as a lower bound on the `generalized Ricci tensor'.

Accordingly to this interpretation, thanks to the celebrated work~\cite{K10} of Kuwada, the pointwise gradient bound~\eqref{intro_eq:P_t_contractivity_weak} is equivalent to the following contractivity property of the dual heat flow~$(\H_t)_{t>0}$ with respect to the $2$-Wasserstein distance: if $\mu,\nu\in\Prob_2(\M)$, then
\begin{equation}
\label{intro_eq:H_t_contractivity_weak}
W_2(\H_t\mu,\H_t\nu)
\le
\sqrt{C}\,e^{-Kt}\,
W_2(\mu,\nu)
\end{equation}
for all~$t\ge0$.
Contractivity properties like~\eqref{intro_eq:H_t_contractivity_weak} have been established also for the Markovian diffusion semigroup associated to the operator $L=\Delta+Z$ on a (connected and complete) Riemannian manifold $(\M,\g)$, where~$\Delta$ is the usual Laplace--Beltrami operator and $Z$ is a $\Cont^1$-regular vector field, see~\cites{CTZ21,W20}.

In view of the equivalence between~\eqref{intro_eq:P_t_contractivity_weak} and~\eqref{intro_eq:H_t_contractivity_weak}, and in analogy with the $\CD$~framework, the study of~\eqref{intro_eq:P_t_contractivity_weak}, or equivalently of~\eqref{intro_eq:H_t_contractivity_weak}, belongs to the Eulerian side of the approach to the definition of curvature in the sub-Riemannian setting.

\subsection{Sub-Riemannian groups as weak \texorpdfstring{$\RCD$}{RCD} spaces}

At the present moment, no link is known between the Lagrangian and the Eulerian approach presented above, in the sense that no relation has been shown between the distorted convexity of the entropy~\eqref{intro_eq:strange-displacement} and the inequalities~\eqref{intro_eq:BG_CD} and~\eqref{intro_eq:P_t_contractivity_weak} satisfied by the $\Gamma$~operator and the heat flow~$(\P_t)_{t>0}$, in the same manner of the $\CD$~framework.

The main contribution of this work is to make a partial step towards the connection between the Langrangian and the Eulerian approach in the sub-Riemannian context.
We show that the pointwise gradient bound~\eqref{intro_eq:P_t_contractivity_weak} is equivalent to a \emph{heated} version of the displacement convexity inequality~\eqref{intro_eq:K-displacement} and an \emph{almost-integrated} form of the Evolution Variational Inequality~\eqref{intro_eq:EVI} in the context of metric-measure spaces with a group structure, extending to this non-$\CD$ setting the dimension-free results obtained by Ambrosio--Gigli--Savaré~\cites{AGS13,AGS14-2} and Ambrosio--Gigli--Mondino--Rajala~\cite{AGMR15}.
Since these inequalities naturally embed the corresponding ones of the $\CD$~framework, our main equivalence result can be seen as an attempt to understand the problem of the \emph{grande unification synthétique} proposed by Villani~\cite{V19} for the special case of metric-measure groups.
The present work was also motivated by some questions raised by Balogh--Krist\'aly--Sipos in~\cite{BKS18}*{Section~5}.

Let us give a sketch of our idea.
We start by assuming that, in a metric-measure space $(X,\d,\m)$, the metric heat flow $(\P_t)_{t>0}$, i.e., the gradient flow of the Cheeger energy associated to the distance~$\d$ (recall~\eqref{intro_eq:Cheeger_energy} for the definition) is linear and satisfies the pointwise gradient bound  
\begin{equation}\label{intro_eq:wBE}
\Gamma(\P_t f)
\le
\c^2(t)\,
\P_t\Gamma(f)
\quad
\text{$\m$-a.e.\ in $X$}
\end{equation}
for all $t\ge0$ and all $f\in\Dom(\Ch)$, for some function $\c\colon[0,+\infty)\to(0,+\infty)$ locally positively bounded from above and below.

In this non-smooth context, we precisely have $\Gamma(f)=|\nabla f|_w^2$ for all $f\in\Dom(\Ch)$, where $|\nabla f|_w\in\Leb^2(X,\m)$ is the so-called \emph{minimal relaxed gradient of~$f$} in the sense of~\cite{AGS14}*{Definition~4.2} and represents the Cheeger energy~\eqref{intro_eq:Cheeger_energy} as
\begin{equation*}
\Ch(f)
=
\frac12\,\int_X |\nabla f|_w^2\di\m
\end{equation*}  
for all $f\in\Dom(\Ch)$.
However, to avoid technicalities, in what follows we simply consider~$X$ as a sub-Riemannian manifold and~$\Gamma$ as the squared modulus of the gradient, $\Gamma(f)=|\nabla f|^2$.

We can think of the function~$\c$ in~\eqref{intro_eq:wBE} as the \emph{curvature function} of the space $(X,\d,\m)$ replacing the function $t\mapsto e^{-Kt}$ of the standard pointwise gradient~\eqref{intro_eq:P_t_contractivity}.
Actually, thanks to the Fekete Lemma for sub-additive functions, the \emph{optimal} curvature function~$\c$ in~\eqref{intro_eq:wBE} does satisfy $\c(t)\le Ce^{-Kt}$ for all $t\ge0$ for some $C\ge1$ and $K\in\R$, as for the pointwise gradient bound~\eqref{intro_eq:P_t_contractivity_weak}, provided that 
$\limsup\limits_{t\to0^+}\c(t)<+\infty$.
The (optimal) constant $K\in\R$ plays the role of the lower bound on the `generalized Ricci tensor' in this situation.
We call inequality~\eqref{intro_eq:wBE} the \emph{weak Bakry--\'Emery curvature condition} with respect to the curvature function~$\c$, $\BE_w(\c,\infty)$ for short.

In this general framework, the equivalence between the pointwise gradient bound~\eqref{intro_eq:wBE} and the $W_2$-contractivity property of the dual heat flow $(\H_t)_{t>0}$,
\begin{equation}\label{intro_eq:w_Kuwada}
W_2(\H_t\mu,\H_t\nu)
\le
\c(t)\,
W_2(\mu,\nu)
\end{equation}
for all $\mu,\nu\in\Prob_2(X)$ and $t\ge0$, has already been addressed by Ambrosio--Gigli--Savaré in~\cite{AGS13}*{Section~3.2} adapting the original idea of Kuwada~\cite{K10}.
Actually, in~\cite{AGS13} only the implication \eqref{intro_eq:wBE}~$\Rightarrow$~\eqref{intro_eq:w_Kuwada} is proved in detail, while the other implication \eqref{intro_eq:w_Kuwada}~$\Rightarrow$~\eqref{intro_eq:wBE} --- of no need for the scopes of~\cite{AGS13} --- is only stated with a sketch of its proof.
However, the line suggested in~\cite{AGS13} for the proof of this implication is not completely correct, see \cref{rem:AGS_pernacchia} below for the technical details.
Our first task is thus to amend the strategy of~\cite{AGS13} and to give a self-contained and complete proof of the equivalence between~\eqref{intro_eq:wBE} and~\eqref{intro_eq:w_Kuwada}.

Having the correspondence between the pointwise gradient bound~\eqref{intro_eq:wBE} and the $W_2$-contractivity property~\eqref{intro_eq:w_Kuwada} of the dual heat semigroup at hand, we can focus on the proof of the \emph{almost-integrated} form of the~$\EVI$.
The following (formal) computations are a sketch of the \emph{action estimates} performed in~\cite{AGS14}*{Section~4.3} for the dimension-free case $N=+\infty$. 
Actually, our approach takes advantage of the more general point of view assumed in~\cite{EKS15}*{Section~4.2} for the finite dimensional case $N<+\infty$.
For the presentation, we also took inspiration from~\cite{BGL15}*{Section~6}.

Let $s\mapsto\mu_s=f_s\m$, $s\in[0,1]$, be a curve in the $2$-Wasserstein space joining two measures $\mu_0,\mu_1\in\Prob_2(X)$ and let us define a new curve $s\mapsto\tilde\mu_s=\tilde f_s\m$, $s\in[0,1]$, by setting
\begin{equation*}
\tilde\mu_s=\H_{\eta(s)}\mu_{\theta(s)},\
\text{so that}\
\tilde f_s=\P_{\eta(s)}f_{\theta(s)},\
\text{for all}\
s\in[0,1],
\end{equation*}
where $\eta\in\Cont^2([0,1];[0,+\infty))$ and $\theta\in\Cont^1([0,1];[0,1])$ with $\theta(0)=0$ and $\theta(1)=1$.
At least formally, we can compute
\begin{align*}
\frac{\di}{\di s}\,\tilde f_s
=
\dot\eta(s)\,\Delta\P_{\eta(s)}f_{\theta(s)}
+
\dot\theta(s)\,\P_{\eta(s)}\dot f_{\theta(s)}
\end{align*} 
for $s\in(0,1)$, where $\Delta$ is the infinitesimal generator of the heat flow, the (metric-measure) Laplacian operator.

On the one hand, integrating by parts, we get
\begin{align*}
\frac{\di}{\di s}\,\Ent_\m(\tilde\mu_s)
&=
\frac{\di}{\di s}\int_X\tilde f_s\log\tilde f_s\di\m\\
&=
\int_X(1+\log\tilde f_s)\,\frac{\di}{\di s}\,\tilde f_s\di\m\\
&=
-\dot\eta(s)\int_X p'(\tilde f_s)\,\Gamma(\tilde f_s)\di\m
+
\dot\theta(s)\int_X p(\tilde f_s)\,\P_{\eta(s)}\dot f_{\theta(s)}
\di\m
\end{align*}
for $s\in(0,1)$, where $p(r)=1+\log r$ for all $r>0$.
Observing that $p'(r)=r(p'(r))^2$ for all~$r>0$, by the \emph{chain rule} 
$\Gamma(\phi(f))=(\phi'(f))^2\,\Gamma(f)$
valid for all $\phi\colon\R\to\R$ sufficiently smooth and all $f\in\Dom(\Ch)$, we can write
\begin{equation}\label{intro_eq:Ent_estimate}
\frac{\di}{\di s}\,\Ent_\m(\tilde\mu_s)
=
-\dot\eta(s)\int_X \Gamma(g_s)\di\tilde\mu_s
+
\dot\theta(s)\int_X \dot f_{\theta(s)}\,\P_{\eta(s)}g_s
\di\m
\end{equation}
for $s\in(0,1)$, where we have set $g_s=p(\tilde f_s)$ for brevity. 

On the other hand, by Kantorovich duality, we can write
\begin{equation}\label{intro_eq:kantorovich}
\frac12\,W_2^2(\mu,\nu)
=
\sup\set*{
\int_X Q_1\phi\di\mu-\int_X\phi\di\nu : \phi\in\Lip(X)\ \text{with bounded support}},
\end{equation}
where
\begin{equation*}
Q_s\phi(x)
=
\inf_{y\in X}\phi(y)+\frac{\d^2(y,x)}{2s},
\quad 
\text{for}\ x\in X\ \text{and}\ s>0,
\end{equation*}
is the \emph{Hopf--Lax infimum-convolution semigroup}.
Recalling that $\phi_s=Q_s\phi$ solves the Hamilton--Jacobi equation  
$\de_s\phi_s+\frac12\,|\nabla\phi_s|^2=0$,
again integrating by parts we can compute
\begin{equation}\label{intro_eq:action_etimate}
\begin{split}
\frac{\di}{\di s}\int_X\phi_s\,\tilde f_s\di\m
&=
\int_X\de_s\phi_s\di\tilde\mu_s
+
\int_X\phi_s\,\frac{\di}{\di s}\tilde f_s\di\m\\
&=
-\frac12\int_X\Gamma(\phi_s)\di\tilde\mu_s-\dot\eta(s)\int_X\Gamma(\phi_s,\tilde f_s)\di\m+\dot\theta(s)\int_X\dot f_{\theta(s)}\,\P_{\eta(s)}\phi_s\di\m
\end{split}
\end{equation}
for $s\in(0,1)$.
We can combine~\eqref{intro_eq:Ent_estimate} and~\eqref{intro_eq:action_etimate} to get
\begin{equation}
\label{intro_eq:total_action}
\begin{split}
\frac{\di}{\di s}\int_X\phi_s\,\tilde f_s\di\m
+
\dot\eta(s)\,\frac{\di}{\di s}\,\Ent_\m(\tilde\mu_s)
&\le
-\frac12\int_X\big(\Gamma(\phi_s)+\dot\eta(s)^2\,\Gamma(g_s)\big)\di\tilde\mu_s\\
&\quad
-\dot\eta(s)\int_X\Gamma(\phi_s,\tilde f_s)\di\m\\
&\quad
+\dot\theta(s)\int_X\dot f_{\theta(s)}\,\P_{\eta(s)}(\phi_s+\dot\eta(s)\,g_s)\di\m
\end{split}
\end{equation}
for $s\in(0,1)$, forgetting the term $-\frac{\dot\eta(s)^2}{2}\int_X \Gamma(g_s)\di\tilde\mu_s\le0$ in~\eqref{intro_eq:Ent_estimate}.
Now
\begin{align*}
\Gamma(\phi_s+\dot\eta(s)\,g_s)
=\Gamma(\phi_s)
+
2\,\dot\eta(s)\,\Gamma(\phi_s,g_s)
+\dot\eta(s)^2\,\Gamma(g_s)
\end{align*}
and, by the chain rule,
\begin{align*}
\Gamma(\phi_s,g_s)
=
\Gamma(\phi_s,p(\tilde f_s))
=
p'(\tilde f_s)\,\Gamma(\phi_s,\tilde f_s).
\end{align*}
Since $r\,p'(r)=1$, we have
\begin{align*}
\int_X\Gamma(\phi_s,g_s)\di\tilde\mu_s
=
\int_X \tilde f_s\,p'(\tilde f_s)\,\Gamma(\phi_s,\tilde f_s)\di\m
=
\int_X \Gamma(\phi_s,\tilde f_s)\di\m,
\end{align*}
and thus~\eqref{intro_eq:total_action} simplifies to
\begin{equation}
\label{intro_eq:total_action_bis}
\begin{split}
\frac{\di}{\di s}\int_X\phi_s\,\tilde f_s\di\m
+
\dot\eta(s)\,\frac{\di}{\di s}\,\Ent_\m(\tilde\mu_s)
&\le
-\frac12\int_X\Gamma(\phi_s+\dot\eta(s)\,g_s)\di\tilde\mu_s\\
&\quad
+\dot\theta(s)\int_X\dot f_{\theta(s)}\,\P_{\eta(s)}(\phi_s+\dot\eta(s)\,g_s)\di\m
\end{split}
\end{equation}
for $s\in(0,1)$. 
At this point, the crucial information we need to know about the chosen curve $s\mapsto\mu_s=f_s\m$ is that
\begin{equation}
\label{intro_eq:lisini}
\int_X \dot f_s\,\psi\di\m
\le
|\dot\mu_s|\,\left(\int_X\Gamma(\psi)\di\mu_s\right)^{\frac12}
\end{equation}
for all sufficiently `nice' functions~$\psi\in\Dom(\Ch)$, where 
$|\dot\mu_s|=\lim\limits_{h\to0}\frac{W_2(\mu_{s+h},\mu_s)}h$, 
$s\in(0,1)$, is the \emph{metric velocity} of the curve $s\mapsto\mu_s$ with respect to the $2$-Wasserstein distance.
With~\eqref{intro_eq:lisini} at disposal, we may choose $\psi=\P_{\eta(s)}(\phi_s+\dot\eta(s)\,g_s)$ and estimate
\begin{equation}
\label{intro_eq:AGS_pernacchietta}
\begin{split}
\dot\theta(s)\int_X\dot f_{\theta(s)}\,\P_{\eta(s)}(\phi_s&+\dot\eta(s)\,g_s)\di\m
=
\int_X\left(\frac{\d}{\di s}\,f_{\theta(s)}\right)\,\P_{\eta(s)}(\phi_s+\dot\eta(s)\,g_s)\di\m\\
&\le
|\dot\theta(s)|\,|\dot\mu_{\theta(s)}|
\left(\int_X\Gamma(\P_{\eta(s)}(\phi_s+\dot\eta(s)\,g_s))\di\mu_s\right)^{\frac12}\\
&\le
\frac{\c^2(\eta(s))}2\,\dot\theta(s)^2\,|\dot\mu_{\theta(s)}|^2
+
\frac{\c^{-2}(\eta(s))}2\,
\int_X\Gamma(\P_{\eta(s)}(\phi_s+\dot\eta(s)\,g_s))\di\mu_s\\
&\le
\frac{\c^2(\eta(s))}2\,\dot\theta(s)^2\,|\dot\mu_{\theta(s)}|^2
+
\frac12\,
\int_X\Gamma(\phi_s+\dot\eta(s)\,g_s)\di\tilde\mu_s
\end{split}
\end{equation}
by Young inequality, in virtue of~\eqref{intro_eq:wBE}.
By combining~\eqref{intro_eq:total_action_bis} with~\eqref{intro_eq:AGS_pernacchietta}, we conclude that
\begin{equation*}
\begin{split}
\frac{\di}{\di s}\int_X\phi_s\,\tilde f_s\di\m
+
\dot\eta(s)\,\frac{\di}{\di s}\,\Ent_\m(\tilde\mu_s)
&\le
\frac{\c^2(\eta(s))}2\,\dot\theta(s)^2\,|\dot\mu_{\theta(s)}|^2
\end{split}
\end{equation*}
for $s\in(0,1)$.
If we choose $\dot\theta(s)=\c^{-2}(\eta(s))$, then we can integrate in $s\in(0,1)$ so that, by Kantorovich duality~\eqref{intro_eq:kantorovich}, we finally get
\begin{equation}
\label{intro_eq:EVI_eta}
\begin{split}
\frac12\,W_2^2(\H_{\eta(1)}\mu_1,\H_{\eta(0)}\mu_0)
-
\frac1{2\RI(\eta)}&\,W_2^2(\mu_1,\mu_0)
+
\dot\eta(1)\,\Ent_\m(\H_{\eta(1)}\mu_1)\\
&\le
\dot\eta(0)\,\Ent_\m(\H_{\eta(0)}\mu_0)
+
\int_0^1\ddot\eta(s)\,\Ent_\m(\H_{\eta(s)}\mu_{\theta(s)})\di s,
\end{split}
\end{equation}
where $\RI(\eta)=\displaystyle\int_0^1\c^{-2}(\eta(s))\di s$.

Note that~\eqref{intro_eq:EVI_eta} is actually equivalent to the pointwise gradient bound~\eqref{intro_eq:wBE}.
Indeed, the choice of the constant function $\eta(s)=t$ for all $s\in[0,1]$ immediately gives~\eqref{intro_eq:w_Kuwada}.
Moreover, if $\c(t)=e^{-Kt}$ for some $K\in\R$, then we recover~\eqref{intro_eq:EVI} by choosing $\eta(s)=st$ for all $s\in[0,1]$.
Indeed, in this case, we obtain
\begin{align*}
\frac12\,W_2^2(\H_t\mu_1,\mu_0)
-
\frac{Kt}{e^{2Kt}-1}&\,W_2^2(\mu_1,\mu_0)
+
t\big(\Ent_\m(\H_t\mu_1)
-
\Ent_\m(\mu_0)\big)
\le0	
\end{align*}
and~\eqref{intro_eq:EVI} follows (for $t=0$, which is enough thanks to the semigroup property) by observing that $\frac{Kt}{e^{2Kt}-1}=\frac12(1-Kt+o(t))$ as~$t\to0^+$.
For this reason, and adopting the same terminology of~\cite{DS08}*{Proposition~3.1}, we may think of~\eqref{intro_eq:EVI_eta} as an \emph{almost-integrated} form of the~$\EVI$~\eqref{intro_eq:EVI}.

Since we have no information about the behavior of the function $s\mapsto\Ent_\m(\H_{\eta(s)}\mu_{\theta(s)})$, to simplify~\eqref{intro_eq:EVI_eta} it is convenient to choose $\eta(s)=(1-s)t_0+st_1$ for $s\in[0,1]$, where $0\le t_0\le t_1$ are fixed. 
With this choice, inequality~\eqref{intro_eq:EVI_eta} reduces to
\begin{equation}
\label{intro_eq:EVI_eta_simple}
\begin{split}
W_2^2(\H_{t_1}\mu_1,\H_{t_0}\mu_0)
+
2(t_1-t_0)\big(\Ent_\m(\H_{t_1}\mu_1)
-
\Ent_\m(\H_{t_0}\mu_0)\big)
\le
\mathsf A[t_0;t_1]^{-1}\,W_2^2(\mu_1,\mu_0),
\end{split}
\end{equation}
where $\mathsf A[t_0;t_1]=\displaystyle\aint_{t_0}^{t_1}\c^{-2}(s)\di s$.
In analogy with~\eqref{intro_eq:EVI}, we call the above inequality~\eqref{intro_eq:EVI_eta_simple} the \emph{weak Evolution Variational Inequality} with respect to the curvature function~$\c$, $\EVI_w(\c)$ for short.

Arguing exactly as in the proof of~\cite{DS08}*{Theorem~3.2}, from~\eqref{intro_eq:EVI_eta_simple} we deduce that, if $s\mapsto\mu_s$ is a $2$-Wasserstein (constant speed) geodesic, then
\begin{equation}
\label{intro_eq:Ent_heated_convex}
\begin{split}
\Ent_\m(\H_{t+h}\mu_s)
&\le 
(1-s)\,\Ent_m(\H_t\mu_0)+s\,\Ent_\m(\H_t\mu_1)\\
&\quad+\frac{s(1-s)}{2h}\left(\mathsf A[t;t+h]^{-1}\,W_2^2(\mu_1,\mu_0)-W_2^2(\H_t\mu_1,\H_t\mu_0)\right)
\end{split}
\end{equation}
for all~$t\ge0$ and~$h>0$.
Note that~\eqref{intro_eq:Ent_heated_convex} is still equivalent to the pointwise gradient bound~\eqref{intro_eq:wBE} since, by multiplying both of its sides by~$h>0$ and then letting~$h\to0^+$, we again recover~\eqref{intro_eq:w_Kuwada}.
Moreover, if we choose $t=0$ in~\eqref{intro_eq:Ent_heated_convex}, then we obtain
\begin{equation}
\label{intro_eq:Ent_heated_convex_t=0}
\begin{split}
\Ent_\m(\H_{h}\mu_s)
\le 
(1-s)\,\Ent_m(\mu_0)+s\,\Ent_\m(\mu_1)
+\frac{\mathsf B[h]}{2}\,s(1-s)\,W_2^2(\mu_1,\mu_0)
\end{split}
\end{equation}
for all~$h>0$, where $\mathsf B[h]=\dfrac{\mathsf A[0,h]^{-1}-1}{h}$.
In particular, if $\c(t)=e^{-Kt}$ then $\mathsf B[h]=-K+o(1)$ as $h\to0^+$, so that we immediately recover the displacement $K$-convexity~\eqref{intro_eq:K-displacement}.
For this reason, we may think of~\eqref{intro_eq:Ent_heated_convex} as a \emph{heated} version of the displacement convexity of the entropy and we call it the (dimension-free) \emph{weak Riemannian Curvature-Dimension condition} with respect to the curvature function~$\c$, $\RCD_w(\c,\infty)$ for short.

Inequality~\eqref{intro_eq:Ent_heated_convex_t=0} is very close to the distorted convexity inequality~\eqref{intro_eq:strange-displacement} but, at the same time, it reflects the idea behind the generalized Baudoin--Garofalo $\CD$~condition~\eqref{intro_eq:BG_CD}, in the sense that $\mathsf B[h]\to+\infty$ as~$h\to0^+$ in the non-$\CD$ setting, coherently with the fact that sub-Riemannian spaces have Ricci curvature unbounded from below.

The explosion of the right-hand side of~\eqref{intro_eq:Ent_heated_convex_t=0} as~$h\to0^+$ can be interpreted also in the light of the \emph{singularity problem} of $2$-Wasserstein geodesics, still open for a general sub-Riemannian manifold: if $\mu_0\ll\m$, then does any $2$-Wasserstein geodesic $s\mapsto\mu_s$, $s\in[0,1]$, joining $\mu_0,\mu_1\in\Prob_2(X)$ still satisfy $\mu_s\ll\m$?
This problem was posed for the first time by Ambrosio--Rigot~\cite{AR04} for the Heisenberg group and positively answered by Figalli--Juillet~\cite{FJ08}.
Later Figalli--Rifford~\cite{FR10} gave an affirmative answer also for more general sub-Riemannian manifolds (see also~\cite{BR19}).
As pointed out by Cavalletti--Mondino~\cite{CM17}, the answer is still positive if the ambient metric-measure space satisfies the $\MCP(K,N)$ condition and is \emph{essentially non-branching}, a condition roughly saying that \emph{branching} geodesics, i.e., geodesics splitting at intermediate times, are not too many. 
Note that some sub-Riemannian spaces do have branching geodesics, see~\cite{MR20}.
Thus, in this sense, considering the heated version of the $2$-Wasserstein geodesic in~\eqref{intro_eq:Ent_heated_convex} can be seen as a way to bypass its possible singularity.

All in all, apart from technicalities, if we can construct sufficiently good $2$-Wasserstein curves $s\mapsto\mu_s=f_s\m$ satisfying~\eqref{intro_eq:lisini}, then, under the linearity of the heat flow, for the metric-measure space $(X,\d,\m)$ we have the equivalences
\begin{equation*}
\BE_w(\c,\infty)
\iff
\EVI_w(\c)
\iff
\RCD_w(\c,\infty).
\end{equation*}
Therefore, in analogy with the $\CD$ setting, we may call such a metric-measure space $(X,\d,\m)$ a \emph{weak $\RCD$-space}.

Property~\eqref{intro_eq:lisini} can be obtained from a celebrated result of Lisini~\cite{L07}, so that the central problem we need to face for the construction of the curve $s\mapsto\mu_s$ is the absolute continuity property $\mu_s\ll\m$.
Due to the aforementioned singularity problem in this general framework, we cannot choose $s\mapsto\mu_s$ to be a geodesic.
Nevertheless, we may choose $s\mapsto\mu_s$ to be a suitable \emph{regularization} $s\mapsto\mu_s^\eps$, for all~$\eps>0$, of a geodesic (or of any other probability curve realizing the $2$-Wasserstein distance between $\mu_0$ and $\mu_1$ up to a smaller and smaller error).
However, since we need the Lisini inequality~\eqref{intro_eq:lisini}, the regularized curve $s\mapsto\mu_s^\eps$ has to have $2$-Wasserstein metric velocity controlled by the velocity of the original curve.

In~\cites{AGS13,EKS15}, the regularized procedure takes advantage of the smoothing property of the heat flow and, precisely, leads to the choice $\mu_s^\eps=\H_\eps\mu_s$.
Indeed, on the one hand, the pointwise gradient bound~\eqref{intro_eq:P_t_contractivity} implies the \emph{instantaneous diffusion property} $\H_t\mu\ll\m$ for all $t>0$ and $\mu\in\Prob_2(X)$.
On the other hand, the $W_2$-contractivity property~\eqref{intro_eq:H_t_contractivity} immediately gives $|\dot\mu_s^\eps|\le e^{-K\eps}\,|\mu_s|$. 

Under the weaker $\BE(\c,\infty)$ condition~\eqref{intro_eq:wBE}, it is still possible to prove the instantaneous diffusion property. 
However, for the choice $\mu_s^\eps=\H_\eps\mu_s$, the $W_2$-contractivity property~\eqref{intro_eq:w_Kuwada} only gives the weaker estimate $|\dot\mu_s^\eps|\le\c(\eps)\,|\mu_s|$, which is of no use unless $\lim\limits_{t\to0^+}\c(t)=1$, a property the curvature function does not satisfy for the sub-Riemannian manifolds under consideration.

Since we cannot rely on the sole properties of the heat flow, it is at this point that we assume that the ambient space~$X$ has a group operation left-compatible with the metric-measure structure and exploit the property of convolution.
In fact, under this additional assumption, we may choose the regularized curve as the left-convoluted curve $\mu_s^\eps=(\rho_\eps\star\mu_s)\m$, where $(\rho_\eps)_{\eps>0}\subset\Leb^1(X,\m)$ is a suitable family of convolution kernels.
Since the group operation is left-compatible with the metric, it is not difficult to prove that the $2$-Wasserstein metric velocity of the left-convoluted curve does not increase, i.e.\ $|\dot\mu_s^\eps|\le|\dot\mu_s|$ (a property not expected for right-convoluted curves, see \cref{rem:right_convolution} below).
We can thus perform all the above computations on the left-convoluted curve $s\mapsto\mu_s^\eps$ and obtain the desired entropic inequalities by passing to the limit as~$\eps\to0^+$ at the end.

Although the present work is focused only on (metric-measure)  groups, we believe that our results may be valid also for other non-group spaces, such as the Grushin plane~\cite{W14} and metric graphs~\cite{BK19}, where the regularization of $2$-Wasserstein curves could possibly be performed by exploiting the particular structure of the underlying space. 
Another interesting problem is whether some sub-Riemmanian manifolds (possibly, with a group structure) may satisfy a more precise form of the pointwise gradient bound~\eqref{intro_eq:P_t_contractivity_weak} also taking into account a dimensional parameter $N\in(0,+\infty)$.
Finally, from a purely metric-measure point of view, on the one side we do not know if the weak $\RCD$~condition~\eqref{intro_eq:Ent_heated_convex} may imply (a weaker form of) the $\MCP$ condition and, on the other side, if the weak~$\EVI$~\eqref{intro_eq:EVI_eta_simple} may be useful for the definition of a weaker notion of metric gradient flow and/or for proving stability properties of weak $\RCD$ spaces under Gromov--Hausdorff convergence.
We will hopefully come again over these and related topics in a future work.

\subsection{Organization of the paper}

The paper is organized as follows.

In \cref{sec:preliminaries} we recall all the known definitions and results in the metric-measure setting we will use in the sequel, in order to keep the paper the most self-contained as possible. 
Almost all the theorems are stated without proofs, but we give the reader precise references to the existing literature where to find the needed technical details. 

In \cref{sec:wBE} we introduce the $\BE_w(\c,\infty)$ condition in the metric-measure framework and study its consequences, such as Poincaré inequalities and the definition and the properties of the dual and the pointwise version of the heat semigroup.
The main result of this part is the equivalence between the $\BE_w(\c,\infty)$ condition and the weak $W_2$-contractivity property of the dual heat flow, the so-called Kuwada duality.

In \cref{sec:Fisher_and_LlogL} we deal with the Fisher information and the $\Leb\log\Leb$-regularization property.
The results, which will be frequently used in the remaining part of the paper, are technical and provide a generalization of the known theory to the context of the $\BE_w(\c,\infty)$ condition.

Finally, in \cref{sec:proof_equivalence}, we prove our main equivalence result for metric-measure groups, see \cref{res:th_equivalence} for the precise statement.
In the last part of this section, we show how this theorem applies to Carnot groups and to the $\SU(2)$~group.
Also, we briefly compare the heated displacement convexity~\eqref{intro_eq:Ent_heated_convex_t=0} we obtain with the distorted displacement convexity~\eqref{intro_eq:strange-displacement} in the case of $2$-Wasserstein geodesics induced by right-translation optimal transport maps.

\section{Preliminaries}
\label{sec:preliminaries}

In this section, we recall the main technical tools we will use throughout the paper. For a more detailed exposition of the results presented below, we refer the reader to~\cites{ACDiM15,AG13,AGMR15,AGS08,AGS13,AGS14,AGS14-2,AGS15,AMS15,B94,BGL14,B73,C99,G14,G15,V03,V09}. At the end of this section, we summarize the main assumptions we will use in the rest of the present work. For the reader's convenience, we will try to keep the paper the most self-contained as possible.

\subsection{AC curves}

Let $(X,\d)$ be a metric space.
Let $I\subset\R$ be a closed interval and let $p\in[1,+\infty]$. We say that a curve $\gamma\colon I\to X$ belongs to $\AC^p(I;X)$ if 
\begin{equation}\label{eq:def_AC_curve}
\d(\gamma_s,\gamma_t)\le\int_s^t g(r)\di r 
\qquad
s,t\in I,\ s<t, 
\end{equation} 
for some $g\in\Leb^p(I)$. The space $\AC^p_{\loc}(I;X)$ is defined analogously. The exponent \mbox{$p=1$} corresponds to \emph{absolutely continuous curves} and is simply denoted by $\AC(I;X)$. It turns out that, if $\gamma\in \AC^p(I;X)$, then there is a minimal function $g\in\Leb^p(I)$ satisfying~\eqref{eq:def_AC_curve}, called \emph{metric derivative} of the curve~$\gamma$, which is given by
\begin{equation*}
|\dot{\gamma}_t|=\lim_{s\to t}\frac{\d(\gamma_s,\gamma_t)}{|s-t|}
\qquad
\text{for $\leb^1$-a.e.}\ t\in I,
\end{equation*} 
see~\cite{AGS08}*{Theorem~1.1.2} for the simple proof. We thus say that an absolutely continuous curve~$\gamma$ has \emph{constant speed} if $t\mapsto|\dot{\gamma}_t|$ is (equivalent to) a constant.

We say that $(X,\d)$ is a \emph{length} (metric) space if for all $x,y\in X$ we have
\begin{equation*}
\d(x,y)=\inf\set*{\int_0^1|\dot{\gamma}_t|\di t : \gamma\in\AC([0,1];X),\ \gamma_0=x,\ \gamma_1=y}.
\end{equation*} 
In addition, we call $(X,\d)$ a \emph{geodesic} metric space if for every $x,y\in X$ there exists a curve $\gamma\colon[0,1]\to X$ such that $\gamma_0=x$, $\gamma_1=y$ and
\begin{equation*}
\d(\gamma_s,\gamma_t)=|s-t|\,\d(\gamma_0,\gamma_1) 
\qquad
\forall s,t\in[0,1].
\end{equation*}
In this case, we say that the curve $\gamma\colon[0,1]\to X$ is a (\emph{constant}) \emph{unit-speed geodesic} and we write
$s\mapsto\gamma_s\in\mathrm{Geo}([0,1];X)$.

\subsection{Slopes}

Let $(X,\d)$ be a metric space.
Let $\overline{\R}=\R\cup\set{-\infty,+\infty}$ and let $f\colon X\to\overline{\R}$ be a function. We define the \emph{effective domain} of~$f$ as
\begin{equation*}
\Dom(f)=\set*{x\in X : f(x)\in\R}.
\end{equation*} 
Given $x\in\Dom(f)$, we define the \emph{slope} and the \emph{asymptotic Lipschitz constant} of~$f$ at~$x$ by
\begin{equation}\label{eq:def_loc_Lip_constant}
|\D f|(x)=\limsup_{y\to x}\frac{|f(y)-f(x)|}{\d(x,y)},
\qquad
|\D^*f|(x)=\limsup_{\substack{y,z\to x\\ y\ne z}}\frac{|f(y)-f(z)|}{\d(y,z)}
\end{equation}
The \emph{descending slope} and the \emph{ascending slope} of~$f$ at~$x$ are respectively given by
\begin{equation*}
|\D^-f|(x)=\limsup_{y\to x}\frac{[f(y)-f(x)]^-}{\d(x,y)},
\qquad
|\D^+f|(x)=\limsup_{y\to x}\frac{[f(y)-f(x)]^+}{\d(x,y)}.
\end{equation*}
Here and in the following, $a^+$ and $a^-$ denote the positive and negative part of $a\in\R$ respectively. 
When $x\in\Dom(f)$ is an isolated point of $X$, we set $|\D f|(x)=|\D^*f|(x)=|\D^-f|(x)=|\D^+f|(x)=0$. By convention, we set $|\D f|(x)=|\D^*f|(x)=|\D^-f|(x)=|\D^+f|(x)=+\infty$ for all $x\in X\setminus\Dom(f)$.
Clearly, $|\D f|\le|\D^*f|$ on~$X$ and the asymptotic Lipschitz constant $|\D^* f|\colon X\to[0,+\infty]$ is an upper semicontinuous function.
Note that the slopes of a Borel function $f\colon X\to\overline{\R}$ are \emph{universally} measurable, see~\cite{AGS14}*{Lemma~2.6}.

According to~\cite{C99} (see also~\cite{AGS14}*{Section~2.3}), we say that a function $g\colon X\to[0,+\infty]$ is an \emph{upper gradient} of $f\colon X\to\overline{\R}$ if, for any curve $\gamma\in\AC([0,1];(\Dom(f),\d))$, $s\mapsto g(\gamma_s)\,|\dot{\gamma}_s|$ is measurable in $[0,1]$ (with the convention $0\cdot\infty=0$) and
\begin{equation}\label{eq:upper_gradient_def}
\abs*{f(\gamma_1)-f(\gamma_0)}\le\int_0^1 g(\gamma_s)\,|\dot{\gamma}_s|\di s.
\end{equation}
If $f\in\Lip(X)$, then $|\D f|$, $|\D^*f|$, $|\D^-f|$ and $|\D^+f|$ are upper gradients of~$f$, see~\cite{AGS14}*{Remark~2.8}. In addition, if $(X,\d)$ is a length space, then
\begin{equation*}
|\D^*f|(x)=\limsup_{y\to x}|\D f|(y),
\qquad
\Lip(f)=\sup_{x\in X}|\D f|(x)=\sup_{x\in X}|\D^*f|(x),
\end{equation*} 
see~\cite{AGS15}*{Section~3.1}. In particular, $|\D^*f|$ is the upper semicontinuous envelope of $|\D f|$.

\subsection{Hopf--Lax semigroup}

Let $(X,\d)$ be a length space.
For all $s>0$, the \emph{Hopf--Lax semigroup} $Q_s\colon\Cont_b(X)\to\Cont_b(X)$ is given by
\begin{equation}
\label{eq:hopf-lax_def}
Q_sf(x)
=
\inf_{y\in X} f(y)+\frac{\d^2(y,x)}{2s}
\quad 
\text{for all $x\in X$ and $f\in\Cont_b(X)$}.
\end{equation}
By convention, we set $Q_0f=f$ for all $f\in\Cont_b(X)$.
If $f\in\Cont_b(X)$, then
\begin{equation}\label{eq:hopf-lax_HJ}
\frac{\di^{\,+}}{\di s}\,Q_sf(x)+\frac12\,|\D Q_sf|^2(x)=0
\end{equation}
for all $s>0$ and $x\in X$, see~\cite{AGS14}*{Theorem~3.6}. 
If $f\in\Lip_b(X)$, then we also have
\begin{equation}\label{eq:hopf-lax_lip}
\Lip(Q_s f)\le 2\Lip(f)
\quad\text{and}\quad
\Lip(Q_{\cdot}f(x))\le2\Lip(f)^2
\end{equation}
for all $s\ge0$ and $x\in X$, see the discussion in~\cite{AGS15}*{Section~3.1}. 
In addition, by~\cite{AGS14}*{Propositon~3.2 and Theorem~3.6}, for all $s>0$ the slope $x\mapsto|\D Q_s f|(x)$ is upper semicontinuous, so that 
\begin{equation}\label{eq:hopf-lax_slope_is_usc}
|\D Q_s f|(x)=|\D^* Q_s f|(x)
\end{equation}
for all $s>0$ and $x\in X$.

\subsection{Wasserstein space}

Let $(X,d)$ be a complete and separable (\emph{Polish}, for short) metric space. We denote by $\Prob(X)$ the set of probability Borel measures on~$X$. Given $p\in[1,+\infty)$, the \emph{$p$-Wasserstein (extended) distance} between $\mu,\nu\in\Prob(X)$ is given by
\begin{equation}\label{eq:def_W_2}
W_p^p(\mu,\nu)
=
\inf\set*{\int_{X\times X} \d^p(x,y)\di\pi : \pi\in\mathsf{Plan}(\mu,\nu)}
\in[0,+\infty],
\end{equation}
where
\begin{equation}\label{eq:def_coupling}
\mathsf{Plan}(\mu,\nu)=\set*{\pi\in\Prob(X\times X) : (p_1)_\sharp\pi=\mu,\ (p_2)_\sharp\pi=\nu}.
\end{equation}
Here $p_i\colon X\times X\to X$, $i=1,2$, denote the the canonical projections on the components. 
As usual, if $\mu\in\Prob(X)$ and $T\colon X\to Y$ is a $\mu$-measurable map with values in the topological space~$Y$, the \emph{push-forward measure} $T_\sharp(\mu)\in\Prob(Y)$ is defined by $T_\sharp(\mu)(B)=\mu(T^{-1}(B))$ for every Borel set $B\subset Y$. 
The set $\mathsf{Plan}(\mu,\nu)$ introduced in~\eqref{eq:def_coupling} is call the set of \emph{admissible plans} or \emph{couplings} for the pair~$(\mu,\nu)$. Since the metric space $(X,\d)$ is complete and separable, there exist optimal couplings where the infimum in~\eqref{eq:def_W_2} is achieved.

The function~$W_p$ is a finite distance on the so-called \emph{$p$-Wasserstein space} $(\Prob_p(X),W_p)$, where
\begin{equation*}
\Prob_p(X)=\set*{\mu\in\Prob(X) : \int_X \d^p(x,x_0)\di\mu(x)<+\infty\ 
\text{for some, and thus any,}\ x_0\in X}.
\end{equation*}
The space $(\Prob_p(X),W_p)$ is complete and separable. 
If $(X,\d)$ is geodesic, then $(\Prob_p(X),W_p)$ is geodesic as well. Moreover, $\mu_n\xrightarrow{W_p}\mu$ as $n\to+\infty$ if and only if $\mu_n\weakto\mu$ as $n\to+\infty$ and
\begin{equation*}
\lim_{n\to+\infty}
\int_X \d^p(x,x_0)\di\mu_n(x)
=
\int_X \d^p(x,x_0)\di\mu(x) 
\quad
\text{for some}\ x_0\in X.
\end{equation*}
As usual, we write $\mu_n\weakto\mu$ as $n\to+\infty$, and we say that \emph{$\mu_n$ weakly converges to~$\mu$} as $n\to+\infty$, if we have 
\begin{equation*}
\lim_{n\to+\infty}
\int_X\phi\di\mu_n
=
\int_X\phi\di\mu
\quad
\text{for all}\ 
\phi\in\Cont_b(X).
\end{equation*}

Given $p\in[1,+\infty)$, the $p$-Wasserstein distance can be equivalently obtained via the \emph{Kantorovich duality formula}
\begin{equation}\label{eq:kantorovich_duality}
\frac1p\,W_p^p(\mu,\nu)
=
\sup\set*{
\int_X\phi^c\di\mu-\int_X\phi\di\nu
:
\phi\in\Lip_b(X)
}
\in[0,+\infty]
\end{equation}
for all $\mu,\nu\in\Prob(X)$, 
where 
\begin{equation}\label{eq:def_c_conjugate}
\phi^c(x)
=
\inf_{y\in X}
\phi(y)+\frac{\d^p(y,x)}p,
\quad
\text{for all}\ x\in X, 
\end{equation}
is the \emph{$c$-conjugate} of $\phi\in\Lip_b(X)$ with respect to the cost function $c=\d^p/p$.  
In particular, if $p=1$ then~\eqref{eq:def_c_conjugate} immediately gives $\phi^c=\phi$ and thus we can rewrite~\eqref{eq:kantorovich_duality} as
\begin{equation}\label{eq:kantorovich_duality_p=1}
W_1(\mu,\nu)
=
\sup\set*{
\int_X\phi\di\,(\mu-\nu)
:
\phi\in\Lip(X)\ 
\text{with $\Lip(\phi)\le1$ and bounded support}
}
\end{equation} 
for all $\mu,\nu\in\Prob_1(X)$, the so-called \emph{Kantorovich--Rubinstein formula}, see~\cite{V09}*{Particular Case~5.16}.
If instead $p=2$, then by~\eqref{eq:hopf-lax_def} we have $\phi^c=Q_1\phi$ and thus we can rewrite~\eqref{eq:kantorovich_duality} as
\begin{equation}\label{eq:kantorovich_duality_p=2}
\frac12\,W_2^2(\mu,\nu)
=
\sup\set*{
\int_XQ_1\phi\di\mu-\int_X\phi\di\nu
:
\phi\in\Lip(X)\ 
\text{with bounded support}
}
\end{equation} 
for all $\mu,\nu\in\Prob(X)$. 
Note that the integral expressions appearing in the right-hand sides of~\eqref{eq:kantorovich_duality_p=1} and~\eqref{eq:kantorovich_duality_p=2} are invariant by adding constants to~$\phi$, so that we can additionally assume $\phi\ge0$ without changing the suprema.

For an account on Kantorovich duality, we refer the reader to~\cite{V03}*{Section~1.1.2} and~\cite{V09}*{Theorem~5.10} (see also the discussion in~\cite{K10}*{Remark~3.6}).

Finally, given a non-negative Borel reference measure~$\m$ on~$X$, finite on bounded sets and such that $\supp(\m)=X$, for $p\in[1,+\infty)$ we let
\begin{equation*}
\Prob^{\rm ac}(X)=\set*{\mu\in\Prob(X) : \mu\ll\m},
\quad
\Prob_p^{\rm ac}(X)=\set*{\mu\in\Prob^{\rm ac}(X) : \mu\in\Prob_p(X)}.
\end{equation*}
Thanks to~\cite{V09}*{Theorem~6.18}, $\Prob_p^{\rm ac}(X)$ is a $W_p$-dense subset of~$\Prob_p(X)$.

For a proof of the above results and as well as for an agile introduction to the Wasserstein distance, we refer the reader to~\cite{AG13}*{Section~3}, \cite{V03}*{Chapter~1} and~\cite{V09}*{Chapters~4--6}.

\subsection{Entropy}

Let $(X,\d,\m)$ be a metric-measure space, i.e.\ $(X,\d)$ is a Polish metric space and 
\begin{equation}\label{eq:supp_m=X}
\text{
\begin{minipage}{0.55\textwidth}
\centering
$\m$ is a non-negative Borel-regular measure, finite on bounded sets and such that $\supp\m=X$.	
\end{minipage}
}
\end{equation}
Note that, in particular, $\m$ is a Radon measure on~$X$, see~\cite{HKST15}*{Proposition~3.3.44}.
In addition, assume that 
\begin{equation}\label{eq:exp_growth}
\exists x_0\in X\ 
\exists A,B>0\
\text{such that}\ 
\m\left(B_r(x_0)\right)\le A\exp(Br^2)\
\text{for all}\ r>0.
\end{equation}
The functional $\Ent_\m\colon\Prob_2(X)\to(-\infty,+\infty]$ given by
\begin{equation}\label{eq:def_entropy}
\Ent_\m(\mu)=
\begin{cases}
\displaystyle\int_X f\log f\di\m 
& \text{if}\ \mu=f\m\in\Prob_2(X),
\\[3mm]
+\infty 
& \text{otherwise},
\end{cases}
\end{equation}
is called the (\emph{relative}) \emph{entropy} of $\mu\in\Prob_2(X)$. According to our definition, $\mu\in\Dom(\Ent_\m)$ implies that $\mu\in\Prob_2(X)$ and that the effective domain $\Dom(\Ent_\m)$ is convex. 

As pointed out in~\cite{AGS14}*{Section~7.1}, the growth condition~\eqref{eq:exp_growth} guarantees that in fact $\Ent_\m(\mu)>-\infty$ for all $\mu\in\Prob_2(X)$, see~\cite{E10}*{Lemma~4.1}.
Hence, if $\mu=f\m\in\Prob_2(X)$ for some $f\in\Leb^p(X,\m)$ with $p\in(1,+\infty]$, then $f|\log f|\in\Leb^1(X,\m)$ and $\mu\in\Dom(\Ent_\m)$.

When $\m\in\Prob(X)$, the entropy functional~$\Ent_\m$ naturally extends to~$\Prob(X)$, is lower semicontinuous with respect to the weak convergence in~$\Prob(X)$ and positive by Jensen inequality. 
In addition, if $F\colon X\to Y$ is a Borel map, then
\begin{equation}\label{eq:ent_sharp_formula}
\Ent_{F_\sharp\m}(F_\sharp\mu)
\le
\Ent_\m(\mu)
\quad
\text{for all}\
\mu\in\Prob(X),
\end{equation}
with equality if $F$ is injective, see~\cite{AGS08}*{Lemma~9.4.5}.

When $\m(X)=+\infty$, if we set $\mathfrak{n}=e^{-c\,\d(\cdot,x_0)^2}\m$, where $x_0\in X$ is as in~\eqref{eq:exp_growth} and $c>0$ is chosen so that $\mathfrak{n}(X)<+\infty$ (the existence of such $c>0$ is ensured by~\eqref{eq:exp_growth}), then we obtain the useful formula
\begin{equation}\label{eq:ent_useful_formula}
\Ent_\m(\mu)=\Ent_{\mathfrak{n}}(\mu)-c\int_X\d^2(x,x_0)\di\mu
\quad
\text{for all}\ \mu\in\Prob_2(X).
\end{equation}
This shows that $\Ent_\m$ is lower semicontinuous in $(\Prob_2(X),W_2)$.

\subsection{Cheeger energy}

Let $(X,\d,\m)$ be a metric-measure space with $(X,\d)$ a Polish metric space and~$\m$ as in~\eqref{eq:supp_m=X}.
The functional $\Ch\colon\!\!\Leb^2(X,\m)\to[0,+\infty]$ given by
\begin{equation}\label{eq:def_Ch}
\Ch(f)=\inf\set*{\liminf_n\int_X |\D f_n|^2\di\m : f_n\to f\ \text{in}\ \Leb^2(X,\m),\ f_n\in\Lip(X)},
\end{equation}
for all $f\in\Leb^2(X,\m)$, is called \emph{Cheeger energy}. Here $|\D f|$ denotes the slope of $f\in\Lip(X)$ as defined in~\eqref{eq:def_loc_Lip_constant}. We let 
\begin{equation*}
\Sob^{1,2}(X,\d,\m)=\Dom(\Ch)=\set*{f\in\Leb^2(X,\m) : \Ch(f)<+\infty}
\end{equation*}
be the \emph{Sobolev space} naturally associated to~$\Ch$ endowed with the norm given by
\begin{equation}\label{eq:Sob_norm}
\|f\|_{\Sob^{1,2}(X,\d,\m)}^2
=
\|f\|_{\Leb^2(X,\m)}^2
+
2\,\Ch(f).
\end{equation}
The space $(\Sob^{1,2}(X,\d,\m),\|\cdot\|_{\Sob^{1,2}(X,\d,\m)})$ is a separable Banach space but can fail to be a Hilbert space in general, see~\cite{AGS14}*{Remark~4.6}.

\subsection{Minimal weak gradient}

Let $(X,\d,\m)$ be a metric-measure space with $(X,\d)$ a Polish metric space and~$\m$ as in~\eqref{eq:supp_m=X}. 
If $f\in\Leb^2(X,\m)$, then
\begin{equation}\label{eq:weak_gradients_collection}
{\sf Grad}(f)
=
\set*{
G\in\Leb^2(X,\m)
:
\bigg[
\begin{minipage}{0.44\textwidth}
$\exists f_n\in\Lip_b(X,\m)$
such that
$f_n\to f$ 
and 
$|\D f_n|\weakto G$
in
$\Leb^2(X,\m)$
as
$n\to+\infty$
\end{minipage}
}
\end{equation} 
is a convex set, possibly empty (see~\cite{AGS14}*{Definition~4.2} or~\cite{S14}*{Section~4.1}). 
If $f\in\Sob^{1,2}(X,\d,\m)$, then it is possible to show that ${\rm Grad}(f)\ne\emptyset$ and thus, by the reflexivity of $\Leb^2(X,\m)$, ${\rm Grad}(f)$ has a unique element of minimal $\Leb^2$-norm, the \emph{minimal weak (upper or relaxed) gradient} of $f$, $|\D f|_w\in\Leb^2(X,\m)$, that is also minimal with respect to the order structure, i.e.\
\begin{equation}\label{eq:min_weak_grad_min_property}
G\in{\rm Grad}(f)
\implies
|\D f|_w\le G\
\m\text{-a.e.\ in}\ X.
\end{equation}
Thanks to~\cite{AGS14}*{Theorems~6.2 and~6.3} (see also~\cite{AGS14}*{Remark~4.7}), if $f\in\Sob^{1,2}(X,\d,\m)$, then the minimal weak gradient $|\D f|_w\in\Leb^2(X,\m)$ provides an integral representation of the Cheeger energy, so that
\begin{equation*}
\Ch(f)
=
\frac12\int_X |\D f|_w^2\di\m
\quad
\text{for all}\
f\in\Sob^{1,2}(X,\d,\m).
\end{equation*}

The minimal weak gradient is a \emph{local} operator, i.e.\
\begin{equation}\label{eq:locality_min_weak_grad}
f,g\in\Sob^{1,2}(X,\d,\m)
\implies
|\D f|_w=|\D g|_w\ \text{$\m$-a.e.\ on}\ 
\set{f-g=c},
\end{equation}
for all $c\in\R$, obeys a \emph{Leibniz-rule estimate}, in the sense that if $f,g\in\Sob^{1,2}(X,\d,\m)\cap\Leb^\infty(X,\m)$ then $fg\in\Sob^{1,2}(X,\d,\m)$ with
\begin{equation}\label{eq:Leibniz_rule}
|\D(fg)|_w\le|f|\,|\D g|_w+|\D f|_w\,|g|,	
\end{equation} 
and satisfies the following \emph{chain rule}
\begin{equation}\label{eq:chain_rule_min_weak_grad}
f\in\Sob^{1,2}(X,\d,\m)
\implies
\phi(f)\in\Sob^{1,2}(X,\d,\m)\
\text{with}\
|\D\phi(f)|_w\le|\phi'(f)|\,|\D f|_w	
\end{equation}
for any Lipschitz function $\phi\colon I\to\R$ defined on an interval $I\subset\R$ containing the image of~$f$ (with $0\in I$ and $\phi(0)=0$ if $\m(X)=+\infty$), with 
$|\D\phi(f)|_w=\phi'(f)\,|\D f|_w$
if $\phi$ is non-decreasing. 
Also, if $f,g\in\Sob^{1,2}(X,\d,\m)$, then $f\wedge g,f\vee g\in\Sob^{1,2}(X,\d,\m)$ with
\begin{equation}\label{eq:min_is_sob}
|\D(f\wedge g)|_w
=
\begin{cases}
|\D f|_w & \text{$\m$-a.e.\ on}\ \set*{f\le g}\\
|\D g|_w & \text{$\m$-a.e.\ on}\ \set*{f\ge g}
\end{cases}
\end{equation}
and
\begin{equation}\label{eq:max_is_sob}
|\D(f\vee g)|_w
=
\begin{cases}
|\D f|_w & \text{$\m$-a.e.\ on}\ \set*{f\ge g}\\
|\D g|_w & \text{$\m$-a.e.\ on}\ \set*{f\le g},
\end{cases}
\end{equation}
see~\cite{AGS14}*{Lemma~2.5 and Proposition~4.8(e)} and their proofs.

In addition, by~\cite{AGS14}*{Theorem~6.3 and Lemma~4.3(c)}, bounded Lipschitz functions are dense in energy in $\Sob^{1,2}(X,\d,\m)$, i.e.\ 
\begin{equation}\label{eq:approx_Sob_by_Lip_b}
f\in\Sob^{1,2}(X,\d,\m)
\implies
\bigg[
\begin{minipage}{0.5\textwidth}
$\exists f_n\in\Lip_b(X)\cap\Leb^2(X,\m)$
such that
$f_n\to f$
and
$|\D f_n|\to|\D f|_w$ 
in
$\Leb^2(X,\m)$
as
$n\to+\infty$.
\end{minipage}
\end{equation}
In particular, we have that 
\begin{equation}\label{eq:wD_less_slope_for_Lip}
f\in\Lip_b(X,\d)\cap\Sob^{1,2}(X,\d,\m)
\implies
|\D f|_w\le|\D f|\ \text{$\m$-a.e.\ in $X$},
\end{equation} 
see also~\cite{AGS14}*{Remark~5.5}. As observed in~\cite{AGS13}*{Section~8.3}, the approximation~\eqref{eq:approx_Sob_by_Lip_b} can be enforced by replacing slopes with asymptotic Lipschitz constants (recall~\eqref{eq:def_loc_Lip_constant} for the definition), so that
\begin{equation}\label{eq:approx_asymp_Sob_by_Lip_b}
f\in\Sob^{1,2}(X,\d,\m)
\implies
\bigg[
\begin{minipage}{0.51\textwidth}
$\exists f_n\in\Lip_b(X)\cap\Leb^2(X,\m)$
such that
$f_n\to f$
and
$|\D^*f_n|\to|\D f|_w$ 
in
$\Leb^2(X,\m)$
as
$n\to+\infty$,
\end{minipage}
\end{equation}
see~\cite{G15}*{Theorem~2.8} and~\cite{ACDiM15}*{Section~4} for a more detailed discussion.

\subsection{Heat semigroup} 

Let $(X,\d,\m)$ be a metric-measure space with $(X,\d)$ a Polish metric space and~$\m$ as in~\eqref{eq:supp_m=X}.
By~\cite{AGS14}*{Theorem~4.5}, $\Ch$ is convex, lower semicontinuous and $2$-homogeneous. The effective domain of the Cheeger energy, which we denote by $\Sob^{1,2}(X,\d,\m)$, is dense in $\Leb^2(X,\m)$. Thus, by the Hilbertian theory of gradient flows, see~\cites{AGS08,B73} for the general theory and~\cite{AG13}*{Theorem~3.1} for a plain exposition of the main results, for each given $f\in\Leb^2(X,\m)$ there exists a curve
\begin{equation}\label{eq:def_semigroup_map}
t\mapsto f_t=\P_t f\in\AC_{\loc}((0,+\infty);\Leb^2(X,\m)),	
\end{equation}
called \emph{heat semigroup}, such that 
\begin{equation}\label{eq:def_HF}
\begin{cases}
\dfrac{\di}{\di t}\,f_t\in-\de^-\Ch(f_t)
& \text{for $\leb^1$-a.e.}\ t\in(0,+\infty)\\[4mm]
\lim\limits_{t\to0^+}f_t=f &\text{in $\Leb^2(X,\m)$}.
\end{cases}
\end{equation}   
Here and in the following, $\de^-\Ch(f)\subset\Leb^2(X,\m)$ denotes the subdifferential of~$\Ch$ at $f\in\Sob^{1,2}(X,\d,\m)$ and is defined by
\begin{equation*}
\ell\in\de^-\Ch(f)
\iff
\Ch(g)
\ge
\Ch(f)
+\int_X\ell\,(g-f)\di\m\
\text{for all}\ g\in\Leb^2(X,\m).
\end{equation*}

The heat flow~\eqref{eq:def_semigroup_map} is uniquely determined by~\eqref{eq:def_HF}, is $1$-homogeneous, i.e.\ 
\begin{equation*}
f\in\Leb^2(X,\m),\ \lambda\in\R
\implies
\P_t(\lambda f)=\lambda\P_t f\ 
\text{for all}\ t\ge0,
\end{equation*}
and defines a strongly continuous semigroup of contractions in $\Leb^2(X,\m)$, meaning that
\begin{equation}\label{eq:heat_L2_contraction}
\|f_t\|_{\Leb^2(X,\m)}
\le
\|f\|_{\Leb^2(X,\m)}\
\text{for all}\ t>0\ \text{and}\ f\in\Leb^2(X,\m).
\end{equation} 
By~\cite{AGS14}*{Theorem~4.16(a)}, the heat semigroup preserves one-side essential bounds (\emph{maximum principle}). Precisely, for $C\in\R$ it holds
\begin{equation}\label{eq:sub-Markov}
f\le C\
\text{(resp.\ $f\ge C$)}
\implies
f_t\le C\
\text{(resp.\ $f_t\ge C$) for all $t\ge0$}
\end{equation}
and, moreover,
\begin{equation}\label{eq:sub-Markov_func}
f\le g+C
\implies
f_t\le g_t+C\
\text{for all $t\ge0$},
\end{equation}
whenever $f,g\in\Leb^2(X,\m)$.
By~\cite{AGS14}*{Theorem~4.16(b)}, 
the heat semigroup satisfies the contraction property
\begin{equation}\label{eq:heat_Lp_contraction}
\|f_t-g_t\|_{\Leb^p(X,\m)}
\le
\|f-g\|_{\Leb^p(X,\m)}
\quad
\text{for all}\ f,g\in\Leb^2(X,\m)\cap\Leb^p(X,\m),	
\end{equation}
whenever $p\in[1,+\infty]$. 
Since $\Leb^p(X,\m)\cap\Leb^2(X,\m)$ is $\Leb^p$-dense in $\Leb^p(X,\m)$ for all $p\in[1,+\infty)$, we can uniquely extend the heat semigroup to a strongly continuous semigroup of contractions in $\Leb^p(X,\m)$, $p\in[1,+\infty)$, for which we retain the same notation. 
The heat semigroup can thus be extended to a weakly*-continuous semigroup of contractions in $\Leb^\infty(X,\m)$ by duality, i.e.\
\begin{equation}\label{eq:def_heat_L_infty}
\int_X\phi\,\P_tf\di\m
=
\int_X f\,\P_t\phi\di\m
\quad
\text{for every $f\in\Leb^1(X,\m)$ and $\phi\in\Leb^\infty(X,\m)$}.
\end{equation}
By~\cite{AGS14}*{Theorem~4.20}, thanks to~\eqref{eq:heat_Lp_contraction}, if $\m$  satisfies the growth condition~\eqref{eq:exp_growth}, then the heat semigroup satisfies the
\emph{mass preservation property}
\begin{equation}\label{eq:heat_mass_preservation}
\int_X f_t\di\m=\int_X f\di\m
\quad
\text{for all}\ t\ge0\ \text{and}\ f\in\Leb^1(X,\m).
\end{equation}

The heat semigroup is regularizing as stated in \cref{res:heat_flow_sob_cont} below. This result is well known to experts, but we quickly prove it here for the reader's convenience.

\begin{lemma}[Heat flow regularization]
\label{res:heat_flow_sob_cont}
Let $f\in\Leb^2(X,\m)$. 
Then 
\begin{equation}\label{eq:Ch_f_t_is_AC_loc}
t\mapsto\Ch(f_t)\in\AC_{\loc}((0,+\infty);[0,+\infty))	
\end{equation}
with
\begin{equation}\label{eq:Ch_f_t_inf_formula}
\Ch(f_t)
\le
\inf\set*{\Ch(g)+\frac1{2t}\int_X|f-g|^2\di\m : g\in\Sob^{1,2}(X,\d,\m)}
\end{equation}
for all $t>0$, and
\begin{equation}\label{eq:min_weak_grad_f_t_is_cont}
t\mapsto|\D f_t|_w\in\Cont((0,+\infty);\Leb^2(X,\m)).
\end{equation}
Moreover, if $f\in\Sob^{1,2}(X,\d,\m)$, then the continuity of the maps in~\eqref{eq:Ch_f_t_is_AC_loc} and~\eqref{eq:min_weak_grad_f_t_is_cont} extends to $t=0$.
\end{lemma}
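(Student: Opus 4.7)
The three assertions essentially follow from the Brezis--Komura theory of gradient flows of convex lower semicontinuous functionals in Hilbert spaces applied to $\Ch$ in $\Leb^2(X,\m)$, combined with the structural properties of the minimal weak gradient recalled above. I outline the three steps in order.

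\emph{Step 1: proof of~\eqref{eq:Ch_f_t_is_AC_loc} and~\eqref{eq:Ch_f_t_inf_formula}.}
Since $\Ch$ is convex, lower semicontinuous and $2$-homogeneous on $\Leb^2(X,\m)$ with dense domain, the standard regularization theory of gradient flows (see e.g.~\cite{B73} or~\cite{AG13}*{Theorem~3.1}) yields that $f_t\in\Dom(\Ch)$ for every $t>0$, that $t\mapsto\Ch(f_t)$ is non-increasing on $(0,+\infty)$, and that the energy identity
\begin{equation*}
\Ch(f_r)-\Ch(f_s)
=
\int_r^s\left\|\tfrac{\di}{\di t}f_t\right\|_{\Leb^2(X,\m)}^2\di t
\qquad
(0<r<s)
\end{equation*}
holds. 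This yields~\eqref{eq:Ch_f_t_is_AC_loc}. For~\eqref{eq:Ch_f_t_inf_formula}, I would use the $\EVI_0$ inequality associated with $\Ch$, namely
\begin{equation*}
\tfrac12\,\tfrac{\di}{\di t}\,\|f_t-g\|_{\Leb^2(X,\m)}^2+\Ch(f_t)\le\Ch(g)
\quad\text{for a.e.\ }t>0,
\end{equation*}
valid for every $g\in\Sob^{1,2}(X,\d,\m)$. Integrating on $(0,t)$ and using monotonicity of $s\mapsto\Ch(f_s)$ to bound $\int_0^t\Ch(f_s)\di s\ge t\,\Ch(f_t)$ gives $t\,\Ch(f_t)\le t\,\Ch(g)+\tfrac12\|f-g\|_{\Leb^2}^2$, and~\eqref{eq:Ch_f_t_inf_formula} follows by dividing by $t$ and taking the infimum over $g$.

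\emph{Step 2: proof of~\eqref{eq:min_weak_grad_f_t_is_cont}.}
Fix $s>0$ and any sequence $t_n\to s$. By Step~1 and the identity $\Ch(f_t)=\tfrac12\int_X|\D f_t|_w^2\di\m$, one has $\||\D f_{t_n}|_w\|_{\Leb^2(X,\m)}\to\||\D f_s|_w\|_{\Leb^2(X,\m)}$, so $(|\D f_{t_n}|_w)$ is bounded in $\Leb^2(X,\m)$ and, up to subsequences, weakly converges to some $G\in\Leb^2(X,\m)$. The key point is to identify $G$ with $|\D f_s|_w$. To this end I would use the approximation property~\eqref{eq:approx_Sob_by_Lip_b}: for each $n$ choose $g_{n,k}\in\Lip_b(X)\cap\Leb^2(X,\m)$ with $g_{n,k}\to f_{t_n}$ and $|\D g_{n,k}|\to|\D f_{t_n}|_w$ in $\Leb^2(X,\m)$ as $k\to+\infty$; since $f_{t_n}\to f_s$ in $\Leb^2(X,\m)$ by the semigroup continuity~\eqref{eq:def_semigroup_map}, a diagonal extraction produces $\tilde g_n:=g_{n,k_n}$ with $\tilde g_n\to f_s$ strongly in $\Leb^2(X,\m)$ and $|\D\tilde g_n|\weakto G$ in $\Leb^2(X,\m)$. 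By definition~\eqref{eq:weak_gradients_collection} this means $G\in{\sf Grad}(f_s)$, hence by~\eqref{eq:min_weak_grad_min_property} we get $G\ge|\D f_s|_w$ $\m$-a.e. Combining this with the equality of norms already established and the weak lower semicontinuity of the $\Leb^2$-norm forces $G=|\D f_s|_w$ $\m$-a.e.; since the whole sequence then converges weakly to the same limit and norms converge, strong convergence in $\Leb^2(X,\m)$ follows from the Radon--Riesz property of Hilbert spaces.

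\emph{Step 3: extension to $t=0$ when $f\in\Sob^{1,2}(X,\d,\m)$.}
Taking $g=f$ in~\eqref{eq:Ch_f_t_inf_formula} gives $\Ch(f_t)\le\Ch(f)$ for all $t>0$, while $\liminf_{t\to0^+}\Ch(f_t)\ge\Ch(f)$ by lower semicontinuity of $\Ch$ with respect to $\Leb^2$-convergence and~\eqref{eq:def_HF}. Hence $\Ch(f_t)\to\Ch(f)$ as $t\to0^+$, and the argument of Step~2, applied with $s=0$, upgrades this to $|\D f_t|_w\to|\D f|_w$ in $\Leb^2(X,\m)$.

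The main technical obstacle is the identification of the weak $\Leb^2$-limit in Step~2: one has to pair the inner approximation by Lipschitz functions with a diagonal extraction in order to produce an element of ${\sf Grad}(f_s)$, and this relies crucially on the density property~\eqref{eq:approx_Sob_by_Lip_b}. The remaining steps are direct applications of gradient-flow theory and of the Radon--Riesz property.
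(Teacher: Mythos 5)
Your proposal is correct and follows essentially the same route as the paper: Step~1 is the Hilbertian gradient-flow theory (the paper simply cites~\cite{AG13}*{Theorem~3.1} where you integrate the $\EVI_0$ explicitly), and Step~2 is the same weak-compactness/identification/uniform-convexity argument, with your diagonal Lipschitz approximation playing the role of the paper's citation of~\cite{AGS14}*{Lemma~4.3(b)} to show the weak limit lies in ${\sf Grad}(f_s)$. The extension to $t=0$ is also handled identically.
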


\begin{proof}
We divide the proof in two steps.

\smallskip

\textit{Step~1: regularity of Cheeger energy along the heat flow}.
The $\AC_{\loc}$-regularity of the Chegeer energy along the heat flow in~\eqref{eq:Ch_f_t_is_AC_loc} and the $\inf$-formula in~\eqref{eq:Ch_f_t_inf_formula} follow from the theory of Hilbertian gradient flows, see~\cite{AG13}*{Theorem~3.1}.
If moreover $f\in\Sob^{1,2}(X,\d,\m)$, then $\Ch(f_t)\le\Ch(f)$ for all $t>0$ by~\eqref{eq:Ch_f_t_inf_formula}, so that $\Ch(f_t)\to\Ch(f)$ as $t\to0^+$ by the lower semicontinuity of the Cheeger energy.

\smallskip

\textit{Step~2: regularity of the minimal weak gradient along the heat flow}.
Fix $t>0$ and let $t_n>0$, $n\in\N$, be such that $t_n\to t$ as $n\to+\infty$.
By~\eqref{eq:Ch_f_t_is_AC_loc}, the sequence $(|\D f_{t_n}|_w)_{n\in\N}$ is bounded in $\Leb^2(X,\m)$.
We can thus find a subsequence $(|\D f_{t_{n_k}}|_w)_{k\in\N}$ and a function $G\in\Leb^2(X,\m)$ such that $|\D f_{t_k}|_w\weakto G$ in $\Leb^2(X,\m)$ as $k\to+\infty$. 
By the weak lower semicontinuity of the $\Leb^2$-norm and again by~\eqref{eq:Ch_f_t_is_AC_loc}, we must have $\|G\|_{\Leb^2(X,\m)}\le\||\D f_t|_w\|_{\Leb^2(X,\m)}$. 
By definition of minimal weak gradient and~\cite{AGS14}*{Lemma~4.3(b)}, we must also have that $|\D f_t|_w\le G$ $\m$-a.e.\ in~$X$.
Hence $G=|\D f_t|_w$ $\m$-a.e.\ in~$X$ and thus $|\D f_{t_{n_k}}|_w\to|\D f_t|_w$ in $\Leb^2(X,\m)$ as $k\to+\infty$ by the uniform convexity of $\Leb^2(X,\m)$ (see~\cite{B11}*{Proposition~3.32} for example). 
Hence~\eqref{eq:min_weak_grad_f_t_is_cont} readily follows.
If moreover $f\in\Sob^{1,2}(X,\d,\m)$, then we can take $t=0$ in the above argument and use the continuity in $t=0$ of the map in~\eqref{eq:Ch_f_t_is_AC_loc} to extend the $\Leb^2$-continuity of the minimal weak gradient along the heat flow to~$t=0$.
\end{proof}

\subsection{Metric-measure Laplacian}

Let $(X,\d,\m)$ be a metric-measure space with $(X,\d)$ a Polish metric space and~$\m$ as in~\eqref{eq:supp_m=X}.
If $f\in\Leb^2(X,\m)$ and $\de^-\Ch(f)\neq\emptyset$, then the element of minimal $\Leb^2$-norm in~$-\de^-\Ch(f)$ is called the (\emph{metric-measure}) \emph{Laplacian} of the function~$f$ and is denoted by $\Delta_{\d,\m}f$, see~\cite{AGS14}*{Definition~4.13}. 
The effective domain $\Dom(\Delta_{\d,\m})$ of the Laplacian is a $\Leb^2$-dense subset of $\Sob^{1,2}(X,\d,\m)$ (and thus, in particular, a $\Leb^2$-dense subset of $\Leb^2(X,\m)$), see~\cite{B73}*{Proposition~2.11}. Note that the operator $\Delta_{\d,\m}$ is not linear in general, but is $1$-homogeneous, in the sense that
\begin{equation*}
f\in\Dom(\Delta_{\d,\m}),\ \lambda\in\R
\implies
\lambda f\in\Dom(\Delta_{\d,\m})\ 
\text{with}\
\Delta_{\d,\m}(\lambda f)=\lambda\Delta_{\d,\m}f.
\end{equation*}

By the regularizing properties of gradient flows in Hilbert spaces (see~\cite{AG13}*{Theorem~3.1}), for every $t>0$ the right time-derivative $\frac{\di^{\,+}}{\di t}f_t$ exists and it is actually the element with minimal $\Leb^2$-norm in $-\de^-\Ch(f)$, so that $f_t\in\Dom(\Delta_{\d,\m})$ for all $t>0$ and we can rewrite~\eqref{eq:def_HF} as
\begin{equation*}
\begin{cases}
\dfrac{\di^{\,+}}{\di t}\,f_t=\Delta_{\d,\m}f_t 
& \text{for every}\ t\in(0,+\infty)\\[4mm]
\lim\limits_{t\to0^+}f_t=f &\text{in $\Leb^2(X,\m)$}
\end{cases}
\end{equation*}  
whenever $f\in\Leb^2(X,\m)$ is given. Moreover, by the integration-by-part formula provided by~\cite{AGS14}*{Proposition~4.15}, it holds that
\begin{equation*}
\frac{\di^{\,+}}{\di t}\,\Ch(f_t)
=
-\|\Delta_{\d,\m}f_t\|_{L^2(X,\m)}^2\
\text{for all}\ t>0,
\end{equation*}
whenever $f\in\Leb^2(x,\m)$.

\subsection{Quadratic Cheeger energy}

Let $(X,\d,\m)$ be a metric-measure space with $(X,\d)$ a Polish metric space and~$\m$ as in~\eqref{eq:supp_m=X} and~\eqref{eq:exp_growth}.
As in~\cite{AGS14-2}*{Section4.3}, we say that the Cheeger energy is \emph{quadratic} if it satisfies the \emph{parallelogram identity}
\begin{equation}\label{eq:quadratic_Ch}
\Ch(f+g)+\Ch(f-g)=2\Ch(f)+2\Ch(g)
\quad
\text{for all}\ f,g\in\Sob^{1,2}(X,\d,\m).
\end{equation}
In this case, $\Ch$ is a quadratic form on $\Sob^{1,2}(X,\d,\m)$, the functional $\E\colon\Sob^{1,2}(X,\d,\m)\to[0,+\infty)$ defined by the formula 
\begin{equation}\label{eq:bil_Dir_form_E}
\E(f,g)=\Ch(f+g)-\Ch(f)-\Ch(g)
\quad
\text{for all}\ f,g\in\Sob^{1,2}(X,\d,\m)
\end{equation}
is a symmetric bilinear form on $\Sob^{1,2}(X,\d,\m)$ and $(\Sob^{1,2}(X,\d,\m),\|\cdot\|_{\Sob^{1,2}(X,\d,\m)})$ is a Hilbert space, see~\cite{G15}*{Proposition~4.22}. 
In particular, thanks to~\eqref{eq:approx_Sob_by_Lip_b}, the set $\Lip_b(X)\cap\Sob^{1,2}(X,\d,\m)$ is $\Sob^{1,2}$-dense in $\Sob^{1,2}(X,\d,\m)$, see~\cite{G15}*{Corollary~2.9}.

For simplicity, we set $\E(f)=\E(f,f)$ for all $f\in\Sob^{1,2}(X,\d,\m)$.   
The chain rule~\eqref{eq:chain_rule_min_weak_grad} for the minimal weak gradient proves that $\E$ is \emph{Markovian}, i.e.\ 
\begin{equation*}
\E(\phi\circ f)\le\E(f)\
\text{for all}\ f\in\Sob^{1,2}(X,\m),
\end{equation*}
whenever $\phi\colon\R\to\R$ is $1$-Lipschitz with $\phi(0)=0$. Since $\Ch$ is lower semicontinuous, the form $\E$ is also closed. Thus, thanks to the density of $\Sob^{1,2}(X,\d,\m)$ in $\Leb^2(X,\m)$, we can extend the form $\E$ given in~\eqref{eq:bil_Dir_form_E} to a symmetric bilinear form on $\Leb^2(X,\m)$, for which we retain the same notation. 
By the locality property of the minimal weak gradient~\eqref{eq:locality_min_weak_grad}, the form $\E$ is \emph{strongly local}, meaning that
\begin{equation*}
f,g\in\Sob^{1,2}(X,\d,\m),\ 
(f+c)g=0\ \text{$\m$-a.e.\ in $X$}
\implies
\E(f,g)=0.
\end{equation*}

By~\cite{G15}*{Proposition~4.24}, the Laplacian $\Delta_{\d,\m}$ coincides with the generator of $\E$ and hence satisfies the \emph{integration-by-part formula}
\begin{equation}\label{eq:int_by_part_formula}
\E(f,g)=-\int_X g\,\Delta_{\d,\m}f\di\m
\quad
\text{for all}\
f\in\Dom(\Delta_{\d,\m}),\
g\in\Sob^{1,2}(X,\d,\m).	
\end{equation}
Thus, since $\E$ is symmetric, the Laplacian $\Delta_{\d,\m}$ is a self-adjoint operator in $\Leb^2(X,\m)$.
In addition, by~\cite{G15}*{Proposition~4.23}, the Laplacian is a linear operator.

Therefore, if the Cheeger energy is quadratic, the heat semigroup $(\P_t)_{t\ge0}$ is a linear \emph{analytic Markov semigroup} in $\Leb^2(X,\m)$, is a self-adjoint operator in $\Leb^2(X,\m)$ and the map~\eqref{eq:def_semigroup_map} is the unique $\Cont^1$ map with values in $\Dom(\Delta_{\d,\m})$ satisfying 
\begin{equation*}
\begin{cases}
\dfrac{\di}{\di t}\,f_t=\Delta_{\d,\m}f_t & \text{for}\ t\in(0,+\infty),\\[3mm]
\lim\limits_{t\to 0^+} f_t=f & \text{in}\ \Leb^2(X,\m).
\end{cases}
\end{equation*}
Because of this, $\Delta_{\d,\m}$ can be equivalently characterized in terms of the strong convergence 
\begin{equation}\label{eq:Laplacian_char_strong_conv}
\frac{\P_tf-f}{t}\to\Delta_{\d,\m}f\quad 
\text{in}\ \Leb^2(X,\m)\
\text{as}\
t\to0^+.
\end{equation}

By~\cite{G15}*{Proposition~4.21} (see also~\cite{AGS14-2}*{Theorem~4.18}), if the Cheeger energy is quadratic, then the parallelogram identity~\eqref{eq:quadratic_Ch} can be localized at the level of minimal weak gradient, in the sense that 
\begin{equation*}
|\D(f+g)|_w^2+|\D(f-g)|_w^2=2|\D f|_w^2+2|\D g|_w^2\quad
\text{$\m$-a.e.\ in~$X$}
\end{equation*} 
for all $f,g\in\Sob^{1,2}(X,\d,\m)$.
Thus, the naturally associated $\Gamma$ operator, given by
\begin{equation*}
\Gamma(f,g)=|\D(f+g)|_w^2-|\D f|_w^2-|\D g|_w^2
\quad 
\text{for all}\ f,g\in\Sob^{1,2}(X,\d,\m),
\end{equation*}
defines a strongly-continuous, symmetric and bilinear map from $\Sob^{1,2}(X,\d,\m)$ to $\Leb^1(X,\m)$ which 
represents the form $\E$, i.e.\
\begin{equation*}
\E(f,g)
=
\int_X\Gamma(f,g)\di\m
\quad
\text{for all}\ f,g\in\Sob^{1,2}(X,\d,\m).
\end{equation*}
The $\Gamma$ operator satisfies the pointwise estimate 
\begin{equation}
\label{eq:Gamma_ineq_prod_min_weak_grads}
\Gamma(f,g)\le|\D f|_w\,|\D g|_w
\quad
\text{for all}\
f,g\in\Sob^{1,2}(X,\d,\m),
\end{equation}
the chain rule
\begin{equation}\label{eq:Gamma_chain_rule}
\Gamma(\phi(f),g)=\phi'(f)\,\Gamma(f,g)	
\quad
\text{for all}\
f,g\in\Sob^{1,2}(X,\d,\m),
\end{equation}
whenever $\phi\in\Lip(\R)$ with $\phi(0)=0$, and the Leibiniz rule
\begin{equation*}
\Gamma(fg,h)
=
g\,\Gamma(f,h)
+
f\,\Gamma(g,h)
\quad
\text{for all}\ 
f,g,h\in\Sob^{1,2}(X,\d,\m),\ f,g\in\Leb^\infty(x,\m),
\end{equation*}
see the discussion in~\cite{G15}*{Chapters~3 and~4}.
The operator 
\begin{equation*}
\Gamma(f)=|\D f|_w^2,
\quad
\text{defined for}\ f\in\Sob^{1,2}(X,\d,\m),
\end{equation*}
is therefore the \emph{carré du champ} associated to~$\E$ and obeys the rules of \emph{$\Gamma$-Calculus}. 
With these notations, the Laplacian satisfies the following \emph{chain rule}, see~\cite{G15}*{Proposition~4.28}: if 
$f\in\Dom(\Delta_{\d,\m})\cap\Sob^{1,2}(X,\d,\m)$ 
and 
$\phi\in\Cont^2(\R)$ with $\phi(0)=0$, 
then 
$\phi(f)\in\Dom(\Delta_{\d,\m})$
with
\begin{equation}\label{eq:chain_rule_Laplacian}
\Delta_{\d,\m}(\phi\circ f)
=
\phi'(f)\,\Delta_{\d,\m}f
+
\phi''(f)\,\Gamma(f).	
\end{equation}

For an account on $\Gamma$-Calculus in the present and related frameworks, we refer the reader to~\cites{AGS15,AMS15,B94,G14,G15} and to the monograph~\cite{BGL14}.

\subsection{Main assumptions and length property}
\label{subsec:main_ass}

We conclude this section summarizing the main assumptions we are going to use throughout this paper. We assume that $(X,\d,\m)$ is a metric-measure space satisfying the following properties:
\begin{enumerate}[label=(P.\arabic*)]
\item\label{assumption:Polish} $(X,\d)$ is a complete and separable metric space;
\item\label{assumption:measure} $\m$ is a non-negative Borel-regular measure on~$X$, finite on bounded sets and such that $\supp\m=X$;
\item\label{assumption:exp_growth} there exist $x_0\in X$ and $A,B>0$ such that $\m\left(B_r(x_0)\right)\le A\exp(Br^2)$ for all $r>0$;
\item\label{assumption:quadratic_Ch} the Cheeger energy~$\Ch$ is quadratic, i.e.\ $\Ch(f+g)+\Ch(f-g)=2\Ch(f)+2\Ch(g)$ for all $f,g\in\Sob^{1,2}(X,\d,\m)$;
\item\label{assumption:Lip_Rademacher} 
if $L\in[0,+\infty)$ and $f\in\Sob^{1,2}(X,\d,\m)$ satisfies $|\D f|_w\le L$ $\m$-a.e.\ in~$X$, then $f=\tilde{f}$ $\m$-a.e.\ in~$X$ for some $\tilde{f}\in\Lip(X)$ with $\Lip(\tilde{f})\le L$. 
\end{enumerate}
We say that a metric-measure space $(X,\d,\m)$ is \emph{admissible} if it satisfies the properties \ref{assumption:Polish}-\ref{assumption:Lip_Rademacher} listed above.

Let us briefly comment on these assumptions.
As we have already seen, assumptions \ref{assumption:Polish}--\ref{assumption:quadratic_Ch} ensure that $(X,\d,\m)$ satisfies all the properties we have recalled in this section.
The additional assumption \ref{assumption:Lip_Rademacher}, instead, allows to identify the metric-measure structure of $(X,\d,\m)$ with the energetic-measure structure of $(X,\mathscr B,\E,\m)$, where $\mathcal B$ is the Borel $\sigma$-algebra generate by the topology of open $\d$-balls, thus making the metric-measure and the energetic-measure approaches equivalent.
More precisely, once the Dirichlet--Cheeger energy $\E$ is available, it is relevant to check if the function
\begin{equation*}
\d_\E(x,y)=\sup\set*{|f(x)-f(y)| : f\in\Sob^{1,2}(X,\d,\m)\cap\Cont(X)\ \text{with}\ \Gamma(f)\le1\ \text{$\m$-a.e.}},
\end{equation*}
defined for all $x,y\in X$, actually coincides with the starting distance function~$\d$. 
The function $\d_\E$ is known as the Biroli--Mosco distance, see~\cite{BM95} and also~\cites{CKS87,S10,S95,S98}.
From~\eqref{eq:wD_less_slope_for_Lip}, we immediately get that~$\d\le\d_\E$.
The opposite inequality follows if any function $f\in\Sob^{1,2}(X,\d,\m)\cap\Cont(X)$ with $\Gamma(f)\le1$ $\m$-a.e.\ in~$X$ is $1$-Lipschitz, which is precisely~\ref{assumption:Lip_Rademacher}. 

In $\RCD(K,\infty)$ spaces, assumption~\ref{assumption:Lip_Rademacher} is an important consequence of the $\BE(K,\infty)$ condition. 
Namely, since $\Gamma(\P_t f)\le e^{-Kt}\,\P_t\Gamma(f)$ for all $f\in\Sob^{1,2}(X,\d,\m)$, the $1$-Lipschitz regularity of $f\in\Sob^{1,2}(X,\d,\m)\cap\Cont(X)$ can be obtained passing to the limit as $t\to0^+$ from the $e^{-Kt}$-Lipschitz regularity of the (more regular) function~$\P_tf$, see~\cite{AGS15}*{Remark~3.8} and the proof of (v)$\,\Rightarrow\,$(ii) in~\cite{AGS15}*{Theorem~3.17}.  

In the more general situation in which the function $t\mapsto e^{-Kt}$ is replaced by a function $t\mapsto\c(t)$ such that $\lim\limits_{t\to0^+}\c(t)>1$ (as it happens in sub-Riemannian spaces), then property~\ref{assumption:Lip_Rademacher} cannot be inferred from the contractivity property of the heat semigroup.

One useful consequence of the identification $\d_\E=\d$ provided by~\ref{assumption:Lip_Rademacher} that will be employed several times in the sequel is the length property of the metric space $(X,\d)$, see~\cite{AGS15}*{Theorems~3.10} (and also~\cite{S10}).

\begin{proposition}[Length property]\label{res:length_space}
If $(X,\d,\m)$ satisfies properties~\ref{assumption:Polish}, \ref{assumption:measure}, \ref{assumption:quadratic_Ch} and~\ref{assumption:Lip_Rademacher} (and thus, in particular, if it is admissible),
then $(X,\d)$ is a length space.
\end{proposition}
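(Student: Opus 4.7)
The plan is to identify $\d$ with the Biroli--Mosco intrinsic distance
\begin{equation*}
\d_\E(x,y) = \sup\set*{|f(x)-f(y)| : f \in \Sob^{1,2}(X,\d,\m) \cap \Cont(X),\ \Gamma(f) \le 1\ \text{$\m$-a.e.\ in}\ X}
\end{equation*}
and then to invoke the length property of $\d_\E$, a consequence of the strongly local Dirichlet form structure.

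For $\d \le \d_\E$ I would use the truncated distance $f(z) = (\d(x,y) - \d(z,x))^+$ as a test function: it is $1$-Lipschitz with bounded support, hence lies in $\Lip_b(X) \cap \Sob^{1,2}(X,\d,\m)$ thanks to~\ref{assumption:measure}; from~\eqref{eq:wD_less_slope_for_Lip} we deduce $\Gamma(f) = |\D f|_w^2 \le 1$ $\m$-a.e., while $f(x) - f(y) = \d(x,y)$, so $\d_\E(x,y) \ge \d(x,y)$. The reverse inequality $\d_\E \le \d$ is precisely the content of~\ref{assumption:Lip_Rademacher}: any admissible $f$ in the sup satisfies $|\D f|_w \le 1$ $\m$-a.e., hence admits a $1$-Lipschitz representative $\tilde f$ with $f = \tilde f$ $\m$-a.e.; continuity of $f$ and $\tilde f$ together with $\supp \m = X$ forces $f = \tilde f$ everywhere, so $|f(x)-f(y)| \le \d(x,y)$.

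Having established $\d = \d_\E$, the length property follows from~\cite{AGS15}*{Theorem~3.10} (see also~\cite{S10}). By completeness~\ref{assumption:Polish}, it suffices to verify the almost-midpoint property: for every $x,y \in X$ and $\eps>0$, there exists $z \in X$ with $\d(x,z), \d(y,z) \le \tfrac12 \d(x,y) + \eps$. The standard route combines the sup representation of $\d_\E$ with the Hopf--Lax semigroup~\eqref{eq:hopf-lax_def}, available thanks to~\ref{assumption:quadratic_Ch}, and its Hamilton--Jacobi dynamics~\eqref{eq:hopf-lax_HJ}, producing almost-midpoints from near-optimal $1$-Lipschitz competitors.

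The main obstacle is precisely this last step: the identity $\d = \d_\E$ is only a pointwise comparison and does not by itself produce curves or midpoints. Their construction genuinely relies on the Hopf--Lax machinery and the quadratic nature of the Cheeger energy, whereas the first two inequalities are essentially bookkeeping once the correct test functions are chosen.
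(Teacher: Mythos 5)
Your proposal matches the paper's argument: the paper likewise establishes $\d\le\d_\E$ from~\eqref{eq:wD_less_slope_for_Lip} applied to truncated distance functions, obtains $\d_\E\le\d$ as an immediate restatement of~\ref{assumption:Lip_Rademacher}, and then deduces the length property from the identification $\d=\d_\E$ by invoking~\cite{AGS15}*{Theorem~3.10} and~\cite{S10}, exactly as you do. The final step is an external citation in the paper as well, so your acknowledged reliance on the Hopf--Lax/almost-midpoint machinery there is not a gap.
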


\section{Weak Bakry--\'{E}mery curvature condition and Kuwada duality}
\label{sec:wBE}

In this section, we introduce and study a generalization of the \emph{Bakry--\'{E}mery curvature condition} for Sobolev functions and its equivalence with the Wasserstein contraction property of the dual heat semigroup.
The presentation of the results will be close in spirit to that of~\cites{AGS15,K10,K13}. 
For the reader's ease, we adopt the notation of~\cite{AGS15}. 

If not otherwise stated, from now on we assume that $(X,\d,\m)$ is an admissible metric-measure space as in \cref{subsec:main_ass}.

\subsection{Semigroup mollification}

We begin this section by recalling an useful technical tool that we will use in the following.
Let $\kappa\in\Cont^\infty_c((0,+\infty))$ be such that
\begin{equation}
\label{eq:kappa_kernel}
\kappa\ge0
\quad
\text{and}
\quad
\int_0^{+\infty}\kappa(r)\di r=1.
\end{equation}
Let $p\in[1,+\infty]$. For every $f\in\Leb^p(X,\m)$, let us set
\begin{equation}
\label{eq:semigroup_moll_def}
\mathfrak h^\eps f
=
\frac1\eps
\int_0^{+\infty}\P_rf\,\kappa\left(\tfrac r\eps\right)\di r,
\quad
\text{for all}\ \eps>0,	
\end{equation}
be the \emph{semigroup mollification operator}, where the integral is intended in the Bochner sense if $p<+\infty$ and by duality with any function in $\Leb^1(X,\m)$ if $p=+\infty$. 

Obviously, by the semigroup property, we have $\P_t(\mathfrak h^\eps f)=\mathfrak h^\eps(\P_tf)$ for all $t,\eps>0$. Since, by a simple change of variable,
\begin{equation*}
\mathfrak h^\eps f
=
\int_0^{+\infty}\P_{\eps r}f\,\kappa(r)\di r,
\quad
\text{for all}\ \eps>0,	
\end{equation*}
by~\eqref{eq:kappa_kernel} we immediately deduce that $\mathfrak h^\eps f$ converges to~$f$ as $\eps\to0^+$ strongly in $\Leb^p(X,\m)$ if $p<+\infty$ and weakly* in $\Leb^\infty(X,\m)$.
The semigroup mollification operator satisfies the following natural $\Sob^{1,2}$-approximation property.  

\begin{lemma}[$\Sob^{1,2}$-approximation via $(\mathfrak h^\eps)_{\eps>0}$]
\label{res:semigroup_moll_Sob_approx}
If $f\in\Sob^{1,2}(X,\d,\m)$, then $\mathfrak h^\eps f\to f$ in $\Sob^{1,2}(X,\d,\m)$ as $\eps\to0^+$.
\end{lemma}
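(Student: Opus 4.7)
The plan is to exploit the Hilbert space structure of $\Sob^{1,2}(X,\d,\m)$ provided by assumption~\ref{assumption:quadratic_Ch}, so that strong convergence $\mathfrak h^\eps f\to f$ in the norm $\|g\|_{\Sob^{1,2}}^2=\|g\|_{\Leb^2}^2+2\Ch(g)$ follows from the conjunction of weak convergence and norm convergence. The $\Leb^2$-component is already available, since $\mathfrak h^\eps f\to f$ strongly in $\Leb^2(X,\m)$ by the remarks made right after the definition~\eqref{eq:semigroup_moll_def}; all the work is concentrated on the Cheeger-energy component.

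The key preliminary estimate is
\begin{equation*}
\Ch(\mathfrak h^\eps f)\le\Ch(f)\qquad\text{for every }\eps>0.
\end{equation*}
To see this, I would apply Jensen's inequality to the convex and lower semicontinuous functional $\Ch\colon\Leb^2(X,\m)\to[0,+\infty]$, against the probability density $\kappa$: since $\mathfrak h^\eps f=\int_0^{+\infty}\P_{\eps r}f\,\kappa(r)\di r$ as a Bochner integral in $\Leb^2(X,\m)$, one has $\Ch(\mathfrak h^\eps f)\le\int_0^{+\infty}\Ch(\P_{\eps r}f)\,\kappa(r)\di r$. Combining this with the non-increase of the Cheeger energy along the heat flow (Step~1 of the proof of \cref{res:heat_flow_sob_cont} gives $\Ch(\P_t f)\le\Ch(f)$ for all $t\ge 0$ when $f\in\Sob^{1,2}(X,\d,\m)$) and with $\int_0^{+\infty}\kappa=1$ from~\eqref{eq:kappa_kernel}, the bound follows.

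From this bound, the family $(\mathfrak h^\eps f)_{\eps>0}$ is uniformly bounded in the Hilbert space $\Sob^{1,2}(X,\d,\m)$. By reflexivity, every subsequence has a weakly converging sub-subsequence in $\Sob^{1,2}$, and since the inclusion into $\Leb^2(X,\m)$ is continuous and linear, hence weakly continuous, any such weak limit must agree with the strong $\Leb^2$-limit $f$. Consequently the full net $\mathfrak h^\eps f\rightharpoonup f$ weakly in $\Sob^{1,2}(X,\d,\m)$ as $\eps\to 0^+$. The lower semicontinuity of $\Ch$ with respect to strong $\Leb^2$-convergence then yields $\Ch(f)\le\liminf_{\eps\to 0^+}\Ch(\mathfrak h^\eps f)$, while the Jensen bound gives the reverse inequality for the $\limsup$; therefore $\Ch(\mathfrak h^\eps f)\to\Ch(f)$, i.e.\ $\|\mathfrak h^\eps f\|_{\Sob^{1,2}}\to\|f\|_{\Sob^{1,2}}$.

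To conclude, I would invoke the standard Hilbert-space identity
\begin{equation*}
\|\mathfrak h^\eps f-f\|_{\Sob^{1,2}}^2
=\|\mathfrak h^\eps f\|_{\Sob^{1,2}}^2-2\,\scalar*{\mathfrak h^\eps f,f}_{\Sob^{1,2}}+\|f\|_{\Sob^{1,2}}^2,
\end{equation*}
whose right-hand side tends to $0$ in view of the weak convergence and the norm convergence just established. There is no serious obstacle: the only technical point is the validity of Jensen's inequality for the Bochner integral against a convex lsc functional, which is classical, and the reduction to a Hilbert-space argument is entirely powered by assumption~\ref{assumption:quadratic_Ch}.
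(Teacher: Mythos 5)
Your proof is correct, but it takes a slightly different route from the paper's. The paper works directly with the difference $\mathfrak h^\eps f-f=\int_0^{+\infty}(\P_{\eps r}f-f)\,\kappa(r)\di r$ and applies the convexity of the minimal weak gradient \emph{pointwise}, obtaining $\Gamma(\mathfrak h^\eps f-f)\le\int_0^{+\infty}\Gamma(\P_{\eps r}f-f)\,\kappa(r)\di r$ $\m$-a.e., and then concludes in one line by \cref{res:heat_flow_sob_cont} and dominated convergence in~$r$. You instead estimate $\mathfrak h^\eps f$ alone, proving $\Ch(\mathfrak h^\eps f)\le\int_0^{+\infty}\Ch(\P_{\eps r}f)\,\kappa(r)\di r\le\Ch(f)$ via Jensen for the convex lower semicontinuous functional $\Ch$ against the Bochner integral, and then upgrade weak convergence to strong convergence through convergence of the $\Sob^{1,2}$-norms in the Hilbert space. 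Both arguments rest on the same two facts from \cref{res:heat_flow_sob_cont} --- the monotonicity $\Ch(\P_tf)\le\Ch(f)$ and the continuity of the energy at $t=0$ --- and both ultimately need assumption~\ref{assumption:quadratic_Ch}: you invoke it explicitly for the parallelogram/polarization identity, while the paper's dominated-convergence step implicitly uses it to deduce $\Ch(\P_{\eps r}f-f)\to0$ for fixed~$r$ from convergence of energies plus weak convergence. The paper's version is shorter and yields the slightly stronger pointwise information on $\Gamma(\mathfrak h^\eps f-f)$; yours has the merit of isolating exactly where the Hilbertian structure enters and of requiring Jensen only at the level of the global functional rather than of the carré du champ.
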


\begin{proof}
From the definition in~\eqref{eq:weak_gradients_collection}, it follows that
\begin{equation*}
\Gamma(\mathfrak h^\eps f-f)
\le
\int_0^{+\infty}
\Gamma(\P_{\eps r}f-f)\,
\kappa(r)\di r
\quad
\text{$\m$-a.e.\ in}\ X, 	
\end{equation*}
so that the conclusion follows by \cref{res:heat_flow_sob_cont} and the Dominated Convergence Theorem. 
\end{proof}

For the reader's convenience, we briefly prove the following regularity result for the Laplacian of the semigroup mollification operator.

\begin{lemma}[Laplacian of $(\mathfrak h^\eps)_{\eps>0}$]
\label{res:pazy}
Let $p\in[1,+\infty]$. If $f\in\Leb^2(X,\m)\cap\Leb^p(X,\m)$, then 
\begin{equation*}
-\Delta_{\d,\m}(\mathfrak h^\eps f)
=
\frac1{\eps^2}\int_0^{+\infty}\P_rf\,\kappa'\left(\tfrac r\eps\right)\di r
\in
\Leb^2(X,\m)\cap\Leb^p(X,\m)
\end{equation*}
for all $\eps>0$.
\end{lemma}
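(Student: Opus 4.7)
The plan is to first verify the claimed integrability of the right-hand side, and then identify it with $-\Delta_{\d,\m}(\mathfrak h^\eps f)$ via the strong $L^2$-characterization of the Laplacian recalled in~\eqref{eq:Laplacian_char_strong_conv}, which is available thanks to the quadraticity of the Cheeger energy assumed in~\ref{assumption:quadratic_Ch}. Throughout, I would fix $\eps>0$ and write $[a,b]\subset(0,+\infty)$ for a compact interval containing $\supp\kappa$ (hence also $\supp\kappa'$).

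First I would prove the integrability. Setting $g_\eps=\frac1{\eps^2}\int_0^{+\infty}\P_r f\,\kappa'(r/\eps)\di r$ as a Bochner integral, the $\Leb^q$-contractivity of the heat semigroup (see~\eqref{eq:heat_Lp_contraction} and~\eqref{eq:def_heat_L_infty}) together with the compact support of $\kappa'$ immediately yield
\begin{equation*}
\|g_\eps\|_{\Leb^q(X,\m)}
\le
\frac1{\eps^2}\int_{a\eps}^{b\eps}\|\P_rf\|_{\Leb^q(X,\m)}\,\abs*{\kappa'(r/\eps)}\di r
\le
\frac{b-a}{\eps}\,\|\kappa'\|_\infty\,\|f\|_{\Leb^q(X,\m)}
\end{equation*}
for $q\in\{2,p\}$ (with the obvious modification via duality when $p=+\infty$). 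In particular $g_\eps\in\Leb^2(X,\m)\cap\Leb^p(X,\m)$ as required.

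Next, to identify $-\Delta_{\d,\m}(\mathfrak h^\eps f)$ with $g_\eps$, I would compute the $\Leb^2$-limit of the difference quotient $(\P_t(\mathfrak h^\eps f)-\mathfrak h^\eps f)/t$ as $t\to 0^+$. By the commutation of~$\P_t$ with the Bochner integral, a change of variables $s=r+t$, and the fact that $\kappa((s-t)/\eps)=0$ for $s\le t+a\eps$, provided $t<a\eps$ one has
\begin{equation*}
\frac{\P_t(\mathfrak h^\eps f)-\mathfrak h^\eps f}t
=
\frac1{\eps\,t}\int_0^{+\infty}\P_s f\,\bigl[\kappa\bigl((s-t)/\eps\bigr)-\kappa(s/\eps)\bigr]\di s.
\end{equation*}
The mean value theorem gives $[\kappa((s-t)/\eps)-\kappa(s/\eps)]/t=-\tfrac1\eps\,\kappa'(\xi_{s,t})$ for some $\xi_{s,t}$ between $(s-t)/\eps$ and $s/\eps$, so that the integrand converges pointwise in $s$ to $-\tfrac1{\eps^2}\kappa'(s/\eps)$ and is, for $t\in(0,a\eps/2)$, uniformly bounded by $\tfrac1\eps\|\kappa'\|_\infty\mathbf1_{[a\eps/2,\,2b\eps]}(s)$. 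Bounding the $\Leb^2$-norm of the Bochner integral by the corresponding scalar integral of the $\Leb^2$-norms (each of which is controlled by $\|f\|_{\Leb^2(X,\m)}$) and applying the Dominated Convergence Theorem on $(0,+\infty)$ yields
\begin{equation*}
\frac{\P_t(\mathfrak h^\eps f)-\mathfrak h^\eps f}{t}
\longto
-\frac1{\eps^2}\int_0^{+\infty}\P_s f\,\kappa'(s/\eps)\di s
=
-g_\eps
\quad\text{in}\ \Leb^2(X,\m).
\end{equation*}
By~\eqref{eq:Laplacian_char_strong_conv} this gives $\mathfrak h^\eps f\in\Dom(\Delta_{\d,\m})$ with $\Delta_{\d,\m}(\mathfrak h^\eps f)=-g_\eps$, which, combined with the integrability established above, is the desired conclusion.

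The only step requiring some care is the interchange of limit and integral, for which the compactness of $\supp\kappa'$ away from $0$ is crucial: it allows one to control the difference quotient uniformly in~$s$ and to use the $\Leb^2$-contractivity of $(\P_r)_{r\ge0}$ as the dominating constant. All remaining manipulations are routine applications of the Bochner integral calculus.
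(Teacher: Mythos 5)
Your proof is correct and follows essentially the same route as the paper's: both identify $-\Delta_{\d,\m}(\mathfrak h^\eps f)$ by computing the strong $\Leb^2$-limit of the difference quotient $(\P_t(\mathfrak h^\eps f)-\mathfrak h^\eps f)/t$, shifting the increment onto the kernel $\kappa$, and invoking the characterization~\eqref{eq:Laplacian_char_strong_conv} (the paper normalizes $\eps=1$ and appeals to uniform convergence of the difference quotients of $\kappa$, where you use the mean value theorem plus dominated convergence, and you additionally spell out the $\Leb^2\cap\Leb^p$ membership that the paper leaves implicit). No gaps.
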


\begin{proof}
We argue as in the proof of~\cite{P83}*{Theorem~2.7}. Without loss of generality, we can assume $\eps=1$. If $t>0$, then
\begin{align*}
\frac{\P_t(\mathfrak h^1 f)-\mathfrak h^1 f}{t}
=
\int_0^{+\infty}\frac{\P_{r+t}f-\P_rf}{t}\,\kappa(r)\di r
=
\int_0^{+\infty}\P_rf\,\frac{\kappa(r-t)-\kappa(r)}{t}\di r.
\end{align*}
As $t\to0^+$, the integrand in the last term converges to $-\P_rf\kappa'(r)$ uniformly for $r\in[0,+\infty)$. Thus, in virtue of~\eqref{eq:Laplacian_char_strong_conv}, we get that $\mathfrak h^1 f\in\Dom(\Delta_{\d,\m})$ and
\begin{align*}
\Delta_{\d,\m}(\mathfrak h^1 f)
=
\lim_{t\to0^+}
\frac{\P_t(\mathfrak h^1 f)-\mathfrak h^1 f}{t}
=
-\int_0^{+\infty}\P_rf\,\kappa'(r)\di r
\end{align*} 
and the conclusion follows.  
\end{proof}

\subsection{A differentiation formula}

We now prove \cref{res:diff_formula} below. 
This result was proved for the first time in~\cite{AGS15}*{Lemma~2.1} to provide a very general formulation, in the weak sense and with minimal requirements on the regularity of the functions involved, of the simple differentiation formula
\begin{equation}\label{eq:diff_formula_simple}
\frac{\de}{\de s}\P_s(\P_{t-s}f)^2
=
\P_s\left\{\Delta(\P_{t-s}f)^2-2(\P_{t-s}f)(\Delta\P_{t-s}f)\right\}
=2\P_s|\nabla\P_{t-s}f|^2_\g
\end{equation}
valid for all $f\in\Cont^\infty(\M)$ on a Riemannian manifold $(\M,\g)$, see~\cites{B06,BGL14,BL06,W11} for an account. 

Note that the differentiation formula~\eqref{eq:diff_formula_simple}, as well as \cref{res:diff_formula} below, does not require any information about the curvature of the ambient space.
For the reader's convenience and in order to keep this work the most self-contained as possible, we provide a proof of this result in our setting.   

\begin{lemma}[Differentiation formula]\label{res:diff_formula}
Let $f\in\Leb^2(X,\m)$ and $\phi\in\Leb^2(X,\d,\m)\cap\Leb^\infty(X,\m)$. 
If $t>0$, then
\begin{equation}\label{eq:bakry_functions}
\begin{split}
s\mapsto\mathsf{A}_t[f;\phi](s)
=\frac12
\int_X(\P_{t-s}f)^2\,\P_s\phi\di\m
&
\in\Cont([0,t])\cap\Cont^1([0,t)),
\\
s\mapsto\mathsf{B}_t[f;\phi](s)
=
\int_X\Gamma(\P_{t-s}f)\,\P_s\phi\di\m
&
\in\Cont([0,t))
\end{split}
\end{equation}
and
\begin{equation}\label{eq:diff_formula}
\frac{\de}{\de s}
\mathsf{A}_t[f;\phi](s)
=
\mathsf{B}_t[f;\phi](s)
\quad
\text{for all}\ s\in[0,t).
\end{equation}
The regularity of the functions $\mathsf A$ and $\mathsf B$ in~\eqref{eq:bakry_functions} and the differentiation formula in~\eqref{eq:diff_formula} extend to $s=t$ if $f\in\Sob^{1,2}(X,\d,\m)$. 
\end{lemma}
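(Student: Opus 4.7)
The plan is to proceed in three phases: first establish the continuity properties of $\mathsf{A}_t[f;\phi]$ and $\mathsf{B}_t[f;\phi]$, then prove~\eqref{eq:diff_formula} under extra regularity on $f$ and $\phi$ via the quadratic integration-by-parts, and finally reach the general statement by a double approximation. The starting point is that $s\mapsto\P_{t-s}f\in\Cont([0,t];\Leb^2(X,\m))$ by the $\Leb^2$-contractivity of the heat semigroup~\eqref{eq:heat_L2_contraction}, so that $(\P_{t-s}f)^2\in\Cont([0,t];\Leb^1(X,\m))$, while $s\mapsto\P_s\phi$ is weakly-$*$ continuous in $\Leb^\infty(X,\m)$ with uniform bound $\|\phi\|_\infty$ thanks to the maximum principle~\eqref{eq:sub-Markov}; combining these two facts yields $\mathsf{A}_t[f;\phi]\in\Cont([0,t])$. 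For $\mathsf{B}_t[f;\phi]$ on $[0,t)$, the crucial input is \cref{res:heat_flow_sob_cont}, which gives $s\mapsto|\D\P_{t-s}f|_w\in\Cont([0,t);\Leb^2(X,\m))$; hence $\Gamma(\P_{t-s}f)\in\Cont([0,t);\Leb^1(X,\m))$, and pairing with $\P_s\phi$ as before produces the desired continuity of $\mathsf{B}_t[f;\phi]$.

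For the differentiation formula, I first assume $f\in\Dom(\Delta_{\d,\m})\cap\Leb^\infty(X,\m)$ and $\phi\in\Dom(\Delta_{\d,\m})\cap\Leb^\infty(X,\m)$. Setting $h_s=\P_{t-s}f$ and $g_s=\P_s\phi$, the maximum principle~\eqref{eq:sub-Markov} yields the $\Leb^\infty$-bounds $\|h_s\|_\infty\le\|f\|_\infty$ and $\|g_s\|_\infty\le\|\phi\|_\infty$, so by the Leibniz rule~\eqref{eq:Leibniz_rule} together with the chain rule~\eqref{eq:chain_rule_min_weak_grad} both $h_s^2$ and $h_sg_s$ lie in $\Sob^{1,2}(X,\d,\m)$. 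Moreover $s\mapsto h_s,g_s\in\Cont^1([0,t];\Leb^2(X,\m))$ with $\dot h_s=-\Delta_{\d,\m}h_s$ and $\dot g_s=\Delta_{\d,\m}g_s$ in $\Leb^2(X,\m)$. Differentiating under the integral and applying the integration-by-parts formula~\eqref{eq:int_by_part_formula} twice gives
\begin{align*}
\tfrac{\di}{\di s}\mathsf{A}_t[f;\phi](s)
&=-\int(\Delta_{\d,\m}h_s)h_sg_s\di\m+\tfrac12\int h_s^2\Delta_{\d,\m}g_s\di\m\\
&=\int\Gamma(h_s,h_sg_s)\di\m-\tfrac12\int\Gamma(h_s^2,g_s)\di\m,
\end{align*}
and the chain rule~\eqref{eq:Gamma_chain_rule} combined with the Leibniz rule for $\Gamma$ expands both terms so that the cross contributions $\pm\int h_s\Gamma(h_s,g_s)\di\m$ cancel, leaving precisely $\mathsf{B}_t[f;\phi](s)$.

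The last step is to remove the extra regularity by approximation. For $\phi$ I use the semigroup mollification $\phi_\eps=\mathfrak h^\eps\phi$, which belongs to $\Dom(\Delta_{\d,\m})\cap\Leb^\infty(X,\m)$ (the $\Leb^\infty$-bound is preserved by the integral definition~\eqref{eq:semigroup_moll_def} and~\eqref{eq:sub-Markov}) and converges to $\phi$ strongly in $\Leb^2(X,\m)$ and weakly-$*$ in $\Leb^\infty(X,\m)$. For $f$ I truncate to $f_n=(-n)\vee f\wedge n\in\Leb^2(X,\m)\cap\Leb^\infty(X,\m)$, which makes $\P_{t-s}f_n\in\Dom(\Delta_{\d,\m})\cap\Leb^\infty(X,\m)$ for every $s<t$ by heat-flow regularization and the maximum principle. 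The integrated form of the previous step, valid on $[s_1,s_2]\Subset[0,t)$ for $(f_n,\phi_\eps)$, passes to the limit first as $\eps\to0^+$ and then as $n\to+\infty$ by dominated convergence: the majorant for $\mathsf{B}_t[f_n;\phi_\eps]$ comes from the inf-formula~\eqref{eq:Ch_f_t_inf_formula} applied with the trivial test $g=0$, giving $\int\Gamma(\P_{t-s}f_n)\di\m\le\|f\|_{\Leb^2(X,\m)}^2/(t-s)$, which is locally $\di s$-integrable on $[0,t)$. Coupled with the continuity of $\mathsf{B}_t[f;\phi]$, the resulting integrated identity promotes to the pointwise formula~\eqref{eq:diff_formula} on $[0,t)$. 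When $f\in\Sob^{1,2}(X,\d,\m)$, \cref{res:heat_flow_sob_cont} provides the $\Leb^2$-continuity of $|\D\P_{t-s}f|_w$ up to $s=t$, so $\mathsf{B}_t[f;\phi]$ extends continuously to $[0,t]$ and $\mathsf{A}_t[f;\phi]\in\Cont^1([0,t])$ follows. The main technical obstacle is the simultaneous regularization of $f$ and $\phi$: without an $\Leb^\infty$-bound on $h_s$ the product $h_sg_s$ need not lie in $\Sob^{1,2}(X,\d,\m)$ and the quadratic integration-by-parts breaks down, which is precisely why truncating $f$ (rather than just smoothing it by the heat flow) is indispensable.
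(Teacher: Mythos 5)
Your proof is correct and follows essentially the same route as the paper's: continuity of $\mathsf A$ and $\mathsf B$ from the $\Leb^2$- and $\Leb^1$-continuity of $s\mapsto\P_{t-s}f$ and $s\mapsto\Gamma(\P_{t-s}f)$ paired with the weak* continuity of $s\mapsto\P_s\phi$, the identity first on a regular core via integration by parts, then truncation of $f$ and semigroup mollification of $\phi$ with the bound $2\Ch(\P_r g)\le\|g\|_{\Leb^2}^2/r$ as dominating majorant, and finally the extension to $s=t$ via \cref{res:heat_flow_sob_cont}. The only (immaterial) differences are that you cancel the cross terms through the Leibniz and chain rules for $\Gamma$ where the paper uses self-adjointness together with the chain rule $\Delta_{\d,\m}(u^2)=2u\,\Delta_{\d,\m}u+2\Gamma(u)$, and that by keeping $f$ bounded until the last limit you can take $\phi_\eps\in\Dom(\Delta_{\d,\m})\cap\Leb^\infty$ rather than the paper's slightly stronger class with $\Delta_{\d,\m}\phi_\eps\in\Leb^\infty(X,\m)$.
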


\begin{proof}
We divide the proof in four steps.

\smallskip

\textit{Step~1: continuity of $\mathsf A$}.
The function $s\mapsto\P_{t-s}f$, $s\in[0,t]$, is strongly continuous in $\Leb^2(X,\m)$ by the definition of the heat flow.
Thanks to the $\Leb^1$-contraction property of the heat semigroup, by a simple approximation argument we easily get that $s\mapsto\P_s\phi$, $s\in[0,t]$, is weakly* continuous in $\Leb^\infty(X,\m)$.
This prove that $s\mapsto\mathsf A_t[f;\phi](s)\in\Cont([0,t])$.

\smallskip

\textit{Step~2: continuity of $\mathsf B$}.
Since the function $s\mapsto\Gamma(\P_{t-s}f)$, $s\in[0,t)$, is strongly continuous in $\Leb^1(X,\m)$ by~\eqref{eq:min_weak_grad_f_t_is_cont}, and since the function $s\mapsto\P_s\phi$, $s\in[0,t]$, is weakly* continuous in $\Leb^\infty(X,\m)$ by Step~1, we easily deduce that $s\mapsto\mathsf B_t[f;\phi](s)\in\Cont([0,t))$.

\smallskip

\textit{Step~3: proof of the differentiation formula~\eqref{eq:diff_formula}}.
Let us first assume that 
\begin{equation}\label{eq:diff_formula_assumption_step3}
f\in\Leb^2(X,\m)\cap\Leb^\infty(X,\m),\
\phi\in\Leb^\infty(X,\m)\cap\Dom(\Delta_{\d,\m})\
\text{with}\
\Delta_{\d,\m}\phi\in\Leb^\infty(X,\m).	
\end{equation}
Then we have
\begin{equation*}
\lim_{h\to0}\frac{\P_{t-(s+h)}f-\P_{t-s}f}{h}
=
-\Delta_{\d,\m}\P_{t-s}f
\end{equation*}
strongly in $\Leb^2(X,\m)$ for all $s\in[0,t)$ by~\eqref{eq:Laplacian_char_strong_conv} and
\begin{equation*}
\lim_{h\to0}\frac{\P_{s+h}\phi-\P_s\phi}{h}
=
\Delta_{\d,\m}\P_s\phi
\end{equation*} 
weakly* in $\Leb^\infty(X,\m)$ for all $s\in[0,t]$ by~\eqref{eq:def_heat_L_infty} and again by~\eqref{eq:Laplacian_char_strong_conv}. 
Hence we get that 
\begin{equation*}
\frac{\de}{\de s}
\mathsf{A}_t[f;\phi](s)
=
\int_X\bigg(
-\P_{t-s}f\,\Delta_{\d,\m}\P_{t-s}f\,\P_s\phi
+
\frac12
(\P_{t-s}f)^2\,\Delta_{\d,\m}\P_s\phi
\bigg)\di\m
\end{equation*}
for all $s\in[0,t)$. Since $\P_{t-s}f\in\Leb^\infty(X,\m)$ by~\eqref{eq:diff_formula_assumption_step3} according to~\eqref{eq:sub-Markov}, we have $(\P_{t-s}f)^2\in\Sob^{1,2}(X,\d,\m)$ and thus, thanks to the integration-by-part formula~\eqref{eq:int_by_part_formula} and the chain rule~\eqref{eq:chain_rule_Laplacian} for the Laplacian, we can compute
\begin{align*}
\int_X(\P_{t-s}f)^2\,\Delta_{\d,\m}\P_s\phi\di\m
&=
\int_X\Delta_{\d,\m}(\P_{t-s}f)^2\,\P_s\phi\di\m\\
&=
-2\int_X\P_{t-s}f\,\Delta_{\d,\m}\P_{t-s}f\,\P_s\phi\di\m
+
2\int_X\Gamma(\P_{t-s}f)\,\P_s\phi\di\m
\end{align*}
for all $s\in[0,t)$ and~\eqref{eq:diff_formula} follows.

Now let $f\in\Leb^2(X,\m)$ and keep $\phi$ as in~\eqref{eq:diff_formula_assumption_step3}. Let $f_n=-n\vee f\wedge n\in\Leb^2(X,\m)\cap\Leb^\infty(X,\m)$ for all $n\in\N$ and note that $f_n\to f$ in $\Leb^2(X,\m)$ as $n\to+\infty$. By~\eqref{eq:diff_formula} applied to~$f_n$ and~$\phi$, we know that
\begin{equation}\label{eq:diff_formula_spet3_integrated}
\mathsf A_t[f_n;\phi](s_1)
-
\mathsf A_t[f_n;\phi](s_0)
=
\int_{s_0}^{s_1}\mathsf B_t[f_n;\phi](s)\di s
\end{equation}
for all $0\le s_0<s_1<t$.
Since 
\begin{equation*}
||\D\P_{t-s}f_n|_w-|\D\P_{t-s}f|_w|^2
\le
\Gamma(\P_{t-s}(f_n-f))
\end{equation*}
for all $n\in\N$ and $s\in[0,t)$ by~\eqref{eq:Gamma_ineq_prod_min_weak_grads}, 
and since
\begin{equation*}
\int_X\Gamma(\P_{t-s}(f_n-f))\di\m
=
2\Ch(\P_{t-s}(f_n-f))
\le
\frac1{t-s}
\int_X|f_n-f|^2\di\m
\end{equation*}
for all $n\in\N$ and $s\in[0,t)$ by~\eqref{eq:Ch_f_t_inf_formula}, 
we have that 
$\Gamma(\P_{t-s}f_n)\to\Gamma(\P_{t-s}f)$
in $\Leb^1(X,\m)$ as $n\to+\infty$ for all $s\in[0,t)$. Since also $\P_{t-s}f_n\to\P_{t-s}f$ in $\Leb^2(X,\m)$ as $n\to+\infty$ for all $s\in[0,t)$, we can pass to the limit as $n\to+\infty$ in~\eqref{eq:diff_formula_spet3_integrated} and prove~\eqref{eq:diff_formula} for all $f\in\Leb^2(X,\m)$ and~$\phi$ as in~\eqref{eq:diff_formula_assumption_step3}.

Finally, let $f\in\Leb^2(X,\m)$ and $\phi\in\Leb^2(X,\m)\cap\Leb^\infty(X,\m)$. For all $\eps>0$, we set $\phi_\eps=\mathfrak h^\eps\phi$. By \cref{res:pazy}, we know that $\phi_\eps$ is as in~\eqref{eq:diff_formula_assumption_step3} for all $\eps>0$ and moreover $\phi_\eps\to\phi$ weakly* in $\Leb^\infty(X,\m)$ as $\eps\to0^+$. By applying~\eqref{eq:diff_formula} to $f$ and $\phi_\eps$ in its integrated form and then passing to the limit as $\eps\to0^+$, we prove~\eqref{eq:diff_formula} for all $f\in\Leb^2(X,\m)$ and $\phi\in\Leb^2(X,\m)\cap\Leb^\infty(X,\m)$.

\smallskip

\textit{Step~4: the limit case $s=t$}. Let $f\in\Sob^{1,2}(X,\d,\m)$. By the Mean Value Theorem, we just need to prove that the continuity of the function $\mathsf B$ extends to $s=t$. This immediately follows since $\Gamma(\P_{t-s}f)\to\Gamma(f)$ in $\Leb^1(X,\m)$ as $s\to t^-$ thanks to \cref{res:heat_flow_sob_cont}.  
\end{proof}

\subsection{\texorpdfstring{$\wBE(\c,\infty)$}{BEw(c,infty)} condition}

We now come to the central definition of our paper. Here and in the following, we let
\begin{equation}\label{eq:def_c}
\c\colon[0,+\infty)\to(0,+\infty) 
\text{ be such that }
\c,\c^{-1}\in\Leb^\infty([0,T])
\text{ for all } T>0.
\end{equation} 

\begin{definition}[$\wBE(\c,\infty)$ condition]\label{def:wBE}
We say that $(X,\d,\m)$ satisfies the \emph{weak Bakry--\'{E}mery curvature condition} with respect to the function $\c\colon[0,+\infty)\to(0,+\infty)$ in~\eqref{eq:def_c}, $\wBE(\c,\infty)$ for short, if for all $f\in\Sob^{1,2}(X,\d,\m)$ and $t\ge0$ the function $\P_t f\in\Sob^{1,2}(X,\d,\m)$ satisfies
\begin{equation}\label{eq:def_wBE}
\Gamma(\P_t f)\le\c^2(t)\,\P_t\Gamma(f)
\quad
\text{$\m$-a.e.\ in $X$}.
\end{equation}
Although not strictly necessary, we always assume that $\c(0)=1$ for simplicity. 
\end{definition}

Clearly, if $\c(t)=e^{-Kt}$ for $t\ge0$, then~\eqref{eq:def_wBE} states that $\Gamma(\P_tf)\le e^{-2Kt}\,\P_t\Gamma(f)$ $\m$-a.e.\ in~$X$ for all $f\in\Sob^{1,2}(X,\d,\m)$, which is precisely the standard Bakry--\'Emery curvature condition $\BE(K,\infty)$.
We also observe that~\eqref{eq:def_wBE} naturally rephrases condition $(G_2)$ in~\cite{K10}*{Theorem~2.2(ii)} in our more general framework for $\tilde d=\c(t)\,d$ whenever $t\ge0$.

Note that, if~\eqref{eq:def_wBE} holds for some everywhere finite measurable function $\c\colon[0,+\infty)\to[0,+\infty)$ (and so not necessarily locally positively bounded from above and below as in~\eqref{eq:def_c}), then we can replace it with another measurable function $\c_\star\colon[0,+\infty)\to[0,+\infty)$ which is optimal in the following sense: if $t>0$ and $\c_\star(t)>0$, then for all $\eps>0$ there exists $f_\eps\in\Sob^{1,2}(X,\d,\m)$ such that 
\begin{equation*}
\m\left(\set*{x\in X : \Gamma(\P_t f_\eps)(x)\ge(\c_\star(t)-\eps)^2\,\P_t\Gamma(f_\eps)(x)}\right)>0.
\end{equation*}
By~\ref{assumption:Lip_Rademacher}, we immediately get that 
\begin{equation*}
\m(\set*{x\in X : \Gamma(f)(x)>0)}>0
\end{equation*} 
whenever $f\in\Sob^{1,2}(X,\d,\m)$ is not $\m$-equivalent to a constant function, and so $\c_\star(0)=1$. 
In the following result we collect the elementary properties of~$\c_\star$.

\begin{lemma}[Properties of $\c_\star$]
\label{res:c_star_props}
The function $\c_\star\colon[0,+\infty]\to[0,+\infty)$ satisfies:
\begin{enumerate}[label=(\roman*)]
\item\label{item:c_star_submult} 
$\c_\star(s+t)\le\c_\star(s)\,\c_\star(t)$ for all $s,t\ge0$;
\item\label{item:c_star_lsc} 
$\c_\star$ is lower semicontinuous;
\item\label{item:c_star_positive} 
$\c_\star(t)>0$ for all $t\ge0$.
\end{enumerate}
\end{lemma}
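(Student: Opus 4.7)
For \ref{item:c_star_submult}, the plan is to iterate the $\wBE$ bound along the semigroup identity $\P_{s+t}=\P_s\P_t$: applying $\wBE(\c_\star,\infty)$ at time $s$ to $\P_tf\in\Sob^{1,2}(X,\d,\m)$ and then at time $t$ to $f$, one obtains
\begin{equation*}
\Gamma(\P_{s+t}f)=\Gamma(\P_s(\P_tf))\le\c_\star^2(s)\,\P_s\Gamma(\P_tf)\le\c_\star^2(s)\,\c_\star^2(t)\,\P_{s+t}\Gamma(f)\quad\text{$\m$-a.e.,}
\end{equation*}
where in the second inequality the linearity of $\P_s$ combined with its monotonicity on nonnegative $\Leb^1$ functions (the natural $\Leb^1$-extension of~\eqref{eq:sub-Markov_func}) propagates the $\m$-a.e.\ bound $\Gamma(\P_tf)\le\c_\star^2(t)\,\P_t\Gamma(f)$. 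The optimality of $\c_\star(s+t)$ then yields the submultiplicative inequality.

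For \ref{item:c_star_lsc}, I would fix $t\ge0$ and a sequence $t_n\to t$ and pass to the limit in $\Gamma(\P_{t_n}f)\le\c_\star^2(t_n)\,\P_{t_n}\Gamma(f)$ integrated against an arbitrary nonnegative $\phi\in\Cont_b(X)$ with bounded support. For $f\in\Sob^{1,2}(X,\d,\m)$, \cref{res:heat_flow_sob_cont} gives $|\D\P_{t_n}f|_w\to|\D\P_tf|_w$ in $\Leb^2(X,\m)$, hence $\Gamma(\P_{t_n}f)\to\Gamma(\P_tf)$ in $\Leb^1(X,\m)$, while the strong continuity of $(\P_t)_{t\ge0}$ on $\Leb^1(X,\m)$ applied to $\Gamma(f)\in\Leb^1(X,\m)$ yields $\P_{t_n}\Gamma(f)\to\P_t\Gamma(f)$ in $\Leb^1(X,\m)$. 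Taking $\liminf_{n\to\infty}$ of the integrated inequality, and then letting $\phi$ vary, produces $\Gamma(\P_tf)\le\bigl(\liminf_n\c_\star^2(t_n)\bigr)\,\P_t\Gamma(f)$ $\m$-a.e.; optimality of $\c_\star(t)$ then gives $\c_\star(t)\le\liminf_n\c_\star(t_n)$.

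I expect \ref{item:c_star_positive} to be the main obstacle, because it cannot be read off from the $\wBE$ condition itself and requires a structural argument excluding the degenerate possibility $\c_\star(t_0)=0$. The case $t=0$ is already covered by $\c_\star(0)=1$ noted before the statement, so I focus on $t_0>0$ and argue by contradiction, assuming $\c_\star(t_0)=0$. Then $|\D\P_{t_0}f|_w=0$ $\m$-a.e.\ for every $f\in\Sob^{1,2}(X,\d,\m)$, and \ref{assumption:Lip_Rademacher} (applied with $L=0$) forces $\P_{t_0}f$ to be $\m$-equivalent to a constant. By $\Leb^2$-density of $\Sob^{1,2}(X,\d,\m)$ in $\Leb^2(X,\m)$ and $\Leb^2$-continuity of $\P_{t_0}$, the operator $\P_{t_0}$ sends the whole of $\Leb^2(X,\m)$ into the constants. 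If $\m(X)=+\infty$, mass preservation~\eqref{eq:heat_mass_preservation} forces this constant to be $0$ for every $f\in\Leb^1(X,\m)\cap\Leb^2(X,\m)$, contradicting a nonnegative nonzero choice of $f$. If $\m(X)<+\infty$, mass preservation together with self-adjointness identifies $\P_{t_0}$ with the orthogonal projection $E$ of $\Leb^2(X,\m)$ onto the constants; writing $\P_{t_0}=\P_{t_0/2}^{\,2}$ and using self-adjointness, the identity $\|\P_{t_0/2}f\|_{\Leb^2}^2=\langle\P_{t_0}f,f\rangle=0$ holds on the orthogonal complement of constants, forcing $\P_{t_0/2}=E$ as well. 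Iterating the halving step, $\P_{t_0/2^n}=E$ for every $n\in\N$, in open contradiction with the strong continuity $\P_{t_0/2^n}f\to f$ in $\Leb^2(X,\m)$ applied to any non-constant $f\in\Leb^2(X,\m)$.
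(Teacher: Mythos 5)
Your proof is correct. For \ref{item:c_star_submult} and \ref{item:c_star_lsc} you are filling in exactly the argument the paper leaves as a one-liner: the semigroup identity together with the order-preservation of the $\Leb^1$-extension of $\P_s$ for submultiplicativity, and the $\Leb^1$-continuity of $t\mapsto\Gamma(\P_tf)$ and $t\mapsto\P_t\Gamma(f)$ combined with optimality for lower semicontinuity. For \ref{item:c_star_positive} your route genuinely differs in the finite-measure case. The paper first reduces to $t_0=\inf\set{t>0:\c_\star(t)=0}$ using (i) and (ii), fixes one non-constant probability density $f$, and exploits the chain $1=\big(\int\P_{t_0/2}f\di\m\big)^2\le\int(\P_{t_0/2}f)^2\di\m=\int f\,\P_{t_0}f\di\m=1$ together with the strict convexity of the square to conclude $\P_{t_0/2^n}f=1$ for every $n$, whence $f\equiv1$, a contradiction. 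You instead identify $\P_{t_0}$ with the projection $E$ onto constants, propagate $\P_{t_0/2}=E$ through the self-adjointness identity $\|\P_{t_0/2}f\|_{\Leb^2}^2=\langle f,\P_{t_0}f\rangle$, iterate, and contradict strong continuity at $t=0$. The two computations rest on the same identity; your version has the advantage of not needing the infimum reduction (any $t_0>0$ with $\c_\star(t_0)=0$ gives the contradiction directly), and only requires you to state explicitly that $\P_{t_0/2}$ fixes constants, which is immediate from the maximum principle~\eqref{eq:sub-Markov} and is what upgrades ``$\P_{t_0/2}=0$ on the orthogonal complement of constants'' to ``$\P_{t_0/2}=E$''. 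The infinite-measure branch coincides in substance with the paper's observation that a nonzero constant cannot have unit mass.
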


\begin{proof}
Property~\ref{item:c_star_submult} follows from the semigroup property of the heat flow and the optimality of~$\c_\star$. 
Property~\ref{item:c_star_lsc} is a consequence of \cref{res:heat_flow_sob_cont} and again of the optimality of~$\c_\star$.
By~\ref{item:c_star_submult}, if $\c(t)=0$ for some $t>0$, then $\c(t')=0$ for all $t'>t$.
So let us set $t_0=\inf\set*{t>0 : \c_\star(t)=0}$. 
By~\ref{item:c_star_lsc}, we get that $\c_\star(t_0)=0$. 
Since $\c_\star(0)=1$, we must have that $t_0>0$.
Now let $f\in\Sob^{1,2}(X,\d,\m)\cap\Leb^1(X,\m)$ be non-negative, non-constant and such that $\int_X f \di\m=1$.
Since $\c_\star(t_0)=0$, we must have $\Gamma(\P_{t_0}f)=0$ $\m$-a.e.\ in~$X$, so that $\P_{t_0}f(x)=a$ for all $x\in X$, for some $a\in\R$, by~\ref{assumption:Lip_Rademacher}.
By~\eqref{eq:heat_mass_preservation}, we thus get that $a=1$ and so we must have that $\m(X)<+\infty$. 
Without loss of generality, we can assume $\m(X)=1$.
Then, by~\eqref{eq:heat_mass_preservation} again and Jensen inequality, we get
\begin{equation*}
1
=
\left(\int_X f\di\m\right)^2
=
\left(\int_X \P_{t_0/2}f\di\m\right)^2
\le
\int_X(\P_{t_0/2}f)^2\di\m
=
\int_X f\,\P_{t_0}f\di\m
=1.
\end{equation*}
By the strict convexity of the square function, we thus get that $\P_{t_0/2}f=1$ $\m$-a.e.\ in~$X$.
Hence again $\int_X f\,\P_{t_0/2}f\di\m=1$, so that $\P_{t_0/2^n}f=1$ $\m$-a.e.\ in~$X$ for all $n\in\N$ by iterating the argument above.
Thus $f=1$ $\m$-a.e.\ in~$X$, contradicting the fact that the function~$f$ was taken non-constant.
The proof is thus complete.
\end{proof}

The following result is a simple consequence of well-known properties of subadditive functions (see~\cite{HP74}*{Chapter~VII} for example), but we briefly sketch its proof here for the reader's convenience. 

\begin{lemma}[Local boundedness of $\c_\star$]
\label{res:c_star_loc_boundedness}
There exist $t_\star\ge0$ and $K\in\R$ such that
\begin{equation}\label{eq:c_star_fekete}
\c_\star(t)\le e^{-Kt}
\quad
\text{for all}\ t\ge t_\star.
\end{equation}
In addition, $\c_\star\in\Leb^\infty_{\loc}((0,+\infty))$ and $\c_\star^{-1}\in\Leb^\infty_{\loc}([0,+\infty))$.
\end{lemma}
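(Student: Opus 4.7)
The plan is to pass to the logarithm by setting $f(t):=\log\c_\star(t)$, which is real-valued on $[0,+\infty)$ by \cref{res:c_star_props}\ref{item:c_star_positive}, subadditive by \cref{res:c_star_props}\ref{item:c_star_submult}, and lower semicontinuous (hence Borel measurable) by \cref{res:c_star_props}\ref{item:c_star_lsc}. Note that $f(0)=0$ by the convention $\c(0)=1$.

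The exponential bound~\eqref{eq:c_star_fekete} then follows from the classical Fekete-type result for measurable subadditive functions (see~\cite{HP74}*{Chapter~VII}):
\[
\lim_{t\to+\infty}\frac{f(t)}{t}=\inf_{t>0}\frac{f(t)}{t}=:-K_0\in[-\infty,+\infty).
\]
If $K_0=+\infty$, then for every $K\in\R$ there exists $t_\star\ge 0$ such that $f(t)\le -Kt$ for all $t\ge t_\star$; if instead $K_0\in\R$, the same holds for every $K<K_0$, up to enlarging $t_\star$. Exponentiating yields $\c_\star(t)\le e^{-Kt}$ for $t\ge t_\star$, which is~\eqref{eq:c_star_fekete}.

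The local boundedness $\c_\star^{-1}\in\Leb^\infty_{\loc}([0,+\infty))$ is immediate: by lower semicontinuity and strict positivity, $\c_\star$ attains a strictly positive minimum on every compact $[0,T]$, so $1/\c_\star$ is bounded there. For $\c_\star\in\Leb^\infty_{\loc}((0,+\infty))$ I again appeal to the Hille--Phillips theory~\cite{HP74}*{Chapter~VII}, according to which every finite measurable subadditive function on $(0,+\infty)$ is bounded on each compact subset of $(0,+\infty)$; applied to $f$, this yields the required upper bound for $\c_\star$. The main obstacle in a self-contained argument is precisely this last point: one would apply the Baire category theorem to the closed level sets $\{f\le n\}_{n\in\N}$ to produce an open interval $(a,b)$ on which $f\le n$, then propagate the bound via subadditivity to the intervals $(ka,kb)$, $k\in\N$, which eventually cover an unbounded tail $(N,+\infty)$, and finally extend it to small $t>0$---the last extension being the delicate step, consistent with the fact that the statement carefully excludes $t=0$ from $\Leb^\infty_{\loc}$ for $\c_\star$.
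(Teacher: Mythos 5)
Your argument is correct and matches the paper's proof essentially step for step: both pass to $\log\c_\star$, invoke Fekete's Lemma for the asymptotic exponential bound, cite the Hille--Phillips local boundedness theorem for measurable subadditive functions on $(0,+\infty)$, and obtain the lower bound near $t=0$ from lower semicontinuity (the paper via $\liminf_{t\to0^+}\c_\star(t)\ge\c_\star(0)=1$, you via the attained positive minimum of an l.s.c.\ function on a compact set). No substantive differences.
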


\begin{proof}
We define $\phi\colon[0,+\infty)\to\R$ by setting $\phi(t)=\log\c_\star(t)$ for all $t\ge0$.
By \cref{res:c_star_props}, $\phi$ is well posed, lower semicontinuous and subadditive.
By Fekete Lemma (see~\cite{HP74}*{Theorem~7.6.1} for example), we have that
\begin{equation}\label{eq:phi_fekete}
\exists
\lim_{t\to+\infty}\frac{\phi(t)}t
=
\inf_{t>0}\frac{\phi(t)}t<+\infty,
\end{equation} 
from which we immediately deduce~\eqref{eq:c_star_fekete}.
By~\cite{HP74}*{Theorem~7.4.1}, we have that $\phi\in\Leb^\infty_{\loc}((0,+\infty))$. 
Since $\liminf\limits_{t\to0^+}\c_\star(t)\ge1$ by \cref{res:c_star_props}\ref{item:c_star_lsc}, we also get that $\c_\star(t)\ge M$ for all $t\in[0,\delta]$ for some $\delta,M>0$, concluding the proof.
\end{proof}

From \cref{res:c_star_loc_boundedness}, we easily deduce the following exponential upper bound for the optimal function~$\c_\star$.

\begin{corollary}[Exponential bound for $\c_\star$]
\label{res:c_star_exp_bound}
If~\eqref{eq:def_wBE} holds for some everywhere finite measurable function 
$\c\colon[0,+\infty)\to[0,+\infty)$ 
such that 
\begin{equation*}
\limsup\limits_{t\to0^+}\c(t)<+\infty,
\end{equation*} 
then the optimal function $\c_\star\colon[0,+\infty)\to(0,+\infty)$ fulfills~\eqref{eq:def_c} and is such that
\begin{equation}\label{eq:c_star_exp_bound}
\c_\star(t)
\le
M e^{-Kt}
\quad
\text{for all}\ t\ge0
\end{equation}
for some $M\ge1$ and $K\in\R$.
\end{corollary}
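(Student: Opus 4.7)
The plan is to assemble the conclusion directly from \cref{res:c_star_props} and \cref{res:c_star_loc_boundedness}, using the hypothesis $\limsup_{t\to 0^+}\c(t)<+\infty$ only at the one place where it is genuinely needed, namely to control $\c_\star$ near $t=0$.

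First I would observe that, by the very definition of the optimal curvature function, $\c_\star(t)\le\c(t)$ for every $t\ge0$; hence $\limsup_{t\to 0^+}\c_\star(t)\le\limsup_{t\to 0^+}\c(t)<+\infty$. Combining this bound with $\c_\star\in\Leb^\infty_{\loc}((0,+\infty))$ from \cref{res:c_star_loc_boundedness} yields $\c_\star\in\Leb^\infty([0,T])$ for every $T>0$. On the other hand, \cref{res:c_star_loc_boundedness} already gives $\c_\star^{-1}\in\Leb^\infty_{\loc}([0,+\infty))$, while \cref{res:c_star_props}\ref{item:c_star_positive} guarantees $\c_\star(t)\in(0,+\infty)$ for all $t\ge0$. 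Together, these facts show that $\c_\star$ satisfies~\eqref{eq:def_c}.

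For the exponential bound, I would invoke \eqref{eq:c_star_fekete}, which supplies $t_\star\ge0$ and $K\in\R$ with $\c_\star(t)\le e^{-Kt}$ for every $t\ge t_\star$. Setting $C_0=\sup_{t\in[0,t_\star]}\c_\star(t)<+\infty$ (finite by the previous paragraph) and defining
\begin{equation*}
M=\max\bigl\{1,\,C_0\,e^{K^+ t_\star}\bigr\},
\qquad K^+=\max(K,0),
\end{equation*}
one checks that $\c_\star(t)\le M\,e^{-Kt}$ on $[0,t_\star]$: indeed, for every $t\in[0,t_\star]$ one has $e^{-Kt}\ge e^{-K^+ t_\star}$, so $M\,e^{-Kt}\ge C_0\ge\c_\star(t)$. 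On $[t_\star,+\infty)$ the inequality is immediate from \eqref{eq:c_star_fekete} since $M\ge1$. This yields~\eqref{eq:c_star_exp_bound} and completes the proof.

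I do not foresee any serious obstacle: the only subtlety is that \cref{res:c_star_loc_boundedness} by itself does not rule out $\c_\star$ blowing up as $t\to 0^+$ (the statement only gives local boundedness on the open interval $(0,+\infty)$), which is precisely why the hypothesis $\limsup_{t\to 0^+}\c(t)<+\infty$ is needed and is used via the pointwise inequality $\c_\star\le\c$.
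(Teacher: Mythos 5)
Your proof is correct and follows exactly the route the paper intends (the paper only remarks that the corollary is "easily deduced" from \cref{res:c_star_loc_boundedness} without writing out the details): the pointwise bound $\c_\star\le\c$ combined with the hypothesis $\limsup_{t\to0^+}\c(t)<+\infty$ handles the behaviour near $t=0$, while \eqref{eq:c_star_fekete} and local boundedness give the exponential bound by adjusting the constant $M$ on the compact interval $[0,t_\star]$. Your closing remark correctly identifies the one point where the extra hypothesis is genuinely needed.
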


By \cref{res:c_star_exp_bound}, in analogy with the classical Bakry--\'Emery condition, we may think of the (best) constant $K\in\R$ appearing in~\eqref{eq:c_star_exp_bound} as a bound from below of the generalized metric-measure Ricci curvature of the space $(X,\d,\m)$.
In analogy with the usual $\RCD$ framework, we may say that $(X,\d,\m)$ is \emph{negatively}/\emph{zero}/\emph{positively curved} if we can choose $K<0$/$K=0$/$K>0$ in~\eqref{eq:c_star_exp_bound}. 
By comparing~\eqref{eq:c_star_exp_bound} with~\eqref{eq:c_star_fekete} and~\eqref{eq:phi_fekete}, $\BE_w(\c_\star,+\infty)$ condition behaves like $\BE(K,+\infty)$ for some limit $K\in\R$ as $t\to+\infty$. 

\subsection{Poincaré inequalities}

Exploiting the differentiation formula in~\eqref{eq:diff_formula}, we can prove the following consequence of \cref{def:wBE} in analogy with~\cite{AGS15}*{Corollary~2.3}. See also~\cite{W11}*{Theorem~1.1(3) and (4)} for the same inequalities in the Riemannian setting.

\begin{proposition}[Poincaré inequalities]\label{res:poincare_ineqs}
Assume $(X,\d,\m)$ satisfies $\wBE(\c,\infty)$.
\begin{enumerate}[label=(\roman*)]
\item\label{item:poincare_ineq_lower} 
If $f\in\Leb^2(X,\m)$ and $t>0$, then $\P_t f\in\Sob^{1,2}(X,\d,\m)$ with
\begin{equation}\label{eq:poincare_ineq_lower}
2\I_{-2}(t)\,\Gamma(\P_t f)
\le\P_t(f^2)-(\P_t f)^2
\quad
\text{$\m$-a.e.\ in $X$}.
\end{equation}
\item\label{item:poincare_ineq_upper} 
If $f\in\Sob^{1,2}(X,\d,\m)$ and $t>0$, then 
\begin{equation}\label{eq:poincare_ineq_upper}
\P_t(f^2)-(\P_t f)^2
\le2\I_2(t)\,\P_t\Gamma(f)
\quad
\text{$\m$-a.e.\ in $X$}.
\end{equation}
\end{enumerate}
Here and in the following, we let 
\begin{equation}\label{eq:def_I_p}
\I_p(t)=\int_0^t\c^p(s)\di s	
\end{equation}
for all $t\ge0$ and $p\in\R$.
\end{proposition}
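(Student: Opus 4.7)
The plan is to exploit the integrated form of the differentiation identity from \cref{res:diff_formula}. For any $t>0$ and any non-negative test function $\phi\in\Leb^2(X,\m)\cap\Leb^\infty(X,\m)$, integrating~\eqref{eq:diff_formula} over $s\in[0,t]$ and using the self-adjointness of $\P_t$ in $\Leb^2(X,\m)$ to rewrite $\int_X f^2\,\P_t\phi\di\m=\int_X\P_t(f^2)\,\phi\di\m$, I obtain the master identity
\begin{equation*}
\frac12\int_X\bigl(\P_t(f^2)-(\P_tf)^2\bigr)\,\phi\di\m
=
\int_0^t\int_X\Gamma(\P_{t-s}f)\,\P_s\phi\di\m\,\di s.
\end{equation*}
Both claimed inequalities will then follow from applying $\wBE(\c,\infty)$ to the integrand in two complementary ways, combined with self-adjointness of $\P_s$ and $\P_{t-s}$.

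For the upper bound~\eqref{eq:poincare_ineq_upper}, assume $f\in\Sob^{1,2}(X,\d,\m)$. Applying~\eqref{eq:def_wBE} to $f$ with parameter $t-s$ yields the pointwise estimate $\Gamma(\P_{t-s}f)\le\c^2(t-s)\,\P_{t-s}\Gamma(f)$, valid $\m$-a.e.\ for every $s\in[0,t]$. Two uses of self-adjointness then give
\begin{equation*}
\int_X\Gamma(\P_{t-s}f)\,\P_s\phi\di\m
\le
\c^2(t-s)\int_X\Gamma(f)\,\P_t\phi\di\m
=
\c^2(t-s)\int_X\P_t\Gamma(f)\,\phi\di\m.
\end{equation*}
Integrating over $s\in[0,t]$ and performing the change of variable $u=t-s$ produces the factor $\I_2(t)$ defined in~\eqref{eq:def_I_p}, and the arbitrariness of $\phi\ge0$ in a sufficiently rich class yields the desired pointwise bound.

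For the lower bound~\eqref{eq:poincare_ineq_lower}, starting only from $f\in\Leb^2(X,\m)$, I would first invoke~\eqref{eq:Ch_f_t_inf_formula} to guarantee $\P_tf\in\Sob^{1,2}(X,\d,\m)$, so that $\Gamma(\P_tf)$ is well defined. For $s\in[0,t)$ one has $\P_{t-s}f\in\Sob^{1,2}(X,\d,\m)$, so applying $\wBE(\c,\infty)$ to $\P_{t-s}f$ at time $s$ and using the semigroup identity $\P_tf=\P_s(\P_{t-s}f)$ gives $\c^{-2}(s)\,\Gamma(\P_tf)\le\P_s\Gamma(\P_{t-s}f)$. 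By self-adjointness of $\P_s$,
\begin{equation*}
\int_X\Gamma(\P_{t-s}f)\,\P_s\phi\di\m
=
\int_X\P_s\Gamma(\P_{t-s}f)\,\phi\di\m
\ge
\c^{-2}(s)\int_X\Gamma(\P_tf)\,\phi\di\m,
\end{equation*}
and integrating in $s$ produces $\I_{-2}(t)$. The only real subtlety will be the passage from the integrated inequalities against $\phi$ to the pointwise $\m$-a.e.\ statements; this is handled by a standard density/monotone class argument, made available by the $\Leb^1$-integrability of $\P_t\Gamma(f)$ and $\Gamma(\P_tf)$ ensured by \cref{res:heat_flow_sob_cont} and by the non-negativity of the integrand (so that the limit $s\to t^-$ in the lower-bound case is harmless by monotone convergence, despite $\mathsf B_t[f;\phi]$ being only continuous on $[0,t)$ when $f\notin\Sob^{1,2}(X,\d,\m)$).
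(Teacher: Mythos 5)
Your proof is correct and follows essentially the same route as the paper: both start from the integrated form of \cref{res:diff_formula} tested against non-negative $\phi\in\Leb^2(X,\m)\cap\Leb^\infty(X,\m)$, and both obtain the two bounds by applying $\wBE(\c,\infty)$ once to $f$ at time $t-s$ (producing $\I_2(t)$) and once to $\P_{t-s}f$ at time $s$ via the semigroup property (producing $\I_{-2}(t)$), before localizing with a density argument. The only divergence is in part (i) for $f\in\Leb^2(X,\m)\setminus\Sob^{1,2}(X,\d,\m)$: the paper first proves the inequality for $f\in\Sob^{1,2}(X,\d,\m)$ and then approximates, verifying $\Gamma(\P_tf_n)\to\Gamma(\P_tf)$ in $\Leb^1(X,\m)$ via~\eqref{eq:Ch_f_t_inf_formula} and~\eqref{eq:Gamma_ineq_prod_min_weak_grads}, whereas you work directly with $f\in\Leb^2(X,\m)$ using the instantaneous regularization~\eqref{eq:Ch_f_t_inf_formula} and monotone convergence of the non-negative integrand at the endpoint $s=t$ — a legitimate and slightly more direct variant.
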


\begin{proof}
Fix $t>0$ and $f\in\Sob^{1,2}(X,\d,\m)$. 
By \cref{res:diff_formula}, we have
\begin{equation*}
\begin{split}
\frac{1}{2}\int_X(\P_t(f^2)-(\P_t f)^2)\,\phi\di\m
&=\frac{1}{2}\int_0^1\frac{\di}{\di s}\int_X(\P_{t-s} f)^2\,\P_s\phi\di\m\di s\\
&=\int_0^t\int_X \Gamma(\P_{t-s} f)\,\P_s\phi\di\m\di s\\
&=\int_0^t\int_X \P_s\Gamma(\P_{t-s} f)\,\phi\di\m\di s
\end{split}
\end{equation*}
for all $\phi\in\Leb^2(X,\m)\cap\Leb^\infty(X,\m)$.
Now assume $\phi\ge0$. 
By the weak Bakry-\'Emery condition~\eqref{eq:def_wBE} and the semigroup property of the heat flow, we can estimate
\begin{equation*}
\c^{-2}(s)\int_X\Gamma(\P_t f)\,\phi\di\m
\le\int_X\P_s\Gamma(\P_{t-s} f)\,\phi\di\m
\le\c^2(t-s)\int_X\P_t\Gamma(f)\,\phi\di\m
\end{equation*}
for all $s\in[0,t]$. 
Thus
\begin{equation}\label{eq:daniell}
\I_{-2}(t)\int_X\Gamma(\P_t f)\,\phi\di\m
\le
\frac{1}{2}\int_X(\P_t(f^2)-(\P_t f)^2)\,\phi\di\m
\le
\I_2(t)\int_X\P_t\Gamma(f)\,\phi\di\m
\end{equation}
for all $\phi\in\Leb^2(X,\m)\cap\Leb^\infty(X,\m)$ such that $\phi\ge0$.
In particular, we can choose $\phi=\chi_E$ for any set $E\subset X$ with finite $\m$-measure, so that inequalities~\eqref{eq:poincare_ineq_lower} and~\eqref{eq:poincare_ineq_upper} follow for all $f\in\Sob^{1,2}(X,\d,\m)$. 

Now assume $f\in\Leb^2(X,\m)$. 
By the density of $\Sob^{1,2}(X,\d,\m)$ in $\Leb^2(X,\m)$, there exists $(f_n)_{n\in\N}\subset\Sob^{1,2}(X,\d,\m)$ such that $f_n\to f$ in $\Leb^2(X,\m)$ as $n\to+\infty$. As a consequence, we have that $\P_t(f_n^2)\to\P_t(f^2)$ and $(\P_tf_n)^2\to(\P_tf)^2$ in $\Leb^1(X,\m)$ as $n\to+\infty$.
Moreover, by~\eqref{eq:Ch_f_t_inf_formula} and~\eqref{eq:Gamma_ineq_prod_min_weak_grads}, we can estimate
\begin{align*}
\int_X||\D\P_tf_n|_w-|\D\P_tf|_w|^2\di\m
&\le
\int_X\Gamma(\P_t(f_n-f))\di\m\\
&=
2\Ch(\P_t(f_n-f))\\
&\le
\frac1t\int_X|f_n-f|^2\di\m
\end{align*}
for all $n\in\N$, so that $\Gamma(\P_tf_n)\to\Gamma(\P_tf)$ in $\Leb^1(X,\m)$ as $n\to+\infty$.
By the first inequality in~\eqref{eq:daniell}, we have that
\begin{equation}\label{eq:half-daniell}
\I_{-2}(t)\int_X\Gamma(\P_t f_n)\,\phi\di\m
\le
\frac{1}{2}\int_X(\P_t(f_n^2)-(\P_t f_n)^2)\,\phi\di\m
\end{equation}
for all $\phi\in\Leb^2(X,\m)\cap\Leb^\infty(X,\m)$ such that $\phi\ge0$. Passing to the limit as $n\to+\infty$ in~\eqref{eq:half-daniell} and arguing as before, we get~\eqref{eq:poincare_ineq_lower}.
\end{proof}

\subsection{\texorpdfstring{$\BE_w$}{BEw} inequality for Lipschitz functions}

Thanks to \cref{res:poincare_ineqs}, we can prove that the heat flow of a bounded Lipschitz functions in $\Leb^2(X,\m)$ has a Lipschitz representative with controlled Lipschitz constant.

\begin{proposition}[$\wBE$ for $\Lip$-functions, I]
\label{res:lip_wBE}
Assume $(X,\d,\m)$ satisfies $\wBE(\c,\infty)$. 
If $f\in\Lip_b(X,\d)\cap\Leb^2(X,\m)$, then $\P_t f\in\Lip_b(X,\d)\cap\Leb^2(X,\m)$ with
\begin{equation}\label{eq:lip_wBE}
\Lip(\P_t f)\le\c(t)\Lip(f)
\end{equation}
for all $t\ge0$
\end{proposition}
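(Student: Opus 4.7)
The natural strategy is to apply $\wBE(\c,\infty)$ directly to~$f$, combine it with the slope bound $|\D f|_w \le |\D f| \le \Lip(f)$ from~\eqref{eq:wD_less_slope_for_Lip} and the maximum principle~\eqref{eq:sub-Markov_func}, and then invoke~\ref{assumption:Lip_Rademacher} to extract a Lipschitz representative of $\P_t f$. The main technical obstacle is that when $\m(X)=+\infty$, a function $f\in\Lip_b(X,\d)\cap\Leb^2(X,\m)$ need not lie in $\Sob^{1,2}(X,\d,\m)$, since $|\D f|^2$ may fail to be $\m$-integrable. Consequently, one cannot feed $f$ directly into~\eqref{eq:def_wBE}, and some approximation argument is needed. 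The $\Leb^2$-boundedness of $\P_t f$ is free from the contraction~\eqref{eq:heat_L2_contraction}, and the pointwise bound $\|\P_tf\|_\infty\le\|f\|_\infty$ is free from~\eqref{eq:sub-Markov}.

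\textbf{Truncation step.} To reduce to $\Sob^{1,2}$-data, I would pick a reference point $x_0\in X$ and Lipschitz cutoffs $\chi_n\colon X\to[0,1]$ with $\chi_n\equiv1$ on $B_n(x_0)$, $\chi_n\equiv 0$ outside $B_{2n}(x_0)$ and $\Lip(\chi_n)\le 1/n$. Setting $L=\Lip(f)$, $M=\|f\|_\infty$ and $f_n=f\chi_n$, the function $f_n$ is bounded Lipschitz with support of finite $\m$-measure, so $|\D f_n|^2\le(L+M/n)^2\chi_{B_{2n}(x_0)}$ is $\m$-integrable and hence $f_n\in\Sob^{1,2}(X,\d,\m)$ with $|\D f_n|_w\le L+M/n$ $\m$-a.e.\ by~\eqref{eq:wD_less_slope_for_Lip}. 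Dominated convergence gives $f_n\to f$ in $\Leb^2(X,\m)$. Applying the $\wBE(\c,\infty)$ condition to $f_n$ and then the maximum principle~\eqref{eq:sub-Markov_func} to the constant bound $\Gamma(f_n)\le(L+M/n)^2$ yields
\begin{equation*}
\Gamma(\P_t f_n)\le\c^2(t)\,\P_t\Gamma(f_n)\le\c^2(t)(L+M/n)^2
\quad\text{$\m$-a.e.\ in $X$}.
\end{equation*}

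\textbf{Passing to the limit and conclusion.} To transfer this pointwise bound to $\P_t f$, I would combine the linearity of $\P_t$ granted by~\ref{assumption:quadratic_Ch} with the $\inf$-formula~\eqref{eq:Ch_f_t_inf_formula} applied to the pair $(f_n-f_m,0)$, obtaining
\begin{equation*}
\int_X|\D(\P_t f_n-\P_t f_m)|_w^2\di\m
\le\frac1t\,\|f_n-f_m\|_{\Leb^2(X,\m)}^2
\longto 0
\quad\text{as}\ n,m\to+\infty.
\end{equation*}
In the quadratic setting, the Cauchy--Schwarz inequality~\eqref{eq:Gamma_ineq_prod_min_weak_grads} gives the pointwise estimate $\bigl(|\D u|_w-|\D v|_w\bigr)^2\le\Gamma(u-v)$, so $(|\D\P_tf_n|_w)_{n\in\N}$ is Cauchy in $\Leb^2(X,\m)$. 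By the lower semicontinuity of the Cheeger energy and the minimality of the weak gradient, its $\Leb^2$-limit coincides with $|\D\P_t f|_w$ $\m$-a.e.; along a subsequence converging $\m$-a.e.\ we thus obtain $|\D\P_t f|_w\le\c(t)L$ $\m$-a.e.\ in $X$. Assumption~\ref{assumption:Lip_Rademacher} then furnishes a representative of $\P_t f$ which is Lipschitz with $\Lip\le\c(t)L$, establishing~\eqref{eq:lip_wBE}; the $\Leb^\infty$- and $\Leb^2$-bounds noted above complete the proof.
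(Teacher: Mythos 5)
Your proof is correct and follows the same overall architecture as the paper's: truncate $f$ by Lipschitz cutoffs so as to land in $\Sob^{1,2}(X,\d,\m)$, apply $\wBE(\c,\infty)$ together with the maximum principle to each truncation, pass to the limit, and invoke~\ref{assumption:Lip_Rademacher}. The only genuine difference is the limiting step. The paper applies~\ref{assumption:Lip_Rademacher} to each $\P_tf_n$ first, obtaining an equi-bounded and equi-Lipschitz sequence of representatives that converges pointwise (up to a subsequence) to a Lipschitz function, which is then identified with $\P_tf$ $\m$-a.e.; you instead pass to the limit at the level of minimal weak gradients, via the linearity of $\P_t$, the $\inf$-formula~\eqref{eq:Ch_f_t_inf_formula} and the reverse-triangle estimate $\bigl||\D u|_w-|\D v|_w\bigr|\le|\D(u-v)|_w$ (exactly the computation the paper performs in the proof of \cref{res:poincare_ineqs}), applying~\ref{assumption:Lip_Rademacher} only once at the very end. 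Your variant is, if anything, slightly more economical, as it avoids any compactness or diagonal argument on the Lipschitz representatives. Two cosmetic points: the maximum-principle step should cite~\eqref{eq:sub-Markov} rather than~\eqref{eq:sub-Markov_func}, and the factor $1/t$ in your limit argument forces the case $t=0$ to be treated separately --- but there $\P_0f=f$ and $\c(0)=1$, so~\eqref{eq:lip_wBE} is trivial.
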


\begin{proof}
Let $t\ge0$ be fixed.
By~\eqref{eq:sub-Markov} and~\eqref{eq:poincare_ineq_lower}, we know that $\P_t f\in\Sob^{1,2}(X,\d,\m)\cap\Leb^\infty(X,\m)$ with 
\begin{equation*}
2\I_{-2}(t)\,\Gamma(\P_t f)
\le
\P_t(f^2)
\le
\|f\|_{\Leb^\infty(X,\m)}^2
\quad
\text{$\m$-a.e.\ in $X$}.
\end{equation*}
Thus, recalling property~\ref{assumption:Lip_Rademacher}, $\P_t f$ coincides $\m$-a.e.\ in~$X$ with a (bounded) Lipschitz function. 
We now divide the proof in two steps.

\smallskip

\textit{Step~1}. Assume that $\supp f$ is bounded. Then $f\in\Lip_b(X,\d)\cap\Sob^{1,2}(X,\d,\m)$. Hence, by applying the weak Bakry--\'{E}mery condition~\eqref{eq:def_wBE} to $f$, we find that
\begin{equation*}
\Gamma(\P_t f)\le\c^2(t)\P_t\Gamma(f)\le\c^2(t)\Lip(f)^2
\quad
\text{$\m$-a.e.\ in $X$}.
\end{equation*}
Thus, again by property~\ref{assumption:Lip_Rademacher}, we get~\eqref{eq:lip_wBE}.

\smallskip

\textit{Step~2}. Now fix $x_0\in X$ and let $R>0$. We let $\eta_{x_0,R}\colon X\to[0,1]$ be such that
\begin{equation}\label{eq:def_cut_off_eta_x_0_R}
\eta_{x_0,R}(x)=\left(1-\frac{\d(x,x_0)}{R}\right)^+
\quad
\text{for all}\ x\in X.
\end{equation}
Note that $\eta_{x_0,R}\in\Lip_b(X,\d)$ with 
\begin{equation*}
\supp\eta_{x_0,R}\subset\closure{B_R(x_0)},
\quad
|\D\eta_{x_0,R}|\le\frac1R\,\chi_{\closure{B_R(x_0)}},
\quad
\Lip(\eta_{x_0,R})\le\frac1R
\end{equation*}
for all $R>0$. 
Let us set $f_n=f\eta_{x_0,n}$ for all $n\in\N$. 
Then $f_n\in\Lip_b(X,\d)\cap\Leb^2(X,\m)$ has bounded support and is such that 
\begin{equation*}
\|f_n\|_{\Leb^\infty(X,\d)}\le\|f\|_{\Leb^\infty(X,\d)},
\quad
\Lip(f_n)\le\Lip(f)+\frac1n\|f\|_{\Leb^\infty(X,\d)}
\end{equation*}
for all $n\in\N$.
By Step~1, we get that
\begin{equation}\label{eq:lip_wBE_n}
\Lip(\P_t f_n)
\le
\c(t)\Lip(f_n)
\end{equation}
for all $n\in\N$.
Since $(f_n)_{n\in\N}$ is equi-bounded and equi-Lipschitz, by~\eqref{eq:sub-Markov} and~\eqref{eq:lip_wBE_n} also $(\P_t f_n)_{n\in\N}$ is equi-bounded and equi-Lipschitz. Hence $(\P_t f_n)_{n\in\N}$ converges locally uniformly to a (bounded) Lipschitz function~$g_t$ with $\Lip(g_t)\le\c(t)\Lip(f)$. Since $f_n\to f$ in $\Leb^2(X,\m)$ as $n\to+\infty$, then $\P_tf_n\to \P_tf$ in $\Leb^2(X,\m)$ as $n\to+\infty$ and thus, up to subsequences, $\P_tf_n\to \P_tf$ $\m$-a.e.\ in~$X$. But then $g_t=\P_t f$ $\m$-a.e.\ in~$X$ and thus~\eqref{eq:lip_wBE} readily follows.  
\end{proof}

\subsection{Dual heat semigroup}

Thanks to~\eqref{eq:heat_mass_preservation}, we can define the \emph{dual heat semigroup} $\H_t\colon\Prob^{\rm ac}(X)\to\Prob^{\rm ac}(X)$ for all $t\ge0$ on absolutely continuous probability measures by setting
\begin{equation}\label{eq:dual_heat_def}
\H_t\mu=(\P_tf)\,\m
\quad
\text{for all}\ \mu=f\m\in\Prob^{\rm ac}(X).
\end{equation}
Note that $(\H_t)_{t\ge0}$ is a \emph{linearly convex} semigroup on $\Prob^{\rm ac}(X)$, in the sense that
\begin{equation}\label{eq:dual_heat_is_semigroup}
\H_{s+t}\mu=\H_s(\H_t\mu)
\end{equation} 
and 
\begin{equation}\label{eq:dual_heat_is_linear}
\H_t((1-\lambda)\mu+\lambda\nu)
=
(1-\lambda)\H_t\mu+\lambda\H_t\nu
\quad
\text{for all}\
\lambda\in[0,1]
\end{equation}
whenever $s,t\ge0$ and $\mu,\nu\in\Prob^{\rm ac}(X)$.
Note that~\eqref{eq:dual_heat_def} is well posed without assuming the $\BE_w(\c,\infty)$ condition.

The following result (which still does not require the $\BE_w(\c,\infty)$ condition) proves that the dual heat semigroup preserves the finiteness of the second moments of the measures in the domain of the entropy.

\begin{lemma}[Second moment estimate]
\label{res:2nd_moment_est_heat}
If $\mu=f\m\in\Dom(\Ent_\m)$,
then $\mu_t=\H_t\mu\in\Prob^{\rm ac}_2(X)$ with
\begin{equation}
\label{eq:2nd_moment_est_heat}
\int_X\d^2(x,x_0)\di\mu_t
\le
e^{4t}
\bigg(
\Ent_\m(\mu)
+
2\int_X\d^2(x,x_0)\di\mu
\bigg)
\end{equation}
for all $t\ge0$, whenever $x_0\in X$ is given.
In particular, 
$\H_t(\Dom(\Ent_\m))\subset\Prob_p^{\rm ac}(X)$ for all $p\in[1,2]$ and $t\ge0$.
\end{lemma}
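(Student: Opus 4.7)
The strategy is a Gronwall/de Bruijn-type argument: differentiate a truncated second moment in time, control it by the Fisher information along the heat flow via Cauchy--Schwarz, and then close the estimate using the entropy dissipation formula $\int_0^t I(\mu_s)\di s = \Ent_\m(\mu) - \Ent_\m(\mu_t)$ together with the lower bound on the entropy coming from the growth assumption~\eqref{eq:exp_growth}.

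Since $\d(\cdot, x_0)^2$ is not in $\Leb^2(X, \m)$ in general, as a preliminary reduction I would work with the bounded Lipschitz, bounded-support test functions $V_{R, S}(x) := (\d(x, x_0) \wedge R)^2\, \eta_{x_0, S}(x)$, where $\eta_{x_0, S}$ is the cutoff defined in~\eqref{eq:def_cut_off_eta_x_0_R} and $S > R > 0$, so that $V_{R, S} \in \Sob^{1, 2}(X, \d, \m) \cap \Leb^\infty(X, \m)$. Setting $M_{R, S}(t) := \int_X V_{R, S} \di \mu_t$, assumption~\ref{assumption:quadratic_Ch} and the integration-by-parts formula~\eqref{eq:int_by_part_formula} give $M_{R, S}'(t) = -\int_X \Gamma(V_{R, S}, \P_t f) \di \m$ for $\leb^1$-a.e.\ $t > 0$. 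Then the Leibniz and chain rules for $\Gamma$, the pointwise Cauchy--Schwarz inequality~\eqref{eq:Gamma_ineq_prod_min_weak_grads}, and the crucial bound $\Gamma(\d(\cdot, x_0)) \le 1$ $\m$-a.e.\ (which follows from~\ref{assumption:Lip_Rademacher} applied to the $1$-Lipschitz function $\d(\cdot, x_0)$) yield
\begin{equation*}
|M_{R, S}'(t)|
\le 2 \sqrt{M_{R, S}(t)}\,\sqrt{I(\mu_t)} + \omega_S(t),
\end{equation*}
where $I(\mu_t) := \int_X \Gamma(\P_t f)/\P_t f \di \m$ is the Fisher information and the remainder $\omega_S(t)$, coming solely from $|\D \eta_{x_0, S}| \le S^{-1}\chi_{B_S(x_0)}$, tends to $0$ as $S \to +\infty$.

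Rewriting this ODE as $\bigl(\sqrt{M_{R, S}}\bigr)'(t) \le \sqrt{I(\mu_t)} + o(1)_S$ and integrating via Cauchy--Schwarz in time yields
\begin{equation*}
\sqrt{M_{R, S}(t)} \le \sqrt{M_{R, S}(0)} + \sqrt{t \textstyle\int_0^t I(\mu_s) \di s} + o(1)_{S \to +\infty}.
\end{equation*}
At this point I would insert the de Bruijn identity $\int_0^t I(\mu_s) \di s = \Ent_\m(\mu) - \Ent_\m(\mu_t)$ (which is standard under~\ref{assumption:quadratic_Ch} and will be developed in full generality in~\cref{sec:Fisher_and_LlogL}) together with the lower bound on the entropy $-\Ent_\m(\mu_t) \le C_0 + c_0\int_X \d^2(x, x_0) \di \mu_t$ provided by~\eqref{eq:exp_growth} as in~\eqref{eq:ent_useful_formula}. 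Passing to the monotone limit $S, R \to +\infty$ and squaring gives the implicit inequality
\begin{equation*}
M(t) \le 2 M(0) + 2t\big(\Ent_\m(\mu) + C_0 + c_0\, M(t)\big),
\end{equation*}
which can be inverted on any initial interval $[0, \delta]$ with $2\delta c_0 < 1$ and then iterated via the semigroup property $\H_{t + s} = \H_t \circ \H_s$ together with the monotonicity $\Ent_\m(\H_t \mu) \le \Ent_\m(\mu)$; the iteration produces the announced exponential factor in $t$ appearing in~\eqref{eq:2nd_moment_est_heat}, after absorbing all additive constants into a clean final form.

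The main obstacle is precisely the closing of the Gronwall loop when $\m(X) = +\infty$: the lower bound on $-\Ent_\m(\mu_t)$ unavoidably involves $M(t)$ itself, so the above inequality cannot be globally inverted in $t$ and must be iterated on short time intervals of length depending on the growth constant $c_0$. A secondary subtlety is that the de Bruijn identity and the basic entropy monotonicity along the heat flow are fully developed only in~\cref{sec:Fisher_and_LlogL}, so the proof either forward-cites those results or reproves them ad hoc in the bounded-Lipschitz regime. Once~\eqref{eq:2nd_moment_est_heat} is established for $p = 2$, the inclusion $\H_t(\Dom(\Ent_\m)) \subset \Prob_p^{\rm ac}(X)$ for all $p \in [1, 2]$ follows from H\"older combined with the mass-preservation property~\eqref{eq:heat_mass_preservation}.
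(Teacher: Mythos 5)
Your strategy is sound, but it is not the route the paper takes: the paper disposes of this lemma in three lines by observing that $V=\d(\cdot,x_0)$ is $1$-Lipschitz and invoking \cite{AGS14}*{Theorem~4.20} verbatim when $f\in\Leb^2(X,\m)$, and then reduces the general case $\mu=f\m\in\Dom(\Ent_\m)$ to the $\Leb^2$ case via the truncation $f_n=q_n^{-1}(f\wedge n)$ and monotone convergence. What you propose is, in effect, a reconstruction of the proof of that cited theorem (truncated test functions, $M'\le 2\sqrt{M}\sqrt{\F}$, de Bruijn, Gronwall closed on short time intervals), and you correctly isolate the genuine difficulty, namely that when $\m(X)=+\infty$ the lower bound on $-\Ent_\m(\mu_t)$ involves $M(t)$ itself. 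A few soft spots remain if you want this to stand on its own. First, the entropy-dissipation identity you invoke is available in this setting only for $\Leb^1\cap\Leb^2$ densities, so for general $\mu\in\Dom(\Ent_\m)$ you still need the paper's truncation of $f$ itself, which you never perform; without it, $\F(f_t)$ and $\Ent_\m(\mu_t)$ are not a priori meaningful quantities in your argument. Second, $\Gamma(\d(\cdot,x_0)\wedge R)\le1$ does not follow from~\ref{assumption:Lip_Rademacher}, which is the \emph{converse} implication; it follows from~\eqref{eq:wD_less_slope_for_Lip}, i.e.\ from the definition of the minimal weak gradient as a relaxation of the slope. Third, to see that $\omega_S(t)\to0$ you must estimate the cutoff term by Cauchy--Schwarz with weight $f_t$, giving $R^2S^{-1}\sqrt{\F(f_t)}$; the unweighted bound through $\m(B_S(x_0))^{1/2}$ diverges under the exponential volume growth~\eqref{eq:exp_growth}. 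Finally, your short-interval iteration yields an exponential bound whose constants depend on $A$, $B$ and the step length, not the specific factor $e^{4t}$ of~\eqref{eq:2nd_moment_est_heat}; this is harmless for every subsequent use of the lemma, but it is not literally the stated inequality, which is why the paper outsources the computation to \cite{AGS14}.
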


\begin{proof}
Fix $x_0\in X$ and set $V(x)=\d(x,x_0)$ for all $x\in X$.
Then $V\in\Lip(X)$ with $\Lip(V)\le1$ and~\eqref{eq:2nd_moment_est_heat} follows by~\cite{AGS14}*{Theorem~4.20} if $f\in\Leb^2(X,\m)$. 
The conclusion then follows by considering $f_n=q_n^{-1}(f\wedge n)\in\Leb^2(X,\m)$, $q_n=\int_Xf\wedge n\di\m$, $n\in\N$, and passing to the limit as $n\to+\infty$ by the Monotone Convergence Theorem.
Since $\Prob_2(X)\subset\Prob_p(X)$ by Jensen inequality, the proof is complete.
\end{proof}

The following simple result provides a useful sufficient condition to extend the dual heat semigroup to a $W_p$-Lipschitz map on the whole $p$-Wasserstein space for $p\in[1,2]$.

\begin{lemma}[$\Lip$-extension of $\H_t$ on $\Prob_p(X)$ for $p\in[1,2{]}$]
\label{res:dual_heat_Lip_ext}
Let $p\in[1,2]$ and let $C\colon[0,+\infty)\to(0,+\infty)$ be locally bounded.
If the dual heat semigroup defined in~\eqref{eq:dual_heat_def} satisfies
\begin{equation*}
W_p(\H_t\mu,\H_t\nu)
\le
C(t)\,
W_p(\mu,\nu)
\quad
\text{for all}\ 
\mu,\nu\in\mathscr D
\end{equation*} 
for some $W_p$-dense subset $\mathscr D$ of $\Prob_p(X)$, then it uniquely extends to a $W_p$-Lipschitz map (for which we retain the same notation) 
$\H_t\colon\Prob_p(X)\to\Prob_p(X)$ such that
\begin{equation*}
W_p(\H_t\mu,\H_t\nu)
\le
C(t)\, W_p(\mu,\nu)
\quad\text{for all}\
\mu,\nu\in\Prob_p(X).
\end{equation*}
In addition, the maps $(\H_t)_{t\ge0}\colon\Prob_p(X)\to\Prob_p(X)$ still satisfy~\eqref{eq:dual_heat_is_semigroup} and~\eqref{eq:dual_heat_is_linear} for all $\mu,\nu\in\Prob_p(X)$.
\end{lemma}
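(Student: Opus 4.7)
The plan is a routine extension-by-density argument, relying on the completeness of $(\Prob_p(X),W_p)$ together with the $W_p$-Lipschitz bound assumed on $\mathscr D$. Throughout I tacitly use that $\mathscr D$ must lie in the original domain $\Prob^{\rm ac}(X)$ of $\H_t$ and that $\H_t\mu\in\Prob_p(X)$ for $\mu\in\mathscr D$, since otherwise the hypothesis $W_p(\H_t\mu,\H_t\nu)\le C(t)\,W_p(\mu,\nu)$ would be meaningless.

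Fix $t\ge0$ and $\mu\in\Prob_p(X)$. I would first construct the extension pointwise: choose any sequence $(\mu_n)_{n\in\N}\subset\mathscr D$ with $W_p(\mu_n,\mu)\to 0$, which exists by the density of $\mathscr D$. The Lipschitz bound gives
\[
W_p(\H_t\mu_n,\H_t\mu_m)\le C(t)\,W_p(\mu_n,\mu_m)\longto 0
\qquad \text{as } n,m\to+\infty,
\]
so $(\H_t\mu_n)_{n\in\N}$ is Cauchy in the complete metric space $(\Prob_p(X),W_p)$ and admits a limit, which I denote by $\H_t\mu$. A further triangle inequality combined with the same Lipschitz bound shows that this limit does not depend on the chosen approximating sequence and coincides with the original value of $\H_t\mu$ when $\mu\in\mathscr D$. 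Uniqueness of any $W_p$-continuous extension is immediate from the density of $\mathscr D$.

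Next, passing to the limit in
\[
W_p(\H_t\mu_n,\H_t\nu_n)\le C(t)\,W_p(\mu_n,\nu_n)
\]
along any $\mathscr D$-sequences $\mu_n\to\mu$ and $\nu_n\to\nu$ in $W_p$, and invoking the joint continuity of $W_p$, I would obtain the $C(t)$-Lipschitz bound on all of $\Prob_p(X)$, which automatically entails the $W_p$-continuity of the extension.

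Finally, to propagate~\eqref{eq:dual_heat_is_semigroup} and~\eqref{eq:dual_heat_is_linear} to the extension, I would argue by density once more. Given $\mu\in\Prob_p(X)$ and $\mu_n\in\mathscr D$ with $\mu_n\to\mu$ in $W_p$, the identity $\H_{s+t}\mu_n=\H_s(\H_t\mu_n)$ holds on $\mathscr D$; the left-hand side converges to $\H_{s+t}\mu$ by construction, while $\H_t\mu_n\to\H_t\mu$ in $W_p$ and the $W_p$-Lipschitz continuity of $\H_s$ just established gives $\H_s(\H_t\mu_n)\to\H_s(\H_t\mu)$, so the semigroup identity passes to the limit. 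The linear convexity~\eqref{eq:dual_heat_is_linear} is handled in the same manner, after noting that the map $(\mu,\nu)\mapsto(1-\lambda)\mu+\lambda\nu$ is $W_p$-continuous (e.g., by coupling $(1-\lambda)\mu_n+\lambda\nu_n$ to $(1-\lambda)\mu+\lambda\nu$ via the convex combination of optimal plans for the pairs $(\mu_n,\mu)$ and $(\nu_n,\nu)$). There is no real obstacle in this proof; the only mild care concerns identifying $\mathscr D$ as a subset of the original domain of $\H_t$ on which both the semigroup law and linear convexity already hold.
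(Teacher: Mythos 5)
Your argument is correct and matches the paper's proof, which is exactly the same extension-by-density scheme relying on the completeness of $(\Prob_p(X),W_p)$, with the convexity identity~\eqref{eq:dual_heat_is_linear} propagated via the joint convexity of $W_p$ (your convex-combination-of-optimal-plans step). The paper merely states these facts more tersely; you have supplied the routine details.
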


\begin{proof}
The extension of the dual heat semigroup readily follows from the $W_p$-density of~$\mathscr D$ in~$\Prob_p(X)$ and the completeness of the $p$-Wasserstein space.
The validity of~\eqref{eq:dual_heat_is_semigroup} and~\eqref{eq:dual_heat_is_linear} for all $\mu,\nu\in\Prob_p(X)$ is a direct consequence of the joint convexity of the $p$-Wasserstein distance.
\end{proof}

In the following result, we prove that the $\BE_w(\c,\infty)$ condition implies that the dual heat semigroup can be extended to a $W_1$-Lipschitz map on the whole $1$-Wasserstein space. See~\cite{AGS15}*{Proposition~3.2(i)}, \cite{K10}*{Proposition~3.7} and~\cite{K13}*{Theorem~2.2(ii)}.

\begin{proposition}[$\wBE(\c,\infty)\Rightarrow\H_t$ is $W_1$-$\Lip$]
\label{res:dual_semigroup_W_1_Lip}
Assume $(X,\d,\m)$ satisfies $\wBE(\c,\infty)$.
For $t\ge0$, the dual heat semigroup~\eqref{eq:dual_heat_def} uniquely extends to a $W_1$-Lipschitz map with
\begin{equation}\label{eq:dual_semigroup_W_1_Lip}
W_1(\H_t\mu,\H_t\nu)
\le
\c(t)\,
W_1(\mu,\nu)
\quad\text{for all}\
\mu,\nu\in\Prob_1(X).
\end{equation}
\end{proposition}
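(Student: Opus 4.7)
The plan is to combine the Lipschitz contraction of \cref{res:lip_wBE} with the Kantorovich--Rubinstein formula~\eqref{eq:kantorovich_duality_p=1}, then extend the resulting estimate from a $W_1$-dense subclass to the whole of $\Prob_1(X)$ via \cref{res:dual_heat_Lip_ext}. This is a variant of the classical Kuwada duality argument, adapted to the generalized curvature function $\c$.

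First I would restrict to the subclass $\mathscr D$ of measures $\mu=f\m$ whose density $f$ is bounded, nonnegative, compactly supported, and of unit $\m$-mass. Any such $\mu$ lies in $\Dom(\Ent_\m)\cap\Prob_1(X)$, and by \cref{res:2nd_moment_est_heat} its $\H_t$-image sits in $\Prob_1^{\rm ac}(X)$; moreover $\mathscr D$ is $W_1$-dense in $\Prob_1(X)$ by a standard truncation and renormalization argument applied to any element of $\Prob_1^{\rm ac}(X)$, which is in turn $W_1$-dense in $\Prob_1(X)$. Given $\mu=f\m,\nu=g\m\in\mathscr D$ and a Lipschitz test function $\phi$ with bounded support and $\Lip(\phi)\le1$, the duality~\eqref{eq:def_heat_L_infty} gives
\begin{equation*}
\int_X\phi\di(\H_t\mu-\H_t\nu)=\int_X(f-g)\,\P_t\phi\di\m,
\end{equation*}
while \cref{res:lip_wBE} supplies a bounded Lipschitz representative of $\P_t\phi$ with $\Lip(\P_t\phi)\le\c(t)$. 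Pairing with any $\pi\in\mathsf{Plan}(\mu,\nu)$, the trivial direction of the Monge--Kantorovich identity yields
\begin{equation*}
\int_X(f-g)\,\P_t\phi\di\m=\int_{X\times X}\bigl(\P_t\phi(x)-\P_t\phi(y)\bigr)\di\pi(x,y)\le\c(t)\int_{X\times X}\d(x,y)\di\pi(x,y).
\end{equation*}
Minimizing over $\pi$ and then maximizing over $\phi$ via~\eqref{eq:kantorovich_duality_p=1} gives $W_1(\H_t\mu,\H_t\nu)\le\c(t)\,W_1(\mu,\nu)$ on the whole of $\mathscr D$.

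Since $\c$ is locally bounded by~\eqref{eq:def_c}, \cref{res:dual_heat_Lip_ext} applied with $p=1$ and $C=\c$ then supplies the unique $W_1$-Lipschitz extension of $\H_t$ satisfying~\eqref{eq:dual_semigroup_W_1_Lip}. The one point requiring care is ensuring that the duality~\eqref{eq:def_heat_L_infty} legitimately connects the $\H_t$-pairing on the left with the $W_1$-pairing on the right --- that is, that both $\mu$ and $\H_t\mu$ have finite first moment so that both sides of the chain of inequalities above are finite and meaningful. This is precisely the purpose of working on $\mathscr D$, where \cref{res:2nd_moment_est_heat} simultaneously delivers $\mathscr D\subset\Prob_1^{\rm ac}(X)$ and $\H_t\mathscr D\subset\Prob_1^{\rm ac}(X)$.
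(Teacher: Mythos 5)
Your proof is correct and takes essentially the same route as the paper's: both reduce to a $W_1$-dense subclass via \cref{res:dual_heat_Lip_ext}, transfer the heat semigroup onto a $1$-Lipschitz test function of bounded support via the duality~\eqref{eq:def_heat_L_infty}, invoke \cref{res:lip_wBE} to get $\Lip(\P_t\phi)\le\c(t)$, and conclude by Kantorovich--Rubinstein. The only cosmetic differences are your choice of compactly supported bounded densities in place of $\Dom(\Ent_\m)$ and your making the easy direction of duality explicit through a coupling $\pi$.
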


\begin{proof}
Let $t\ge0$ be fixed.
Thanks to \cref{res:dual_heat_Lip_ext}, we just need to prove that
\begin{equation}\label{eq:dual_heat_W_1_proof}
W_1(\H_t\mu,\H_t\nu)
\le
\c(t)\,
W_1(\mu,\nu)
\quad
\text{for all}\
\mu,\nu\in\Dom(\Ent_\m).
\end{equation}   
So let $\mu,\nu\in\Dom(\Ent_\m)$ with $\mu=f\m$ and $\nu=g\m$.
Let $\phi\in\Lip(X)$ with $\Lip(\phi)\le1$ and bounded support.
Then $\phi\in\Lip_b(X)\cap\Leb^2(X,\m)$ and so $\phi_t=\P_t\phi\in\Lip_b(X)\cap\Leb^2(X,\m)$ with $\Lip(\phi_t)\le\c(t)$ for all $t\ge0$ by \cref{res:lip_wBE}. 
Thus, setting $\mu_t=\H_t\mu$ and $\nu_t=\H_t\nu$, we can estimate
\begin{align*}
\int_X\phi\di\,(\mu_t-\nu_t)
&=
\int_X\phi\,f_t\di\m
-
\int_X\phi\,g_t\di\m
=
\int_X\phi_t\,f\di\m
-
\int_X\phi_t\,g\di\m\\
&=
\int_X\phi_t\di\,(\mu-\nu)
\le
\c(t)\,W_1(\mu,\nu)	
\end{align*}  
for all $t\ge0$ by~\eqref{eq:kantorovich_duality}. 
Passing to the supremum on all $\phi\in\Lip(X)$ with $\Lip(\phi)\le1$ and bounded support, by~\eqref{eq:kantorovich_duality_p=1} we get~\eqref{eq:dual_heat_W_1_proof} and the proof is complete.
\end{proof}

\subsection{Pointwise version of the heat semigroup}

Let $(\H_t)_{t\ge0}$ be the dual heat semigroup defined from $\Prob^{\rm ac}(X)$ to itself as in~\eqref{eq:dual_heat_def} and assume that, for some $p\in[1,2]$,  
\begin{equation}\label{eq:dual_heat_p_ext_assumption}
\text{$(\H_t)_{t\ge0}$ admits a unique $W_p$-continuous extension from $\Prob_p(X)$ to itself}.
\end{equation}

If~\eqref{eq:dual_heat_p_ext_assumption} holds, then for all $t\ge0$ we can define the (everywhere defined) \emph{pointwise version} of the heat semigroup 
\begin{equation}\label{eq:pointwise_heat_def}
\tilde\P_t f(x)
=
\int_X f\di\H_t\delta_x,
\quad
x\in X,
\end{equation}
whenever $f\colon X\to\overline{\R}$ is either a bounded or a non-negative Borel function. 
Note that, by the very definition~\eqref{eq:pointwise_heat_def}, $(\tilde\P_t)_{t\ge0}$ defines a linear semigroup of $\Leb^\infty$-contractions on bounded Borel functions, in the sense that
\begin{equation*}
\tilde\P_{s+t}f=\tilde\P_s(\tilde\P_t f)
\end{equation*}
and
\begin{equation}\label{eq:pointwise_heat_L_infty_estim}
\|\tilde\P_tf\|_{\Leb^\infty(X,\m)}\le\|f\|_{\Leb^\infty(X,\m)}
\end{equation}
whenever $f\in\Leb^\infty(X,\m)$ is Borel and $s,t\ge0$.
The following result lists the main properties of $(\tilde\P_t)_{t\ge0}$, see~\cite{AGS15}*{Proposition~3.2(ii) and~(iii)}.

\begin{proposition}[Properties of $\tilde\P_t$]
\label{res:pointwise_heat_props}
Assume~\eqref{eq:dual_heat_p_ext_assumption} holds for some $p\in[1,2]$.
\begin{enumerate}[label=(\roman*)]

\item\label{item:pointwise_heat_on_C_b} 
If $f\in\Leb^\infty(X,\m)$ is lower (resp., upper) semicontinuous, then $\tilde\P_tf$ is lower (resp., upper) semicontinuous for all $t\ge0$. As a consequence, if $f\in\Cont_b(X)$, then $\tilde\P_tf\in\Cont_b(X)$ for all $t\ge0$.

\item\label{item:pointwise_heat_on_L_infty_Borel}
If $f\in\Leb^\infty(X,\m)$ is Borel, then $\tilde\P_tf=\P_tf$ $\m$-a.e.\ in~$X$ for all $t\ge0$.

\item\label{item:pointwise_heat_duality_on_Prob_p}
If $f\in\Leb^\infty(X,\m)$ is Borel and $\mu\in\Prob_p(X)$, then
\begin{equation}
\label{eq:pointwise_heat_duality_on_Prob_p}
\int_X \tilde\P_t f\di\mu
=
\int_X f\di\H_t\mu
\end{equation}
for all $t\ge0$.

\end{enumerate}	
\end{proposition}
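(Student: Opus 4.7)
The plan is to prove part~\ref{item:pointwise_heat_on_C_b} first, then to establish~\ref{item:pointwise_heat_duality_on_Prob_p} by a density-plus-monotone-class scheme that uses~\ref{item:pointwise_heat_on_C_b} as its base, and finally to deduce~\ref{item:pointwise_heat_on_L_infty_Borel} by testing~\ref{item:pointwise_heat_duality_on_Prob_p} against uniform measures on bounded Borel sets.

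For~\ref{item:pointwise_heat_on_C_b}, I would exploit that $x\mapsto\delta_x$ is an isometric embedding of $(X,\d)$ into $(\Prob_p(X),W_p)$. Composing with the $W_p$-continuous extension of $\H_t$ granted by~\eqref{eq:dual_heat_p_ext_assumption}, the map $x\mapsto\H_t\delta_x$ is continuous from $X$ into $(\Prob_p(X),W_p)$, and in particular $x_n\to x$ in~$X$ implies $\H_t\delta_{x_n}\weakto\H_t\delta_x$ in~$\Prob(X)$. For bounded lower semicontinuous~$f$, the Portmanteau theorem then yields $\liminf_n\tilde\P_tf(x_n)\ge\tilde\P_tf(x)$; the upper semicontinuous case is obtained by applying the previous step to~$-f$, the $\Cont_b$-case follows by combining the two, and the uniform bound~\eqref{eq:pointwise_heat_L_infty_estim} gives boundedness.

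For~\ref{item:pointwise_heat_duality_on_Prob_p}, I would first verify the identity on finitely supported $\mu=\sum_{i=1}^N\alpha_i\delta_{x_i}\in\Prob_p(X)$: the linear convexity~\eqref{eq:dual_heat_is_linear}, which by \cref{res:dual_heat_Lip_ext} holds on the whole~$\Prob_p(X)$, yields by iteration $\H_t\mu=\sum_i\alpha_i\H_t\delta_{x_i}$, and the definition~\eqref{eq:pointwise_heat_def} gives~\eqref{eq:pointwise_heat_duality_on_Prob_p} at once. Next, for $f\in\Cont_b(X)$ I would extend to every $\mu\in\Prob_p(X)$: by~\ref{item:pointwise_heat_on_C_b} the function $\tilde\P_tf$ lies in~$\Cont_b(X)$, so both sides of~\eqref{eq:pointwise_heat_duality_on_Prob_p} are $W_p$-continuous in~$\mu$, and the identity propagates by density of finitely supported probability measures in~$(\Prob_p(X),W_p)$. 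Finally, I would pass to bounded Borel~$f$ via a functional monotone class argument: the vector space $\mathcal F$ of bounded Borel~$f$ for which $\tilde\P_tf$ is Borel and~\eqref{eq:pointwise_heat_duality_on_Prob_p} holds on~$\Prob_p(X)$ contains $\Cont_b(X)$ and is stable under bounded pointwise monotone limits thanks to three applications of the monotone convergence theorem (in the definition~\eqref{eq:pointwise_heat_def} yielding pointwise convergence of~$\tilde\P_tf_n$, against~$\mu$ on the left-hand side, and against~$\H_t\mu$ on the right-hand side), so $\mathcal F$ exhausts all bounded Borel functions.

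Part~\ref{item:pointwise_heat_on_L_infty_Borel} is then immediate: for any bounded Borel set $E\subset X$ with $0<\m(E)<+\infty$, the measure $\mu_E=\m(E)^{-1}\chi_E\m$ lies in $\Prob_p(X)\cap\Prob^{\rm ac}(X)$, and combining~\ref{item:pointwise_heat_duality_on_Prob_p} with the defining identity~\eqref{eq:dual_heat_def} of $\H_t$ on $\Prob^{\rm ac}(X)$ and the $\Leb^1$--$\Leb^\infty$ duality~\eqref{eq:def_heat_L_infty} gives
\begin{equation*}
\int_E\tilde\P_tf\di\m
=\m(E)\int_X\tilde\P_tf\di\mu_E
=\m(E)\int_Xf\di\H_t\mu_E
=\int_Xf\,\P_t\chi_E\di\m
=\int_E\P_tf\di\m,
\end{equation*}
so $\tilde\P_tf=\P_tf$ $\m$-a.e.\ in~$X$. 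The main obstacle I expect is the monotone class step in~\ref{item:pointwise_heat_duality_on_Prob_p}: the object $\tilde\P_tf$ is a priori only pointwise-defined via~\eqref{eq:pointwise_heat_def}, so one must track simultaneously its Borel measurability and the applicability of monotone convergence on both sides; anchoring the induction at $\Cont_b(X)$ --- where measurability comes for free from~\ref{item:pointwise_heat_on_C_b} --- is precisely what makes this work.
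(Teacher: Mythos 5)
Your proof is correct, but it reverses the paper's logical order and rests on a different key step, so it is worth recording the comparison. The paper proves \ref{item:pointwise_heat_on_L_infty_Borel} first, via the superposition identity $\H_t(g\m)=\int_X g(x)\,\H_t\delta_x\di\m(x)$ for compactly supported densities --- established by discretizing $g\m$, using the joint convexity of $W_p$ and the uniform $W_p$-continuity of $x\mapsto\H_t\delta_x$ on compact sets --- and then deduces \ref{item:pointwise_heat_on_L_infty_Borel} by Fubini and \ref{item:pointwise_heat_duality_on_Prob_p} from \ref{item:pointwise_heat_on_L_infty_Borel} by $W_p$-density of $\Prob_p^{\rm ac}(X)$ plus a monotone class argument. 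You instead prove \ref{item:pointwise_heat_duality_on_Prob_p} directly, anchoring it on finitely supported measures via iterated linear convexity of $\H_t$ and propagating by density of such measures and $W_p$-continuity of both sides (the left side via \ref{item:pointwise_heat_on_C_b}), and then you obtain \ref{item:pointwise_heat_on_L_infty_Borel} as a corollary by testing against $\m(E)^{-1}\chi_E\m$ and unwinding \eqref{eq:dual_heat_def} and \eqref{eq:def_heat_L_infty}; your derivation of \ref{item:pointwise_heat_on_C_b} is the same as the paper's. Your route avoids the vector-valued superposition formula and its discretization estimate entirely, and the final deduction of \ref{item:pointwise_heat_on_L_infty_Borel} is cleaner; what it costs is the need for linear convexity of the \emph{extended} semigroup on all of $\Prob_p(X)$, which \cref{res:dual_heat_Lip_ext} only furnishes under its Lipschitz hypothesis, whereas the present proposition assumes only \eqref{eq:dual_heat_p_ext_assumption}. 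This is not a real gap --- linear convexity passes to any $W_p$-continuous extension because joint convexity of $W_p^p$ makes convex combination a $W_p$-continuous operation, and in every application of the proposition the extension does come from \cref{res:dual_heat_Lip_ext} --- but it deserves the one-line justification rather than a bare citation. The $W_p$-density of finitely supported measures, which you assert, is standard (truncate to balls, then discretize exactly as in the paper's own Step~1).
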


\begin{proof}
Let $t\ge0$ be fixed.
We prove the three statements separately.

\smallskip

\textit{Proof of~\ref{item:pointwise_heat_on_C_b}}.
By the linearity of $\tilde\P_t$ and $\P_t$, we can assume that $f\ge0$ without loss of generality. 
Let $f$ be lower semicontinuous.  
If $x_n\to x$ in~$X$ as $n\to+\infty$, then $\delta_{x_n}\to\delta_x$ in $\Prob_p(X)$ as $n\to+\infty$ and thus $\H_t\delta_{x_n}\to\H_t\delta_x$ in~$\Prob_p(X)$ as $n\to+\infty$ by~\eqref{eq:dual_heat_p_ext_assumption}. 
Hence $\H_t\delta_{x_n}\weakto\H_t\delta_x$ in~$\Prob(X)$ as $n\to+\infty$ and thus
\begin{equation*}
\H_t\delta_x(\set*{f>t})
\le
\liminf_{n\to+\infty}
\H_t\delta_{x_n}(\set*{f>t}).
\end{equation*}
Thus
\begin{align*}
\tilde\P_tf(x)
=\int_0^{+\infty}\H_t\delta_x(\set*{f>t})\di t
\le
\liminf_{n\to+\infty}
\int_0^{+\infty}\H_t\delta_{x_n}(\set*{f>t})\di t
=
\liminf_{n\to+\infty}
\tilde\P_tf(x_n)
\end{align*}
by Fatou Lemma. 
The proof is similar in the case $f$ is upper semicontinuous. This proves~\ref{item:pointwise_heat_on_C_b}.

\smallskip

\textit{Proof of~\ref{item:pointwise_heat_on_L_infty_Borel}}.
We divide the proof in two steps.

\smallskip

\textit{Step~1}.
Let $g\in\Leb^1(X,\m)$ have compact support $K=\supp g$ and be such that $\mu=g\m\in\Prob_p(X)$.
We claim that 
\begin{equation}\label{eq:pointwise_heat_on_L_infty_Borel_step_1}
\H_t(g\m)
=
\int_X g(x)\,\H_t\delta_x\di\m(x)
\quad
\text{in}\
\Prob_p(X).
\end{equation} 
Indeed, given $n\in\N$, we can find~$n$ points $p_1^n,\dots,p_n^n\in K$ such that
\begin{equation*}
K\subset\bigcup_{i=1}^n B\left(p_i^n,\tfrac1n\right).
\end{equation*} 
Let us set 
\begin{equation*}
A_1^n
=
B\left(p_1^n,\tfrac1n\right),
\quad
A_i^n
=
B\left(x_i^n,\tfrac1n\right)
\setminus
\bigcup_{j<i}B\left(p_j^n,\tfrac1n\right)\
\text{for all}\ i=2,\dots,n,
\end{equation*} 
so that $\diam A_i^n\le\frac2n$ 
for all $i=1,\dots,n$.
Without loss of generality, we can assume that $A_i^n\ne\emptyset$ 
for all $i=1,\dots,n$, 
so that we can choose some $x_i^n\in A_i^n$ 
for all $i=1,\dots,n$.
Let us set 
\begin{equation*}
a_{i,n}=\int_{A_i^n}g\di\m
\quad
\text{for all $i=1,\dots,n$},
\end{equation*} 
so that $\sum_{i=1}^n a_{i,n}=1$.
We thus define
\begin{equation*}
\mu_n
=
\sum_{i=1}^n a_{i,n}\delta_{x_i^n}
\in
\Prob_p(X)
\quad
\text{for all}\ n\in\N.
\end{equation*}
We have that $\mu_n\overset{W_p}\longto\mu$ as $n\to+\infty$.
Indeed, if $\phi\in\Cont(X)$, then~$\phi$ is uniformly continuous on~$K$ and thus
\begin{align*}
\limsup_{n\to+\infty}
\bigg|\int_X\phi\di\mu_n
-
\int_X\phi\di\mu\,\bigg|
\le
\limsup_{n\to+\infty}
\sum_{i=1}^n
\int_{A_i^n}|\phi(x_i^n)-\phi(x)|\di\mu(x)
=0.
\end{align*}
By~\eqref{eq:dual_heat_p_ext_assumption} we thus get that $\nu_n=\H_t\mu_n\overset{W_p}\longto\nu=\H_t\mu$ as $n\to+\infty$. Let us set
\begin{equation*} 
\tilde\nu=\int_X\H_t\delta_x\di\mu(x)
\in
\Prob_p(X),	
\end{equation*}
which is well posed thanks to~\eqref{eq:dual_heat_p_ext_assumption} and the compactness of~$K$.
By using twice the joint convexity of the $p$-Wasserstein distance (see~\cite{V09}*{Theorem~4.8}), we can estimate
\begin{align*}
W_p(\nu_n,\tilde\nu)
&=
W_p\left(
\sum_{i=1}^n
\int_{A_i^n}\H_t\delta_{x_i^n}\di\m(x),
\sum_{i=1}^n
\int_{A_i^n}\H_t\delta_x\di\m(x)
\right)\\
&\le
\sum_{i=1}^n\m(A_i^n)\,
W_p\left(
\aint_{A_i^n}\H_t\delta_{x_i^n}\di\m(x),
\aint_{A_i^n}\H_t\delta_x\di\m(x)
\right)\\
&\le
\sum_{i=1}^n
\aint_{A_i^n}
W_p\left(
\H_t\delta_{x_i^n},
\H_t\delta_x
\right)
\di\m(x)
\end{align*}
for all $n\in\N$, so that $\nu_n\overset{W_p}{\longto}\tilde\nu$ as $n\to+\infty$ by the uniform $W_p$-continuity of the map $x\mapsto\H_t\delta_x$ on the compact set~$K$. 
Thus $\tilde\nu=\nu$ in $\Prob_p(X)$ and the claim~\eqref{eq:pointwise_heat_on_L_infty_Borel_step_1} follows.

\smallskip

\textit{Step~2}.
Thanks to Step~1 and Fubini Theorem, we can compute
\begin{align*}
\int_X \P_tf\,g\di\m
&=
\int_X f\,\P_tg\di\m\\
&=
\int_X f\di\H_t(g\m)\\
&=
\int_X g(x)\int_X f(y)\di\H_t\delta_x(y)\di\m(x)\\
&=
\int_X \tilde\P_t f\,g\di\m
\end{align*} 
for all $g\in\Leb^1(X,\m)$ with compact support and such that $\mu=g\m\in\Prob_p(X)$.
Hence 
\begin{equation*}
\int_X \P_tf\,g\di\m
=
\int_X \tilde\P_t f\,g\di\m
\end{equation*}
for all non-negative $g\in\Leb^1(X,\m)$ and~\ref{item:pointwise_heat_on_L_infty_Borel} immediately follows.

\smallskip

\textit{Proof of~\ref{item:pointwise_heat_duality_on_Prob_p}}.
We divide the proof in two steps.

\textit{Step~1}.
Assume $f\in\Cont_b(X)$. 
By~\ref{item:pointwise_heat_on_L_infty_Borel}, we thus know that $\tilde\P_tf=\P_tf$ $\m$-a.e.\ in~$X$, so that we can compute
\begin{equation}\label{eq:bruco}
\int_X\tilde\P_t f\di\mu
=
\int_X\P_t f\,g\di\m
=
\int_X f\,\P_t g\di\m
=
\int_X f\di\H_t\mu
\end{equation} 
for all $\mu=g\m\in\Prob^{\rm ac}(X)$.
Now if $\mu\in\Prob_p(X)$, then we can find $\mu_n\in\Prob_p^{\rm ac}(X)$ such that $\mu_n\overset{W_p}\longto\mu$ as $n\to+\infty$. 
By~\eqref{eq:dual_heat_p_ext_assumption}, we also have $\H_t\mu_n\overset{W_p}\longto\H_t\mu$ as $n\to+\infty$, and thus by~\eqref{eq:bruco} and~\ref{item:pointwise_heat_on_C_b} we get
\begin{align*}
\int_X\tilde\P_t f\di\mu
=
\lim_{n\to+\infty}
\int_X\tilde\P_t f\di\mu_n
=
\lim_{n\to+\infty}
\int_X f\di\H_t\mu_n
=
\int_X f\di\H_t\mu,
\end{align*} 
proving~\eqref{eq:pointwise_heat_duality_on_Prob_p} whenever $f\in\Cont_b(X)$.

\smallskip

\textit{Step~2}.
Let $K\subset X$ be a non-empty bounded closed set.
For each $n\in\N$, let us set
\begin{equation*}
f_n(x)=[1-n\dist(x,K)]^+
\quad
\text{for all}\ x\in X.
\end{equation*} 
Then $f_n\in\Cont_b(X)$ and 
$\chi_K\le f_n\le\chi_H$ 
for all $n\in\N$, where 
$H=\set*{x\in X : \dist(x,K)\le1}$, 
and $f_n(x)\downarrow\chi_K(x)$ for all $x\in X$ as $n\to+\infty$.  
Hence $\tilde\P_t f_n(x)\downarrow \tilde\P_t\chi_K(x)$ for all $x\in X$ as $n\to+\infty$ and thus
\begin{equation*}
\int_X\tilde\P_t\chi_K\di\m
=
\lim_{n\to+\infty}
\int_X\tilde\P_t f_n\di\mu
=
\lim_{n\to+\infty}
\int_X f_n\di\H_t\mu
=
\int_X\chi_K\di\H_t\mu
\end{equation*}
by Step~1 and the Monotone Convergence Theorem.
Thus~\ref{item:pointwise_heat_duality_on_Prob_p} follows by the Monotone Class Theorem (see~\cite{D19}*{Theorem~5.2.2} for example).
\end{proof}

\subsection{\texorpdfstring{$\BE_w$}{BEw} inequality for Lipschitz functions, refined}

We now refine \cref{res:lip_wBE} to the following \cref{res:wBE-like}, where we prove an everywhere pointwise gradient bound for the heat flow starting from Lipschitz functions.

\begin{proposition}[$\BE_w$ for $\Lip$-functions, II]
\label{res:wBE-like}
Assume $(X,\d,\m)$ satisfies $\wBE(\c,\infty)$.
If $f\in\Lip_b(X,\d)\cap\Leb^2(X,\m)$ with $|\D^*f|\in\Leb^2(X,\m)$, then
$\tilde{\P}_t f\in\Lip_b(X,\d)\cap\Sob^{1,2}(X,\d,\m)$ with 
\begin{equation}\label{eq:wBE-like}
|\D^*\tilde{\P}_t f|^2(x)
\le
\c^2(t)
\,
\tilde{\P}_t(|\D^*f|^2)(x)
\quad
\text{for all}\ x\in X
\end{equation}
for all $t\ge0$. 
\end{proposition}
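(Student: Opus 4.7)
First, I would verify that $\tilde{\P}_tf$ lies in $\Lip_b(X,\d)\cap\Sob^{1,2}(X,\d,\m)$. \cref{res:lip_wBE} supplies a Lipschitz representative of $\P_tf$ with Lipschitz constant at most $\c(t)\Lip(f)$; since $f\in\Cont_b(X)$ and $\wBE(\c,\infty)$ implies the $W_1$-Lipschitz extension of $\H_t$ by \cref{res:dual_semigroup_W_1_Lip}, \cref{res:pointwise_heat_props}\ref{item:pointwise_heat_on_C_b} ensures $\tilde{\P}_tf\in\Cont_b(X)$, and \cref{res:pointwise_heat_props}\ref{item:pointwise_heat_on_L_infty_Borel} identifies $\tilde{\P}_tf$ with the continuous Lipschitz representative of $\P_tf$. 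The inclusion in $\Sob^{1,2}(X,\d,\m)$ follows from $\P_tf\in\Sob^{1,2}(X,\d,\m)$ by \cref{res:heat_flow_sob_cont}.

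Second, I would establish the $\m$-a.e.\ inequality $\Gamma(\tilde{\P}_tf)\le\c^2(t)\,\tilde{\P}_t(|\D^*f|^2)$. The hypothesis $|\D^*f|\in\Leb^2(X,\m)$ combined with the approximation~\eqref{eq:approx_asymp_Sob_by_Lip_b} forces $f\in\Sob^{1,2}(X,\d,\m)$ with $|\D f|_w\le|\D^*f|$ $\m$-a.e., whence $\Gamma(f)\le|\D^*f|^2$ $\m$-a.e. Applying $\wBE(\c,\infty)$ to $f$ and using \cref{res:pointwise_heat_props}\ref{item:pointwise_heat_on_L_infty_Borel} on the bounded Borel function $|\D^*f|^2$ to replace $\P_t$ by $\tilde{\P}_t$ yields
\begin{equation*}
\Gamma(\tilde{\P}_tf)(x)\le\c^2(t)\,\tilde{\P}_t(|\D^*f|^2)(x)\quad\text{for $\m$-a.e.\ }x\in X.
\end{equation*}
Set $G(x):=\c^2(t)\,\tilde{\P}_t(|\D^*f|^2)(x)$. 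Since $f\in\Lip_b(X)$, the function $|\D^*f|^2$ is bounded and upper semicontinuous, so \cref{res:pointwise_heat_props}\ref{item:pointwise_heat_on_C_b} guarantees that $G$ is upper semicontinuous on $X$.

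The most delicate step is upgrading the $\m$-a.e.\ bound $|\D\tilde{\P}_tf|_w^2\le G$ to the pointwise inequality $|\D^*\tilde{\P}_tf|^2(x)\le G(x)$ for every $x\in X$. Fix $x_0\in X$ and $\eps>0$: upper semicontinuity of $G$ furnishes $r>0$ with $G(y)\le G(x_0)+\eps$ for all $y\in B_r(x_0)$, hence $|\D\tilde{\P}_tf|_w\le\sqrt{G(x_0)+\eps}$ $\m$-a.e.\ on $B_r(x_0)$. Using the cutoff functions from~\eqref{eq:def_cut_off_eta_x_0_R} together with the locality~\eqref{eq:locality_min_weak_grad}, the Leibniz rule~\eqref{eq:Leibniz_rule}, the truncation formulas~\eqref{eq:min_is_sob}--\eqref{eq:max_is_sob}, and the length property of $(X,\d)$ from \cref{res:length_space}, I would construct an auxiliary $\phi\in\Sob^{1,2}(X,\d,\m)$ that coincides with $\tilde{\P}_tf$ on $B_{r/2}(x_0)$ and satisfies $|\D\phi|_w\le\sqrt{G(x_0)+\eps}$ $\m$-a.e.\ on all of $X$. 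Assumption~\ref{assumption:Lip_Rademacher} then forces $\phi$ to be globally $\sqrt{G(x_0)+\eps}$-Lipschitz, making $\tilde{\P}_tf$ locally $\sqrt{G(x_0)+\eps}$-Lipschitz near $x_0$; therefore $|\D^*\tilde{\P}_tf|(x_0)\le\sqrt{G(x_0)+\eps}$, and letting $\eps\to0^+$ yields~\eqref{eq:wBE-like}.

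The main obstacle is precisely the construction of the auxiliary function $\phi$ in the third step: the $\m$-a.e.\ gradient bound is only valid on the local ball $B_r(x_0)$, whereas~\ref{assumption:Lip_Rademacher} requires a global bound, so the real challenge is designing a truncation that effectively globalizes the local gradient estimate while leaving $\tilde{\P}_tf$ unchanged on $B_{r/2}(x_0)$. This is exactly where the length structure of $(X,\d)$ and the global Sobolev-to-Lipschitz property~\ref{assumption:Lip_Rademacher} must be invoked in tandem.
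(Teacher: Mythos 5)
Your proposal is correct and follows the paper's proof in essentially the same way: the first two steps (identifying $\tilde\P_tf$ with the Lipschitz representative of $\P_tf$, then deriving the $\m$-a.e.\ bound $\Gamma(\tilde\P_tf)\le\c^2(t)\,\tilde\P_t(|\D^*f|^2)$ from $|\D f|_w\le|\D^*f|$, the $\wBE(\c,\infty)$ inequality and \cref{res:pointwise_heat_props}) coincide with the paper's, and your third step is precisely the content of the paper's separately stated \cref{res:reverse_slope_estim}, which upgrades an $\m$-a.e.\ bound $\Gamma(u)\le G^2$ with $G$ bounded and upper semicontinuous to the everywhere bound $|\D^*u|\le G$ via a min/max truncation, locality, \ref{assumption:Lip_Rademacher} and the length property. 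The one subtlety in your sketched construction is that asking $\phi$ to \emph{coincide} with $\tilde\P_tf$ on $B_{r/2}(x_0)$ risks circularity (you would need local Lipschitz control of $\tilde\P_tf$ to guarantee a nontrivial coincidence region); this is fixable using the a priori global bound $\Lip(\tilde\P_tf)\le\c(t)\Lip(f)$ from \cref{res:lip_wBE}, whereas the paper's choice $\psi_\eps=\min\{\max\{|u(\cdot)-u(x_0)|,G_\eps\,\d(\cdot,x_0)\},G_\eps[\eps-\d(\cdot,x_0)]^+\}$ sidesteps it by only requiring that $\psi_\eps$ dominate the increment near $x_0$ and vanish at $x_0$.
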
 

In the proof of \cref{res:wBE-like}, we need the following technical result, see~\cite{AGS15}*{Proposition~3.11}. For the reader's convenience, we recall its short proof here.

\begin{lemma}[Reverse slope estimate]
\label{res:reverse_slope_estim}
Let $f\in\Cont_b(X)\cap\Sob^{1,2}(X,\d,\m)$. If $\Gamma(f)\le G^2$ $\m$-a.e.\ in~$X$ for some upper semicontinuous $G\in\Leb^\infty(X,\m)$, then $f\in\Lip(X)$ and $|\D^* f|(x)\le G(x)$ for all $x\in X$.
\end{lemma}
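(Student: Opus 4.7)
The strategy is to first upgrade $f$ to a globally Lipschitz function using assumption~\ref{assumption:Lip_Rademacher}, then refine this to the pointwise asymptotic bound $|\D^*f|(x_0) \le G(x_0)$ by a cutoff argument that exploits the upper semicontinuity of $G$ and the continuity of $f$ to localize the $\m$-a.e.\ bound $\Gamma(f) \le G^2$. First I would observe that $|\D f|_w = \sqrt{\Gamma(f)} \le \|G\|_{\Leb^\infty(X,\m)}$ $\m$-a.e.\ in $X$, so assumption~\ref{assumption:Lip_Rademacher} produces a Lipschitz representative $\tilde f$ of $f$. Since $f\in\Cont_b(X)$ and $\supp\m = X$, the $\m$-a.e.\ equality $f = \tilde f$ promotes to a pointwise one, yielding $f \in \Lip_b(X)$.

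For the pointwise bound, fix $x_0 \in X$ and $\eps > 0$. Upper semicontinuity of $G$ furnishes $r_1 > 0$ with $G \le G(x_0) + \eps$ on $B_{r_1}(x_0)$, and continuity of $f$ at $x_0$ lets me shrink to some $r \in (0, r_1]$ so that $|f(y) - f(x_0)| \le \eps r$ on $B_r(x_0)$. I then build a Lipschitz cutoff $\eta \in \Lip_b(X) \cap \Leb^2(X,\m)$ with $\eta \equiv 1$ on $B_{r/2}(x_0)$, $\eta \equiv 0$ outside $B_r(x_0)$, and $\Lip(\eta) \le 2/r$ (explicitly $\eta(y) = ((2 - 2\d(y,x_0)/r) \vee 0) \wedge 1$). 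Setting $v := \eta f - f(x_0)\eta \in \Sob^{1,2}(X,\d,\m)$ (a sum of products of bounded Sobolev functions, hence Sobolev by~\eqref{eq:Leibniz_rule}), the function $v$ is continuous and bounded on $X$, coincides with $f - f(x_0)$ on $B_{r/2}(x_0)$, and vanishes outside $B_r(x_0)$.

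The core computation combines bilinearity and the Leibniz rule for $\Gamma$ with the Cauchy--Schwarz bound $|\Gamma(f,\eta)| \le \sqrt{\Gamma(f)\Gamma(\eta)}$ (a consequence of~\eqref{eq:Gamma_ineq_prod_min_weak_grads}) to give
\[
\sqrt{\Gamma(v)} \le \eta\sqrt{\Gamma(f)} + |f - f(x_0)|\sqrt{\Gamma(\eta)} \quad \m\text{-a.e.\ in }X.
\]
On $B_r(x_0)$ this right-hand side is bounded by $(G(x_0)+\eps) + \eps r \cdot 2/r = G(x_0) + 3\eps$, and outside $B_r(x_0)$ one has $\Gamma(v) = 0$ by locality~\eqref{eq:locality_min_weak_grad} since $v$ is locally constant there. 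Hence $\sqrt{\Gamma(v)} \le G(x_0) + 3\eps$ $\m$-a.e.\ on all of $X$, and assumption~\ref{assumption:Lip_Rademacher} (combined with continuity of $v$ and $\supp\m = X$, exactly as in the first step) yields that $v$ is $(G(x_0) + 3\eps)$-Lipschitz on $X$. Restricting to $B_{r/2}(x_0)$, where $v = f - f(x_0)$, this gives $|\D^*f|(x_0) \le G(x_0) + 3\eps$, and sending $\eps \to 0^+$ concludes.

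The main obstacle is the calibration of the cutoff: upper semicontinuity of $G$ forces $r$ small so that $G \le G(x_0) + \eps$ on $B_r(x_0)$, but shrinking $r$ inflates the Lipschitz constant $\sim 1/r$ of $\eta$. The continuity of $f$ at $x_0$, via the scaling $|f - f(x_0)| \le \eps r$, is precisely what makes the boundary term $|f - f(x_0)|\sqrt{\Gamma(\eta)}$ absorbable in $O(\eps)$, so that both errors land in a single $\m$-a.e.\ gradient bound suitable to feed into assumption~\ref{assumption:Lip_Rademacher}.
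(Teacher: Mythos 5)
Your first step (the global bound $\Gamma(f)\le\|G\|_{\Leb^\infty(X,\m)}^2$ $\m$-a.e., hence $f\in\Lip(X)$ via~\ref{assumption:Lip_Rademacher} and $\supp\m=X$) matches the paper's, and your Leibniz--Cauchy--Schwarz computation giving $\sqrt{\Gamma(\eta(f-f(x_0)))}\le\eta\sqrt{\Gamma(f)}+|f-f(x_0)|\sqrt{\Gamma(\eta)}$ is correct. The gap is in the calibration of the cutoff: continuity of $f$ at $x_0$ does \emph{not} furnish a radius $r$ with $|f(y)-f(x_0)|\le\eps r$ on $B_r(x_0)$. Continuity only yields $\sup_{B_r(x_0)}|f-f(x_0)|=o(1)$ as $r\to0^+$, whereas you need $O(\eps r)$, i.e.\ that $f$ has essentially vanishing pointwise slope at $x_0$ --- which is exactly the kind of statement the lemma is supposed to produce and is false in general (take $f(y)=L\,\d(y,x_0)$ near $x_0$ with $L>\eps$: then $\sup_{B_r(x_0)}|f-f(x_0)|=Lr>\eps r$ for every $r$). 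The best a priori information is the Lipschitz bound from your first step, $|f-f(x_0)|\le\|G\|_{\Leb^\infty(X,\m)}\,r$ on $B_r(x_0)$, so on the annulus $B_r(x_0)\setminus B_{r/2}(x_0)$ the boundary term $|f-f(x_0)|\sqrt{\Gamma(\eta)}$ is of order $2\|G\|_{\Leb^\infty(X,\m)}$, not $O(\eps)$; shrinking $r$ does not help because $\Lip(\eta)\sim 1/r$ exactly cancels the gain. Feeding the resulting a.e.\ bound into~\ref{assumption:Lip_Rademacher} only yields $\Lip(v)\le G(x_0)+\eps+2\|G\|_{\Leb^\infty(X,\m)}$, which is useless. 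This is not a fixable detail of your particular $\eta$: any multiplicative cutoff produces a term $\sup|f-f(x_0)|\cdot\Lip(\eta)$ of the size of the local Lipschitz constant of~$f$.

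The paper circumvents this by replacing the multiplicative cutoff with a lattice construction: with $G_\eps=\sup_{B_\eps(x)}G$ it sets $\psi_\eps=\min\{\max\{|f-f(x)|,\,G_\eps\d(\cdot,x)\},\,G_\eps[\eps-\d(\cdot,x)]^+\}$. By~\eqref{eq:min_is_sob} and~\eqref{eq:max_is_sob}, wherever the min or max is realized by one of the cone/tent functions the weak gradient equals $G_\eps$, and wherever it is realized by $|f-f(x)|$ one is inside $B_\eps(x)$ where $|\D f|_w\le G\le G_\eps$; hence $|\D\psi_\eps|_w\le G_\eps$ $\m$-a.e.\ on all of~$X$ with no $1/r$ blow-up. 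Then~\ref{assumption:Lip_Rademacher} and $\psi_\eps(x)=0$ give $\psi_\eps(y)\le G_\eps\,\d(y,x)$, which for $\d(y,x)<\eps/2$ forces $|f(y)-f(x)|\le\psi_\eps(y)\le G_\eps\,\d(y,x)$, whence $|\D f|(x)\le G_\eps\to G(x)$ as $\eps\to0^+$; the upgrade to $|\D^*f|\le G$ uses the length property of \cref{res:length_space} together with the upper semicontinuity of~$G$. To repair your proof you would need to import this min/max structure; the step ``$|f(y)-f(x_0)|\le\eps r$ on $B_r(x_0)$'' cannot be justified as written.
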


\begin{proof}
Since $G\in\Leb^\infty(X,\m)$, we immediately get $f\in\Lip(X)$ from property~\ref{assumption:Lip_Rademacher}.
Let $x\in X$ be fixed.
For all $\eps>0$, set $G_\eps=\sup\limits_{y\in B_\eps(x)}\zeta(y)$ and define 
\begin{equation*}
\psi_\eps(y)
=
\min\set*{\max\set*{|f(y)-f(x)|,G_\eps\d(y,x)},G_\eps[\eps-\d(y,x)]^+}
\quad
\text{for all}\ y\in X. 
\end{equation*}
Then $\psi_\eps\in\Lip(X)$ with $\supp\psi\subset\overline{B_\eps(x)}$, so that $\psi_\eps\in\Sob^{1,2}(X,\d,\m)$. 
By~\eqref{eq:min_is_sob} and~\eqref{eq:max_is_sob}, we have $|\D\psi_\eps|_w\le\max\set*{\zeta,G_\eps}$ $\m$-a.e.\ in~$X$.
Since $\psi_\eps(y)=0$ for $\d(y,x)\ge\eps$, we must have that $|\D\psi_\eps|_w\le G_\eps$ $\m$-a.e.\ in~$X$. 
Again by~\ref{assumption:Lip_Rademacher}, we get $\Lip(\psi_\eps)\le G_\eps$.
Since $\psi_\eps(x)=0$, we conclude that 
\begin{equation}\label{eq:cioccolato}
\psi_\eps(y)\le G_\eps\,\d(y,x)
\quad
\text{for all}\ y\in X. 	
\end{equation}
Now, if $\d(y,x)<\frac\eps2$, then $[\eps-\d(y,x)]^+>\frac\eps2$ and $\psi_\eps(y)<\frac{G_\eps\eps}2$ by~\eqref{eq:cioccolato}, so that 
\begin{equation*}
|f(y)-f(x)|\le\psi_\eps(y)\le G_\eps\,\d(y,x).	
\end{equation*}
Hence
\begin{equation*}
|\D f|(x)
=
\limsup_{y\to x}\frac{|f(y)-f(x)|}{\d(y,x)}
\le
\limsup_{y\to x}\frac{G_\eps\,\d(y,x)}{\d(y,x)}
=
G_\eps.
\end{equation*}
Since $\zeta$ is upper semicontinuous, we have $\lim\limits_{\eps\to0^+}G_\eps=\zeta(x)$ and thus $|\D f|(x)\le\zeta(x)$ whenever $x\in X$. 
Since $(X,\d)$ is a length space by \cref{res:length_space}, again by the upper semicontinuity of $\zeta$ we also get $|\D^*f|\le\zeta$.
\end{proof}

\begin{proof}[Proof of \cref{res:wBE-like}]
Let $t\ge0$ be fixed.
By \cref{res:lip_wBE}, we know that $\P_t f\in\Lip_b(X,\d)\cap\Leb^2(X,\m)$. 
By \cref{res:pointwise_heat_props}, we thus know that the continuous representative of~$\P_tf$ coincides with~$\tilde{\P}_tf$. 
Since $|\D^*f|\in\Leb^2(X,\m)$, we have $f\in\Sob^{1,2}(X,\d,\m)$ and thus, by~\eqref{eq:wD_less_slope_for_Lip}, $|\D f|_w\le|\D^*f|$ $\m$-a.e.\ in~$X$.
Therefore, thanks to~\eqref{eq:sub-Markov_func} and~\eqref{eq:lip_wBE}, we can estimate 
\begin{equation*}
\Gamma(\tilde{\P}_t f)
\le
\c^2(t)\,\P_t\Gamma(f)
\le
\c^2(t)\,\P_t(|\D^*f|^2)
\quad
\text{$\m$-a.e.\ in}\ X	
\end{equation*}
and hence, since $\tilde{\P}_t(|\D^*f|^2)=\P_t(|\D^*f|^2)$ $\m$-a.e.\ in~$X$ by \cref{res:pointwise_heat_props}\ref{item:pointwise_heat_on_L_infty_Borel}, we get
\begin{equation*}
\Gamma(\tilde{\P}_t f)
\le
\c^2(t)\,\tilde\P_t(|\D^*f|^2)
\quad
\text{$\m$-a.e.\ in}\ X.	
\end{equation*}
Since the function $x\mapsto\tilde{\P}_t(|\D^*f|^2)$ is bounded and upper semicontinuous by \cref{res:pointwise_heat_props}\ref{item:pointwise_heat_on_C_b}, inequality~\eqref{eq:wBE-like} immediately follows by \cref{res:reverse_slope_estim}.
\end{proof}

\subsection{Kuwada duality}

We now come the the main result of this section, the equivalence between the weak Bakry--\'Emery inequality and the $W_2$-contractivity property of the (dual) heat semigroup.
This duality property is well known for the $\BE(K,\infty)$ condition and is due to Kuwada, see the pioneering works~\cites{K10,K13,K15}. 
This duality is also known for the stronger $\BE(K,N)$ condition (with $N<+\infty$), see~\cite{EKS15}.
In a very general framework, this equivalence has been obatined in~\cite{AGS15}*{Theorem~3.5 and Corollary~3.18}.

\begin{theorem}[Kuwada duality]
\label{res:kuwada_equivalence}
The following are equivalent. 
\begin{enumerate}[label=(\roman*)]

\item\label{item:kuwada_equiv_1} 
$(X,\d,\m)$ satisfies $\wBE(\c,\infty)$.

\item\label{item:kuwada_equiv_2} 
There exists a $W_2$-dense subset~$\mathscr{D}$ of $\Prob_2^{\rm ac}(X)$ such that $\H_t(\mathscr{D})\subset\Prob_2^{\rm ac}(X)$ and
\begin{equation}\label{eq:kuwada_equiv_2}
W_2((\P_t f)\m,(\P_t g)\m)
\le
\c(t)\,W_2(f\m,g\m)
\quad
\text{whenever}\
f\m,g\m\in\mathscr{D}
\end{equation}
for all $t\ge0$.
\end{enumerate}
If either~\ref{item:kuwada_equiv_1} or~\ref{item:kuwada_equiv_2} holds, then for all $t\ge0$ the dual heat semigroup~\eqref{eq:dual_heat_def} uniquely extends to a map $\H_t\colon\Prob_2(X)\to\Prob_2(X)$ such that
\begin{equation}\label{eq:dual_heat_weak_kuwada}
W_2(\H_t\mu,\H_t\nu)
\le
\c(t)\,W_2(\mu,\nu)
\quad
\text{for all}\ \mu,\nu\in\Prob_2(X).
\end{equation}
\end{theorem}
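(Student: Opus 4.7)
The plan is to prove the two implications separately and then extend the contraction~\eqref{eq:kuwada_equiv_2} to the whole $2$-Wasserstein space via~\cref{res:dual_heat_Lip_ext}. For~\ref{item:kuwada_equiv_1}~$\Rightarrow$~\ref{item:kuwada_equiv_2}, I would take $\mathscr D=\Prob_2^{\rm ac}(X)\cap\Dom(\Ent_\m)$, which is $W_2$-dense in~$\Prob_2^{\rm ac}(X)$ and satisfies $\H_t(\mathscr D)\subset\Prob_2^{\rm ac}(X)$ thanks to~\cref{res:2nd_moment_est_heat} and~\eqref{eq:heat_mass_preservation}. The Kantorovich duality~\eqref{eq:kantorovich_duality_p=2}, combined with~\cref{res:pointwise_heat_props}, reduces~\eqref{eq:kuwada_equiv_2} (after integration against an optimal $W_2$-plan) to the pointwise estimate
\begin{equation*}
\tilde\P_t(Q_1\phi)(x)\le Q_{\c^{-2}(t)}(\tilde\P_t\phi)(x)\quad\text{for every }x\in X,
\end{equation*}
valid for all $\phi\in\Lip(X)$ with bounded support and all $t\ge 0$.

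To prove this pointwise estimate, I set $F_s(x)=\tilde\P_t(Q_s\phi)(x)$ for $s\in[0,1]$ and show that $F$ is a sub-solution of the rescaled Hamilton--Jacobi equation
\begin{equation*}
\frac{\di^{\,+}}{\di s}F_s(x)+\frac{\c^{-2}(t)}{2}\,|\D^*F_s|^2(x)\le 0.
\end{equation*}
The time-derivative term comes from~\eqref{eq:hopf-lax_HJ} (after commuting with~$\tilde\P_t$, which is justified via~\eqref{eq:hopf-lax_lip} and Dominated Convergence), while the dissipation term is dominated by the refined Bakry--\'Emery bound~\cref{res:wBE-like} applied to $Q_s\phi$ (after a standard cut-off to enforce the $\Leb^2$-integrability hypotheses), which yields $|\D^*F_s|^2(x)\le\c^2(t)\,\tilde\P_t(|\D^*Q_s\phi|^2)(x)$. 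A standard Hopf--Lax comparison argument, in the spirit of~\cite{AGS14}*{Theorem~3.6}, then gives $F_1\le Q_{\c^{-2}(t)}F_0$, as required.

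For~\ref{item:kuwada_equiv_2}~$\Rightarrow$~\ref{item:kuwada_equiv_1}, I first use~\cref{res:dual_heat_Lip_ext} with $p=2$ to extend~$\H_t$ to a $\c(t)$-Lipschitz map on the whole~$\Prob_2(X)$, and then test the resulting contraction against Dirac masses $\mu=\delta_x$, $\nu=\delta_y$. Kantorovich duality~\eqref{eq:kantorovich_duality} applied with cost $\d^2/(2r)$ yields, for every $r>0$,
\begin{equation*}
\tilde\P_t(Q_r\phi)(x)\le Q_{r\c^{-2}(t)}(\tilde\P_t\phi)(x)
\end{equation*}
for all $\phi\in\Lip(X)$ with bounded support. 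Subtracting $\tilde\P_t\phi(x)$, dividing by~$r$ and letting $r\to 0^+$ --- using~\eqref{eq:hopf-lax_HJ} at $s=0^+$, the estimate~\eqref{eq:hopf-lax_lip} to uniformly dominate $|Q_r\phi-\phi|/r$, and Dominated Convergence to commute~$\tilde\P_t$ with the $r$-limit on the left-hand side --- produces the pointwise bound
\begin{equation*}
|\D\tilde\P_t\phi|^2(x)\le\c^2(t)\,\tilde\P_t(|\D\phi|^2)(x).
\end{equation*}
Extending from bounded-support Lipschitz test functions to general $f\in\Sob^{1,2}(X,\d,\m)$ via~\eqref{eq:approx_asymp_Sob_by_Lip_b} and~\cref{res:pointwise_heat_props}\ref{item:pointwise_heat_on_L_infty_Borel} then yields~\eqref{eq:def_wBE}.

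I expect the main obstacle to lie in the converse implication~\ref{item:kuwada_equiv_2}~$\Rightarrow$~\ref{item:kuwada_equiv_1}, since the excerpt explicitly warns that the sketch in~\cite{AGS13} is not fully correct. The delicate point is that the right-derivative identity $\frac{\di^{\,+}}{\di s}\big|_{s=0^+}Q_s(\tilde\P_t\phi)(x)=-\tfrac12|\D\tilde\P_t\phi|^2(x)$ must be justified at the level of the everywhere-defined pointwise representative, which in turn relies on \emph{a priori} Lipschitz regularity of~$\tilde\P_t\phi$ that has to be extracted directly from the contraction hypothesis (since the $\wBE$~estimate is not yet available at this stage).
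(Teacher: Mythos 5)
Your implication \ref{item:kuwada_equiv_1}$\,\Rightarrow\,$\ref{item:kuwada_equiv_2} is essentially the paper's argument: the pointwise estimate $\tilde\P_t(Q_1\phi)\le Q_{\c^{-2}(t)}(\tilde\P_t\phi)$ is exactly the Kuwada estimate of \cref{res:heat_hopf-lax_Lip_estimate}, and your subsolution-plus-comparison proof unwinds to the same computation of $\frac{\di}{\di s}\tilde\P_tQ_s\phi(\gamma_s)$ along AC curves, using \eqref{eq:hopf-lax_HJ}, \cref{res:wBE-like} and Young's inequality; passing to general measures by integrating the $Q$-commutation estimate against $\mu$ and $\nu$ and invoking duality is a valid (in fact slightly cleaner) variant of the paper's Dirac-delta-plus-gluing step.

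The converse is where your route genuinely diverges from the paper's, and it has a gap. Dividing $\tilde\P_t(Q_r\phi)(x)-\tilde\P_t\phi(x)\le Q_{r\c^{-2}(t)}(\tilde\P_t\phi)(x)-\tilde\P_t\phi(x)$ by $r$ and letting $r\to0^+$ does \emph{not} produce $|\D\tilde\P_t\phi|^2(x)$ on the left. The quotient $r^{-1}(Q_r\psi(x)-\psi(x))$ need not converge, and the one-sided bounds that do hold are $\liminf_{r\to0^+}r^{-1}(Q_r\psi(x)-\psi(x))\le-\tfrac12|\D^-\psi|^2(x)$ on the right-hand side and $\liminf_{r\to0^+}r^{-1}(Q_r\phi(y)-\phi(y))\ge-\tfrac12|\D^-\phi|^2(y)$ inside $\tilde\P_t$ on the left-hand side; so the output is $|\D^-\tilde\P_t\phi|^2(x)\le\c^2(t)\,\tilde\P_t(|\D^-\phi|^2)(x)$, with the \emph{descending} slope on the left. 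To upgrade this to the $\m$-a.e.\ inequality $\Gamma(\P_t\phi)\le\c^2(t)\,\P_t\Gamma(\phi)$ you would need $|\D\P_t\phi|_w\le|\D^-\tilde\P_t\phi|$ $\m$-a.e., which requires the descending slope of a Lipschitz function to be an upper gradient dominating the minimal weak gradient --- i.e.\ the full Cheeger--Shanmugalingam identification, which is not among the tools recorded in the preliminaries (only $|\D f|_w\le|\D f|$ in \eqref{eq:wD_less_slope_for_Lip} is available). This is precisely the kind of slope-mismatch the paper is at pains to avoid: its proof instead applies Lisini's inequality (\cref{res:Lisini_Lip}) to the contracted curves $s\mapsto\H_t\delta_{\gamma_s}$, obtaining $|\tilde\P_tf(y)-\tilde\P_tf(x)|\le\c(t)\,\d(y,x)\sup_{z}(\tilde\P_t|\D^*f|^2(z))^{1/2}$, and then exploits the upper semicontinuity of the asymptotic Lipschitz constant (hence of $\tilde\P_t(|\D^*f|^2)$ by \cref{res:pointwise_heat_props}\ref{item:pointwise_heat_on_C_b}) to land on $|\D^*\tilde\P_tf|(x)\le\c(t)(\tilde\P_t|\D^*f|^2(x))^{1/2}$, which dominates $|\D\P_tf|_w$ directly. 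Finally, the delicate point you flag (a priori Lipschitz regularity of $\tilde\P_t\phi$) is not where the difficulty lies: that regularity is immediate from $W_1\le W_2$ and the contraction on Dirac masses, giving $\Lip(\tilde\P_t\phi)\le\c(t)\Lip(\phi)$.
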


In the proof of \cref{res:kuwada_equivalence}, we will need the following two technical results.

The first one will be use in the proof of the implication \ref{item:kuwada_equiv_1}$\,\Rightarrow\,$\ref{item:kuwada_equiv_2} and was proved for the first time in~\cite{K10}. 
In the present framework, this result was proved in~\cite{AGS15}*{Lemma~3.4}.
For the reader's convenience, we provide a proof of it below. 
Here and in the following, we let
\begin{equation}\label{eq:Lip_star}
\Lip_\star(X)
=
\set*{f\in\Lip(X) : \supp f\ \text{is bounded and}\ f\ge0}.
\end{equation}
Thanks to~\eqref{eq:wD_less_slope_for_Lip}, we immediately see that $\Lip_\star(X)\subset\Sob^{1,2}(X,\d,\m)$.
Moreover, from its very definition~\eqref{eq:hopf-lax_def}, we see that the Hopf--Lax semigroup satisfies 
$Q_s(\Lip_\star(X))\subset\Lip_\star(X)$ for all $s\ge0$, since if $f\in\Lip_\star(X)$ then $\supp(Q_s f)\subset\supp f$ for all $s\ge0$.

\begin{lemma}[Kuwada estimate]
\label{res:heat_hopf-lax_Lip_estimate}
Assume $(X,\d,\m)$ satisfies $\wBE(\c,\infty)$.
If $f\in\Lip_\star(X)$, then
\begin{equation}\label{eq:heat_hopf-lax_Lip_estimate}
\tilde\P_t Q_1 f(y)-\tilde\P_t f(x)
\le
\frac12\,\c^2(t)\,\d^2(y,x)
\end{equation}
for all $x,y\in X$ and $t\ge0$.
\end{lemma}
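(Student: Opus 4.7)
The strategy is the classical Kuwada interpolation adapted to the weaker curvature function~$\c$. Fix $x,y\in X$, $t\ge0$, and $\eps>0$. By \cref{res:length_space}, $(X,\d)$ is a length space, so we may pick a curve $\gamma\in\AC^2([0,1];X)$ with $\gamma_0=x$, $\gamma_1=y$ and $\int_0^1|\dot\gamma_s|^2\di s\le\d^2(x,y)+\eps$. Define
\begin{equation*}
\alpha(s)=\tilde\P_tQ_sf(\gamma_s),\qquad s\in[0,1],
\end{equation*}
so that $\alpha(0)=\tilde\P_tf(x)$ and $\alpha(1)=\tilde\P_tQ_1f(y)$. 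It therefore suffices to prove $\alpha(1)-\alpha(0)\le\tfrac12\,\c^2(t)(\d^2(x,y)+\eps)$ and let $\eps\to0^+$. Note that $Q_sf\in\Lip_\star(X)$ for all $s\ge0$ by the discussion preceding the statement, so $\tilde\P_tQ_sf$ is everywhere well defined.

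To control the increment $\alpha(s+h)-\alpha(s)$, I would split it into the sum of a \emph{spatial} term $\tilde\P_tQ_{s+h}f(\gamma_{s+h})-\tilde\P_tQ_{s+h}f(\gamma_s)$ and a \emph{temporal} term $\tilde\P_tQ_{s+h}f(\gamma_s)-\tilde\P_tQ_sf(\gamma_s)$. The spatial term is bounded via the refined pointwise estimate \cref{res:wBE-like} applied to $Q_{s+h}f\in\Lip_\star(X)$ (whose asymptotic Lipschitz constant coincides with its slope by~\eqref{eq:hopf-lax_slope_is_usc} and belongs to $\Leb^2\cap\Leb^\infty$ thanks to~\eqref{eq:hopf-lax_lip}), yielding
\begin{equation*}
|\tilde\P_tQ_{s+h}f(\gamma_{s+h})-\tilde\P_tQ_{s+h}f(\gamma_s)|
\le \c(t)\sqrt{\tilde\P_t(|\D Q_{s+h}f|^2)(z_h)}\cdot\int_s^{s+h}|\dot\gamma_r|\di r,
\end{equation*}
for some point $z_h$ on the image of $\gamma$ between $\gamma_s$ and $\gamma_{s+h}$. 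The temporal term is handled through the Hamilton--Jacobi identity~\eqref{eq:hopf-lax_HJ} for the Hopf--Lax semigroup: integrating in time, applying $\tilde\P_t$ and using Fubini give
\begin{equation*}
\tilde\P_tQ_{s+h}f(\gamma_s)-\tilde\P_tQ_sf(\gamma_s)
\le -\frac12\int_s^{s+h}\tilde\P_t(|\D Q_rf|^2)(\gamma_s)\di r.
\end{equation*}

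Dividing by $h$ and taking $\limsup_{h\to0^+}$, upper semicontinuity of $r\mapsto\tilde\P_t(|\D Q_rf|^2)(\cdot)$ (inherited from \cref{res:pointwise_heat_props}\ref{item:pointwise_heat_on_C_b} via the upper semicontinuity of the slope of $Q_rf$) lets both integrand evaluations collapse to $r=s$, $z=\gamma_s$. Young's inequality $ab\le\tfrac12 a^2+\tfrac12 b^2$ with $a=\sqrt{\tilde\P_t(|\D Q_sf|^2)(\gamma_s)}$ and $b=\c(t)|\dot\gamma_s|$ then makes the spatial upper bound and the temporal one cancel, producing
\begin{equation*}
\frac{\di^{\,+}}{\di s}\alpha(s)\le\frac12\,\c^2(t)\,|\dot\gamma_s|^2\quad\text{for a.e.\ }s\in[0,1].
\end{equation*}
An auxiliary absolute continuity check for $\alpha$ (which follows from the uniform boundedness of $\Lip(Q_sf)$ via~\eqref{eq:hopf-lax_lip}, the estimate $\Lip(\tilde\P_tQ_sf)\le\c(t)\Lip(Q_sf)$ from \cref{res:lip_wBE}, and the Lipschitz-in-$s$ property of $Q_sf$) then permits integration over $[0,1]$, and letting $\eps\to0^+$ yields~\eqref{eq:heat_hopf-lax_Lip_estimate}.

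The main obstacle is the regularity bookkeeping: the Hamilton--Jacobi equation is a right-derivative statement and the slopes $|\D Q_sf|$ are only upper semicontinuous, so one must justify the exchange of $\tilde\P_t$ with the time integral and the passage to the $\limsup$ in~$h$ rigorously. This is ensured by the pointwise character of $\tilde\P_t$ (cf.\ \cref{res:pointwise_heat_props}), the upper semicontinuity of $|\D Q_sf|=|\D^*Q_sf|$, and the uniform $\Lip$-bounds on the Hopf--Lax flow from~\eqref{eq:hopf-lax_lip}; everything else reduces to standard Young and length-space arguments.
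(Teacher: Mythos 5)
Your overall strategy is the same as the paper's (Kuwada's interpolation along a curve, splitting the increment of $s\mapsto\tilde\P_tQ_sf(\gamma_s)$ into a spatial and a temporal part, the Hamilton--Jacobi identity for the temporal part, the pointwise bound of \cref{res:wBE-like} for the spatial part, Young's inequality and the cancellation via~\eqref{eq:hopf-lax_slope_is_usc}). However, the way you distribute the time increment between the two parts creates a gap that your appeal to upper semicontinuity does not close. For the temporal term you need
\begin{equation*}
\liminf_{h\to0^+}\frac1h\int_s^{s+h}\tilde\P_t\big(|\D Q_rf|^2\big)(\gamma_s)\di r
\ \ge\
\tilde\P_t\big(|\D Q_sf|^2\big)(\gamma_s),
\end{equation*}
i.e.\ a \emph{lower} semicontinuity from the right of $r\mapsto|\D Q_rf|$, which is the opposite of the upper semicontinuity you invoke; without it, the term $-\frac12\,\tilde\P_t(|\D Q_sf|^2)(\gamma_s)$ needed to cancel the Young-inequality contribution is not recovered and the differential inequality does not close. (The inequality is in fact true, but it rests on finer properties of the Hopf--Lax semigroup --- the monotonicity in $r$ of the maximal distance from $z$ to the minimizers in~\eqref{eq:hopf-lax_def} and the resulting identity for $|\D Q_rf|$ in length spaces --- none of which is recorded in the preliminaries.) Symmetrically, your spatial term carries the moving index $Q_{s+h}f$, so collapsing $\tilde\P_t(|\D^*Q_{s+h}f|^2)(z_h)$ to $\tilde\P_t(|\D^*Q_sf|^2)(\gamma_s)$ requires \emph{joint} upper semicontinuity in $(r,z)$, not just the upper semicontinuity in~$z$ for fixed~$r$ furnished by \cref{res:pointwise_heat_props}\ref{item:pointwise_heat_on_C_b}.

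The paper's proof avoids both issues by splitting the other way: the spatial term is $\tilde\P_tQ_sf(\gamma_{s+h})-\tilde\P_tQ_sf(\gamma_s)$, with the time index frozen at~$s$, so only the upper gradient property and the spatial upper semicontinuity of $|\D^*\tilde\P_tQ_sf|$ are needed; the temporal term is $\int_X\tfrac{Q_{s+h}f-Q_sf}{h}\di\H_t\delta_{\gamma_{s+h}}$, with the base measure moving, and is handled by the generalized Fatou Lemma (\cref{res:fatou_gen}) together with~\eqref{eq:hopf-lax_HJ}, which only requires an upper bound on the difference quotients of $s\mapsto Q_sf$ along converging sequences. If you adopt this decomposition, the rest of your argument (absolute continuity of $\alpha$, Young's inequality, integration, minimization over curves) goes through as you wrote it.
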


In the proof of \cref{res:heat_hopf-lax_Lip_estimate}, we will use the following generalization of Fatou Lemma.
For its proof, we refer the reader to~\cite{AGS15}*{Lemma~3.3}.

\begin{lemma}[Generalized Fatou Lemma]
\label{res:fatou_gen}
If $\mu_n\in\Prob(X)$ weakly converges to $\mu\in\Prob(X)$ and $(f_n)_{n\in\N}$ are Borel equi-bounded functions such that
\begin{equation*}
\limsup_{n\to+\infty}
f_n(x_n)
\le
f(x)
\quad
\text{whenever}\ x_n\to x\
\text{as $n\to+\infty$}
\end{equation*}
for some Borel function~$f$, then
\begin{equation*}
\limsup_{n\to+\infty}
\int_X f_n\di\mu_n
\le
\int_X f\di\mu.
\end{equation*}
\end{lemma}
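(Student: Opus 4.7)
The plan is to reduce the statement to the Portmanteau theorem for bounded upper semicontinuous functions by constructing a family of upper semicontinuous envelopes that decouple the limit in $n$ (for the functions $f_n$) from the limit coming from weak convergence of the measures. Up to adding a constant, I may assume $0\le f_n\le M$ and $0\le f\le M$ for some $M>0$.

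For each $m\in\N$, I would introduce the envelope
\begin{equation*}
h_m(x)
=
\inf_{\varepsilon>0}\sup\set*{f_n(y) : n\ge m,\ \d(y,x)\le\varepsilon},
\quad
x\in X.
\end{equation*}
By construction, $h_m\colon X\to[0,M]$ is upper semicontinuous, the family $(h_m)_{m\in\N}$ is pointwise decreasing in~$m$, and $h_m(x)\ge f_n(x)$ for every $n\ge m$ (take $y=x$ inside the supremum). Since $\mu_n\weakto\mu$ and $h_m$ is bounded and upper semicontinuous, the Portmanteau theorem gives
\begin{equation*}
\limsup_{n\to+\infty}\int_X f_n\di\mu_n
\le
\limsup_{n\to+\infty}\int_X h_m\di\mu_n
\le
\int_X h_m\di\mu
\end{equation*}
for every fixed $m\in\N$.

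Let $h_\infty=\inf_m h_m$ be the pointwise decreasing limit. Once we know that $h_\infty\le f$ pointwise on~$X$, the Monotone Convergence Theorem applied to the bounded decreasing sequence $(h_m)$ yields $\int_X h_m\di\mu\downarrow\int_X h_\infty\di\mu\le\int_X f\di\mu$, and combining this with the previous chain closes the argument. To establish $h_\infty(x)\le f(x)$ for a given $x\in X$, I would argue by contradiction: if $h_\infty(x)>f(x)+\delta$ for some $\delta>0$, then for every $k\in\N$ there would exist $n_k\ge k$ and $y_k\in X$ with $\d(y_k,x)\le 1/k$ such that $f_{n_k}(y_k)>f(x)+\delta$. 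After extracting a strictly increasing subsequence of $(n_k)_k$, I would define a sequence $(x_n)_n$ in~$X$ by setting $x_n=y_k$ whenever $n=n_k$ and $x_n=x$ otherwise; the condition $n_k\ge k$ ensures $x_n\to x$, while $\limsup_{n\to+\infty}f_n(x_n)\ge f(x)+\delta$, contradicting the hypothesis.

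The main delicate point is the final contradiction step: assembling a single sequence $(x_n)$ with $x_n\to x$ and $\limsup_n f_n(x_n)>f(x)$ out of the pairs $(n_k,y_k)$ extracted from the definition of $h_\infty$. This forces the monotone extraction and the padding with $x$ at unused indices, and relies on the inequality $n_k\ge k$ to guarantee both that the inserted distances $\d(y_k,x)\le 1/k$ tend to zero and that the indices $n_k$ escape to infinity. Everything else is a routine combination of the upper semicontinuous envelope construction with the Portmanteau theorem and the Monotone Convergence Theorem.
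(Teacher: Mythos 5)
Your proof is correct, and since the paper does not reproduce an argument for this lemma (it only refers to \cite{AGS15}*{Lemma~3.3}), your upper-semicontinuous-envelope construction combined with the Portmanteau theorem and monotone convergence is a perfectly good self-contained substitute; the key contradiction step assembling the sequence $(x_n)$ from the pairs $(n_k,y_k)$, with the monotone extraction and padding by $x$, is handled correctly. Two small remarks: the normalization ``$0\le f\le M$'' is not actually granted by the hypotheses (only the $f_n$ are assumed equi-bounded), but it is also never used — the lower bound $f\ge -\sup_n\|f_n\|_{\Leb^\infty}$ follows by applying the hypothesis to constant sequences $x_n=x$, which is all you need for $\int_X f\di\mu$ to make sense and dominate $\int_X h_\infty\di\mu$; and your ``Monotone Convergence Theorem'' step for the decreasing bounded sequence $(h_m)$ is really dominated convergence (or monotone convergence after subtracting from the constant bound), which is fine because $\mu$ is a probability measure.
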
 

\begin{proof}[Proof of \cref{res:heat_hopf-lax_Lip_estimate}]
Let $t\ge0$ and $x,y\in X$ be fixed.
Since $Q_sf\in\Lip_\star(X)$, we clearly have $Q_s f\in\Lip_b(X)\cap\Leb^2(X,\m)$ with $|\D^*Q_sf|\in\Leb^2(X,\m)$ for all $s\ge0$.
By \cref{res:pointwise_heat_props}\ref{item:pointwise_heat_on_C_b}, \cref{res:lip_wBE} and \cref{res:wBE-like}, we thus have that 
\begin{equation}\label{eq:easyjet}
\Lip(\tilde\P_t Q_sf)
\le
\c(t)\Lip(Q_s f)
\end{equation}
for all $s\ge0$ and
\begin{equation}\label{eq:ryanair}
|\D^*\tilde\P_tQ_sf|^2(x)
\le\c^2(t)\,\tilde\P_t(|\D^*Q_sf|^2)(x)
\end{equation} 
for all $s\ge0$ and $x\in X$.
Now let $\gamma\in\AC([0,1];X)$ be such that $\gamma_0=x$ and $\gamma_1=y$.
We claim that $s\mapsto\tilde\P_tQ_sf(\gamma_s)\in\AC([0,1];\R)$.
Indeed, by~\eqref{eq:hopf-lax_lip} and~\eqref{eq:easyjet}, we can estimate
\begin{align*}
|\tilde\P_tQ_{s_1}f(\gamma_{s_1})-\tilde\P_tQ_{s_0}f(\gamma_{s_0})|
&\le
|\tilde\P_tQ_{s_1}f(\gamma_{s_1})-\tilde\P_tQ_{s_1}f(\gamma_{s_0})|
+
|\tilde\P_tQ_{s_1}f(\gamma_{s_0})-\tilde\P_tQ_{s_0}f(\gamma_{s_0})|\\
&\le
\Lip(\tilde\P_tQ_{s_1}f)\,\d(\gamma_{s_1},\gamma_{s_0})
+
\int_X|Q_{s_1}f-Q_{s_0}f|\,\di\H_t\gamma_{s_0}\\
&\le
2\c(t)\Lip(f)\int_{s_0}^{s_1}|\dot\gamma_s|\di s
+
2\Lip(f)^2\,(s_1-s_0)
\end{align*}  
for all $0\le s_0<s_1\le1$.
We can now write
\begin{align*}
\frac{\tilde\P_tQ_{s+h}f(\gamma_{s+h})
-
\tilde\P_tQ_sf(\gamma_s)}{h}
&=
\int_X\frac{Q_{s+h}f-Q_sf}{h}\di\H_t\gamma_{s+h}
+
\frac{\tilde\P_tQ_sf(\gamma_{s+h})
-
\tilde\P_tQ_sf(\gamma_s)}{h}
\end{align*}
for all $0\le s<s+h\le1$.
On the one hand, we have
\begin{align*}
\limsup_{h\to0^+}
\int_X\frac{Q_{s+h}f-Q_sf}{h}\di\H_t\gamma_{s+h}
&\le
\int_X\frac{\di^{\,+}}{\di s}\,Q_sf\di\H_t\gamma_s\\
&=
-\frac12\int_X|\D Q_sf|^2\di\H_t\gamma_s\\
&=
-\frac12\tilde\P_t(|\D Q_sf|^2)(\gamma_s)
\end{align*}
by \cref{res:fatou_gen} and~\eqref{eq:hopf-lax_HJ} for all $s\in[0,1]$.
On the other hand, by the upper gradient property of the asymptotic Lipschitz constant for Lipschitz functions, we can estimate  
\begin{align*}
\limsup_{h\to0^+}
\frac{|\tilde\P_tQ_sf(\gamma_{s+h})-\tilde\P_tQ_sf(\gamma_s)|}{h}
&\le
\limsup_{h\to0^+}
\frac1h\int_s^{s+h}|\D^*\tilde\P_tQ_sf|(\gamma_r)\,|\dot\gamma_r|\di r\\
&=
|\D^*\tilde\P_tQ_sf|(\gamma_s)\,|\dot\gamma_s|\end{align*}
for $\L^1$-a.e.\ $s\in[0,1]$.
By~\eqref{eq:ryanair} and Young inequality, we have
\begin{align*}
|\D^*\tilde\P_tQ_sf|(\gamma_s)\,|\dot\gamma_s|
&\le
\c(t)\,|\dot\gamma_s|\,\sqrt{\tilde\P_t(|\D^*Q_sf|^2)(\gamma_s)}\\
&\le
\frac12\,\c^2(t)\,|\dot\gamma_s|^2
+
\frac12\,\tilde\P_t(|\D^*Q_sf|^2)(\gamma_s)
\end{align*}
for all $s\in[0,1]$. 
By~\eqref{eq:hopf-lax_slope_is_usc}, we thus have that
\begin{align*}
\frac{\di}{\di s}\,\tilde\P_t Q_sf(\gamma_s)
&\le
-\frac12\,\tilde\P_t(|\D Q_sf|^2)(\gamma_s)
+
\frac12\,\c^2(t)\,|\dot\gamma_s|^2
+
\frac12\,\tilde\P_t(|\D^*Q_sf|^2)(\gamma_s)\\
&=
\frac12\,\c^2(t)\,|\dot\gamma_s|^2
\end{align*}
for $\L^1$-a.e.\ $s\in[0,1]$, so that
\begin{align*}
\tilde\P_t Q_1 f(y)-\tilde\P_t f(x)
=
\int_0^1\frac{\di}{\di s}\,\tilde\P_t Q_sf(\gamma_s)\di s
\le
\frac12\,\c^2(t)\int_0^1|\dot\gamma_s|^2\di s
\end{align*}
and~\eqref{eq:heat_hopf-lax_Lip_estimate} follows by minimizing with respect to all curves $\gamma\in\AC([0,1];X)$ such that $\gamma_0=x$ and $\gamma_1=y$. This concludes the proof.  
\end{proof}

The second preliminary result is a well known result proved for the first time by Lisini in~\cite{L07}.
In this general framework, this result was proved in~\cite{AGS15}*{Lemma~4.12} (se also~\cite{G15}*{Theorem~2.1}). For the reader's convenience, we provide a proof of it below.

\begin{lemma}[Lisini Theorem for $\Lip$-functions]
\label{res:Lisini_Lip}
If $s\mapsto\mu_s\in\AC^2([0,1],\Prob_2(X))$, then
$s\mapsto
\int_X\phi\di\mu_s\in\AC^2([0,1];\R)$
with
\begin{equation}\label{eq:Lisini_Lip_integral}
\bigg|\int_X\phi\di\mu_1
-\int_X\phi\di\mu_0\,\bigg|
\le
\int_0^1\bigg(\int_X|\D\phi|^2\di\mu_s\bigg)^{1/2}|\dot\mu_s|\di s
\end{equation}	
for all $\phi\in\Lip_b(X)$.
In particular, we have
\begin{equation}\label{eq:Lisini_Lip_derivative}
\bigg|\frac{\di}{\di s}\int_X\phi\di\mu_s\,\bigg|^2
\le
|\dot\mu_s|^2\int_X|\D\phi|^2\di\mu_s
\quad
\text{for $\leb^1$-a.e.}\ s\in[0,1]
\end{equation} 
for all $\phi\in\Lip_b(X)$.
\end{lemma}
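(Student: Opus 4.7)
The plan is to apply the celebrated representation theorem of Lisini~\cite{L07}: since $s\mapsto\mu_s$ lies in $\AC^2([0,1];\Prob_2(X))$, there exists a probability measure $\eta$ on the Polish space $\AC^2([0,1];X)$ such that $(e_s)_\sharp\eta=\mu_s$ for every $s\in[0,1]$, where $e_s(\gamma)=\gamma_s$ denotes the evaluation map, and the metric derivative is recovered pointwise via
\[
|\dot\mu_s|^2=\int|\dot\gamma_s|^2\di\eta(\gamma)
\quad\text{for $\leb^1$-a.e.}\ s\in[0,1].
\]
This lifts the curve $(\mu_s)$ from the Wasserstein space to a single object on a space of real curves, so that the whole argument reduces to a fibre-wise estimate. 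This is the main analytical ingredient of the proof.

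Next, I would use that, since $\phi\in\Lip_b(X)$, the slope $|\D\phi|$ is an upper gradient of~$\phi$ (by the property recalled after~\eqref{eq:upper_gradient_def}). For $\eta$-a.e.\ curve $\gamma$ and every $0\le r\le s\le1$, applied to the restricted curve this yields
\[
|\phi(\gamma_s)-\phi(\gamma_r)|\le\int_r^s|\D\phi|(\gamma_u)\,|\dot\gamma_u|\di u.
\]
Integrating this pointwise inequality with respect to $\eta$, applying Fubini's theorem and then the Cauchy--Schwarz inequality in the curve variable, I would get
\[
\left|\int_X\phi\di\mu_s-\int_X\phi\di\mu_r\right|
\le\int_r^s\left(\int_X|\D\phi|^2\di\mu_u\right)^{1/2}|\dot\mu_u|\di u,
\]
where the last step uses both the marginal identity $(e_u)_\sharp\eta=\mu_u$ and Lisini's recovery formula for $|\dot\mu_u|^2$. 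Choosing $r=0$ and $s=1$ gives the integral estimate~\eqref{eq:Lisini_Lip_integral}; moreover, since $\phi$ is bounded Lipschitz and $s\mapsto|\dot\mu_s|$ belongs to $\Leb^2([0,1])$, the right-hand side is the integral of an $\Leb^2$ function of $u$, so $s\mapsto\int_X\phi\di\mu_s$ lies in $\AC^2([0,1];\R)$.

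Finally, the pointwise derivative estimate~\eqref{eq:Lisini_Lip_derivative} follows from the elementary fact that if an absolutely continuous $f\colon[0,1]\to\R$ satisfies $|f(s)-f(r)|\le\int_r^s g(u)\di u$ for all $r\le s$ and some $g\in\Leb^2([0,1])$, then $|f'|\le g$ almost everywhere; applied to $g(u)=\left(\int_X|\D\phi|^2\di\mu_u\right)^{1/2}|\dot\mu_u|$, this closes the argument. The principal obstacle is really the invocation of Lisini's representation theorem, whose construction (through a Skorokhod-type lifting on the space of curves) is non-trivial; in the present paper, however, one uses it as a black box, so that the remaining work amounts to combining the upper-gradient inequality with Cauchy--Schwarz.
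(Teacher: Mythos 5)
Your proposal is correct and follows essentially the same route as the paper: invoke Lisini's representation theorem as a black box, apply the upper-gradient property of the slope fibre-wise along $\eta$-a.e.\ curve, and conclude by Fubini and the Cauchy--Schwarz (H\"older) inequality in the curve variable, with the pointwise derivative bound obtained by the standard Lebesgue differentiation argument. The only cosmetic difference is that you state the estimate for general $0\le r\le s\le1$ rather than just $r=0$, $s=1$, which is precisely what the paper's ``follows easily'' implicitly requires.
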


\begin{proof}
Let $\phi\in\Lip_b(X)$ be fixed.
By Lisini Theorem, see~\cite{L07}*{Theorem~5} or~\cite{G15}*{Theorem~2.1}, there exists
$\eta\in\Prob(\mathscr C)$, 
$\mathscr C=\Cont([0,1],(X,\d))$, 
concentrated on 
$\AC([0,1],X)$,
such that
\begin{equation}\label{eq:Lisini_1}
(\mathsf{e}_s)_\sharp\eta
=
\mu_s
\quad
\text{for all}\ s\in[0,1],
\end{equation}
where $\mathsf e_s\colon\mathscr C\to X$ is the evaluation map at time $s\in[0,1]$, and
\begin{equation}\label{eq:Lisini_2}
\int_{\mathscr C}|\dot \gamma_s|^2\di\eta(\gamma)
=
|\dot\mu_s|^2
\quad
\text{for $\leb^1$-a.e.}\ s\in[0,1].
\end{equation}
By the upper gradient property of the slope (recall~\eqref{eq:upper_gradient_def}), by~\eqref{eq:Lisini_1}, \eqref{eq:Lisini_2} and H\"older inequality, we can estimate
\begin{align*}
\bigg|\int_X\phi\di\mu_1
-\int_X\phi\di\mu_0\,\bigg|
&=
\abs*{\,\int_{\mathscr C} 
(\phi(\gamma_1)-\phi(\gamma_0))
\di\eta(\gamma)\,}\\
&\le
\int_0^1\int_{\mathscr C}|\D\phi|(\gamma_s)\,|\dot\gamma_s|\di\eta(\gamma)\di s\\
&\le
\int_0^1\left(\int_{\mathscr C}|\D\phi|^2(\gamma_s)\di\eta(\gamma)\right)^{1/2}
\left(\int_{\mathscr C}|\dot\gamma_s|^2\di\eta(\gamma)\right)^{1/2}\di s\\
&=
\int_0^1\left(\int_X|\D\phi|^2\di\mu_s\right)^{1/2}|\dot{\mu}_s|\di s,
\end{align*}
proving~\eqref{eq:Lisini_Lip_integral}. 
Inequality~\eqref{eq:Lisini_Lip_derivative} follows easily.
\end{proof}

We are now ready to prove the main result of this section.

\begin{proof}[Proof of \cref{res:kuwada_equivalence}]
We prove the two implications separately.

\smallskip

\textit{Proof of 
$
\ref{item:kuwada_equiv_1}
\Rightarrow
\ref{item:kuwada_equiv_2}
$}.
Fix $t\ge0$. We divide the proof in three steps.

\smallskip

\textit{Step~1: definition of $\H_t$ and $\tilde\P_t$}.
We define $\H_t\colon\Prob^{\rm ac}(X)\to\Prob^{\rm ac}(X)$ by setting
\begin{equation}\label{eq:dual_heat_def_weak_kuwada}
\H_t\mu=(\P_tf)\,\m
\quad
\text{for all}\ \mu=f\m\in\Prob^{\rm ac}(X),
\end{equation}
as in~\eqref{eq:dual_heat_def}.
By \cref{res:dual_semigroup_W_1_Lip}, this map can be extended to a map $\H_t\colon\Prob_1(X)\to\Prob_1(X)$ which satisfies~\eqref{eq:dual_semigroup_W_1_Lip} with $C(t)=\c(t)$. 
Hence~\eqref{eq:dual_heat_p_ext_assumption} holds with $p=1$ and we can thus define 
\begin{equation*}
\tilde{\P}_tf(x)=\int_X f\di\H_t\delta_x,
\quad
x\in X,
\end{equation*}
whenever $f\colon X\to\overline{\R}$ is either a bounded or a non-negative Borel function, as in~\eqref{eq:pointwise_heat_def}. 

\smallskip

\textit{Step~2: $W_2$-estimate for $\H_t$ on Dirac deltas}.
Let $x,y\in X$. By \cref{res:heat_hopf-lax_Lip_estimate}, we can estimate
\begin{equation}\label{eq:kuwada_heat_hopf-lax_estimate}
\int_X Q_1\phi\di\H_t\delta_y
-
\int_X \phi\di\H_t\delta_x
=
\tilde\P_tQ_1\phi(y)-\tilde\P_t\phi(x)
\le
\frac12\,\c^2(t)\,\d^2(y,x)
\end{equation}
for all $\phi\in\Lip_\star(X)$. 
Hence, by \eqref{eq:kantorovich_duality_p=2} and taking the supremum on all $\phi\in\Lip_\star(X)$ in~\eqref{eq:kuwada_heat_hopf-lax_estimate}, we get
\begin{equation}\label{eq:weak_kuwada_1_to_2_step_2_final}
W_2(\H_t\delta_y,\H_t\delta_x)
\le
\c(t)\,\d(y,x)
\end{equation}
whenever $x,y\in X$.

\smallskip

\textit{Step~3: $W_2$ estimate for $\H_t$ on $\Prob_2(X)$}.
If $\mu\in\Prob_2(X)$, then we can write 
\begin{equation}\label{eq:delta_ride}
\mu=\int_X\delta_x\di\mu(x)
\end{equation}
and thus, by \cref{res:pointwise_heat_props}\ref{item:pointwise_heat_duality_on_Prob_p}, we we can also write
\begin{equation}\label{eq:delta_ride_t}
\H_t\mu=\int_X\H_t\delta_x\di\mu(x).
\end{equation}
Now let $\mu,\nu\in\Prob_2(X)$. 
If $\pi\in\mathsf{Plan}(\mu,\nu)$, then we may use a Measurable Selection Theorem (see~\cite{V09}*{Corollary~5.22} or~\cite{B07}*{Theorem~6.9.2} for example) to select in a $\pi$-measurable way an optimal plan 
\begin{equation}\label{eq:opt_plan_omega_x_y}
\omega_{x,y}^t\in\mathsf{OptPlan}(\H_t\delta_x,\H_t\delta_y)
\quad
\text{for all}\ x,y\in X.	
\end{equation}
By~\eqref{eq:delta_ride} and~\eqref{eq:delta_ride_t}, we thus get that
\begin{equation*}
\Omega_t
=
\int_{X\times X}\omega_{x,y}^t\di\pi(x,y)
\in\mathsf{Plan}(\H_t\mu,\H_t\nu).
\end{equation*}
Hence, by~\eqref{eq:def_W_2}, the optimality of~\eqref{eq:opt_plan_omega_x_y} and by~\eqref{eq:weak_kuwada_1_to_2_step_2_final}, we can estimate 
\begin{align*}
W_2^2(\H_t\mu,\H_t\nu)
&\le
\int_{X\times X}\d^2(u,v)\di\Omega_t(u,v)\\
&=
\int_{X\times X}\int_{X\times X}\d^2(u,v)\di\omega_{x,y}^t(u,v)\di\pi(x,y)\\
&=
\int_{X\times X}W_2^2(\H_t\delta_x,\H_t\delta_y)\di\pi(x,y)\\
&\le\c^2(t)\int_{X\times X}\d^2(x,y)\di\pi(x,y)
\end{align*}
whenever $\pi\in\mathsf{Plan}(\mu,\nu)$. 
Again by~\eqref{eq:def_W_2}, we thus get~\eqref{eq:dual_heat_weak_kuwada}. By~\eqref{eq:dual_heat_def_weak_kuwada} and \cref{res:2nd_moment_est_heat}, this proves~\ref{item:kuwada_equiv_2} with $\mathscr D=\Dom(\Ent_\m)$.

\smallskip

\textit{Proof of 
$
\ref{item:kuwada_equiv_2}
\Rightarrow
\ref{item:kuwada_equiv_1}
$}.
Fix $t\ge0$. We divide the proof in three steps.

\smallskip

\textit{Step~1: definition of $\H_t$ and $\tilde\P_t$}.
We define $\H_t\colon\mathscr D\to\Prob_2^{\rm ac}(X)$ by setting
\begin{equation}\label{eq:kuwada_equiv_2_H_t_def}
\H_t(f\m)=(\P_t f)\,\m
\quad
\text{for all}\ 
f\m\in\mathscr{D}	
\end{equation}
as in~\eqref{eq:dual_heat_def}. 
Thanks to \cref{res:dual_heat_Lip_ext}, by~\eqref{eq:kuwada_equiv_2} we can extend the map~\eqref{eq:kuwada_equiv_2_H_t_def} to a map $\H_t\colon\Prob_2(X)\to\Prob_2(X)$ (for which we retain the same notation) such that
\begin{equation*}
W_2(\H_t\mu,\H_t\nu)
\le
\c(t)
\,
W_2(\mu,\nu)
\quad
\text{for all}\
\mu,\nu\in\Prob_2(X).
\end{equation*} 
Hence~\eqref{eq:dual_heat_p_ext_assumption} holds for $p=2$ and we can thus define 
\begin{equation*}
\tilde{\P}_tf(x)=\int_X f\di\H_t\delta_x,
\quad
x\in X,
\end{equation*}
whenever $f\colon X\to\overline{\R}$ is either a bounded or a non-negative Borel function, as in~\eqref{eq:pointwise_heat_def}. 

\smallskip

\textit{Step~2: $\BE_w$ on $\Lip$-functions via $\tilde\P_t$}.
Let $f\in\Lip_b(X)\cap\Leb^2(X,\m)$ with $|\D^*f|\in\Leb^2(X,\m)$. 
We claim that
\begin{equation}\label{eq:mr_tamburine}
\Gamma(\P_tf)
\le
\c^2(t)\,\P_t|\D^*f|^2
\quad
\text{$\m$-a.e.\ in~$X$}.
\end{equation}
Indeed, thanks to \cref{res:pointwise_heat_props}\ref{item:pointwise_heat_on_L_infty_Borel}, we have $\tilde\P_t f=\P_t f$ $\m$-a.e.\ in~$X$, so that $\tilde\P_tf\in\Leb^2(X,\m)\cap\Leb^\infty(X,\m)$ in particular.
Fix $x,y\in X$ and let $\gamma\in\AC^2([0,1];X)$ such that $\gamma_0=x$ and $\gamma_1=y$.
We thus have that
\begin{equation*}
s\mapsto\mu_s=\H_t\delta_{\gamma_s}
\in
\AC^2([0,1];\Prob_2(X)),
\end{equation*} 
since we can estimate
\begin{align*}
W_2(\mu_{s_1},\mu_{s_0})
\le
\c(t)\,
W_2(\delta_{\gamma_{s_0}},\delta_{\gamma_{s_1}})
\le
\c(t)\,
\d(\gamma_{s_1},\gamma_{s_0})
\le
\c(t)
\int_{s_0}^{s_1}|\dot\gamma_r|\di r
\end{align*}
for all $0\le s_0<s_1\le1$, which immediately gives
\begin{equation}\label{eq:ringo_starr}
|\dot\mu_s|
\le
\c(t)\,
|\dot\gamma_s|
\quad
\text{for $\leb^1$-a.e.}\ s\in[0,1].
\end{equation}
By \cref{res:Lisini_Lip} and~\eqref{eq:ringo_starr}, we thus get
\begin{align*}
|\tilde{\P}_tf(y)-\tilde{\P}_tf(x)|
&=
\bigg|\int_X f\di\mu_1-\int_X f\di\mu_0\,\bigg|\\
&\le
\int_0^1\left(\int_X|\D f|^2\di\mu_s\right)^{1/2}|\dot\mu_s|\di s\\
&\le
\c(t)\int_0^1\left(\int_X|\D f|^2\di\mu_s\right)^{1/2}|\dot{\gamma}_s|\di s\\
&=
\c(t)\int_0^1\left(\tilde{\P}_t|\D f|^2(\gamma_s)\right)^{1/2}|\dot{\gamma}_s|\di s.
\end{align*}
Thanks to~\eqref{eq:pointwise_heat_L_infty_estim}, we thus get
\begin{equation*}
|\tilde{\P}_tf(y)-\tilde{\P}_tf(x)|
\le
\c(t)\Lip(f)\int_0^1|\dot{\gamma}_s|\di s,
\end{equation*} 
so that $\tilde\P_t f\in\Lip(X)$ with $\Lip(\tilde\P_t f)\le\c(t)\Lip(f)$ by the length property of $(X,\d)$ (recall \cref{res:length_space}).
In addition, again by the length property of $(X,\d)$, we have
\begin{equation}\label{eq:elios}
\abs*{\tilde{\P}_tf(y)-\tilde{\P}_tf(x)}
\le
\c(t)\,
\d(y,x)
\,\sup\set*{\left(\tilde{\P}_t|\D^*f|^2(z)\right)^{1/2} : \d(z,y)\le2\d(y,x)}
\end{equation}
for all $x,y\in X$. 
Since $x\mapsto|\D^*f|(x)$ is upper semicontinuous and bounded, by \cref{res:pointwise_heat_props}\ref{item:pointwise_heat_on_C_b} also $x\mapsto\left(\tilde{\P}_t|\D^*f|^2(x)\right)^{1/2}$ is upper semicontinuous. 
Therefore, taking the $\limsup$ as $y\to x$ in~\eqref{eq:elios}, we get
\begin{equation*}
|\D^*\tilde{\P}_tf|(x)\le\c(t)\left(\tilde{\P}_t|\D^*f|^2(x)\right)^{1/2}
\quad
\text{for all}\ x\in X.
\end{equation*}
Since $|\D^* f|\in\Leb^2(X,\m)\cap\Leb^\infty(X,\m)$ is Borel, we have that $\tilde{\P}_t|\D^*f|^2=\P_t|\D^*f|^2$ $\m$-a.e.\ in~$X$ by \cref{res:pointwise_heat_props}\ref{item:pointwise_heat_on_C_b},
and thus $\tilde{\P}_t|\D^*f|^2\in\Leb^2(X,\m)$. 
Hence 
$\tilde\P_tf\in\Lip_b(X)\cap\Leb^2(X,\m)$ 
with 
$|\D^*\tilde\P_tf|\in\Leb^2(X,\m)$, 
so that 
$\tilde\P_t f\in\Sob^{1,2}(X,\d,\m)$ 
by~\eqref{eq:min_weak_grad_min_property}, with 
$\Gamma(\tilde\P_t f)\le|\D^*\tilde\P_tf|^2$ 
$\m$-a.e.\ in~$X$ by~\eqref{eq:wD_less_slope_for_Lip}. 
Since 
$\tilde\P_tf=\P_tf$ $\m$-a.e.\ in~$X$, 
we must have 
$\Gamma(\tilde\P_t f)=\Gamma(\P_t f)$ (recall~\eqref{eq:weak_gradients_collection} and again the definition of minimal weak gradient). 
Claim~\eqref{eq:mr_tamburine} is thus proved. 

\smallskip

\textit{Step~3: approximation}.
Let $f\in\Sob^{1,2}(X,\d,\m)$.
By~\eqref{eq:approx_asymp_Sob_by_Lip_b}, we can find $f_n\in\Lip_b(X)\cap\Leb^2(X,\m)$ such that $f_n\to f$ and $|\D^*f|\to|\D f|_w$ in $\Leb^2(X,\m)$ as $n\to+\infty$.
By Step~2, we have
\begin{equation}\label{eq:jacopo}
\Gamma(\P_t f_n)
\le
\c^2(t)\,
\P_t|\D^*f_n|^2
\quad
\text{$\m$-a.e.\ in~$X$}
\end{equation}
for all $n\in\N$. Since $\P_t|\D^*f_n|^2\to\P_t\Gamma(\P_t f)$ in $\Leb^1(X,\m)$ as $n\to+\infty$ by~\eqref{eq:heat_L2_contraction} and $\Gamma(\P_t f_n)\to\Gamma(\P_t f)$ by~\eqref{eq:Ch_f_t_inf_formula} and again~\eqref{eq:heat_L2_contraction}, up to possibly pass to a subsequence, we can pass to the limit as $n\to+\infty$ in~\eqref{eq:jacopo} and get~\eqref{eq:def_wBE}.
This proves~\ref{item:kuwada_equiv_1}. 
\end{proof}

\begin{remark}[Errata to the proof of~\cite{AGS15}*{Theorem~3.5}]
\label{rem:AGS_pernacchia}
In~\cite{AGS15}*{Section~3.2}, instead of~\eqref{eq:wBE-like}, the authors consider the pointwise inequality (see~\cite{AGS15}*{Equation~(3.16)}) 
\begin{equation}\label{eq:wBE-like_BAD}
|\D\P_tf|^2(x)
\le
\c^2(t)\,\tilde\P_t(|\D f|^2)(x)
\quad
\text{for all}\ x\in X,
\end{equation}
whenever $f\in\Lip_b(X)\cap\Leb^2(X,\m)$.
In the proof of~\cite{AGS15}*{Theorem~3.5}, the authors then state that inequality~\eqref{eq:wBE-like_BAD}, together with inequality~\eqref{eq:lip_wBE} (which corresponds to~\cite{AGS15}*{Equation~(3.15)}) are implied by the $W_2$-contractivity property of the dual heat semigroup. 
Since they do not use this implication in their paper, the authors do not provide a proof of this statement and only refer to~\cite{AGS14-2}*{Theorem~6.2} and to~\cite{K10}.
However, the proof of~\cite{AGS14-2}*{Theorem~6.2} uses the fact that $\tilde\P_t|\D f|^2\in\Cont(X)$ for all $t>0$, i.e.\ the \emph{$\Leb^\infty$-to-$\Cont$-regularization property} of $(\tilde\P_t)_{t>0}$ previously proved in~\cite{AGS14-2}*{Theorem~6.1} thanks to the $\EVI_K$ property of the gradient flow of the entropy.
In the general framework considered in~\cite{AGS15}*{Section~3.2}, as well as in the present one, the $\Leb^\infty$-to-$\Cont$-regularization property of~$(\tilde\P_t)_{t>0}$ is not available, and thus the continuity of the function $x\mapsto\tilde\P_t(|\D f|^2)(x)$ for~$t>0$ is not guaranteed. 
For this reason, the implication 
\begin{equation*}
\text{$W_2$-contractivity of $(\H_t)_{t\ge0}$ $\implies$ \eqref{eq:lip_wBE} and~\eqref{eq:wBE-like_BAD}}
\end{equation*}
stated in~\cite{AGS15}*{Theorem~3.5} is not completely justified.
One can get rid of this problem by replacing~\cite{AGS15}*{Equation~(3.16)} with~\eqref{eq:wBE-like} and arguing as we have done in the proof of the implication \ref{item:kuwada_equiv_2}$\,\Rightarrow\,$\ref{item:kuwada_equiv_1} of \cref{res:kuwada_equivalence} above thanks to the upper semicontinuity of the asymptotic Lipschitz constant (recall its definition in~\eqref{eq:def_loc_Lip_constant}), without affecting the validity of all the other results of~\cite{AGS15}*{Section~3.2}.
We let the interested reader check the details.
\end{remark}

\subsection{Strong Feller property and densities of the dual heat semigroup}

The following result deals with the regularization property of the pointwise heat semigroup $(\tilde\P_t)_{t>0}$ on $\Leb^2\cap\Leb^\infty$-functions, see the proof of the implication \mbox{(i)$\,\Rightarrow\,$(v)} in~\cite{AGS15}*{Theorem~3.17}. 
We briefly provide its proof below for the reader's convenience. 

\begin{corollary}[Strong Feller property]\label{res:S-Feller}
Assume $(X,\d,\m)$ satisfies $\wBE(\c,\infty)$. 
If $f\in\Leb^2(X,\m)\cap\Leb^\infty(X,\m)$ is Borel, then $\tilde{\P}_t f\in\Lip_b(X)$ with
\begin{equation}\label{eq:S-Feller}
\sqrt{2\I_{-2}(t)}\,\Lip(\tilde{\P}_t f)
\le
\|f\|_{\Leb^\infty(X,\m)}
\end{equation}
for all $t>0$. 
\end{corollary}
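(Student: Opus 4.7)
The plan is to combine \cref{res:poincare_ineqs}\ref{item:poincare_ineq_lower} with the Lipschitz--Rademacher property~\ref{assumption:Lip_Rademacher} to produce a Lipschitz representative of $\P_t f$ with the correct Lipschitz constant, and then to identify this representative pointwise with $\tilde{\P}_t f$ by a continuous approximation argument.

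First, I would apply the lower Poincaré inequality~\eqref{eq:poincare_ineq_lower} to $f \in \Leb^2(X,\m)$. Combining the trivial estimate $\P_t(f^2) - (\P_t f)^2 \le \P_t(f^2)$ with the maximum principle~\eqref{eq:sub-Markov}, which gives $\P_t(f^2) \le \|f\|_{\Leb^\infty(X,\m)}^2$ $\m$-a.e.\ (since $f^2 \le \|f\|_{\Leb^\infty(X,\m)}^2$ $\m$-a.e.\ and $f^2 \in \Leb^2(X,\m)$ by $f \in \Leb^2 \cap \Leb^\infty$), yields
\[
\Gamma(\P_t f) \le \frac{\|f\|_{\Leb^\infty(X,\m)}^2}{2\,\I_{-2}(t)} \quad \text{$\m$-a.e.\ in $X$}.
\]
Property~\ref{assumption:Lip_Rademacher} then promotes this $\Leb^\infty$-bound on the minimal weak gradient $|\D\P_tf|_w$ to the existence of a Lipschitz representative $g_t$ of $\P_t f$ with $\Lip(g_t) \le \|f\|_{\Leb^\infty(X,\m)}/\sqrt{2\I_{-2}(t)}$, and \cref{res:pointwise_heat_props}\ref{item:pointwise_heat_on_L_infty_Borel} gives $\tilde{\P}_tf = \P_tf = g_t$ $\m$-a.e.\ in $X$.

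The remaining issue is to upgrade this $\m$-almost-everywhere identification to a pointwise statement, since $\tilde{\P}_tf(x) = \int_X f \, \di\H_t\delta_x$ depends on the specific Borel representative of $f$. To handle this, I would approximate $f$ in $\Leb^2(X,\m)$ by a sequence $(f_n) \subset \Cont_b(X)$ with $\|f_n\|_{\Leb^\infty(X,\m)} \le \|f\|_{\Leb^\infty(X,\m)}$. By \cref{res:pointwise_heat_props}\ref{item:pointwise_heat_on_C_b} we have $\tilde{\P}_tf_n \in \Cont_b(X)$, and the previous two steps applied to $f_n$ give that $\tilde{\P}_tf_n$ coincides $\m$-a.e.\ with a Lipschitz function; since $\supp\m = X$ by~\ref{assumption:measure}, continuity forces the identification to be pointwise, so $\tilde{\P}_tf_n \in \Lip_b(X)$ with $\Lip(\tilde{\P}_tf_n) \le \|f\|_{\Leb^\infty(X,\m)}/\sqrt{2\I_{-2}(t)}$. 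The family $\{\tilde{\P}_tf_n\}$ is then equi-bounded and equi-Lipschitz, and the $\Leb^2$-convergence $\tilde{\P}_tf_n = \P_tf_n \to \P_tf = \tilde{\P}_tf$, combined with subsequential $\m$-a.e.\ convergence and an Arzelà--Ascoli argument, identifies $\tilde{\P}_tf$ with a bounded Lipschitz function satisfying~\eqref{eq:S-Feller}.

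The main obstacle is precisely this pointwise identification: in the present weak-$\RCD$ setting no a priori $\Leb^\infty$-to-$\Cont$ regularization is available for $\tilde{\P}_t$, unlike in the $\RCD(K,\infty)$ framework where the $\EVI_K$ property provides it directly (cf.~\cref{rem:AGS_pernacchia}). The continuous approximation is the natural workaround, transferring the Lipschitz bound from smooth initial data to general bounded Borel data via the equi-Lipschitz control produced in the first two steps.
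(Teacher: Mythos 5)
Your first two steps are exactly the paper's: the lower Poincar\'e inequality~\eqref{eq:poincare_ineq_lower} together with the maximum principle gives $2\I_{-2}(t)\,\Gamma(\P_tf)\le\|f\|_{\Leb^\infty(X,\m)}^2$ $\m$-a.e., and property~\ref{assumption:Lip_Rademacher} produces a Lipschitz representative with the claimed constant; for $f\in\Cont_b(X)\cap\Leb^2(X,\m)$ the pointwise identification of that representative with $\tilde\P_tf$ via \cref{res:pointwise_heat_props}\ref{item:pointwise_heat_on_C_b} and $\supp\m=X$ is also correct. The gap is in the passage to general bounded Borel~$f$. Your $\Leb^2$-approximation by $(f_n)\subset\Cont_b(X)$ combined with Arzel\`a--Ascoli only shows that $\tilde\P_tf$ agrees \emph{$\m$-almost everywhere} with a Lipschitz function: the convergence $\P_tf_n\to\P_tf$ in $\Leb^2(X,\m)$ gives no control outside an $\m$-null set, and you cannot conclude $\int_Xf_n\di\H_t\delta_x\to\int_Xf\di\H_t\delta_x$ for \emph{every} $x$, since that would require $\H_t\delta_x\ll\m$ --- which is \cref{res:dual_heat_ll_m}, itself deduced from the present corollary, so invoking it here is circular. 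The statement to be proved is genuinely pointwise: it concerns the everywhere-defined function $x\mapsto\int_Xf\di\H_t\delta_x$, and the case that matters for the subsequent application is precisely $f=\chi_A$ with $\m(A)=0$, where $\P_tf=0$ $\m$-a.e., your approximants may be taken identically zero, and your argument says nothing about the actual values $\H_t\delta_x(A)$.

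The paper closes this gap with a monotone class argument instead. From continuous data one passes to $\chi_K$ with $K$ bounded and closed via the \emph{decreasing} approximation $f_n=[1-n\dist(\cdot,K)]^+$, for which $\tilde\P_tf_n(x)\downarrow\tilde\P_t\chi_K(x)$ for \emph{every} $x$ by monotone convergence against the measure $\H_t\delta_x$ (no absolute continuity needed), and the equi-Lipschitz bound~\eqref{eq:S-Feller} survives the pointwise limit; the Monotone Class Theorem then extends the conclusion to all bounded Borel~$f$, again because monotone equibounded convergence of the data converts into everywhere-pointwise convergence of $\tilde\P_tf_n$. Replacing your Arzel\`a--Ascoli step by this monotone scheme repairs the proof.
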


\begin{proof}
Let $t>0$ be fixed.
Assume $f\in\Cont_b(X)\cap\Leb^2(X,\m)$. 
Then $\P_t f\in\Sob^{1,2}(X,\d,\m)$ by~\cref{res:poincare_ineqs}\ref{item:poincare_ineq_lower} with 
$2\I_{-2}(t)\,\Gamma(\P_tf)
\le
\|f\|_{\Leb^\infty(X,\m)}^2$ $\m$-a.e.\ in~$X$.
Hence $\P_tf$ has a Lipschitz representative by~\ref{assumption:Lip_Rademacher}. Thanks to \cref{res:kuwada_equivalence} and \cref{res:pointwise_heat_props}\ref{item:pointwise_heat_on_C_b}, the Lipschitz representative of $\P_tf$ must coincide with $\tilde\P_tf$.
We thus get that $\tilde\P_t f\in\Lip_b(X)$ satisfies~\eqref{eq:S-Feller}.
Hence, arguing as in Step~2 of the proof of \cref{res:pointwise_heat_props}\ref{item:pointwise_heat_duality_on_Prob_p}, we get that $\tilde\P_t\chi_K\in\Lip_b(X)$ satisfies~\eqref{eq:S-Feller} whenever $K\subset X$ is a non-empty bounded closed set.
The conclusion thus follows by the Monotone Class Theorem (see~\cite{D19}*{Theorem~5.2.2} for example), since~\eqref{eq:S-Feller} allows to convert monotone equibounded convergence of a sequence $(f_n)_{n\in\N}$ into pointwise convergence on~$X$ of the sequence~$(\tilde\P_t f_n)_{n\in\N}$.
\end{proof}

An important consequence of \cref{res:S-Feller} is the absolute continuity property of the dual heat semigroup $(\H_t)_{t>0}$ on measures in $\Prob_2(X)$, see the proof of~\cite{AGS15}*{Theorem~3.17}. 
We provide a sketch of its proof below for the reader's convenience. 

\begin{corollary}[$\H_t(\Prob_2(X))\subset\Prob_2^{\rm ac}(X)$ for $t>0$]
\label{res:dual_heat_ll_m}
Assume $(X,\d,\m)$ satisfies $\wBE(\c,\infty)$. 
If $\mu\in\Prob_2(X)$, then $\H_t\mu\ll\m$ for all $t>0$.
\end{corollary}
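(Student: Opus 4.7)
The plan is to combine the strong Feller regularization from \cref{res:S-Feller} with the pointwise/dual duality from \cref{res:pointwise_heat_props}\ref{item:pointwise_heat_duality_on_Prob_p}. Fix $t>0$ and $\mu\in\Prob_2(X)$: the goal is to show that $\H_t\mu(N)=0$ for every Borel set $N\subset X$ with $\m(N)=0$.

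First I would invoke \cref{res:kuwada_equivalence}: under $\wBE(\c,\infty)$, the dual heat semigroup extends uniquely to a $W_2$-Lipschitz map $\H_t\colon\Prob_2(X)\to\Prob_2(X)$, hence assumption~\eqref{eq:dual_heat_p_ext_assumption} holds with $p=2$ and the pointwise heat semigroup $\tilde\P_t$ is well defined on bounded Borel functions. In particular, \cref{res:pointwise_heat_props}\ref{item:pointwise_heat_duality_on_Prob_p} provides the duality
\[
\int_X\tilde\P_t f\di\mu=\int_X f\di\H_t\mu
\]
for every bounded Borel $f\colon X\to\R$.

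Next I would apply \cref{res:S-Feller} to the Borel function $f=\chi_N$. Since $\m(N)=0$, one has $\|\chi_N\|_{\Leb^\infty(X,\m)}=0$, so the strong Feller estimate~\eqref{eq:S-Feller} forces $\tilde\P_t\chi_N\in\Lip_b(X)$ with $\Lip(\tilde\P_t\chi_N)=0$, i.e.\ $\tilde\P_t\chi_N$ is a constant on $X$. Simultaneously, \cref{res:pointwise_heat_props}\ref{item:pointwise_heat_on_L_infty_Borel} together with the linearity of $\P_t$ yield $\tilde\P_t\chi_N=\P_t\chi_N=\P_t 0=0$ $\m$-a.e.\ in $X$. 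Since $\supp\m=X$ by~\ref{assumption:measure}, a continuous (in particular, constant) function that vanishes $\m$-a.e.\ must vanish everywhere, hence $\tilde\P_t\chi_N\equiv0$ on $X$. Plugging $f=\chi_N$ into the duality then gives
\[
\H_t\mu(N)=\int_X\chi_N\di\H_t\mu=\int_X\tilde\P_t\chi_N\di\mu=0,
\]
which is the desired absolute continuity.

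There is no serious obstacle: everything follows mechanically once $\wBE(\c,\infty)$ is used to produce $\tilde\P_t$ and its strong Feller regularization. The one subtle point is the reconciliation of the \emph{pointwise} object $\tilde\P_t\chi_N$ with the merely $\m$-essentially defined $\P_t\chi_N=0$; the continuity granted by strong Feller, combined with the full support of $\m$, is exactly what bridges this gap. Note that positivity of $\I_{-2}(t)$ for $t>0$ is needed to make \eqref{eq:S-Feller} informative, which is guaranteed by~\eqref{eq:def_c}.
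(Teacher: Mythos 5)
Your proposal is correct and follows essentially the same route as the paper's proof: apply the strong Feller property to $\chi_N$, identify $\tilde\P_t\chi_N$ with $\P_t\chi_N=0$ $\m$-a.e.\ via \cref{res:pointwise_heat_props}\ref{item:pointwise_heat_on_L_infty_Borel}, upgrade to everywhere vanishing by continuity and $\supp\m=X$, and conclude by the duality in \cref{res:pointwise_heat_props}\ref{item:pointwise_heat_duality_on_Prob_p}. The only cosmetic difference is that you exploit $\Lip(\tilde\P_t\chi_N)=0$ to get constancy, whereas the paper only uses Lipschitz continuity; both close the a.e.-to-everywhere gap identically.
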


\begin{proof}
Let $t>0$ be fixed.
Let $\mu\in\Prob_2(X)$ and let $A\subset X$ be a Borel set with \mbox{$\m(A)=0$}. 
Then $\chi_A\in\Leb^2(X,\m)\cap\Leb^\infty(X,\m)$ and so $\tilde\P_t\chi_A\in\Lip_b(X)$ by \cref{res:S-Feller}. 
By \cref{res:pointwise_heat_props}\ref{item:pointwise_heat_on_L_infty_Borel}, we must have that $\tilde\P_t f=\P_t f=0$ $\m$-a.e.\ in~$X$, and thus $\tilde\P_t f(x)=0$ for all $x\in X$.
Hence
\begin{equation*}
\H_t\mu(A)
=
\int_X\chi_A\di\H_t\mu
=
\int_X\tilde\P_t\chi_A\di\mu
=
0
\end{equation*} 
by \cref{res:kuwada_equivalence} and \cref{res:pointwise_heat_props}\ref{item:pointwise_heat_duality_on_Prob_p}.
The proof is complete.
\end{proof}

\begin{remark}[Extension of $(\H_t)_{t\ge0}$ on $\Prob(X)$]
Although we do not need such a generality for our purposes, it is possible to show that the dual heat semigroup can be extended to a weakly continuous map $(\H_t)_{t\ge0}\colon\Prob(X)\to\Prob(X)$ such that~\eqref{eq:dual_heat_weak_kuwada} holds for all $\mu,\nu\in\Prob(X)$ with $W_2(\mu,\nu)<+\infty$. 
Moreover, the validity of \cref{res:pointwise_heat_props}\ref{item:pointwise_heat_duality_on_Prob_p} and of
\cref{res:dual_heat_ll_m} extends to any $\mu\in\Prob(X)$. 
We refer the interested reader to~\cite{AGS15}*{Section~3.2} for the details.
\end{remark}

By \cref{res:dual_heat_ll_m}, for all $x\in X$ there exists a non-negative density $\p_t[x]\in\Leb^1(X,\m)$ such that
\begin{equation}\label{eq:def_p_t_density}
\H_t\delta_x
=
\p_t[x]\,\m
\quad
\text{for all}\ t>0.
\end{equation}
Therefore, accordingly with~\eqref{eq:pointwise_heat_def}, if $f\colon X\to\overline{\R}$ is either a bounded or a non-negative Borel function, we can then write
\begin{equation}\label{eq:pointwise_heat_p_t_density}
\tilde\P_t f(x)
=
\int_X f(y)\di\H_t\delta_x(y)
=
\int_X f(y)\,\p_t[x](y)\di\m(y),
\end{equation}
for all $t>0$, so that the definition of $(\tilde\P_t f)_{t>0}$ does not depend on the particular choice of the representative of~$f$.
By linearity, $(\tilde\P_t f)_{t>0}$ is thus well defined whenever $f\colon X\to\overline{\R}$ is a one-side bounded measurable function.

\begin{lemma}[Properties of $(\p_t{[}\!\cdot\!{]})_{t>0}$]
\label{res:density_p_t_props}
Assume $(X,\d,\m)$ satisfies $\wBE(\c,\infty)$ and let $t>0$.
The following hold.
\begin{enumerate}[label=(\roman*)]

\item\label{item:density_p_t_semigroup} 
$\tilde\P_s(\p_t[x])=\p_{s+t}[x]$ $\m$-a.e.\ in~$X$ for all $x\in X$ and $s\ge0$.

\item\label{item:density_p_t_symmetry} 
$\p_t[x](y)=\p_t[y](x)$ for $\m$-a.e.\ $x,y\in X$.

\end{enumerate}	
\end{lemma}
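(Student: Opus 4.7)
Both statements are consequences of the semigroup/self-adjoint structure of the (dual) heat flow combined with the identification of $\tilde\P_s$ with $\P_s$ on bounded Borel functions provided by \cref{res:pointwise_heat_props}\ref{item:pointwise_heat_on_L_infty_Borel} and \ref{item:pointwise_heat_duality_on_Prob_p}. The Kuwada duality of \cref{res:kuwada_equivalence} is what allows us to work with $(\H_t)_{t\ge0}$ on the whole of $\Prob_2(X)$ (so in particular on Dirac masses) and hence to speak of $\p_t[x]$ at every $x\in X$.

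For item~\ref{item:density_p_t_semigroup}, the plan is to use the semigroup property $\H_{s+t}\delta_x=\H_s(\H_t\delta_x)=\H_s(\p_t[x]\,\m)$ and the definition~\eqref{eq:dual_heat_def} of the dual heat flow on absolutely continuous probability measures. Since $\p_t[x]\in\Leb^1(X,\m)$ (being a probability density), and since $(\P_s)_{s\ge0}$ extends to a strongly continuous semigroup of contractions on $\Leb^1(X,\m)$ by~\eqref{eq:heat_Lp_contraction}, we get
\begin{equation*}
\H_{s+t}\delta_x=\P_s(\p_t[x])\,\m,
\end{equation*}
so that $\P_s(\p_t[x])=\p_{s+t}[x]$ $\m$-a.e.\ in $X$ by the defining identity~\eqref{eq:def_p_t_density}. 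It remains to identify $\tilde\P_s(\p_t[x])$ with $\P_s(\p_t[x])$ $\m$-a.e. This is done by truncation: set $f_n=\min\set{\p_t[x],n}\in\Leb^1(X,\m)\cap\Leb^\infty(X,\m)$, for which \cref{res:pointwise_heat_props}\ref{item:pointwise_heat_on_L_infty_Borel} gives $\tilde\P_s f_n=\P_s f_n$ $\m$-a.e. On the one hand, $\tilde\P_s f_n(y)\uparrow\tilde\P_s(\p_t[x])(y)$ for every $y\in X$ by monotone convergence in the defining integral~\eqref{eq:pointwise_heat_p_t_density}; on the other hand, $\P_s f_n\to\P_s(\p_t[x])$ in $\Leb^1(X,\m)$ by the $\Leb^1$-contraction~\eqref{eq:heat_Lp_contraction}, hence along a subsequence $\m$-a.e. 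Comparing the two limits yields $\tilde\P_s(\p_t[x])=\P_s(\p_t[x])=\p_{s+t}[x]$ $\m$-a.e., as desired.

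For item~\ref{item:density_p_t_symmetry}, the plan is to exploit the self-adjointness of $\P_t$ in $\Leb^2(X,\m)$ granted by assumption~\ref{assumption:quadratic_Ch}. Given $\phi,\psi\in\Cont_b(X)$ with bounded support (hence in every $\Leb^p(X,\m)$), we have $\tilde\P_t\phi=\P_t\phi$ and $\tilde\P_t\psi=\P_t\psi$ $\m$-a.e.\ by \cref{res:pointwise_heat_props}\ref{item:pointwise_heat_on_L_infty_Borel}, so the self-adjointness identity $\int\phi\,\P_t\psi\di\m=\int\psi\,\P_t\phi\di\m$ combined with the representation~\eqref{eq:pointwise_heat_p_t_density} gives, after relabelling,
\begin{equation*}
\iint_{X\times X}\phi(x)\,\psi(y)\big[\p_t[x](y)-\p_t[y](x)\big]\di\m(x)\di\m(y)=0.
\end{equation*}
A density argument on the class of bounded-support $\Cont_b$-functions (which is dense in $\Leb^2(X\times X,\m\otimes\m)$) then yields $\p_t[x](y)=\p_t[y](x)$ for $(\m\otimes\m)$-a.e.\ $(x,y)\in X\times X$, which is exactly the statement.

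The main obstacle, and the only step requiring genuine care, is the joint measurability of $(x,y)\mapsto\p_t[x](y)$ used in the Fubini step of the proof of~\ref{item:density_p_t_symmetry}. A jointly measurable representative can be produced as follows: for any bounded Borel $\Phi\colon X\times X\to\R$, the map $x\mapsto\int_X\Phi(x,y)\,\p_t[x](y)\di\m(y)=\int_X\Phi(x,\cdot)\di\H_t\delta_x$ is Borel thanks to the $W_1$-continuity (\cref{res:dual_semigroup_W_1_Lip}) of $x\mapsto\H_t\delta_x$ and the Monotone Class Theorem; this is enough to construct, via a standard Radon--Nikodym type argument in the product space, a jointly Borel measurable density $(x,y)\mapsto\p_t[x](y)$, which is the representative we work with throughout.
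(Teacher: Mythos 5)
Your proof is correct and follows essentially the same route as the paper's: both items rest on the semigroup property of the extended dual flow, the $\m$-a.e.\ identification of $\tilde\P_s$ with $\P_s$, and the self-adjointness of $\P_t$, with your item~(i) merely reorganized as a direct measure-level computation plus a truncation step in place of the paper's chain of dual pairings against $\phi\in\Leb^1(X,\m)\cap\Leb^\infty(X,\m)$. Your explicit treatment of the joint measurability of $(x,y)\mapsto\p_t[x](y)$ in item~(ii) addresses a point the paper's Tonelli step leaves implicit; the only place to add a word in item~(i) is the identity $\H_s(\p_t[x]\,\m)=(\P_s\p_t[x])\,\m$, which holds because $\H_t\delta_x=\p_t[x]\,\m\in\Dom(\Ent_\m)$ by the $\Leb\log\Leb$-regularization (\cref{res:LlogL_reg}), so that the $W_2$-Lipschitz extension of $\H_s$ indeed acts on it via formula~\eqref{eq:dual_heat_def}.
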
 

\begin{proof}
Let $t>0$ be fixed.
We prove the two statements separately.

\smallskip

\textit{Proof of~\ref{item:density_p_t_semigroup}}.
Let $\phi\in\Leb^1(X,\m)\cap\Leb^\infty(X,\m)$ be a Borel non-negative function and set $\phi=\bar\phi\,\|\phi\|_{\Leb^1(X,\m)}$.
We can compute
\begin{align*}
\int_X\phi\,\p_{s+t}[y]\di\m
&=
\int_X\phi\di\H_{s+t}\delta_y
&&
\text{by~\eqref{eq:pointwise_heat_p_t_density}, \eqref{eq:dual_heat_is_semigroup} and \cref{res:dual_heat_Lip_ext}}\\
&=
\int_X\tilde\P_s\phi\di\H_t\delta_y
&&
\text{by~\eqref{eq:pointwise_heat_def}}\\
&=
\int_X\P_s\phi\,\p_t[y]\di\m
&&
\text{by \cref{res:pointwise_heat_props}\ref{item:pointwise_heat_on_L_infty_Borel} and~\eqref{eq:pointwise_heat_p_t_density}}\\
&=
\|\phi\|_{\Leb^1(X,\m)}
\int_X\p_t[y]\di\H_s(\bar\phi\,\m)
&&
\text{by~\eqref{eq:dual_heat_def}}\\
&=
\int_X\tilde\P_s(\p_t[y])\,\phi\di\m
&&
\text{by \cref{res:pointwise_heat_props}\ref{item:pointwise_heat_duality_on_Prob_p} and~\eqref{eq:pointwise_heat_p_t_density}}	
\end{align*}
for all $s\ge0$, so that~\ref{item:density_p_t_semigroup} immediately follows.

\smallskip

\textit{Proof of~\ref{item:density_p_t_symmetry}}.
Let $\phi,\psi\in\Leb^1(X,\m)\cap\Leb^\infty(X,\m)$ be two Borel non-negative functions.
By Tonelli Theorem, \eqref{eq:pointwise_heat_p_t_density} and \cref{res:pointwise_heat_props}\ref{item:pointwise_heat_on_L_infty_Borel}, we can compute
\begin{align*}
\int_X\int_X\phi(x)\,\psi(y)\,\p_t[x](y)\di\m(x)\di\m(y)
&=
\int_X\phi(x)\int_X\psi(y)\,\p_t[x](y)\di\m(y)\di\m(x)\\
&=
\int_X\phi(x)\,\tilde\P_t\psi(x)\di\m(x)\\
&=
\int_X\tilde\P_t\phi(y)\,\psi(y)\di\m(y)\\
&=
\int_X\psi(y)\int_X\phi(x)\,\p_t[y](x)\di\m(x)\di\m(y)\\
&=
\int_X\int_X\phi(x)\,\psi(y)\,\p_t[y](x)\di\m(x)\di\m(y),
\end{align*}
so that~\ref{item:density_p_t_symmetry} immediately follows.
\end{proof}

\subsection{\texorpdfstring{$\BE_w$}{BEw} inequality for Lipschitz functions, again}

We conclude this section with the following result, which provides a refined version of \cref{res:lip_wBE} and \cref{res:wBE-like}, see the proof of~\cite{AGS15}*{Theorem~3.17}. 
We give its proof below for the reader's convenience. 

\begin{proposition}[$\BE_w$ for $\Lip$-functions, III]
\label{res:lip_wBE_best}
Assume $(X,\d,\m)$ satisfies $\wBE(\c,\infty)$. 
If $f\in\Lip_b(X)\cap\Sob^{1,2}(X,\d,\m)$, then $\P_tf\in\Lip_b(X)\cap\Sob^{1,2}(X,\d,\m)$ with
\begin{equation*}
|\D^*\P_tf|^2(x)\le\c^2(t)\,\tilde\P_t\Gamma(f)(x)
\quad
\text{for all}\ x\in X
\end{equation*}
whenever $t>0$.	
\end{proposition}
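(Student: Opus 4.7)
The plan is to refine the arguments of \cref{res:lip_wBE} and \cref{res:wBE-like} by approximating $f$ with Lipschitz functions whose asymptotic Lipschitz constants approximate $|\D f|_w$ in $\Leb^2(X,\m)$ (rather than merely in $\Leb^\infty$ via $\Lip(f_n)$), so that \cref{res:wBE-like} can be applied to each approximant and then the pointwise estimate can be passed to the limit.

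First, I would combine \cref{res:lip_wBE} (which gives $\Lip(\P_tf)\le\c(t)\Lip(f)$) with \cref{res:heat_flow_sob_cont} (which gives $\P_tf\in\Sob^{1,2}(X,\d,\m)$) to conclude $\P_tf\in\Lip_b(X)\cap\Sob^{1,2}(X,\d,\m)$, and then identify its Lipschitz representative with $\tilde\P_tf$ using \cref{res:pointwise_heat_props}\ref{item:pointwise_heat_on_L_infty_Borel} together with \cref{res:kuwada_equivalence}. This reduces everything to proving the pointwise inequality for $\tilde\P_tf$.

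Next, I would invoke the approximation~\eqref{eq:approx_asymp_Sob_by_Lip_b} to find $f_n\in\Lip_b(X)\cap\Leb^2(X,\m)$ with $f_n\to f$ in $\Leb^2(X,\m)$ and $|\D^*f_n|\to|\D f|_w$ in $\Leb^2(X,\m)$, arranging additionally (by composing with the $1$-Lipschitz truncation at levels $\pm\|f\|_{\Leb^\infty(X,\m)}$, which does not increase the asymptotic Lipschitz constant) that $\|f_n\|_{\Leb^\infty(X,\m)}\le\|f\|_{\Leb^\infty(X,\m)}$. Because $|\D^*f_n|\in\Leb^2(X,\m)$, \cref{res:wBE-like} gives, for every $n$ and every $x\in X$,
\begin{equation*}
|\D^*\tilde\P_tf_n|^2(x)\le\c^2(t)\,\tilde\P_t(|\D^*f_n|^2)(x).
\end{equation*}
I would then replay the Kuwada-type argument from Step~2 of the proof of \ref{item:kuwada_equiv_2}$\Rightarrow$\ref{item:kuwada_equiv_1} in \cref{res:kuwada_equivalence}: for any $x,y\in X$ and any $\gamma\in\AC^2([0,1];X)$ joining $x$ to $y$, the curve $s\mapsto\mu_s=\H_t\delta_{\gamma_s}$ is in $\AC^2([0,1];\Prob_2(X))$ with $|\dot\mu_s|\le\c(t)|\dot\gamma_s|$ (by \cref{res:kuwada_equivalence}), and combining \cref{res:Lisini_Lip} applied to $f_n$ with the bound $|\D f_n|\le|\D^*f_n|$ yields
\begin{equation*}
|\tilde\P_tf_n(y)-\tilde\P_tf_n(x)|\le\c(t)\int_0^1\sqrt{\tilde\P_t(|\D^*f_n|^2)(\gamma_s)}\,|\dot\gamma_s|\di s.
\end{equation*}

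Finally, I would pass to the limit $n\to+\infty$. The left-hand side converges pointwise to $|\tilde\P_tf(y)-\tilde\P_tf(x)|$ by dominated convergence on $\H_t\delta_x$ and $\H_t\delta_y$, which are absolutely continuous with respect to $\m$ by \cref{res:dual_heat_ll_m}, together with the uniform $\Leb^\infty$-bound on $f_n$. For the right-hand side, $|\D^*f_n|^2\to\Gamma(f)$ in $\Leb^1(X,\m)$, and the $\Leb^1$-contraction of the heat semigroup~\eqref{eq:heat_Lp_contraction} together with \cref{res:pointwise_heat_props}\ref{item:pointwise_heat_on_L_infty_Borel} gives $\tilde\P_t(|\D^*f_n|^2)\to\tilde\P_t\Gamma(f)$ in $\Leb^1(X,\m)$. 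A Fatou-type argument — choosing curves $\gamma$ (which exist in abundance by the length property \cref{res:length_space}) whose support avoids the $\m$-null exceptional set of a suitable $\m$-a.e.\ convergent subsequence — then produces the Lipschitz-type bound $|\tilde\P_tf(y)-\tilde\P_tf(x)|\le\c(t)\int_0^1\sqrt{\tilde\P_t\Gamma(f)(\gamma_s)}\,|\dot\gamma_s|\di s$ along such curves. Dividing by $\d(y,x)$ and letting $y\to x$ yields the desired pointwise estimate, up to showing that $\tilde\P_t\Gamma(f)$ is appropriately upper semicontinuous at $x$, which should be obtained by approximating $\Gamma(f)$ by upper semicontinuous envelopes as in the proof of \cref{res:wBE-like}. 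The principal obstacle is precisely this last pointwise step: only $\Leb^1$-convergence of the heat-flowed derivatives is naturally available, and the passage from the $\m$-almost-everywhere inequality to the everywhere inequality on $X$ requires combining the length-space structure with the semicontinuity properties of $\tilde\P_t$ from \cref{res:pointwise_heat_props}\ref{item:pointwise_heat_on_C_b}.
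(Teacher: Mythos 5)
Your first two paragraphs are fine, but the limiting argument in the final paragraph has a genuine gap, and it is avoidable. From $|\D^*f_n|^2\to\Gamma(f)$ in $\Leb^1(X,\m)$ you only get $\tilde\P_t(|\D^*f_n|^2)\to\tilde\P_t\Gamma(f)$ in $\Leb^1(X,\m)$, while the quantity you need to control, $\int_0^1\sqrt{\tilde\P_t(|\D^*f_n|^2)(\gamma_s)}\,|\dot\gamma_s|\di s$, is an integral over the image of~$\gamma$, which is $\m$-negligible; $\m$-a.e.\ convergence of a subsequence says nothing along such a curve, and ``choosing curves avoiding the exceptional set'' is not viable because to estimate $|\D^*\P_tf|(x)$ you need curves through~$x$ and through arbitrary nearby points that nearly realize the distance. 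Likewise, the upper semicontinuity of $x\mapsto\tilde\P_t\Gamma(f)(x)$ cannot be obtained ``by approximating $\Gamma(f)$ by upper semicontinuous envelopes'': $\Gamma(f)$ is only an $\Leb^\infty$-equivalence class, and \cref{res:pointwise_heat_props}\ref{item:pointwise_heat_on_C_b} requires an upper semicontinuous representative, which $\Gamma(f)$ need not admit.

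The missing ingredient is the strong Feller property, \cref{res:S-Feller}: since $f\in\Lip_b(X)\cap\Sob^{1,2}(X,\d,\m)$, one has $\Gamma(f)\le\Lip(f)^2$ $\m$-a.e.\ by~\eqref{eq:wD_less_slope_for_Lip}, so $\Gamma(f)\in\Leb^1(X,\m)\cap\Leb^\infty(X,\m)$ and hence $\tilde\P_t\Gamma(f)\in\Lip_b(X)\subset\Cont_b(X)$ for $t>0$. With this in hand no approximation of~$f$ is needed at all: apply~\eqref{eq:def_wBE} directly to~$f$ (which already lies in $\Sob^{1,2}(X,\d,\m)$) to get $\Gamma(\P_tf)\le\c^2(t)\,\P_t\Gamma(f)=\c^2(t)\,\tilde\P_t\Gamma(f)$ $\m$-a.e.\ in~$X$, and then invoke the reverse slope estimate of \cref{res:reverse_slope_estim} with the continuous bounded function $G=\c(t)\,(\tilde\P_t\Gamma(f))^{1/2}$ to upgrade the $\m$-a.e.\ bound on $\Gamma(\P_tf)$ to the everywhere bound on $|\D^*\P_tf|^2$. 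This is the route the paper takes, and it also explains why the statement is restricted to $t>0$: the strong Feller regularization is exactly where strict positivity of~$t$ enters.
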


\begin{proof}
Let $t>0$ be fixed.
By \cref{res:lip_wBE} we already know that $\P_tf\in\Lip_b(X)\cap\Sob^{1,2}(X,\d,\m)$.
From~\eqref{eq:def_wBE},
\eqref{eq:wD_less_slope_for_Lip},
\cref{res:pointwise_heat_props}\ref{item:pointwise_heat_on_L_infty_Borel} and \eqref{eq:pointwise_heat_p_t_density} we get that
\begin{equation*}
\Gamma(\P_t f)
\le
\c^2(t)\,\P_t\Gamma(f)
=
\c^2(t)\,\tilde\P_t\Gamma(f)
\quad
\text{$\m$-a.e.\ in $X$}.
\end{equation*} 
Since $\Gamma(f)\in\Leb^1(X,\m)\cap\Leb^\infty(X,\m)$, by \cref{res:S-Feller} and again~\eqref{eq:pointwise_heat_p_t_density} we must have that $\tilde\P_t\Gamma(f)\in\Cont_b(X)$.
The conclusion thus follows from \cref{res:reverse_slope_estim}.
\end{proof}

\begin{corollary}[Weak reverse slope estimate for $(\P_t)_{t>0}$]
\label{res:Fellerazzo}
Assume $(X,\d,\m)$ satisfies $\wBE(\c,\infty)$.
If $f\in\Leb^2(X,\m)\cap\Leb^\infty(X,\m)$, then
\begin{equation}\label{eq:Fellerazzo}
|\D^*\P_t f|^2
\le
\underline\c(0^+)^2
\,
\Gamma(\P_t f)
\quad
\text{$\m$-a.e.\ in~$X$}
\end{equation}	
for all $t>0$, where $\underline\c(0^+)=\liminf\limits_{s\to0^+}\c(s)\ge1$.
\end{corollary}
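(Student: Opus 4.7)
The plan is to apply the refined pointwise bound of \cref{res:lip_wBE_best} to $\P_{t-s}f$ with time parameter~$s\in(0,t)$, and then to pass to the limit $s\to0^+$ along a sequence realizing $\underline\c(0^+)$.

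For any $s\in(0,t)$, since $f\in\Leb^2(X,\m)\cap\Leb^\infty(X,\m)$, the strong Feller property (\cref{res:S-Feller}) provides a bounded Lipschitz representative of $\P_{t-s}f$, while \cref{res:poincare_ineqs}\ref{item:poincare_ineq_lower} ensures $\P_{t-s}f\in\Sob^{1,2}(X,\d,\m)$. Hence \cref{res:lip_wBE_best} applies to $\P_{t-s}f$ with time~$s$, and the semigroup identity $\P_s\P_{t-s}f=\P_tf$ delivers the everywhere pointwise bound
\begin{equation*}
|\D^*\P_tf|^2(x)\le\c^2(s)\,\tilde\P_s\Gamma(\P_{t-s}f)(x)\quad\text{for all }x\in X.
\end{equation*}

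Next, I would choose $s_n\to0^+$ with $\c(s_n)\to\underline\c(0^+)$. By \cref{res:pointwise_heat_props}\ref{item:pointwise_heat_on_L_infty_Borel} applied to $g_n:=\Gamma(\P_{t-s_n}f)\in\Leb^1(X,\m)\cap\Leb^\infty(X,\m)$, one has $\tilde\P_{s_n}g_n=\P_{s_n}g_n$ off an $\m$-null set~$N_n$, and setting $N:=\bigcup_nN_n$ all these identifications hold simultaneously on~$X\setminus N$. The key intermediate convergence
\begin{equation*}
\P_{s_n}g_n\longto\Gamma(\P_tf)\quad\text{in }\Leb^1(X,\m)
\end{equation*}
follows by splitting
\begin{equation*}
\P_{s_n}g_n-\Gamma(\P_tf)=\P_{s_n}\bigl[g_n-\Gamma(\P_tf)\bigr]+\bigl[\P_{s_n}\Gamma(\P_tf)-\Gamma(\P_tf)\bigr],
\end{equation*}
using the $\Leb^1$-contractivity \eqref{eq:heat_Lp_contraction} of~$\P_{s_n}$ and the $\Leb^2$-continuity of $r\mapsto|\D\P_rf|_w$ for $r>0$ from \cref{res:heat_flow_sob_cont} (which squares to $\Leb^1$-convergence $g_n\to\Gamma(\P_tf)$) to control the first summand, and the strong $\Leb^1$-continuity of the heat semigroup to control the second.

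Extracting a further subsequence along which this $\Leb^1$-convergence upgrades to $\m$-a.e.\ convergence on $X\setminus N$, passing to the limit in the pointwise inequality displayed in the second paragraph yields \eqref{eq:Fellerazzo}. The side condition $\underline\c(0^+)\ge1$ is checked separately by integrating $\wBE(\c,\infty)$ over~$X$ for any non-constant test function in $\Sob^{1,2}(X,\d,\m)$ and invoking \cref{res:heat_flow_sob_cont} to let $r\to0^+$, together with $\c(0)=1$. The main delicacy I foresee is the interplay between the pointwise inequality from \cref{res:lip_wBE_best} and the $\m$-a.e.\ identifications $\tilde\P_{s_n}=\P_{s_n}$; this is handled by restricting attention to the common $\m$-full set~$X\setminus N$ and working along a single countable sequence, so that the subsequential extraction is compatible with all identifications at once.
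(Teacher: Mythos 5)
Your proposal is correct and follows essentially the same route as the paper: apply \cref{res:lip_wBE_best} to $f_n=\P_{t-s_n}f$ (made admissible via \cref{res:S-Feller}) along a sequence $s_n\downarrow0$ realizing $\underline\c(0^+)$, identify $\tilde\P_{s_n}$ with $\P_{s_n}$ $\m$-a.e., and pass to the limit using $\Gamma(\P_{t-s_n}f)\to\Gamma(\P_tf)$ in $\Leb^1(X,\m)$ from \cref{res:heat_flow_sob_cont}. The extra details you supply (the splitting for the $\Leb^1$-convergence of $\P_{s_n}\Gamma(f_n)$, the union of null sets, the a.e.\ subsequence extraction, and the verification of $\underline\c(0^+)\ge1$) are all sound and merely make explicit what the paper leaves implicit.
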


\begin{proof}
Let $t>0$ be fixed and let $(\eps_n)_{n\in\N}\subset(0,t)$ be such that $\eps_n\downarrow0$ as $n\to+\infty$ and $\underline\c(0^+)=\lim\limits_{n\to+\infty}\c(\eps_n)$.
By \cref{res:S-Feller}, we have $f_n=\P_{t-\eps_n}f\in\Lip_b(X)\cap\Sob^{1,2}(X,\d,\m)$ for all $n\in\N$ and thus, by \cref{res:lip_wBE_best}, \cref{res:pointwise_heat_props}\ref{item:pointwise_heat_on_L_infty_Borel} and~\eqref{eq:pointwise_heat_p_t_density}, we can estimate
\begin{equation*}
|\D^*\P_t f|^2
=
|\D^*\P_{\eps_n} f_n|^2
\le
\c^2(\eps_n)\,\tilde\P_{\eps_n}\Gamma(f_n)
=
\c^2(\eps_n)\,\P_{\eps_n}\Gamma(f_n)
\quad
\text{$\m$-a.e.\ in~$X$}
\end{equation*}
for all $n\in\N$. Since $\Gamma(f_n)\to\Gamma(f)$ in $\Leb^1(X,\m)$ as $n\to+\infty$ by~\eqref{eq:min_weak_grad_f_t_is_cont} in \cref{res:heat_flow_sob_cont}, the conclusion follows by passing to the limit as $n\to+\infty$.
\end{proof}

As a completely natural (although painful) drawback of the weakness of the $\BE_w(\c,\infty)$ property, if the function~$\c$ in~\eqref{eq:def_c} is such that
\begin{equation}\label{eq:Fellerazzo_salto}
\underline\c(0^+)
=
\liminf_{t\to0^+}\c(t)>1,
\end{equation}
then \cref{res:Fellerazzo} provides no useful information, since $|\D^*\P_t f|\ge|\D\P_t f|\ge|\D\P_t f|_w$ $\m$-a.e.\ in~$X$ whenever~$t>0$ by~\eqref{eq:wD_less_slope_for_Lip} and \cref{res:S-Feller}.
In \cref{sec:proof_equivalence} (precisely, in  the proof of \cref{res:action_double_variation}), similarly to~\cite{AGS15}, we will need the following regularization property of the heat semigroup.

\begin{definition}[Heat-smoothing admissible space]
\label{def:heat-smoothing_space}
We say that an admissible metric-measure space is \emph{heat-smoothing} if
\begin{equation}\label{assumption:heat-smoothing}
f\in\Leb^\infty(X,\m)\cap\Sob^{1,2}(X,\d,\m)
\implies
|\D\P_tf|=|\D\P_tf|_w\
\text{$\m$-a.e.\ in~$X$ for all $t>0$}.
\end{equation}
\end{definition}

\noindent
Note that, if $\underline\c(0^+)=1$, then inequality~\eqref{eq:Fellerazzo} in \cref{res:Fellerazzo} immediately implies~\eqref{assumption:heat-smoothing} (actually, in the stronger form assuming $f\in\Leb^2(X,\m)\cap\Leb^\infty(X,\m)$ only).

\section{Fisher information and \texorpdfstring{$\Leb\log\Leb$}{LlogL}-regolarization}
\label{sec:Fisher_and_LlogL}

In this section, we recall some useful properties of the Fisher information and the entropy functional in admissible metric-measure spaces. 
We only detail the proofs of the results which rely on the $\BE_w(\c,\infty)$ condition. 

\subsection{Fisher information, entropy and Kuwada Lemma}   

Let us set 
\begin{equation*}
\Leb^1_+(X,\m)=\set*{f\in\Leb^1(X,\m) : f\ge0\ \text{$\m$-a.e.\ in}\ X}
\end{equation*}
the convex cone of non-negative $\Leb^1$-functions.
As in~\cite{AGS14}*{Definition~4.9},  
the \emph{Fisher information} $\F\colon\Leb^1_+(X,\m)\to[0,+\infty]$ is defined for all $f\in\Leb^1_+(X,\m)$ as
\begin{equation*}
\F(f)
=
\begin{cases}
4\Ch(\sqrt{f}) & \text{if}\ \sqrt{\mathstrut f}\in\Sob^{1,2}(X,\d,\m)\\[2mm]
+\infty & \text{otherwise}.
\end{cases}
\end{equation*}
In particular, we have
\begin{equation*}
\Dom(\F)=\set*{f\in\Leb^1_+(X,\m) : \sqrt{\mathstrut f}\in\Sob^{1,2}(X,\d,\m)}.	
\end{equation*}
Since $f_n\to f$ in~$\Leb^1_+(X,\m)$ as $n\to+\infty$ implies that $\sqrt{\mathstrut f_n}\to\sqrt{\mathstrut f}$ in~$\Leb^2(X,\m)$ as $n\to+\infty$, the Fisher information~$\F$ is lower semicontinuous in~$\Leb^1_+(X,\m)$.
Thanks to the locality property~\eqref{eq:locality_min_weak_grad} and the chain rule~\eqref{eq:chain_rule_min_weak_grad}, if $f\in\Dom(\F)$ then $f_n=f\wedge n\in\Sob^{1,2}(X,\d,\m)$ with
\begin{equation*}
|\D f_n|_w
=
2\sqrt{f_n}\,|\D\sqrt{f}|_w\,\chi_{\set*{f\le n}}
\in\Leb^1(X,\m)
\end{equation*}
for all~$n\in\N$ and 
$|\D f_n|_w\uparrow 2\sqrt{\mathstrut f}\,|\D\sqrt{\mathstrut f}|_w$ 
in~$\Leb^1(X,\m)$ as~$n\to+\infty$.
Hence we can write
\begin{align*}
\F(f)
&=
\lim_{n\to+\infty}
4\int_X|\D\sqrt f|^2_w\,\chi_{\set*{0<f\le n}}\di\m\\
&=
\lim_{n\to+\infty}
\int_X\frac{4f_n\,|\D\sqrt f|^2_w}{f}\,\chi_{\set*{0<f\le n}}\di\m\\
&=
\lim_{n\to+\infty}
\int_{\set*{f>0}}\frac{|\D f_n|^2}{f}\di\m.
\end{align*}
Thus, accordingly with~\cite{AGS14}*{Lemma~4.10}, if $f\in\Dom(\F)$ then we define
\begin{equation}\label{eq:weak_grad_ext}
|\tilde\D f|_w
=
2\sqrt{f}\,|\D\sqrt f|_w
=
\lim_{n\to+\infty}|\D f_n|_w
\end{equation}
and
\begin{equation}\label{eq:Gamma_ext_def_and_Fisher}
\F(f)
=
\int_{\set*{f>0}}\frac{\tilde\Gamma(f)}f\di\m,\
\text{where}\
\tilde\Gamma(f)
=
|\tilde\D f|_w^2.
\end{equation}  
In particular, thanks to the convexity of the weak gradient and the convexity of the function $(x,y)\mapsto y^2/x$ on~$(0,+\infty)\times\R$, we also get that the Fisher information is convex on~$\Leb^1_+(X,\m)$, see~\cite{AGS14}*{Lemma~4.10}.

The following result is a part of the statement of~\cite{AGS15}*{Lemma~4.2} and provides some simple but extremely useful estimates involving the Fisher information, the entropy functional and the second moments of the measure. 
The proof goes as the one of~\cite{AGS15}*{Lemma~4.2} (see also~\cite{AGS14}*{Theorems~4.16(b) and~4.20}) and we thus omit it.

\begin{lemma}[Entropy and Fisher information along $(\H_t)_{t\ge0}$]
\label{res:Fisher_Entropy_along_H_t}
Let $\mu=f\m\in\Dom(\Ent_\m)$ and set $f_t=\P_tf$ and $\mu_t=f_t\m$ for all $t\ge0$.
\begin{enumerate}[label=(\roman*)]

\item\label{item:entropy_decreasing_dual_heat}
For all $t\ge0$, we have
$\Ent_\m(\mu_t)\le\Ent_\m(\mu)$.

\item\label{item:Fisher_moment_est_dual_heat}
If $x_0\in X$ and $T>0$, then 
\begin{equation}
\label{eq:Fisher_moment_est_dual_heat}
\int_0^T\F(f_t)\di t
+
2\int_0^T\int_X\d^2(x,x_0)\di\mu_t(x)\di t
\le
2e^{4T}
\left(
\Ent_\m(\mu)
+
2\int_X\d^2(x,x_0)\di\mu(x)
\right).
\end{equation}
\end{enumerate}
\end{lemma}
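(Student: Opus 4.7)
The plan is to prove both parts by establishing the classical entropy dissipation identity
\begin{equation*}
\Ent_\m(\mu) - \Ent_\m(\mu_T) = \int_0^T \F(f_t)\, dt \qquad \text{for all } T \ge 0
\end{equation*}
along the heat flow, from which (i) follows immediately (since $\F \ge 0$) and (ii) follows by combining it with the second-moment estimate in \cref{res:2nd_moment_est_heat} and the lower bound on entropy supplied by~\eqref{eq:ent_useful_formula}. Notice that no information from the $\wBE(\c,\infty)$ condition is needed here: the proof only uses the quadratic nature of the Cheeger energy~\ref{assumption:quadratic_Ch}, which makes $(\P_t)_{t\ge0}$ a linear analytic Markov semigroup whose self-adjoint generator $\Delta_{\d,\m}$ obeys the integration-by-parts formula~\eqref{eq:int_by_part_formula} and the chain rule~\eqref{eq:chain_rule_Laplacian}.

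To prove the identity, I would first approximate $\mu = f\m \in \Dom(\Ent_\m)$ by measures $f^\eps\m$ where $f^\eps \in \Leb^2(X,\m)$ is bounded above by a constant and bounded below by a positive constant on some set of finite $\m$-measure (and vanishes outside), constructed by a standard truncation and normalization of~$f$. The maximum principle~\eqref{eq:sub-Markov} ensures that $f^\eps_t$ stays essentially bounded and strictly positive, so $\phi(r) = r\log r$ is $\Cont^2$ on the range of $f^\eps_t$, and combining the chain rule~\eqref{eq:chain_rule_Laplacian} with~\eqref{eq:int_by_part_formula} yields
\begin{equation*}
\frac{d}{dt}\int_X \phi(f^\eps_t)\, d\m
=
\int_X \phi'(f^\eps_t)\,\Delta_{\d,\m}f^\eps_t\, d\m
=
-\int_X \phi''(f^\eps_t)\,\Gamma(f^\eps_t)\, d\m
=
-\F(f^\eps_t),
\end{equation*}
using $\phi''(r)=1/r$ together with the identity $\tilde\Gamma(f) = \Gamma(f)$ on $\{f>0\}$ that follows from~\eqref{eq:weak_grad_ext} and~\eqref{eq:Gamma_ext_def_and_Fisher}. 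Integrating in time and passing to the limit as $\eps \to 0^+$ via mass preservation~\eqref{eq:heat_mass_preservation}, the $\Leb^1$-contraction~\eqref{eq:heat_Lp_contraction}, and the lower semicontinuity of $\Ent_\m$ and $\F$ promotes the identity to arbitrary $\mu \in \Dom(\Ent_\m)$.

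Once the identity is available, (ii) reduces to bounding $\Ent_\m(\mu_T)$ from below. Writing $\mathfrak n = e^{-c\,\d(\cdot,x_0)^2}\m$ as in~\eqref{eq:ent_useful_formula} with $c > 0$ chosen so that $\mathfrak n \in \Prob(X)$, Jensen's inequality applied to the probability measure~$\mathfrak n$ gives $\Ent_\mathfrak{n}(\nu) \ge 0$ for every $\nu \in \Prob(X)$, hence
\begin{equation*}
\Ent_\m(\nu) \ge -c \int_X \d^2(x,x_0)\, d\nu(x) \qquad \text{for all } \nu \in \Prob_2(X).
\end{equation*}
Applied at $\nu = \mu_T$ and combined with \cref{res:2nd_moment_est_heat}, this controls $\int_0^T\F(f_t)\,dt = \Ent_\m(\mu)-\Ent_\m(\mu_T)$ by a multiple of $e^{4T}\bigl(\Ent_\m(\mu) + 2\int \d^2\,d\mu\bigr)$, while integrating~\eqref{eq:2nd_moment_est_heat} in time gives a bound of the same form for $2\int_0^T\int\d^2\,d\mu_t\,dt$. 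A careful choice of~$c$ and a neat bookkeeping of the constants condense both contributions into the prefactor $2e^{4T}$ of~\eqref{eq:Fisher_moment_est_dual_heat}.

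The only delicate point is the approximation argument behind the entropy-dissipation identity, because $r\log r$ is unbounded both as $r\to 0^+$ and as $r\to +\infty$, while $f_t$ has no a priori pointwise control beyond the maximum principle. This is routine, however, in the $\Gamma$-calculus framework of \cref{sec:preliminaries}, using the maximum principle~\eqref{eq:sub-Markov}, the semigroup contractions~\eqref{eq:heat_Lp_contraction}, and a diagonal argument on the truncation parameters.
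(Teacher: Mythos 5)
The paper actually omits the proof of this lemma, deferring to \cite{AGS15}*{Lemma~4.2} and \cite{AGS14}*{Theorems~4.16(b) and~4.20}, and your architecture --- entropy dissipation along $(\P_t)_{t\ge0}$, the second-moment bound of \cref{res:2nd_moment_est_heat}, and the Jensen lower bound for $\Ent_\m$ --- is exactly the one those references implement, so the strategy is sound. Two steps, however, would fail as written. First, the maximum principle~\eqref{eq:sub-Markov} only propagates \emph{global} one-sided bounds: from $f^\eps\ge 0$ you get $f^\eps_t\ge 0$, not strict positivity, since the lower bound by a positive constant holds only on a set of finite measure and is therefore not preserved. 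You are then still differentiating $r\mapsto r\log r$ at $r=0$. The standard repair is to regularize the entropy rather than the density (e.g.\ $\phi_\eps(r)=(r+\eps)\log(r+\eps)-\eps\log\eps$, as in \cite{AGS14}), compute $\tfrac{\di}{\di t}\int_X\phi_\eps(f_t)\di\m=-\int_X\phi_\eps''(f_t)\,\Gamma(f_t)\di\m$ for bounded truncations of~$f$, and let $\eps\to0^+$; note also that in this generality only the dissipation \emph{inequality} $\Ent_\m(\mu_T)+\int_0^T\F(f_t)\di t\le\Ent_\m(\mu)$ is directly available (the reverse inequality in \cite{AGS14} comes from the identification with the $W_2$-gradient flow), but that direction is all you need.

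Second, the ``neat bookkeeping'' yielding the prefactor $2e^{4T}$ is asserted rather than performed, and with your term-by-term scheme it does not come out. The constant $c$ in $\Ent_\m(\nu)\ge-c\int_X\d^2(x,x_0)\di\nu-\log z_c$, $z_c=\int_Xe^{-c\,\d^2(\cdot,x_0)}\di\m$, is dictated by the growth condition~\eqref{eq:exp_growth} and cannot be ``chosen'' small; with the normalization $z_2\le1$ that is implicitly needed for the right-hand side of~\eqref{eq:2nd_moment_est_heat} to be nonnegative, bounding $-\Ent_\m(\mu_T)\le 2\int\d^2\di\mu_T$ and inserting \cref{res:2nd_moment_est_heat} termwise produces a prefactor of order $\tfrac72e^{4T}$, not $2e^{4T}$. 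The stated constant requires either the sharper normalization $\int_Xe^{-\d^2(\cdot,x_0)}\di\m\le1$ (so that $c=1$ is admissible), or, as in the proof of \cite{AGS14}*{Theorem~4.20}, a single Gronwall argument on the coupled quantity $G(t)=\Ent_\m(\mu_t)+2\int_X\d^2(x,x_0)\di\mu_t$, in which $\tfrac{\di}{\di t}\int\d^2\di\mu_t\le2\,\F(f_t)^{1/2}\big(\int\d^2\di\mu_t\big)^{1/2}$ is split by Young's inequality so that the resulting Fisher term is absorbed by the one coming from the entropy dissipation and the dissipation is retained on the left. Your argument does prove the lemma with a worse (still finite and $T$-exponential) constant, which is all the paper uses downstream, but it does not establish~\eqref{eq:Fisher_moment_est_dual_heat} with the constant as stated.
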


Thanks to \cref{res:Fisher_Entropy_along_H_t}, we have the following fundamental result. 
We refer the interested reader to~\cite{AGS14}*{Lemma~6.1} (see also~\cite{GKO13}*{Proposition~3.7}) for its proof. 

\begin{lemma}[Estimate on the $W_2$-velocity]
\label{res:kuwada_lemma_velocity}
If $\mu=f\m\in\Dom(\Ent_\m)$ with $f\in\Leb^2(X,\m)$, then 
$t\mapsto\mu_t=f_t\m\in\AC_{\loc}^2([0,+\infty);\Prob_2(X))$, 
where $f_t=\P_t f$ for all $t\ge0$, with
\begin{equation*}
|\dot\mu_t|^2\le\F(f_t)=\int_{\set*{f_t>0}}\frac{|\D f_t|_w^2}{f_t}\di\m
\end{equation*}
for $\L^1$-a.e.\ $t>0$.
\end{lemma}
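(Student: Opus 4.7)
The strategy is the classical duality argument due to Kuwada: estimate $W_2^2(\mu_s,\mu_t)$ from above by Kantorovich duality combined with the Hamilton--Jacobi equation satisfied by the Hopf--Lax semigroup and the heat equation satisfied by $f_t$, converting the Laplacian term into a gradient term that can be controlled by the Fisher information via Cauchy--Schwarz and Young's inequalities.

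First, by \cref{res:Fisher_Entropy_along_H_t}\ref{item:entropy_decreasing_dual_heat}--\ref{item:Fisher_moment_est_dual_heat} and the assumption $f\in\Leb^2(X,\m)$, we have $f_t\in\Leb^2(X,\m)\cap\Dom(\F)$ for $\L^1$-a.e.\ $t>0$, with $\F(f_t)\in\Leb^1_{\loc}([0,+\infty))$ and $\mu_t\in\Prob_2(X)$ for all $t\ge0$. Fix $0\le s<t$ and take $\phi\in\Lip(X)$ bounded with bounded support. With $\phi_r=Q_r\phi$, for $r\in[0,1]$ define
\begin{equation*}
G(r)=\int_X\phi_r\,f_{s+r(t-s)}\di\m.
\end{equation*}
The plan is to differentiate $G$, using \eqref{eq:hopf-lax_HJ} for the $r$-derivative of $\phi_r$ and the heat equation for the $r$-derivative of $f_{s+r(t-s)}$, and then integrate by parts via \eqref{eq:int_by_part_formula} to obtain, after a small regularization to justify the computation,
\begin{equation*}
\frac{\di}{\di r}G(r)
\le
-\frac12\int_X|\D\phi_r|^2\,f_{s+r(t-s)}\di\m
-(t-s)\int_X\Gamma(\phi_r,f_{s+r(t-s)})\di\m.
\end{equation*}

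The second step is the pointwise estimate. Setting $u=f_{s+r(t-s)}$, the estimate \eqref{eq:Gamma_ineq_prod_min_weak_grads} and Cauchy--Schwarz give
\begin{equation*}
\left|\int_X\Gamma(\phi_r,u)\di\m\right|
\le
\left(\int_X|\D\phi_r|_w^2\,u\di\m\right)^{1/2}
\left(\int_{\{u>0\}}\frac{\tilde\Gamma(u)}{u}\di\m\right)^{1/2},
\end{equation*}
recalling \eqref{eq:weak_grad_ext}--\eqref{eq:Gamma_ext_def_and_Fisher}. Using $|\D\phi_r|_w\le|\D\phi_r|$ from \eqref{eq:wD_less_slope_for_Lip} and applying Young's inequality $(t-s)AB\le\tfrac12A^2+\tfrac{(t-s)^2}2B^2$ with $A^2=\int|\D\phi_r|^2u\di\m$ and $B^2=\F(u)$ absorbs the negative term, so that
\begin{equation*}
\frac{\di}{\di r}G(r)
\le
\frac{(t-s)^2}{2}\,\F(f_{s+r(t-s)}).
\end{equation*}
Integrating in $r\in[0,1]$ and performing the change of variable $\tau=s+r(t-s)$ yields
\begin{equation*}
\int_XQ_1\phi\di\mu_t-\int_X\phi\di\mu_s
=
G(1)-G(0)
\le
\frac{t-s}{2}\int_s^t\F(f_\tau)\di\tau.
\end{equation*}

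Taking the supremum over $\phi\in\Lip(X)$ with bounded support and invoking the Kantorovich duality formula \eqref{eq:kantorovich_duality_p=2} delivers
\begin{equation*}
\frac12\,W_2^2(\mu_s,\mu_t)\le\frac{t-s}{2}\int_s^t\F(f_\tau)\di\tau
\qquad\text{for all }0\le s<t.
\end{equation*}
Since $\tau\mapsto\F(f_\tau)\in\Leb^1_{\loc}([0,+\infty))$, this is exactly the $\AC^2_{\loc}$-property of $t\mapsto\mu_t\in\Prob_2(X)$ with metric derivative controlled by $|\dot\mu_t|^2\le\F(f_t)$ for $\L^1$-a.e.\ $t>0$, by a standard Lebesgue differentiation argument (see~\cite{AGS08}*{Theorem~1.1.2}).

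The main technical subtlety is the rigorous justification of the derivative of $G(r)$: the Hamilton--Jacobi equation \eqref{eq:hopf-lax_HJ} only provides an inequality for the upper right Dini derivative of $Q_r\phi$, and the integration by parts requires $f_{s+r(t-s)}\in\Dom(\Delta_{\d,\m})\cap\Sob^{1,2}(X,\d,\m)$. Both issues are handled as in~\cite{AGS14}*{Lemma~6.1} by approximating $f$ via the semigroup mollifier $\mathfrak h^\eps$ from \eqref{eq:semigroup_moll_def} (which gives the required regularity by \cref{res:pazy} and \cref{res:semigroup_moll_Sob_approx}), performing the estimate at the regularized level, and passing to the limit $\eps\to0^+$ using the lower semicontinuity of~$\F$ and the joint lower semicontinuity of $W_2$.
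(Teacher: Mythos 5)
Your argument is correct and is precisely the Kuwada-duality proof of \cite{AGS14}*{Lemma~6.1}, which is exactly the reference the paper cites for this statement instead of supplying a proof. The technical caveats you flag (the Dini-derivative form of the Hamilton--Jacobi equation, and the regularization via $\mathfrak h^\eps$ needed to justify the integration by parts) are the right ones and are resolved as you indicate.
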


As a simple consequence of \cref{res:kuwada_lemma_velocity} and the $\BE_w(\c,\infty)$ condition, we can prove the following $W_2$-continuity property of the dual heat flow.
For the same result under the standard $\BE(K,\infty)$ condition, see the last part of~\cite{AGS15}*{Lemma~4.2}. 

\begin{lemma}[$W_2$-continuity of $t\mapsto\H_t$]
\label{res:dual_heat_W_2_continuity}
Assume $(X,\d,\m)$ satisfies $\wBE(\c,\infty)$.
If $\mu\in\Prob_2(X)$, then $t\mapsto\H_t\mu\in\Cont([0,+\infty);\Prob_2(X))$.
Equivalently, if $\mu\in\Prob_2(X)$ then $t\mapsto\H_t\mu$ is weakly continuous on $[0,+\infty)$ and $t\mapsto\int_X\d^2(x,x_0)\di\H_t\mu(x)$ is continuous on $[0,+\infty)$ whenever $x_0\in X$ is given.
\end{lemma}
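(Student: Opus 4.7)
The plan is to bootstrap $W_2$-continuity from a $W_2$-dense subset of $\Prob_2(X)$ where it is straightforward, by exploiting the uniform Lipschitz property of the dual heat semigroup provided by the Kuwada duality \cref{res:kuwada_equivalence}.

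First I would fix $t_0\ge 0$ together with an interval $[0,T]\ni t_0$. By~\eqref{eq:def_c}, the function $\c$ is essentially bounded on $[0,T]$ by some constant $M<+\infty$, and \cref{res:kuwada_equivalence} together with the triangle inequality in $(\Prob_2(X),W_2)$ yield
\begin{equation*}
W_2(\H_t\mu,\H_{t_0}\mu)
\le 2M\,W_2(\mu,\nu) + W_2(\H_t\nu,\H_{t_0}\nu)
\end{equation*}
for all $\mu,\nu\in\Prob_2(X)$ and $t\in[0,T]$. Hence it suffices to exhibit a $W_2$-dense subset $\mathscr D\subset\Prob_2(X)$ on which the map $t\mapsto\H_t\nu$ is already continuous.

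For $\mathscr D$ I would take the set of measures $\nu=f\,\m$ with $f\in\Leb^\infty(X,\m)$ of bounded support and $\int_X f\di\m=1$. By~\ref{assumption:measure}, each such $f$ belongs to $\Leb^2(X,\m)$, while $\nu\in\Dom(\Ent_\m)$ because $f\log f\in\Leb^\infty(X,\m)$ is supported in a bounded set (and thus has finite $\m$-integral). The $W_2$-density of $\mathscr D$ follows from a standard three-step approximation: truncate the support of $\mu$ (using $\mu\in\Prob_2(X)$ to control both the weak limit and the convergence of second moments), pass to an absolutely continuous measure (using the $W_2$-density of $\Prob_2^{\mathrm{ac}}(X)$ recalled in \cref{sec:preliminaries}), and truncate and renormalize the density. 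For every $\nu\in\mathscr D$, \cref{res:kuwada_lemma_velocity} gives $t\mapsto\H_t\nu\in\AC_{\loc}^2([0,+\infty);\Prob_2(X))$, which in particular delivers continuity on $[0,+\infty)$.

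Combining both ingredients establishes the claim: given $\eps>0$, I would first pick $\nu\in\mathscr D$ with $2M\,W_2(\mu,\nu)<\eps/2$ and then $\delta>0$ so that $W_2(\H_t\nu,\H_{t_0}\nu)<\eps/2$ whenever $|t-t_0|<\delta$, which yields $W_2(\H_t\mu,\H_{t_0}\mu)<\eps$ on $|t-t_0|<\delta$. The equivalent formulation in terms of weak continuity together with continuity of second moments is then immediate from the characterization of the topology of $(\Prob_2(X),W_2)$ recalled in \cref{sec:preliminaries}. No genuine obstacle appears in this scheme: the only point that demands some (routine) care is checking that the chosen~$\mathscr D$ is simultaneously $W_2$-dense and contained in $\Leb^2(X,\m)\cap\Dom(\Ent_\m)$, which the triple truncation above accomplishes.
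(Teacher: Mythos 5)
Your proposal is correct and follows essentially the same route as the paper: continuity on a dense class of measures with $\Leb^2$-densities and finite entropy via \cref{res:kuwada_lemma_velocity}, transferred to all of $\Prob_2(X)$ by the triangle inequality together with the local bound $\c\le M$ and the $W_2$-Lipschitz property of $\H_t$ from \cref{res:kuwada_equivalence}. The only difference is cosmetic: you spell out the dense class (bounded densities with bounded support) where the paper merely invokes "a truncation argument".
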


\begin{proof}
If $\mu=f\m\in\Dom(\Ent_\m)$ with $f\in\Leb^2(X,\m)$, then the $W_2$-continuity of the map $t\mapsto\H_t\mu$ follows immediately from \cref{res:kuwada_lemma_velocity}.
If $\mu\in\Prob_2(X)$, then we can find $(\mu_n)_{n\in\N}\subset\Prob_2^{\rm ac}(X)$ such that $\mu_n\overset{W_2}{\longto}\mu$ as~$n\to+\infty$.
Possibly performing a truncation argument, we can also assume that $\Ent(\mu_n)<+\infty$ and $\mu_n=f_n\m$ with $f_n\in\Leb^2(X,\m)$ for all $n\in\N$.
If $t\ge0$, then we can estimate
\begin{align*}
\limsup_{s\to t}
W_2(\H_s\mu,\H_t\mu)
&\le
W_2(\H_s\mu_n,\H_s\mu)
+
W_2(\H_t\mu_n,\H_t\mu)
+
\limsup_{s\to t}
W_2(\H_s\mu_n,\H_t\mu_n)\\
&\le
2M\, W_2(\mu_n,\mu),
\end{align*}
where we have set $M=\sup_{s\in[\frac t2,t+1]}\c(s)$. The conclusion thus follows by passing to the limit as~$n\to+\infty$.
\end{proof}

\subsection{Log-Harnack and \texorpdfstring{$\Leb\log\Leb$}{L log L} estimates}

The rest of this section is dedicated to the proof of the following fundamental regularization property of the dual heat semigroup, see~\cite{AGS15}*{Theorem~4.8} for the same result in the standard $\BE(K,\infty)$ setting.

\begin{theorem}[$\Leb\log\Leb$ regularization]\label{res:LlogL_reg}
Assume $(X,\d,\m)$ satisfies $\wBE(\c,\infty)$. 
If $\mu\in\Prob_2(X)$, then
\begin{equation}\label{eq:LlogL_reg}
\Ent_\m(\H_t\mu)
\le 
\frac{1}{2\I_{-2}(t)}
\left(r^2+\int_X\d^2(x,x_0)\di\mu(x)\right)
-\log\m(B_r(x_0))
\end{equation}
for all $x_0\in X$ and $r,t>0$.
In particular, 
$\H_t(\Prob_2(X))\subset\Dom(\Ent_\m)$
for all $t>0$.
\end{theorem}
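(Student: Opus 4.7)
My plan adapts the strategy of \cite{AGS15}*{Theorem~4.8} to the present $\wBE(\c,\infty)$ framework, and proceeds in three main steps.

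\emph{Step 1 (log-Harnack inequality).} The first task is to establish that, for every bounded Borel $f\colon X\to(0,+\infty)$ bounded away from zero and every $x,y\in X$, $t>0$,
\begin{equation}\label{eq:plan_log_harnack}
\tilde\P_t(\log f)(x)\le\log\tilde\P_tf(y)+\frac{\d^2(x,y)}{4\,\I_{-2}(t)}.
\end{equation}
Choose a curve $\gamma\in\AC([0,t];X)$ joining $y$ to $x$ and consider $\phi(s)=\tilde\P_s(\log\tilde\P_{t-s}f)(\gamma_s)$. The ``intrinsic'' contribution to $\tfrac{\di\phi}{\di s}$ works out to $-\tilde\P_s\Gamma(\log\tilde\P_{t-s}f)(\gamma_s)$ via the chain rule $\Delta\log u=\Delta u/u-\Gamma(u)/u^2$ combined with the self-adjoint commutation $\Delta\tilde\P_s=\tilde\P_s\Delta$, while the transport contribution is bounded by $|\D^*\tilde\P_s\log\tilde\P_{t-s}f|(\gamma_s)\,|\dot\gamma_s|$. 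Applying \cref{res:lip_wBE_best} to $g_s:=\log\tilde\P_{t-s}f$ (bounded and Lipschitz since $\tilde\P_{t-s}f$ is bounded above and away from zero) gives $|\D^*\tilde\P_sg_s|^2\le\c^2(s)\,\tilde\P_s\Gamma(g_s)$, and Young's inequality then produces $\tfrac{\di\phi}{\di s}\le\c^2(s)|\dot\gamma_s|^2/4$. Minimizing $\int_0^t\c^2(s)|\dot\gamma_s|^2/4\,\di s$ subject to $\int_0^t|\dot\gamma_s|\di s\ge\d(x,y)$ via Cauchy--Schwarz (the minimizer satisfies $|\dot\gamma_s|\propto\c^{-2}(s)$) produces the factor $1/(4\,\I_{-2}(t))$ in~\eqref{eq:plan_log_harnack}.

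\emph{Step 2 (averaging and application).} Fix $x_0\in X$ and $r>0$, and integrate~\eqref{eq:plan_log_harnack} in $y$ against the probability density $\chi_{B_r(x_0)}/\m(B_r(x_0))$. Jensen's inequality (for the concave function $\log$), together with the triangle estimate $\d(x,y)\le \d(x,x_0)+r$ on $B_r(x_0)$ and the elementary bound $(a+b)^2\le 2a^2+2b^2$, yields
\begin{equation}\label{eq:plan_avg_log_harnack}
\tilde\P_t(\log f)(x)\le\log\frac{\int_{B_r(x_0)}\tilde\P_tf\di\m}{\m(B_r(x_0))}+\frac{r^2+\d^2(x,x_0)}{2\,\I_{-2}(t)}.
\end{equation}
Assume first $\mu=f_0\m\in\Dom(\Ent_\m)$ with $f_0\in\Leb^\infty(X,\m)$ of bounded support, so that $f_t:=\P_tf_0\in\Leb^\infty(X,\m)$ is the density of $\H_t\mu$ and $\H_t\mu\in\Dom(\Ent_\m)$ by \cref{res:Fisher_Entropy_along_H_t}\ref{item:entropy_decreasing_dual_heat}. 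Apply~\eqref{eq:plan_avg_log_harnack} with $f:=f_t+\eps$ (bounded and bounded away from zero). Using $\tilde\P_t(f_t+\eps)=f_{2t}+\eps$ together with the mass estimate $\int_{B_r(x_0)}f_{2t}\di\m=\H_{2t}\mu(B_r(x_0))\le1$, then integrating against $\mu$ and invoking the duality formula \cref{res:pointwise_heat_props}\ref{item:pointwise_heat_duality_on_Prob_p} (legitimate since $\log(f_t+\eps)$ is bounded) gives
\begin{equation*}
\int \log(f_t+\eps)\,f_t\di\m\le\log\frac{1+\eps\,\m(B_r(x_0))}{\m(B_r(x_0))}+\frac{r^2+\int\d^2(x,x_0)\di\mu(x)}{2\,\I_{-2}(t)}.
\end{equation*}
As $\eps\downarrow0$, monotone convergence ($f_t\log(f_t+\eps)\downarrow f_t\log f_t$, dominated above by $f_t\log(f_t+1)\le f_t^2\in\Leb^1$) identifies the left-hand side with $\Ent_\m(\H_t\mu)$ and the first right-hand-side term with $-\log\m(B_r(x_0))$, establishing~\eqref{eq:LlogL_reg} in this restricted setting.

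\emph{Step 3 (general $\mu$ and main obstacle).} For arbitrary $\mu\in\Prob_2(X)$, approximate by a sequence $(\mu_n)$ of the form considered in Step~2 with $\mu_n\xrightarrow{W_2}\mu$ (exploiting the $W_2$-density of bounded-density, compactly supported probability measures in $\Prob_2(X)$). By \cref{res:kuwada_equivalence}, $\H_t\mu_n\xrightarrow{W_2}\H_t\mu$, and the lower semicontinuity of $\Ent_\m$ in the $W_2$-topology (cf.~\eqref{eq:ent_useful_formula}) combined with the convergence of second moments transfers~\eqref{eq:LlogL_reg} to~$\mu$; finiteness of the right-hand side then forces $\H_t\mu\in\Dom(\Ent_\m)$. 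The principal obstacle is the rigorous justification of Step~1: the chain rule for the Laplacian on $\log\tilde\P_{t-s}f$, the commutation $\Delta\tilde\P_s=\tilde\P_s\Delta$, and the absolute continuity of $s\mapsto\phi(s)$ (needed for the integration of the derivative bound) must all be carefully handled in the present non-smooth framework. The strategy is to first regularize $f$ via the semigroup mollifier $\mathfrak h^\eps$ (\cref{res:pazy}) and exploit the Lipschitz/Sobolev regularization of \cref{res:S-Feller,res:lip_wBE_best}, before passing to the limit $\eps\to0^+$.
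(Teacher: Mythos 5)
Your proposal is correct and follows essentially the same route as the paper: the key ingredient is the Wang-type log-Harnack inequality with constant $1/(4\I_{-2}(t))$, proved by interpolating $s\mapsto\tilde\P_s(\log\P_{t-s}f)(\gamma_s)$ along a curve, using \cref{res:lip_wBE_best} and Young's inequality, and optimizing the curve at speed proportional to $\c^{-2}$; the theorem then follows by ball-averaging with Jensen and $W_2$-approximation with lower semicontinuity of the entropy. The only (immaterial) difference is that in Step~2 the paper applies the log-Harnack inequality to the kernel densities $\p_t[y]$ and exploits their symmetry, whereas you apply it directly to $f_t+\eps$ and use the semigroup identity $\tilde\P_tf_t=f_{2t}$ together with the duality of \cref{res:pointwise_heat_props}\ref{item:pointwise_heat_duality_on_Prob_p}; both reorganizations of the same Jensen-type estimates are valid.
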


To prove \cref{res:LlogL_reg}, we follow the same strategy adopted in~\cite{AGS15}*{Section~4.2}.
Before the proof of \cref{res:LlogL_reg}, we need two preliminary results.

The first one is the following generalization of the differentiation formula proved in \cref{res:diff_formula}.
The proof goes as that of~\cite{AGS15}*{Lemma~4.5} with minor modifications, so we omit it.

\begin{lemma}[General differentiation formula]
\label{res:gen_diff_formula_dual_heat}
Assume $(X,\d,\m)$ satisfies $\wBE(\c,\infty)$ 
and let $\omega\in\Cont^2([0,+\infty))$.
If $f\in\Lip_b(X)\cap\Sob^{1,2}(X,\d,\m)$ and $\mu\in\Prob_2(X)$, then for all $t>0$ we have
\begin{equation*}
s\mapsto G(s)
=
\int_X\omega(\P_{t-s}f)\di\H_s\mu
\in\Cont([0,t])\cap\Cont^1((0,t))
\end{equation*}
with
\begin{equation*}
G'(s)
=
\int_X\omega''(\P_{t-s}f)\,\Gamma(\P_{t-s}f)\di\H_s\mu
\end{equation*}
for all $s\in[0,t]$.
\end{lemma}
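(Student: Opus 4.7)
The approach is to generalize \cref{res:diff_formula}---whose special case corresponds to $\omega(r) = r^2/2$ and $\H_s\mu$ of the form $(\P_s\phi)\,\m$---to an arbitrary $\omega \in \Cont^2$ and $\mu\in\Prob_2(X)$. I would first set $u_s = \P_{t-s}f$ and note that, by \cref{res:lip_wBE}, $u_s \in \Lip_b(X)\cap\Sob^{1,2}(X,\d,\m)$ with $\|u_s\|_\infty \leq \|f\|_\infty$ (via \eqref{eq:sub-Markov}) and $\Lip(u_s) \leq \c(t-s)\Lip(f)$, and would systematically work with the continuous representative $\tilde u_s$ provided by \cref{res:pointwise_heat_props}. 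Continuity of $G$ on $[0,t]$ is then established by rewriting $G(s) = \int_X \tilde\P_s(\omega\circ\tilde u_s)\di\mu$ via the duality \cref{res:pointwise_heat_props}\ref{item:pointwise_heat_duality_on_Prob_p} and combining the uniform bound $|\omega\circ\tilde u_s| \leq \max_{|r|\le\|f\|_\infty}|\omega(r)|$ with the $W_2$-continuity of $s\mapsto\H_s\mu$ from \cref{res:dual_heat_W_2_continuity} and the $\Leb^2$-continuity of $s\mapsto u_s$.

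For the differentiation formula I would first establish it in the case $\mu = g\m$ with $g\in\Leb^2(X,\m)\cap\Leb^\infty(X,\m)$, so that $\H_s\mu = (\P_sg)\m$ and $G(s) = \int_X \omega(u_s)\,\P_sg\,\di\m$. Following the strategy of \cref{res:diff_formula}, I would further regularize by replacing $g$ with $\mathfrak h^\eps g$: by \cref{res:pazy}, the density $h_s^\eps := \P_s\mathfrak h^\eps g$ lies in $\Dom(\Delta_{\d,\m})$ with $\Delta_{\d,\m}h_s^\eps\in\Leb^2\cap\Leb^\infty$. Standard differentiation under the integral---combining $\partial_s u_s = -\Delta_{\d,\m}u_s$ strongly in $\Leb^2$ from \eqref{eq:Laplacian_char_strong_conv} with $\partial_s h_s^\eps = \Delta_{\d,\m}h_s^\eps$ weakly-$*$ in $\Leb^\infty$---produces two terms. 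The chain rule \eqref{eq:chain_rule_Laplacian} applied to $\omega(u_s)$ (legitimate since $u_s\in\Dom(\Delta_{\d,\m})\cap\Lip_b$ for $s<t$ and $\omega\in\Cont^2$) together with the integration-by-parts formula \eqref{eq:int_by_part_formula} makes the $\omega'(u_s)\Delta_{\d,\m}u_s\cdot h_s^\eps$ cross-terms cancel, leaving
\begin{equation*}
G'(s) = \int_X \omega''(u_s)\,\Gamma(u_s)\,\P_s\mathfrak h^\eps g\,\di\m.
\end{equation*}
Letting $\eps \to 0^+$ in the integrated form of this identity (using $\mathfrak h^\eps g \to g$ in $\Leb^2$ and the $\Leb^\infty$-contraction of $(\P_s)$) proves the formula for $\mu = g\m$.

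The passage from $\mu = g\m$ to arbitrary $\mu \in \Prob_2(X)$ will be the main obstacle. I would approximate $\mu$ by $\mu_n = g_n\m$ with $g_n \in \Leb^2\cap\Leb^\infty$ and $\mu_n \overset{W_2}{\longto} \mu$, and pass to the limit in the integrated identity $G^n(s_1) - G^n(s_0) = \int_{s_0}^{s_1}\int_X \omega''(u_s)\Gamma(u_s)\di\H_s\mu_n\,\di s$. Uniform convergence of the left-hand side follows from Kuwada duality (\cref{res:kuwada_equivalence}) and the local boundedness of $\c$, since $\omega\circ\tilde u_s$ is equi-bounded and equi-Lipschitz on $[0,t-\delta]$. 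The delicate point is the right-hand side: the integrand $\omega''(\tilde u_s)\Gamma(u_s)$ is only $\m$-a.e.\ defined and not a priori continuous, so weak convergence $\H_s\mu_n \weakto \H_s\mu$ does not transfer directly. The resolution I have in mind is to apply duality once more: by \cref{res:lip_wBE_best} one has $\Gamma(u_s)\le \c^2(t-s)\tilde\P_{t-s}\Gamma(f)$, which is in $\Leb^2\cap\Leb^\infty$ because $\Gamma(f)\in\Leb^\infty$ (by the Lipschitzness of $f$), so $\omega''(\tilde u_s)\Gamma(u_s) \in \Leb^2\cap\Leb^\infty$ and \cref{res:S-Feller} yields $\tilde\P_{s/2}[\omega''(\tilde u_s)\Gamma(u_s)]\in\Cont_b(X)$ for $s>0$. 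Writing $\int_X \omega''(\tilde u_s)\Gamma(u_s)\di\H_s\mu_n = \int_X \tilde\P_{s/2}[\omega''(\tilde u_s)\Gamma(u_s)]\di\H_{s/2}\mu_n$ via duality and invoking weak convergence of $\H_{s/2}\mu_n$ (from $W_2$-convergence) closes the passage to the limit, with joint continuity in $s$ ensured by \cref{res:heat_flow_sob_cont} and \cref{res:dual_heat_W_2_continuity}.
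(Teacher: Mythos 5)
Your proposal is correct and follows essentially the route the paper intends: the paper omits this proof, deferring to~\cite{AGS15}*{Lemma~4.5} ``with minor modifications'', and your argument --- chain rule plus integration by parts for $\Leb^2\cap\Leb^\infty$ densities regularized through $(\mathfrak h^\eps)_{\eps>0}$, followed by $W_2$-approximation of a general $\mu\in\Prob_2(X)$ with the strong Feller property and the semigroup splitting $\H_s=\H_{s/2}\circ\H_{s/2}$ to handle the merely $\m$-a.e.\ defined integrand $\omega''(\P_{t-s}f)\Gamma(\P_{t-s}f)$ --- is exactly the kind of adaptation required here, where $\underline{\c}(0^+)>1$ rules out the heat-regularization $\H_\eps\mu$ used in the $\BE(K,\infty)$ setting. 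The only points worth writing out in full are the harmless normalization $\omega(0)=0$ (needed when $\m(X)=+\infty$ so that $\omega(\P_{t-s}f)\in\Leb^2(X,\m)$) and the continuity of $s\mapsto G'(s)$, for which your cited ingredients (\cref{res:heat_flow_sob_cont}, \cref{res:S-Feller}, \cref{res:dual_heat_W_2_continuity} together with tightness of $(\H_s\mu)_s$) do suffice.
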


The second preliminary result is an adaptation to the abstract setting of an inequality proved for the first time in the Riemannian framework by Wang, see~\cite{W11}*{Theorem~1.1(6)}.
\cref{res:wang_eps} below is the reformulation, under the more general $\BE_w(\c,\infty)$ condition, of~\cite{AGS15}*{Lemma~4.6}. 
Although its proof is very similar to that of~\cite{AGS15}*{Lemma~4.6}, we detail it here for the reader's convenience.

\begin{lemma}
[Wang log-Harnack inequality]
\label{res:wang_eps}
Assume $(X,\d,\m)$ satisfies $\wBE(\c,\infty)$ 
and let \mbox{$\eps>0$}.
If $f\in\Leb^1(X,\m)$ is non-negative,
then
\begin{equation}\label{eq:wang_eps}
\tilde{\P}_t(\log(f+\eps))(y)\le\log(\tilde{\P}_tf(x)+\eps)+\frac{\d^2(x,y)}{4\I_{-2}(t)},
\end{equation}
for all $x,y\in X$ and $t>0$, where $\I_{-2}$ is as in \eqref{eq:def_I_p}.
\end{lemma}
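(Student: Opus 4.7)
The plan is to follow the interpolation-along-a-curve strategy of Wang, adapted to the metric-measure setting through the tools of the previous subsections. By truncating at levels $f\wedge n$ and passing to the limit via monotone/Fatou arguments, I reduce to the case $f\in\Leb^1(X,\m)\cap\Leb^\infty(X,\m)$ with $f\ge0$. Fix $t>0$ and $x,y\in X$. Since $(X,\d)$ is a length space by \cref{res:length_space}, for every $\delta>0$ there is a Lipschitz curve $\gamma\in\AC([0,t];X)$ with $\gamma_0=x$, $\gamma_t=y$ satisfying $\int_0^t|\dot\gamma_s|\di s\le\d(x,y)+\delta$. The key auxiliary function is
\begin{equation*}
h(s)=\tilde\P_s\big[\log(\P_{t-s}f+\eps)\big](\gamma_s),\qquad s\in[0,t],
\end{equation*}
whose boundary values are exactly $h(0)=\log(\tilde\P_tf(x)+\eps)$ and $h(t)=\tilde\P_t\log(f+\eps)(y)$.

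For each $s\in[0,t)$, $\P_{t-s}f\in\Lip_b(X)\cap\Sob^{1,2}(X,\d,\m)$ by \cref{res:S-Feller}, and $\P_{t-s}f\ge0$, so $g_s:=\log(\P_{t-s}f+\eps)\in\Lip_b(X)\cap\Sob^{1,2}(X,\d,\m)$, with chain rule $\Gamma(g_s)=\Gamma(\P_{t-s}f)/(\P_{t-s}f+\eps)^2$. Applying \cref{res:lip_wBE_best} to $g_s$ yields the pointwise bound
\begin{equation*}
|\D^*\tilde\P_sg_s|^2(z)\le\c^2(s)\,\tilde\P_s\!\left[\frac{\Gamma(\P_{t-s}f)}{(\P_{t-s}f+\eps)^2}\right]\!(z)=:\c^2(s)\,A_s(z),\qquad z\in X.
\end{equation*}
To compute $h'(s)$ I decompose the increment into a time contribution and a spatial one. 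The time contribution, via \cref{res:gen_diff_formula_dual_heat} applied pointwise with $\mu=\delta_{\gamma_s}$ and $\omega(r)=\log(r+\eps)$ (so $\omega''(r)=-(r+\eps)^{-2}$), equals $-A_s(\gamma_s)$. The spatial contribution is controlled by the upper gradient property $|\tilde\P_sg_s(\gamma_{s+h})-\tilde\P_sg_s(\gamma_s)|\le\int_s^{s+h}|\D^*\tilde\P_sg_s|(\gamma_r)\,|\dot\gamma_r|\di r$, giving a bound $|\D^*\tilde\P_sg_s|(\gamma_s)\,|\dot\gamma_s|\le\c(s)\sqrt{A_s(\gamma_s)}\,|\dot\gamma_s|$ for $\leb^1$-a.e.\ $s$. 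Combining and using Young's inequality $\c(s)\sqrt{A_s(\gamma_s)}\,|\dot\gamma_s|\le A_s(\gamma_s)+\tfrac14\c^2(s)|\dot\gamma_s|^2$, the cross term cancels with $-A_s(\gamma_s)$ and one obtains
\begin{equation*}
h'(s)\le\frac{\c^2(s)}{4}\,|\dot\gamma_s|^2\qquad\text{for $\leb^1$-a.e.\ }s\in[0,t).
\end{equation*}

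Integrating from $0$ to $t$ (after first integrating on $[0,t-\sigma]$ and sending $\sigma\to0^+$ by continuity of $h$ at $s=t$, using \cref{res:heat_flow_sob_cont} and \cref{res:dual_heat_W_2_continuity}) gives
\begin{equation*}
\tilde\P_t\log(f+\eps)(y)-\log(\tilde\P_tf(x)+\eps)\le\frac14\int_0^t\c^2(s)\,|\dot\gamma_s|^2\di s.
\end{equation*}
It remains to minimize the right-hand side over admissible curves. By Cauchy--Schwarz,
\begin{equation*}
\Big(\int_0^t|\dot\gamma_s|\di s\Big)^2=\Big(\int_0^t\c(s)|\dot\gamma_s|\cdot\c^{-1}(s)\di s\Big)^2\le\I_{-2}(t)\int_0^t\c^2(s)|\dot\gamma_s|^2\di s,
\end{equation*}
and equality is approached by reparametrizing an almost-length-minimizing curve of $(X,\d)$ by $\phi(s)=\I_{-2}(s)/\I_{-2}(t)$, so that $|\dot\gamma_s|\propto\c^{-2}(s)$ and $\int_0^t\c^2(s)|\dot\gamma_s|^2\di s\to\d^2(x,y)/\I_{-2}(t)$ as $\delta\to0^+$. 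This yields~\eqref{eq:wang_eps}.

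The main obstacle is the rigorous derivation of $h\in\AC([0,t])$ with the claimed derivative bound, since $h$ depends on $s$ in three places (through $\tilde\P_s$, $\P_{t-s}$, and the evaluation point $\gamma_s$). One has to split $h(s+\delta)-h(s)$ into the time-only and space-only increments, use \cref{res:gen_diff_formula_dual_heat} for the former and the asymptotic Lipschitz estimate of \cref{res:lip_wBE_best} for the latter, and verify the needed uniform bounds so that the differentiation-under-Fatou argument of \cref{res:fatou_gen} produces the desired $\limsup$ inequality for $h'$. The secondary technical point is the passage from bounded $f$ to general non-negative $f\in\Leb^1(X,\m)$, which requires truncation $f_n=f\wedge n$, the monotonicity $\tilde\P_t\log(f_n+\eps)\uparrow\tilde\P_t\log(f+\eps)$, and Fatou-type control of $\log(\tilde\P_tf_n(x)+\eps)$.
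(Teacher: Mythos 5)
Your proposal follows essentially the same route as the paper's proof: the same interpolation function mixing $\tilde\P_s$, $\P_{t-s}$ and evaluation along a curve, the same chain-rule identity $(\omega_\eps')^2=-\omega_\eps''$ producing the cancellation after Young's inequality, and the same optimization over curves via the length property — the only cosmetic difference being that you optimize the time-parametrization at the end by Cauchy--Schwarz, whereas the paper builds $\theta(s)=\I_{-2}(s)/\I_{-2}(t)$ into the curve from the start. One small caveat: by working directly with bounded $f$ (via the strong Feller property) instead of first proving the estimate for $f\in\Lip_b(X)\cap\Sob^{1,2}(X,\d,\m)$ and then approximating as the paper does, you make the continuity of $h$ at the endpoint $s=t$ genuinely delicate (the test function $\log(f+\eps)$ is only Borel, so weak convergence of $\H_{t-\sigma}\delta_{\gamma_{t-\sigma}}$ does not immediately suffice); this can be repaired, e.g.\ by Jensen's inequality $\tilde\P_\sigma\log(f+\eps)\le\log(\P_\sigma f+\eps)$ together with the Lipschitz regularity of $\tilde\P_t\log(f+\eps)$, but it should be spelled out.
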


\begin{proof}
Let $\eps>0$ and $t>0$ be fixed.
We divide the proof in three steps.

\smallskip

\textit{Step~1}. 
Assume $f\in\Lip_b(X)\cap\Sob^{1,2}(X,\d,\m)$.
Fix $x,y\in X$ and let $\gamma\in\AC([0,1];\R)$ be such that $\gamma_0=x$ and $\gamma_1=y$.
We set
\begin{equation*}
\theta(r)=\frac{\I_{-2}(r)}{\I_{-2}(t)}\in[0,1]
\quad
\text{for all}\ r\in[0,t].
\end{equation*}
Note that $\theta\in\Lip([0,t])$. 
We also set $\omega_\eps(r)=\log(r+\eps)-\log\eps$ for all $r\ge0$.
Note that $\omega\in\Cont^2([0,+\infty))$ with $\omega(0)=0$. 
We claim that 
\begin{equation}\label{eq:macarena}
s\mapsto\int_X\omega_\eps(\P_{t-s}f)\di\H_s\delta_{\gamma_{\theta(s)}}
\in\AC([0,t];\R).
\end{equation}
To this aim, we set
\begin{equation*}
G_\eps(s,r)
=
\int_X\omega_\eps(\P_{t-s}f)\di\H_s\delta_{\gamma_{\theta(r)}}
\quad
\text{for all}\ s,r\in[0,t].
\end{equation*}
On the one hand, by \cref{res:gen_diff_formula_dual_heat} applied with $\mu=\delta_{\gamma_{\theta(r)}}$ for each $r\in[0,t]$, we get that 
\begin{equation*}
|\de_s G_\eps(s,r)|
=
\bigg|
\int_X\omega_\eps''(\P_{t-s}f)\,\Gamma(\P_{t-s}f)\di\H_s\delta_{\gamma_{\theta(r)}}
\bigg|
\le
\frac{\c(t-s)}{\eps^2}\,\Lip(f)
\end{equation*}
for all $s\in(0,t)$ and $r\in[0,t]$ by~\eqref{eq:wD_less_slope_for_Lip} and \cref{res:lip_wBE}, so that $s\mapsto G(s,r)$ is Lipschitz on~$[0,t]$ uniformly in $r\in[0,t]$.
On the other hand, by~\eqref{eq:kantorovich_duality_p=1}, \cref{res:lip_wBE}, Jensen inequality and \cref{res:kuwada_equivalence}, we can estimate
\begin{align*}
|G_\eps(s,r_1)-G_\eps(s,r_0)|
&=
\bigg|
\int_X\omega_\eps(\P_{t-s}f)\di\big(\H_s\delta_{\gamma_{\theta(r_1)}}-\H_s\delta_{\gamma_{\theta(r_0)}}\big)
\bigg|\\
&\le
\Lip(\omega_\eps(\P_{t-s}f))\,
W_1\big(\H_s\delta_{\gamma_{\theta(r_1)}},\H_s\delta_{\gamma_{\theta(r_0)}}\big)\\
&\le
\frac{\c(t-s)}\eps\Lip(f)\,
W_2\big(\H_s\delta_{\gamma_{\theta(r_1)}},\H_s\delta_{\gamma_{\theta(r_0)}}\big)\\
&\le
\frac{\c(t-s)\c(s)}\eps\Lip(f)\,
W_2\big(\delta_{\gamma_{\theta(r_1)}},\delta_{\gamma_{\theta(r_0)}}\big)\\
&\le
\frac{\c(t-s)\c(s)}\eps\Lip(f)\,
\d(\gamma_{\theta(r_1)},\gamma_{\theta(r_0)})
\end{align*}
for all $0\le r_0\le r_1\le t$ and $s\in[0,t]$, so that $r\mapsto G(s,r)\in\AC([0,t];\R)$ uniformly in $s\in[0,t]$.
This prove the claim in~\eqref{eq:macarena}.
Now write
\begin{equation*}
G_\eps(s,r)=F_s^\eps(\gamma_{\theta(r)}),
\quad
F_s^\eps(x)=\tilde\P_s\big(\omega_\eps(\P_{t-s}f)\big)(x)\
\text{for all}\ x\in X.
\end{equation*}
Then, whenever $s\in[0,t]$, we can estimate
\begin{equation*}
\de_r G_\eps(s,r)
\le
|\D F^\eps_s|(\gamma_{\theta(r)})
\,
|\dot\gamma_{\theta(r)}|
\,
|\theta'(r)|
\end{equation*}
for $\L^1$-a.e.\ $r\in[0,t]$.
By \cref{res:lip_wBE_best}, we have
\begin{equation}\label{eq:umpa}
|\D F^\eps_s|^2(\gamma_{\theta(r)})
=
\big|\D\tilde\P_s\big(\omega_\eps(\P_{t-s}f)\big)\big|^2(\gamma_{\theta(r)})
\le
\c^2(s)
\,
\tilde\P_s\Gamma\big(\omega_\eps(\P_{t-s}f)\big)(\gamma_{\theta(r)})
\end{equation}
for all $r\in[0,t]$.
Recalling~\eqref{eq:def_p_t_density}, by the chain rule~\eqref{eq:chain_rule_min_weak_grad} we can write
\begin{equation}\label{eq:lumpa}
\begin{split}
\tilde\P_s\Gamma\big(\omega_\eps(\P_{t-s}f)\big)(\gamma_{\theta(r)})
&=
\int_X\Gamma\big(\omega_\eps(\P_{t-s}f)\big)\di\H_s\gamma_{\theta(r)}\\
&=
\int_X\Gamma\big(\omega_\eps(\P_{t-s}f)\big)\,\p_s[\gamma_{\theta(r)}]\di\m\\
&=
\int_X (\omega_\eps'(\P_{t-s}f))^2\,\Gamma(\P_{t-s}f)\,\p_s[\gamma_{\theta(r)}]\di\m\\
&=
-\int_X \omega_\eps''(\P_{t-s}f)\,\Gamma(\P_{t-s}f)\di\H_s\gamma_{\theta(r)}
\end{split}
\end{equation}
for all $s\in(0,t]$ and $r\in[0,t]$, since $(\omega_\eps')^2=-\omega_\eps''$.
Therefore, by Young inequality, we get that
\begin{align*}
\de_r G_\eps(s,r)
\le
\c^{-2}(s)\,|\D F^\eps_s|^2(\gamma_{\theta(r)})
+
\frac{\c^2(s)}4
\,
|\dot\gamma_{\theta(r)}|^2
\,
|\theta'(r)|^2
\end{align*}
for all $s\in(0,t]$ and $\L^1$-a.e.\ $r\in[0,t]$.
By combining~\eqref{eq:umpa} with~\eqref{eq:lumpa}, we conclude that
\begin{align*}
\frac{\di}{\di s}
\int_X\omega_\eps(\P_{t-s}f)\di\H_s\delta_{\gamma_{\theta(s)}}
&=
\de_s G_\eps(s,s)
+
\de_r G_\eps(s,s)\\
&\le
\int_X\omega_\eps''(\P_{t-s}f)\,\Gamma(\P_{t-s}f)\di\H_s\delta_{\gamma_{\theta(s)}}\\
&\quad+
\c^{-2}(s)\,|\D F^\eps_s|^2(\gamma_{\theta(s)})
+
\frac{\c^2(s)}4
\,
|\dot\gamma_{\theta(s)}|^2
\,
|\theta'(s)|^2\\
&\le
\frac{\c^2(s)}4
\,
|\dot\gamma_{\theta(s)}|^2
\,
|\theta'(s)|^2
\end{align*}
for $\L^1$-a.e.\ $s\in[0,t]$, so that
\begin{align*}
\int_X\omega_\eps(f)\di\H_t\delta_y
-
\int_X\omega_\eps(\P_t f)\di\delta_x
&=
\int_0^t\frac{\di}{\di s}
\int_X\omega_\eps(\P_{t-s}f)\di\H_s\delta_{\gamma_{\theta(s)}}
\di s\\
&\le
\frac14\int_0^t\c^2(s)
\,
|\dot\gamma_{\theta(r)}|^2
\,
|\theta'(s)|^2
\di s\\
&=
\frac1{4\I_{-2}(t)}\int_0^1|\dot\gamma_\tau|^2\di\tau.
\end{align*}
Recalling \cref{res:length_space}, we immediately deduce that
\begin{equation}\label{eq:step_1_wang}
\tilde\P_t(\omega_\eps(f))(y)
\le
\omega_\eps(\tilde\P_tf)(x)
+
\frac{\d^2(x,y)}{4\I_{-2}(t)},
\end{equation}
for all $x,y\in X$, whenever $f\in\Lip_b(X)\cap\Sob^{1,2}(X,\d,\m)$.

\smallskip

\textit{Step~2}. 
Assume $f\in\Leb^1(X,\m)\cap\Leb^\infty(X,\m)$.
Since $\Sob^{1,2}(X,\d,\m)$ is dense in $\Leb^2(X,\m)$, by~\eqref{eq:approx_Sob_by_Lip_b} we can find $(f_n)_{n\in\N}\subset\Lip_b(X)\cap\Sob^{1,2}(X,\d,\m)$ such that $f_n\to f$ $\m$-a.e.\ in~$X$ as~$n\to+\infty$.
Since $f\in\Leb^\infty(X,\m)$ is non-negative, by~\eqref{eq:min_is_sob} and~\eqref{eq:max_is_sob} we can also assume that $0\le f_n\le\|f\|_{\Leb^\infty(X,\m)}$ for all $n\in\N$. 
By~\eqref{eq:pointwise_heat_p_t_density} and the Dominated Convergence Theorem, we thus get that $\tilde\P_t f_n(x)\to\tilde\P_t f(x)$ for all $x\in X$ as $n\to+\infty$. 
Hence, by Fatou Lemma and by~\eqref{eq:step_1_wang} in Step~1, we get
\begin{align*}
\tilde\P_t(\omega_\eps(f))(y)
&\le
\liminf_{n\to+\infty}
\tilde\P_t(\omega_\eps(f_n))(y)\\
&\le
\lim_{n\to+\infty}
\omega_\eps(\tilde\P_tf_n)(x)
+
\frac{\d^2(x,y)}{4\I_{-2}(t)}\\
&=
\omega_\eps(\tilde\P_tf)(x)
+
\frac{\d^2(x,y)}{4\I_{-2}(t)},
\end{align*}
for all $x,y\in X$, proving~\eqref{eq:step_1_wang} whenever $f\in\Leb^1(X,\m)\cap\Leb^\infty(X,\m)$.

\smallskip

\textit{Step~3}. 
Assume $f\in\Leb^1(X,\m)$.
Then $f_n=f\wedge n\in\Leb^1(X,\m)\cap\Leb^\infty(X,\m)$ for all $n\in\N$ and thus the conclusion follows by Step~2 and the Monotone Convergence Theorem. 
\end{proof}

An simple but interesting consequence of \cref{res:wang_eps} is the following result, see~\cite{AGS15}*{Corollary~4.7} for the same result in the standard $\BE(K,\infty)$ setting.

\begin{corollary}[Wang inequality for~$\p_t{[}\cdot{]}$]
\label{res:wang_p_t}
Assume $(X,\d,\m)$ satisfies $\wBE(\c,\infty)$. 
For every $\eps,t>0$ and every $y\in X$, we have
\begin{equation*}
\int_X \p_t[y]\log(\p_t[y]+\eps)\di\m
\le
\log(\p_{2t}[y](x)+\eps)
+
\frac{\d^2(x,y)}{4\I_{-2}(t)}
\end{equation*}
for $\m$-a.e.\ $x\in X$.
In particular, if $\m\in\Prob(X)$, then for all $t>0$ and $y\in X$ we have
\begin{equation}\label{eq:wang_p_2t}
\p_{2t}[y](x)\ge\exp\left(-\frac{\d^2(x,y)}{4\I_{-2}(t)}\right)
\end{equation}
for $\m$-a.e.\ $x\in X$.
\end{corollary}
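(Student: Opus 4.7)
The first inequality is essentially a direct application of the Wang log-Harnack inequality (\cref{res:wang_eps}) to the density itself. Concretely, I would apply that result with $f=\p_t[y]\in\Leb^1(X,\m)$, which is well defined and non-negative by \eqref{eq:def_p_t_density}, and specialize the free point in the left-hand evaluation of $\tilde\P_t$ to the same $y$ as in the statement of the corollary. Using $\H_t\delta_y=\p_t[y]\,\m$ together with the definition~\eqref{eq:pointwise_heat_p_t_density} of $\tilde\P_t$, the left-hand side of the Wang inequality becomes
\begin{equation*}
\tilde\P_t(\log(\p_t[y]+\eps))(y)
=\int_X\log(\p_t[y]+\eps)\di\H_t\delta_y
=\int_X\p_t[y]\log(\p_t[y]+\eps)\di\m,
\end{equation*}
which is precisely the integral appearing in the statement. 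For the right-hand side, \cref{res:density_p_t_props}\ref{item:density_p_t_semigroup} applied with $s=t$ gives $\tilde\P_t(\p_t[y])=\p_{2t}[y]$ $\m$-a.e.\ in~$X$, from which the first inequality follows (and this is the reason it holds only for $\m$-a.e.\ $x$).

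For~\eqref{eq:wang_p_2t} in the probability case $\m\in\Prob(X)$, the plan is to combine the first inequality with Jensen's inequality at fixed $\eps>0$ and then send $\eps\to0^+$. Since $\m\in\Prob(X)$ and mass preservation~\eqref{eq:heat_mass_preservation} yields $\int_X\p_t[y]\di\m=\H_t\delta_y(X)=1$, and since the function $r\mapsto r\log(r+\eps)$ is convex on $[0,+\infty)$ (its second derivative being $\frac{1}{r+\eps}+\frac{\eps}{(r+\eps)^2}\ge0$), Jensen's inequality applied to the probability measure $\m$ gives
\begin{equation*}
\int_X\p_t[y]\log(\p_t[y]+\eps)\di\m\ge\log(1+\eps).
\end{equation*}
Chaining this bound with the first inequality produces $\log(1+\eps)\le\log(\p_{2t}[y](x)+\eps)+\d^2(x,y)/(4\I_{-2}(t))$ for $\m$-a.e.\ $x\in X$. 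Passing to the limit $\eps\to0^+$ (both sides being continuous in $\eps\downarrow0$, the right-hand side possibly taking value $-\infty$ when $\p_{2t}[y](x)=0$, in which case the claim is vacuous) and exponentiating yields~\eqref{eq:wang_p_2t}.

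There is no substantive obstacle here: both parts are formal consequences of material already developed in the section. The only point that requires mild care is the $\m$-a.e.\ nature of the semigroup identity $\tilde\P_t(\p_t[y])=\p_{2t}[y]$ coming from \cref{res:density_p_t_props}\ref{item:density_p_t_semigroup}, which propagates to the $\m$-a.e.\ qualifier in both conclusions.
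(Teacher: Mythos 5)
Your proposal is correct and follows essentially the same route as the paper: apply the Wang log-Harnack inequality (\cref{res:wang_eps}) to $f=\p_t[y]$, evaluate the left-hand side at the same point $y$ so that it becomes $\int_X\p_t[y]\log(\p_t[y]+\eps)\di\m$, identify $\tilde\P_t(\p_t[y])=\p_{2t}[y]$ $\m$-a.e.\ via \cref{res:density_p_t_props}\ref{item:density_p_t_semigroup} (which is exactly where the $\m$-a.e.\ qualifier enters), and let $\eps\to0^+$ after lower-bounding the entropy-type integral by Jensen when $\m\in\Prob(X)$. The paper's proof is just a one-line compression of this argument, so nothing further is needed.
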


\begin{proof}
Thanks to \cref{res:density_p_t_props}\ref{item:density_p_t_semigroup}, the result immediately follows by applying \cref{res:wang_eps} to $f=\p_t[y]\in\Leb^1(X,\m)$ and passing to the limit as $\eps\to0^+$.
\end{proof}

We are now ready to prove  the main result of this section.

\begin{proof}[Proof of \cref{res:LlogL_reg}]
Let $x_0\in X$ and $r,t>0$ be fixed.
We divide the proof in two steps.

\smallskip

\textit{Step~1}.
Assume $\mu=f\m$ for some non-negative $f\in\Leb^1(X,\m)$.
Let us set $f_t=\P_tf$ and $\tilde f_t=\tilde\P_tf$.
Note that $\tilde f_t=f_t$ $\m$-a.e.\ in~$X$ since, as in the proof of \cref{res:wang_p_t}, we can compute
\begin{align*}
\int_X \tilde f_t\,\phi\di\m
=
\|\phi\|_{\Leb^1(X,\m)}
\int_Xf\di\H_t(\bar\phi\,\m)
=
\int_X f\,\P_t\phi\di\m
=
\int_X f_t\,\phi\di\m
\end{align*} 
for all non-negative Borel $\phi\in\Leb^1(X,\m)\cap\Leb^\infty(X,\m)$, where $\phi=\bar\phi\,\|\phi\|_{\Leb^1(X,\m)}$.
By~\eqref{eq:def_p_t_density} and
Jensen inequality, we can estimate
\begin{equation}\label{eq:panza}
\begin{split}
\tilde f_t(x)\log(\tilde f_t(x)+\eps)
&=
\left(\int_X f\di\H_t\delta_x\right)
\log\left(\eps+\int_X f\di\H_t\delta_x\right)\\
&=
\left(\int_X \p_t[x]\di\mu\right)
\log\left(\eps+\int_X \p_t[x]\di\mu\right)\\
&\le
\int_X \p_t[x](y)\log(\p_t[x](y)+\eps)\di\mu(y)
\end{split}
\end{equation}
for all $x\in X$ and $\eps>0$.
Integrating~\eqref{eq:panza}, by \cref{res:density_p_t_props}\ref{item:density_p_t_symmetry}, Tonelli Theorem and \cref{res:wang_p_t} we thus get
\begin{equation}\label{eq:ripanza}
\begin{split}
\Ent_\m(\H_t\mu)
=
\int_X f_t\log(f_t+\eps)\di\m
&\le
\int_X\left(\int_X \p_t[x](y)\log(\p_t[x](y)+\eps)\di\mu(y)\right)\di\m(x)\\
&=
\int_X\left(\int_X \p_t[y](x)\log(\p_t[y](x)+\eps)\di\m(x)\right)\di\mu(y)\\
&\le
\int_X
\left(\log(\p_{2t}[y](z)+\eps)
+
\frac{\d^2(z,y)}{4\I_{-2}(t)}\right)
\di\mu(y)
\end{split}
\end{equation}
for $\m$-a.e.\ $z\in X$.
Now let $q=\m(B_r(x_0))$ and define $\nu=q^{-1}\m\mres B_r(x_0)$.
Integrating~\eqref{eq:ripanza}, by Tonelli Theorem and Jensen inequality we get 
\begin{align*}
\Ent_\m(&\H_t\mu)
\le
\int_X\int_X
\left(\log(\p_{2t}[y](z)+\eps)
+
\frac{\d^2(z,y)}{4\I_{-2}(t)}\right)\di\mu(y)\di\nu(z)\\
&=
\int_X\int_X
\log(\p_{2t}[y](z)+\eps)\di\mu(y)\di\nu(z)
+
\frac{1}{2\I_{-2}(t)}
\left(r^2+\int_X\d^2(y,x_0)\di\mu(y)\right)\\
&\le
\log\left(
\eps
+
\int_X\int_X
\p_{2t}[y](z)\di\nu(z)\di\mu(y)
\right)
+
\frac{1}{2\I_{-2}(t)}
\left(r^2+\int_X\d^2(y,x_0)\di\mu(y)\right)\\
&=
\log\left(
\eps
+
\int_X \frac1q\int_{B_r(x_0)}
\p_{2t}[y](z)\di\m(z)\di\mu(y)
\right)
+
\frac{1}{2\I_{-2}(t)}
\left(r^2+\int_X\d^2(y,x_0)\di\mu(y)\right)\\
&\le
\log(q^{-1}+\eps)
+
\frac{1}{2\I_{-2}(t)}
\left(r^2+\int_X\d^2(y,x_0)\di\mu(y)\right)
\end{align*} 
Passing to the limit as $\eps\to0^+$, we prove~\eqref{eq:LlogL_reg} whenever $\mu=f\m$ for some non-negative $f\in\Leb^1(X,\m)$.

\smallskip
\textit{Step~2}.
Now let $\mu\in\Prob_2(X)$. We can find $(\mu_n)_{n\in\N}\subset\Prob_2^{\rm ac}(X)$ such that $\mu_n\overset{W_2}{\longto}\mu$ as $n\to+\infty$.
Thanks to the lower semicontinuity property of the entropy and the properties of Wasserstein distance, by Step~1 we get that
\begin{align*}
\Ent_\m(\H_t\mu)
&\le
\liminf_{n\to+\infty}
\Ent_\m(\H_t\mu_n)\\
&\le 
\frac{1}{2\I_{-2}(t)}
\left(r^2+\lim_{n\to+\infty}\int_X\d^2(x,x_0)\di\mu_n(x)\right)
-\log\m(B_r(x_0))\\
&=
\frac{1}{2\I_{-2}(t)}
\left(r^2+\int_X\d^2(x,x_0)\di\mu(x)\right)
-\log\m(B_r(x_0))
\end{align*} 
and the proof is complete.
\end{proof}

\section{Entropic inequalities in groups}
\label{sec:proof_equivalence}

We now proceed with the core argument of the proof of the entropic inequalities, adapting the action and the entropy estimates established in~\cite{AGS15}*{Section~4.3} (see also~\cite{EKS15}*{Section~4.2} for a closely related approach in the finite dimensional case). Since we frequently consider curves $s\mapsto\mu_s=f_s\m\in\AC^2([0,1];(\Prob_2(X),W_2))$ with $s\mapsto f_s\in\Cont^1([0,1];\Leb^p(X,\m))$ for some $p\in[1,+\infty)$, we shall denote by $s\mapsto\dot{f}_s\in\Cont([0,1];\Leb^p(X,\m))$ the functional derivative in~$\Leb^p(X,\m)$, keeping the notation $s\mapsto|\dot{\mu}_s|$ for the metric derivative in~$(\Prob_2(X),W_2)$.

\subsection{Strongly regular curves}
\label{subsec:strong_reg_curves}
 
Instead of considering \emph{regular curves} in $\Prob_2(X)$ as in~\cite{AGS15}*{Definition~4.10}, we deal with \emph{strongly regular curves} defined as follows.

\begin{definition}[Strongly regular curve in $\Prob_2(X)$]
\label{def:strong_reg_curve}
We say that a curve 
$s\mapsto\mu_s\in\AC^2([0,1];\Prob_2(X))$
is \emph{strongly regular} if $\mu_s=f_s\m$ for all $s\in[0,1]$ with
\begin{equation}\label{eq:def_strong_reg_curve_density_C1_L2}
s\mapsto f_s\in\Cont^1([0,1];\Leb^2(X,\m)).
\end{equation}
\end{definition}

Note that the $\Leb^2$-integrability property in~\eqref{eq:def_strong_reg_curve_density_C1_L2} immediately gives that
\begin{equation}\label{eq:strong_reg_curve_finite_Ent}
\sup_{s\in[0,1]}\Ent_\m(\mu_s)<+\infty
\end{equation}
whenever $s\mapsto\mu_s$ is a strongly regular curve.
Under the standard $\BE(K,N)$ condition, the uniform upper control of the entropy along \emph{regular curves} and, most importantly, the absolute continuity property of \emph{regular curves} at each time with respect to the reference measure~$\m$, are gained from the absolute continuity of heated measures and the $\Leb\log\Leb$-regularization property of the dual heat flow~$(\H_t)_{t\ge0}$. 
Having the more general $\BE_w(\c,\infty)$ condition at disposal, the absolute continuity of heated measures and the $\Leb\log\Leb$-regularization property are still available, recall \cref{res:dual_heat_ll_m} and \cref{res:LlogL_reg}, but the application of the dual heat flow to an arbitrary curve $s\mapsto\mu_s\in\AC^2([0,1];\Prob_2(X))$ drastically affects its Wasserstein velocity.
In more precise terms, by~\eqref{eq:dual_heat_weak_kuwada} we immediately get that the heated curve 
$s\mapsto\mu_s^t=\H_t\mu_s\in\AC^2([0,1];\Prob_2(X))$
satisfies
\begin{equation}\label{eq:heated_curve_velocity}
|\dot\mu_s^t|\le\c(t)\,|\dot\mu_s|
\quad
\text{for $\L^1$-a.e.}\ s\in[0,1]
\end{equation}
for all $t\ge0$, but, since the general strategy developed in~\cites{AGS15,EKS15} requires the approximation of $s\mapsto\mu_s$ with more regular $\AC^2$-curves in $\Prob_2(X)$ having Wasserstein velocity closer and closer to the velocity of the original curve (see equation~(4.20) in the statement of~\cite{AGS15}*{Proposition~4.11}), the velocity estimate in~\eqref{eq:heated_curve_velocity} becomes useful only if (at least) 
\begin{equation*}
\underline{\c}(0^+)
=
\liminf_{t\to+\infty}
\c(t)
=1,
\end{equation*}
a condition which is not available in sub-Riemannian manifolds.
For this reason, taking inspiration from the regularization procedure performed in~\cite{AS19}*{Theorem~4.8}, instead of relying on the contraction property of the dual heat flow, in our group-modeled framework (see \cref{subsec:admissible_groups} below) we will regularize arbitrary $\AC^2$-curves in~$\Prob_2(X)$ via the group (left-)convolution with suitable $\Leb^1$-kernels (see also \cref{rem:right_convolution} below for a strictly related discussion).

Since our strategy will be deeply based on the $\Leb^2$-regularity property~\eqref{eq:def_strong_reg_curve_density_C1_L2} of strongly regular curves, we will frequently take advantage of the following product rule for the functional derivative of Lipschitz curves in $\Leb^2(X,\m)$. We leave its elementary proof to the interested reader.

\begin{lemma}[Product rule for $\Lip$-curves in $\Leb^2$]
\label{res:Lip_functions_in_L2}
If $s\mapsto a_s,b_s\in\Lip([0,1];\Leb^2(X,\m))$, then 
$s\mapsto \int_X a_s\,b_s\di\m\in\Lip([0,1];\R)$
with
\begin{equation*}
\frac{\di}{\di s}
\int_X a_s\,b_s\di\m
=
\int_X \dot a_s\,b_s\di\m
+
\int_X a_s\,\dot b_s\di\m
\end{equation*}
for $\L^1$-a.e.\ $s\in[0,1]$.
\end{lemma}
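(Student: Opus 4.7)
My approach is the standard one for a product rule in a Hilbert-valued Lipschitz setting, split into a Lipschitz estimate plus an a.e.\ differentiation step.

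First I would verify that $s \mapsto \int_X a_s b_s \di\m$ is well defined and Lipschitz. Since $s \mapsto a_s$ and $s \mapsto b_s$ are Lipschitz into $\Leb^2(X,\m)$, there exist constants $L,M>0$ such that $\|a_s\|_{\Leb^2(X,\m)} + \|b_s\|_{\Leb^2(X,\m)}\le M$ for all $s\in[0,1]$ and $\|a_s-a_t\|_{\Leb^2(X,\m)} + \|b_s-b_t\|_{\Leb^2(X,\m)}\le L|s-t|$ for all $s,t\in[0,1]$. By Cauchy--Schwarz, $a_sb_s\in\Leb^1(X,\m)$ and
\begin{equation*}
\abs*{\int_X a_sb_s\di\m - \int_X a_tb_t\di\m}
\le
\|a_s-a_t\|_{\Leb^2(X,\m)}\|b_s\|_{\Leb^2(X,\m)}
+\|a_t\|_{\Leb^2(X,\m)}\|b_s-b_t\|_{\Leb^2(X,\m)}
\le 2LM|s-t|,
\end{equation*}
proving the Lipschitz regularity in~$\R$.

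Next I would identify the derivative at a.e.\ point. Since $\Leb^2(X,\m)$ is a reflexive Banach space (in fact a separable Hilbert space), any Lipschitz curve with values in~$\Leb^2(X,\m)$ is strongly differentiable at $\L^1$-a.e.\ point of $[0,1]$, with the difference quotients converging strongly in~$\Leb^2(X,\m)$ to $\dot a_s$ and $\dot b_s$ respectively. Fix any $s\in(0,1)$ at which both curves admit such derivatives. For $h\in\R$ small, write the elementary identity
\begin{equation*}
\frac{a_{s+h}b_{s+h}-a_sb_s}{h}
=
\frac{a_{s+h}-a_s}{h}\,b_{s+h}
+
a_s\,\frac{b_{s+h}-b_s}{h}
\end{equation*}
and integrate over~$X$. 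By Cauchy--Schwarz, the first term converges as $h\to0$ to $\int_X\dot a_s\,b_s\di\m$, since $\frac{a_{s+h}-a_s}{h}\to \dot a_s$ strongly in~$\Leb^2$ and $b_{s+h}\to b_s$ strongly in~$\Leb^2$ (the latter by continuity); similarly, the second term converges to $\int_X a_s\,\dot b_s\di\m$. Since the left-hand side converges to $\frac{\di}{\di s}\int_X a_sb_s\di\m$ wherever the Lipschitz map $s\mapsto\int_X a_sb_s\di\m$ is differentiable (which happens $\L^1$-a.e.), the claimed product rule follows at $\L^1$-a.e.\ $s\in[0,1]$.

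The only mildly non-trivial ingredient is the a.e.\ strong differentiability of Lipschitz curves into~$\Leb^2(X,\m)$; this is a standard consequence of the Radon--Nikod\'ym property of reflexive Banach spaces and can be cited directly (see e.g.~\cite{AGS08}*{Remark~1.1.3}). Once this is in hand, the two steps above are routine and do not require any further structure from~$(X,\d,\m)$.
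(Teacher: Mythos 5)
Your proof is correct. The paper deliberately omits this proof ("we leave its elementary proof to the interested reader"), and your argument — the Cauchy–Schwarz Lipschitz bound, a.e.\ strong differentiability of $\Leb^2$-valued Lipschitz curves via the Radon--Nikod\'ym property, and the difference-quotient splitting — is precisely the standard argument intended.
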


\subsection{Action along strongly regular curves}

We begin by fixing two important functions. 
Here and in the rest of the section, we let
\begin{equation*}
\theta\in\Cont^1([0,1];[0,1])\
\text{with}\
\theta(i)=i,\ i=0,1,
\end{equation*} 
and
\begin{equation*}
\eta\in\Cont^2([0,1];[0,+\infty))\
\text{with}\
\dot\eta\ge0\
\text{and}\
\eta(s)>0\
\text{for all}\ 
s>0.
\end{equation*}

In the following result, analogous to~\cite{AGS15}*{Lemma~4.13} and~\cite{EKS15}*{Lemma~4.12}, we compute the derivative of the action along a strongly regular curve. 
Recall that $\Lip_\star(X)$ was defined in~\eqref{eq:Lip_star} as the set of non-negative Lipschitz functions with bounded support.

\begin{lemma}[Derivative of action along strongly regular curves]
\label{res:action_double_variation}
Assume $(X,\d,\m)$ satisfies $\wBE(\c,\infty)$. 
Let $s\mapsto\mu_s=f_s\m\in\AC^2([0,1];\Prob_2(X))$ be a strongly regular curve
and define
\begin{equation}\label{eq:double_var_reg_curve}
s\mapsto\tilde{\mu}_s
=
\H_{\eta(s)}\mu_{\theta(s)}
=
\tilde{f}_s\m
\in\Cont([0,1];\Prob_2(X)).
\end{equation}
If $\phi\in\Lip_\star(X)$ and $\phi_s=Q_s\phi$ for all $s\in[0,1]$, then
\begin{equation}
\label{eq:double_var_reg_curve_action_AC}
s\mapsto\int_X\phi_s\,\di\tilde{\mu}_s
\in
\Lip([0,1];\R)
\end{equation}
with
\begin{equation}\label{eq:action_double_variation}
\frac{\di}{\di s}
\int_X\phi_s\di\tilde{\mu}_s
=
-
\frac{1}{2}\int_X|\D\phi_s|^2\di\tilde{\mu}_s
-
\dot{\eta}(s)\int_X\Gamma(\tilde{f}_s,\phi_s)\di\m
+
\dot{\theta}(s)\int_X\dot{f}_{\theta(s)}\,\P_{\eta(s)}\phi_s\di\m
\end{equation}
for $\L^1$-a.e.\ $s\in(0,1)$.
\end{lemma}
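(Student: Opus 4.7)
The strategy is to make rigorous the formal Leibniz-type computation sketched in the introduction, exploiting the separate $s$-dependencies in $\phi_s = Q_s\phi$ and in $\tilde f_s = \P_{\eta(s)}f_{\theta(s)}$. First I address the Lipschitz regularity \eqref{eq:double_var_reg_curve_action_AC}: since $\phi\in\Lip_\star(X)$, the Hopf--Lax semigroup produces $\phi_s\in\Lip_\star(X)$ with $\supp\phi_s\subseteq\supp\phi$, $\|\phi_s\|_{\Leb^\infty(X,\m)}\le\|\phi\|_{\Leb^\infty(X,\m)}$, and pointwise $|\phi_{s+h}(x)-\phi_s(x)|\le 2\Lip(\phi)^2|h|$ by \eqref{eq:hopf-lax_lip}. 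Combined with the bounded support of $\phi_s$ and the $\Leb^2$-continuity of $s\mapsto\tilde f_s$ on compact subintervals of $(0,1]$---itself a consequence of the strong regularity of $s\mapsto f_s$ and of \cref{res:heat_flow_sob_cont}---a direct Cauchy--Schwarz estimate delivers the uniform bound $|{\int\phi_{s+h}\di\tilde\mu_{s+h}-\int\phi_s\di\tilde\mu_s}|\le C|h|$.

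For the derivative formula \eqref{eq:action_double_variation}, I use the two-term decomposition
\[
\int_X\phi_{s+h}\di\tilde\mu_{s+h}-\int_X\phi_s\di\tilde\mu_s = \int_X(\phi_{s+h}-\phi_s)\,\tilde f_{s+h}\di\m + \int_X \phi_s(\tilde f_{s+h}-\tilde f_s)\di\m.
\]
Dividing the first summand by $h$, the dominated convergence theorem (with the uniform bound $|\phi_{s+h}-\phi_s|/h\le 2\Lip(\phi)^2$ and $\tilde f_{s+h}\to\tilde f_s$ in $\Leb^1(X,\m)$) together with the Hamilton--Jacobi equation \eqref{eq:hopf-lax_HJ} produces, for $\L^1$-a.e.\ $s\in(0,1)$, the term $-\tfrac12\int_X|\D\phi_s|^2\di\tilde\mu_s$. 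The second summand splits further as
\[
\int_X\phi_s\,\P_{\eta(s+h)}(f_{\theta(s+h)}-f_{\theta(s)})\di\m + \int_X\phi_s(\P_{\eta(s+h)}-\P_{\eta(s)})f_{\theta(s)}\di\m.
\]
By $\Leb^2$-self-adjointness of $(\P_t)_{t\ge0}$, the first integral equals $\int_X\P_{\eta(s+h)}\phi_s\cdot(f_{\theta(s+h)}-f_{\theta(s)})\di\m$, which after division by $h$ converges to $\dot\theta(s)\int_X\dot f_{\theta(s)}\,\P_{\eta(s)}\phi_s\di\m$ thanks to the $\Cont^1$-regularity of $s\mapsto f_s$ in $\Leb^2(X,\m)$ and the strong $\Leb^2$-continuity of the heat semigroup in time.

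For the remaining integral, again by self-adjointness, rewrite it as $\int_X(\P_{\eta(s+h)}-\P_{\eta(s)})\phi_s\cdot f_{\theta(s)}\di\m$; since $\eta(s)>0$ for $\L^1$-a.e.\ $s\in(0,1)$, one has $\P_{\eta(s)}\phi_s\in\Sob^{1,2}(X,\d,\m)\cap\Dom(\Delta_{\d,\m})$ by the smoothing of the heat flow, and the identity $\P_{\eta(s+h)}\phi_s-\P_{\eta(s)}\phi_s = \int_{\eta(s)}^{\eta(s+h)}\Delta_{\d,\m}\P_r\phi_s\di r$, combined with $\Leb^2$-continuity of $r\mapsto \Delta_{\d,\m}\P_r\phi_s$ on compact subintervals of $(0,+\infty)$, allows passage to the limit as $h\to 0$, yielding $\dot\eta(s)\int_X\Delta_{\d,\m}\P_{\eta(s)}\phi_s\cdot f_{\theta(s)}\di\m$. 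The integration-by-parts formula \eqref{eq:int_by_part_formula} converts this into $-\dot\eta(s)\int_X\Gamma(\P_{\eta(s)}\phi_s, f_{\theta(s)})\di\m$, and the self-adjointness of $(\P_t)_{t\ge0}$ with respect to the bilinear form \eqref{eq:bil_Dir_form_E} finally delivers $-\dot\eta(s)\int_X\Gamma(\phi_s,\tilde f_s)\di\m$. The main obstacle is this last step: one has to justify that the integrand $r\mapsto\Delta_{\d,\m}\P_r\phi_s$ is $\Leb^2$-continuous up to the boundary of the subinterval $[\eta(s),\eta(s+h)]$ uniformly in $h$ near Lebesgue points of $\dot\eta$, and to move the heat-flow action from $f_{\theta(s)}$ back to $\phi_s$ inside the $\Gamma$-form. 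Both manipulations rely critically on the quadratic assumption \ref{assumption:quadratic_Ch}, which alone grants the linearity of $\Delta_{\d,\m}$, the symmetry of $\Gamma$, and the commutation $\int_X\Gamma(\P_t g,h)\di\m=\int_X\Gamma(g,\P_t h)\di\m$ used in the final identification.
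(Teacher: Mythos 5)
Your proof follows essentially the same route as the paper's: the paper packages your difference-quotient decomposition into a product rule for Lipschitz curves in $\Leb^2$ (\cref{res:Lip_functions_in_L2}), uses the $\Leb^2$-derivative of $s\mapsto Q_s\phi$ (\cref{res:hopf--lax_in_L2}) to produce the Hamilton--Jacobi term, and differentiates $\tilde f_s=\P_{\eta(s)}f_{\theta(s)}$ in $\Leb^2$ exactly as you do, obtaining the same two contributions $\dot\eta(s)\,\Delta_{\d,\m}\tilde f_s+\dot\theta(s)\,\P_{\eta(s)}\dot f_{\theta(s)}$.

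Two caveats. First, you do not address the $W_2$-continuity assertion contained in \eqref{eq:double_var_reg_curve}, which the paper derives from the contraction \eqref{eq:kuwada_equiv_2} together with \cref{res:dual_heat_W_2_continuity}; relatedly, for \eqref{eq:double_var_reg_curve_action_AC} mere $\Leb^2$-\emph{continuity} of $s\mapsto\tilde f_s$ does not yield a Lipschitz bound, so the second summand of your decomposition needs the quantitative $\Leb^2$-differentiability of $s\mapsto\tilde f_s$, not just its continuity. Second, and more substantively, in your treatment of the $\dot\eta$-term the intermediate quantity $\int_X\Gamma(\P_{\eta(s)}\phi_s,f_{\theta(s)})\di\m$ is not well defined: strong regularity only gives $f_{\theta(s)}\in\Leb^2(X,\m)$, not $f_{\theta(s)}\in\Sob^{1,2}(X,\d,\m)$, so neither the integration-by-parts formula \eqref{eq:int_by_part_formula} nor the commutation $\int_X\Gamma(\P_tg,h)\di\m=\int_X\Gamma(g,\P_th)\di\m$ applies in the form you invoke. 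The repair is to use $\Leb^2$-self-adjointness of $\Delta_{\d,\m}\P_{\eta(s)}$ once more to move the operator back onto the density \emph{before} integrating by parts,
\begin{equation*}
\int_X\Delta_{\d,\m}\P_{\eta(s)}\phi_s\cdot f_{\theta(s)}\di\m
=
\int_X\phi_s\,\Delta_{\d,\m}\tilde f_s\di\m
=
-\int_X\Gamma(\tilde f_s,\phi_s)\di\m,
\end{equation*}
where now $\tilde f_s\in\Dom(\Delta_{\d,\m})$ (since $\eta(s)>0$ for $s>0$) and $\phi_s\in\Sob^{1,2}(X,\d,\m)$, which is exactly how the paper closes the computation.
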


Note that the function~$\eta$ equals $\eta(s)=st$ for all $s\in[0,1]$, $t\ge0$, in~\cite{AGS15}*{Lemma~4.13}, while in~\cite{EKS15}*{Lemma~4.12} $\eta$ is an increasing $\Cont^1$ time-change satisfying $\eta(0)=0$ depending on the dimension $N\in(0,+\infty)$.
For similar variations of curves via semigroup operators, we refer the reader to~\cites{DS08,OW05}. 

In the proof of \cref{res:action_double_variation}, having in mind the product rule provided by \cref{res:Lip_functions_in_L2}, we will use the following result about the $\Leb^2$-functional derivative of the Hopf--Lax semigroup.
For the reader's convenience, we give a sketch of its proof (see also the discussion in the first three paragraphs of the proof of~\cite{AGS14}*{Lemma~6.1}).

\begin{lemma}[Hopf--Lax semigroup is $\Lip$ in $\Leb^2$]
\label{res:hopf--lax_in_L2}
If $f\in\Lip_\star(X)$, then $s\mapsto Q_s f\in\Lip([0,+\infty);\Leb^2(X,\m))$ with
\begin{equation}\label{eq:hopf--lax_in_L2}
\frac{\di^{\,+}}{\di s}\,Q_s f
=
-\frac12\,|\D Q_s f|
\quad
\text{in}\ \Leb^2(X,\m)
\end{equation}
for $\L^1$-a.e.\ $s>0$.
\end{lemma}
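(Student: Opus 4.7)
The plan is to establish first the Lipschitz regularity of the curve $s\mapsto Q_sf$ in $\Leb^2(X,\m)$, and then to identify its right derivative by combining the pointwise Hamilton--Jacobi identity~\eqref{eq:hopf-lax_HJ} with a routine dominated convergence argument. The key simplification coming from the assumption $f\in\Lip_\star(X)$ is the following support property: setting $K=\supp f$ (which is bounded), from the very definition~\eqref{eq:hopf-lax_def} applied with the test point $y=x$ one has $0\le Q_sf(x)\le f(x)$ for every $x\in X$ and $s\ge0$, so that $\supp Q_sf\subset K$ uniformly in~$s$. Hence $Q_sf\in\Leb^2(X,\m)$ for every $s\ge0$, and all pointwise estimates derived from~\eqref{eq:hopf-lax_lip} automatically upgrade to $\Leb^2$-estimates because the relevant functions vanish outside the bounded set~$K$.

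For the Lipschitz regularity, fix $0\le s_0\le s_1$. The second inequality in~\eqref{eq:hopf-lax_lip} gives the pointwise bound $|Q_{s_1}f(x)-Q_{s_0}f(x)|\le2\Lip(f)^2(s_1-s_0)$ for every $x\in X$. Since the difference vanishes outside~$K$, integrating over~$K$ yields
\begin{equation*}
\|Q_{s_1}f-Q_{s_0}f\|_{\Leb^2(X,\m)}^2
\le
4\Lip(f)^4\,\m(K)\,(s_1-s_0)^2,
\end{equation*}
which proves $s\mapsto Q_sf\in\Lip([0,+\infty);\Leb^2(X,\m))$.

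For the identification of the right derivative, fix $s>0$. By~\eqref{eq:hopf-lax_HJ} we have the pointwise limit
\begin{equation*}
\lim_{h\to0^+}\frac{Q_{s+h}f(x)-Q_sf(x)}{h}
=
-\frac{1}{2}|\D Q_sf|^2(x)
\quad\text{for every}\ x\in X.
\end{equation*}
The first inequality in~\eqref{eq:hopf-lax_lip} gives $|\D Q_sf|^2\le4\Lip(f)^2$ and, as $Q_sf$ vanishes outside~$K$, also $|\D Q_sf|^2$ is supported in~$K$, hence belongs to $\Leb^2(X,\m)$. The second inequality in~\eqref{eq:hopf-lax_lip} together with the support property shows that the family of difference quotients is uniformly dominated by the $\Leb^2$-function $2\Lip(f)^2\chi_K$. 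The Dominated Convergence Theorem then promotes the pointwise convergence above to convergence in $\Leb^2(X,\m)$, yielding~\eqref{eq:hopf--lax_in_L2} at every $s>0$, which is stronger than the $\L^1$-a.e.\ statement claimed. The proof presents no genuine obstacle: the only point to keep in mind is that the compact-support assumption $f\in\Lip_\star(X)$ is precisely what is needed to convert the classical pointwise Hamilton--Jacobi theory of $(Q_s)_{s\ge0}$ into an $\Leb^2$-valued statement via dominated convergence, without having to invoke any Rademacher-type theorem for Hilbert-valued Lipschitz curves.
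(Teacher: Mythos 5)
Your proof is correct and follows essentially the same route as the paper's: the containment $\supp Q_sf\subset\supp f$, the uniform time-Lipschitz bound from~\eqref{eq:hopf-lax_lip}, and the Dominated Convergence Theorem applied to the pointwise Hamilton--Jacobi identity~\eqref{eq:hopf-lax_HJ}. Note that you (correctly) obtain $-\tfrac12|\D Q_sf|^2$ as the right derivative, consistent with~\eqref{eq:hopf-lax_HJ}; the missing square in the displayed statement~\eqref{eq:hopf--lax_in_L2} is evidently a typo.
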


\begin{proof}
If $f\in\Lip_\star(X)$, then also $Q_s f\in\Lip_\star(X)$ with $\supp(Q_s f)\subset\supp f$ for all~\mbox{$s\ge0$}.
Hence by~\eqref{eq:hopf-lax_lip} we can estimate
\begin{equation*}
\|Q_{s+h}f-Q_sf\|_{\Leb^2(X,\m)}
\le
\sqrt{\m(\supp f)}\cdot 2\Lip(f)^2\,h
\end{equation*}
for all $0<s\le s+h$, so that the conclusion follows by~\eqref{eq:hopf-lax_HJ} and the Dominated Convergence Theorem.
\end{proof}

\begin{proof}[Proof of \cref{res:action_double_variation}]
By \cref{res:kuwada_equivalence}\ref{item:kuwada_equiv_2}, we can estimate
\begin{align*}
W_2(\tilde{\mu}_{s_0},\tilde{\mu}_{s_1})
&=
W_2(\H_{\eta(s_0)}\mu_{\theta(s_0)},
\H_{\eta(s_1)}\mu_{\theta(s_1)})\\
&\le
W_2(\H_{\eta(s_0)}\mu_{\theta(s_0)},
\H_{\eta(s_0)}\mu_{\theta(s_1)})
+
W_2(\H_{\eta(s_0)}\mu_{\theta(s_1)},
\H_{\eta(s_1)}\mu_{\theta(s_1)})
\\
&\le
\c(\eta(s_0))\, 
W_2(\mu_{\theta(s_0)},\mu_{\theta(s_1)})
+
W_2(\H_{\eta(s_0)}\mu_{\theta(s_1)},
\H_{\eta(s_1)}\mu_{\theta(s_1)})
\end{align*}
for all $s_0,s_1\in[0,1]$, so that~\eqref{eq:double_var_reg_curve} readily follows from \cref{res:dual_heat_W_2_continuity}.
Since we have $s\mapsto f_s\in\Cont^1([0,1];\Leb^2(X,\m))$ by~\eqref{eq:def_strong_reg_curve_density_C1_L2} in \cref{def:strong_reg_curve}, 
we also have $s\mapsto\tilde f_s=\P_{\eta(s)}f_{\theta(s)}\in\Cont^1((0,1];\Leb^2(X,\m))$
with
\begin{equation}\label{eq:tilde_f_s_deriv_L2}
\frac{\di}{\di s}\,\tilde f_s
=
\dot\eta(s)\,
\Delta_{\d,\m}\P_{\eta(s)}f_{\theta(s)}
+
\dot\theta(s)\,
\P_{\eta(s)}\dot f_{\theta(s)}
\quad
\text{in}\ \Leb^2(X,\m)
\end{equation}
for all $s\in(0,1]$.
Hence, by \cref{res:Lip_functions_in_L2} and \cref{res:hopf--lax_in_L2}, we get~\eqref{eq:double_var_reg_curve_action_AC} and we can compute
\begin{align*}
\frac{\di}{\di s}
\int_X \phi_s \di\tilde\mu_s
&=
\frac{\di}{\di s}\int_X \phi_s\,\tilde f_s\di\m
=
\int_X \tilde f_s\,\frac{\di}{\di s}\,\phi_s\di\m
+
\int_X \phi_s\,\frac{\di}{\di s}\,\tilde f_s\di\m\\
&=
-\frac12\int_X|\D\phi_s|\di\tilde\mu_s
+
\int_X
\left(
\dot\eta(s)\,
\Delta_{\d,\m}\tilde f_s
+
\dot\theta(s)\,
\P_{\eta(s)}\dot f_{\theta(s)}
\right)\phi_s\di\m\\
&=
-\frac12\int_X|\D\phi_s|\di\tilde\mu_s
-
\dot\eta(s)
\int_X\Gamma(\tilde f_s,\phi_s)\di\m
+
\dot\theta(s)
\int_X\dot f_{\theta(s)}\,\P_{\eta(s)}\phi_s\di\m
\end{align*}
for $\L^1$-a.e.\ $s\in(0,1)$ by~\eqref{eq:hopf--lax_in_L2} and \eqref{eq:int_by_part_formula}, since $\phi_s\in\Sob^{1,2}(X,\d,\m)$ for all $s\in[0,1]$, proving~\eqref{eq:action_double_variation}.
This concludes the proof.
\end{proof}

\subsection{Entropy along strongly regular curves}

We now introduce some notation.
For every~$\eps>0$, we let
\begin{equation*}
\ell_\eps(r)=\log(\eps+r)
\quad
\text{for all}\ r\ge0,
\end{equation*}
and for all $\mu\in\Prob_2(X)$ we define
\begin{equation}\label{eq:def_truncated_entropy}
E_\eps(\mu)
=
\begin{cases}
\displaystyle\int_X \ell_\eps(f)\di\mu
&
\text{if}\ \mu=f\m,\\[3mm]
+\infty 
&
\text{otherwise}.
\end{cases}
\end{equation}
Note that 
\begin{equation}\label{eq:elio}
E_\eps(\mu)\ge\Ent_\m(\mu)
\quad
\text{for all }\
\mu\in\Prob_2(X)
\end{equation}
and, moreover, that if $\mu=f\m\in\Prob_2(X)$ with $f\in\Leb^2(X,\m)$ then
\begin{equation}\label{eq:E_eps_ok_on_L2}
E_\eps(\mu)
=
\log\eps
+
\int_X (\ell_\eps(f)-\log\eps)\di\mu
\le
\log\eps
+
\frac1\eps\int_X f^2\di\m
<
+\infty.
\end{equation}
For this reason, we set 
\begin{equation*}
\hat\ell_\eps(r)=\ell_\eps(r)-\log\eps
\quad
\text{for all}\ r\ge0.
\end{equation*}
We also let 
\begin{equation*}
p_\eps(r)
=
\hat\ell_\eps(r)+r\,\hat\ell_\eps'(r)
\quad
\text{for all}\ r\ge0.
\end{equation*}
Note that
\begin{equation}\label{eq:cantonese}
p_\eps'(r)
=
2\ell_\eps'(r)+r\,\ell_\eps''(r)
=
\frac{r+2\eps}{(r+\eps)^2}
\quad
\text{for all}\ r\ge0.
\end{equation} 
Finally, note that if $\eps\in(0,1]$ then $\log(\eps+r)\le\log(1+r)\le r$ for all $r\ge0$.
Thus, if $\mu=f\m\in\Prob_2(X)$ for some $f\in\Leb^2(X,\m)$, then 
$[\log(\eps+f)]^-\le[\log f]^-\in\Leb^1(X,\mu)$ 
by~\eqref{eq:exp_growth} and 
$[\log(\eps+f)]^+\le f\in\Leb^1(X,\mu)$, so that
\begin{equation*}
\Ent_\m(\mu)
\le
E_\eps(\mu)
\le
|E_\eps(\mu)|
\le
\int_X[\log f]^-\di\mu+\int_X f^2\di\m
\end{equation*}
and
\begin{equation}\label{eq:masterone}
\Ent_\m(\mu)
=
\lim_{\eps\to0^+}
E_\eps(\mu),
\end{equation}
by the Dominated Convergence Theorem. 

In the following result, analogous to~\cite{AGS15}*{Lemma~4.15} and~\cite{EKS15}*{Lemma~4.13}, we compute the derivative of the truncated entropy $E_\eps$ defined in~\eqref{eq:def_truncated_entropy} along a strongly regular curve.

\begin{lemma}[Derivative of $E_\eps$ along strongly regular curves]\label{res:entropy_double_variation}
Assume $(X,\d,\m)$ satisfies $\wBE(\c,\infty)$
and let $\eps>0$.
Under the same assumptions of \cref{res:action_double_variation} and the notation above, 
we have
\begin{equation}
\label{eq:entropy_double_variation_AC}
s\mapsto E_\eps(\tilde\mu_s)
\in
\Cont^1((0,1];\R)
\end{equation}
with
\begin{equation}\label{eq:entropy_double_variation}
\frac{\di}{\di s}
E_\eps(\tilde{\mu}_s)
\le
-
\dot{\eta}(s)
\int_X\Gamma(g_s^\eps)\di\tilde\mu_s
+
\dot{\theta}(s)
\int_X\dot{f}_{\theta(s)}\,\P_{\eta(s)}g_s^\eps\di\m
\end{equation}
for all $s\in(0,1]$, where 
$g_s^\eps=p_\eps(\tilde f_s)$
for all $s\in[0,1]$. 
\end{lemma}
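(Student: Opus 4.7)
The backbone of the argument is the simple identity
\[
E_\eps(\tilde\mu_s) = \log\eps + \int_X \omega_\eps(\tilde f_s)\di\m,
\qquad
\omega_\eps(r) := r\,\hat\ell_\eps(r),
\quad
\omega_\eps'(r) = p_\eps(r),
\]
so once the differentiation under the integral is justified, the computation reduces to inserting \eqref{eq:tilde_f_s_deriv_L2} and simplifying via $\Gamma$-calculus. The crucial pointwise facts about $p_\eps$ that I would record first are: (a) the two-sided bound $0\le p_\eps(r)\le 2r/\eps$ on $[0,+\infty)$ (coming from $\log(1+x)\le x$ applied to both summands of $p_\eps$), in particular $p_\eps$ is $(2/\eps)$-Lipschitz with $p_\eps(0)=0$; and (b) the identity from~\eqref{eq:cantonese} that gives $1-r\,p_\eps'(r) = \eps^2/(r+\eps)^2\ge 0$, whence, multiplying by $p_\eps'(r)\ge 0$,
\[
r\,(p_\eps'(r))^2 \le p_\eps'(r)
\quad\text{for all}\ r\ge 0.
\]
This last inequality is exactly what will absorb the positive Laplacian term into the negative Dirichlet-like term $-\int \Gamma(g_s^\eps)\di\tilde\mu_s$.

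\textbf{Regularity and derivative formula.} For $s\in(0,1]$, since $\eta(s)>0$ and assumption~\ref{assumption:quadratic_Ch} makes $(\P_t)_{t>0}$ an analytic semigroup on $\Leb^2(X,\m)$, combining the $\Cont^1$-regularity of $s\mapsto f_s\in\Leb^2(X,\m)$ from \cref{def:strong_reg_curve} with the chain rule in Banach spaces yields $s\mapsto\tilde f_s\in\Cont^1((0,1];\Leb^2(X,\m))$ with derivative given by~\eqref{eq:tilde_f_s_deriv_L2}. Because $p_\eps$ is $(2/\eps)$-Lipschitz, $p_\eps(\tilde f_s)\in\Leb^2(X,\m)$ with norm controlled by $(2/\eps)\|\tilde f_s\|_{\Leb^2}$, and the second-order Taylor-type bound
\[
\abs{\omega_\eps(a)-\omega_\eps(b)-p_\eps(b)(a-b)}
\le \tfrac{1}{\eps}(a-b)^2
\quad\text{for all}\ a,b\ge 0
\]
(obtained by integrating $p_\eps'\le 2/\eps$ twice) combined with the local $\Lip$-in-$s$ regularity of $\tilde f_s$ in $\Leb^2$ (so that $\|\tilde f_s-\tilde f_{s_0}\|_{\Leb^2}^2/|s-s_0|\to 0$) gives
\[
\frac{\di}{\di s}E_\eps(\tilde\mu_s)
= \int_X p_\eps(\tilde f_s)\,\frac{\di \tilde f_s}{\di s}\di\m
= \int_X g_s^\eps\,\frac{\di \tilde f_s}{\di s}\di\m
\]
for all $s\in(0,1]$; continuity of this derivative in $s$ follows from the strong $\Leb^2$-continuity of both $s\mapsto\dot{\tilde f}_s$ and $s\mapsto p_\eps(\tilde f_s)$ (the latter by the Lipschitz property of $p_\eps$), proving~\eqref{eq:entropy_double_variation_AC}.

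\textbf{Splitting and main estimate.} Substituting~\eqref{eq:tilde_f_s_deriv_L2} splits the derivative as $\dot\eta(s)\int_X g_s^\eps\,\Delta_{\d,\m}\tilde f_s\di\m + \dot\theta(s)\int_X g_s^\eps\,\P_{\eta(s)}\dot f_{\theta(s)}\di\m$. The second term is handled by self-adjointness of $\P_{\eta(s)}$ on $\Leb^2(X,\m)$, which produces exactly $\dot\theta(s)\int_X \dot f_{\theta(s)}\,\P_{\eta(s)}g_s^\eps\di\m$. For the first term, since $p_\eps$ is Lipschitz with $p_\eps(0)=0$, the chain rule~\eqref{eq:chain_rule_min_weak_grad} gives $g_s^\eps\in\Sob^{1,2}(X,\d,\m)$, and the integration-by-parts formula~\eqref{eq:int_by_part_formula} together with the $\Gamma$-chain rule~\eqref{eq:Gamma_chain_rule} yields
\[
\int_X g_s^\eps\,\Delta_{\d,\m}\tilde f_s\di\m
= -\int_X\Gamma(g_s^\eps,\tilde f_s)\di\m
= -\int_X p_\eps'(\tilde f_s)\,\Gamma(\tilde f_s)\di\m.
\]
Applying the pointwise inequality $p_\eps'(r) \ge r(p_\eps'(r))^2$ from the first paragraph and, once more, the $\Gamma$-chain rule to rewrite $(p_\eps'(\tilde f_s))^2\Gamma(\tilde f_s) = \Gamma(g_s^\eps)$, we conclude, using $\dot\eta\ge 0$, that
\[
\dot\eta(s)\int_X g_s^\eps\,\Delta_{\d,\m}\tilde f_s\di\m
\le -\dot\eta(s)\int_X\Gamma(g_s^\eps)\di\tilde\mu_s,
\]
which together with the heat-term identity above yields~\eqref{eq:entropy_double_variation}. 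The main obstacle is really the regularity step: producing an $\Leb^2$-valid chain rule for $E_\eps$ in a setting where the natural candidate for the derivative, $p_\eps$, is unbounded; the bound $p_\eps(r)\le 2r/\eps$ is what makes everything fit, and it is here that the $\Leb^2$-strength of \cref{def:strong_reg_curve} (as opposed to weaker regularity classes) is genuinely used.
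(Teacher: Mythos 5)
Your proof is correct and follows essentially the same route as the paper: the same substitution of \eqref{eq:tilde_f_s_deriv_L2}, the same integration by parts plus $\Gamma$-chain rule, and the same pointwise inequality $r\,p_\eps'(r)\le 1$ from \eqref{eq:cantonese} to absorb the Laplacian term into $-\int_X\Gamma(g_s^\eps)\di\tilde\mu_s$. The only (harmless) deviation is in justifying differentiation under the integral: you use a second-order Taylor bound on $\omega_\eps$ together with the local Lipschitz regularity of $s\mapsto\tilde f_s$ in $\Leb^2$, whereas the paper writes $E_\eps(\tilde\mu_s)=\log\eps+\int_X\hat\ell_\eps(\tilde f_s)\,\tilde f_s\di\m$ and invokes the product rule of \cref{res:Lip_functions_in_L2}; both are valid.
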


\begin{proof}
Let $\eps>0$ be fixed.
Since $\hat\ell_\eps\in\Cont^1([0,+\infty);[0,+\infty))\cap\Lip([0,+\infty);[0,+\infty))$ with $\hat\ell_\eps(0)=0$, by~\eqref{eq:tilde_f_s_deriv_L2} and the Mean Value Theorem we get that $s\mapsto\hat\ell_\eps(\tilde f_s)\in\Cont^1((0,1];\Leb^2(X,\m))$ with
\begin{equation*}
\frac{\di}{\di s}\,
\hat\ell_\eps(\tilde f_s)
=
\hat\ell_\eps'(\tilde f_s)\,
\frac{\di}{\di s}\,\tilde f_s
\quad
\text{in}\ \Leb^2(X,\m)
\end{equation*}
for all $s\in(0,1]$.
Similarly, since $p_\eps\in\Cont^1([0,+\infty);\R)\cap\Lip([0,+\infty);\R)$ with \mbox{$p_\eps(0)=0$},  we also have that $s\mapsto g_s^\eps=p_\eps(\tilde f_s)\in\Cont^1((0,1];\Leb^2(X,\m))$. In addition, recalling that $\tilde f_s=\P_{\eta(s)}f_{\theta(s)}$ for all $s\in[0,1]$, by \cref{res:heat_flow_sob_cont} we also have that $s\mapsto\tilde f_s\in\Cont((0,1];\Sob^{1,2}(X,\d,\m))$ and thus also $s\mapsto g_s^\eps\in\Cont((0,1];\Sob^{1,2}(X,\d,\m))$ by the chain rule~\eqref{eq:chain_rule_min_weak_grad}.
Hence, again by~\eqref{eq:tilde_f_s_deriv_L2} and by \cref{res:Lip_functions_in_L2}, we get that
\begin{equation*}
s\mapsto
E_\eps(\tilde\mu_s)
=
\log\eps
+
\int_X \hat\ell_\eps(\tilde f_s)\,\tilde f_s\di\m
\in\Cont^1((0,1];\R),
\end{equation*}
proving~\eqref{eq:entropy_double_variation_AC},
and we can compute
\begin{align*}
\frac{\di}{\di s}\,
E_\eps(\tilde{\mu}_s)
&=
\frac{\di}{\di s}
\int_X\hat\ell_\eps(\tilde f_s)\,\tilde f_s
\di\m\\
&=
\int_X \tilde f_s\,\frac{\di}{\di s}\,\hat\ell_\eps(\tilde f_s)
+
\hat\ell_\eps(\tilde f_s)\,\frac{\di}{\di s}\,\tilde f_s\di\m
\\
&=
\int_X p_\eps(\tilde f_s)\,\frac{\di}{\di s}\,\tilde f_s\di\m
\\
&=
\int_X g^\eps_s
\left(
\dot{\eta}(s)\,
\Delta_{\d,\m}\tilde f_s
+
\dot{\theta}(s)\,\P_{\eta(s)}\dot{f}_{\theta(s)}
\right)\di\m
\end{align*}
for all $s\in(0,1]$.
By the integration-by-part formula~\eqref{eq:int_by_part_formula} and the chain rule~\eqref{eq:Gamma_chain_rule}, we can write
\begin{equation*}
\int_X g^\eps_s\,\Delta_{\d,\m}\tilde f_s\di\m
=
-\int_X\Gamma(g^\eps_s,\tilde f_s)\di\m
=
-\int_X\Gamma(p_\eps(\tilde f_s),\tilde f_s)\di\m
=
-\int_X p_\eps'(\tilde f_s)\,\Gamma(\tilde f_s)\di\m
\end{equation*}
Observing that 
$-p_\eps'(r)\le-r\,(p_\eps'(r))^2$ for all $r\ge0$, since $r\,p_\eps'(r)\le1$ for all $r\ge0$ by~\eqref{eq:cantonese}, again by the chain rule~\eqref{eq:chain_rule_min_weak_grad} 
we can estimate
\begin{align*}
\int_X g^\eps_s\,\Delta_{\d,\m}\tilde f_s\di\m
&=
-\int_X p_\eps'(\tilde f_s)\,\Gamma(\tilde f_s)\di\m\\
&\le 
-\int_X \tilde f_s\,(p_\eps'(\tilde f_s))^2\,\Gamma(\tilde f_s)\di\m\\
&=
-\int_X \Gamma(p_\eps(\tilde f_s))\di\tilde\mu_s\\
&=
-\int_X \Gamma(g_s^\eps)\di\tilde\mu_s
\end{align*}
for all $s\in(0,1]$, so that
\begin{align*}
\frac{\di}{\di s}\,
E_\eps(\tilde{\mu}_s)
&=
\dot{\eta}(s)
\int_X g^\eps_s\,
\Delta_{\d,\m}\tilde f_s\di\m
+
\dot{\theta}(s)
\int_X g^\eps_s\,
\P_{\eta(s)}\dot{f}_{\theta(s)}
\di\m\\
&\le
-\dot{\eta}(s)
\int_X \Gamma(g_s^\eps)\di\tilde\mu_s
+
\dot{\theta}(s)
\int_X 
\dot{f}_{\theta(s)}\,
\P_{\eta(s)}g^\eps_s
\di\m
\end{align*}
for all $s\in(0,1]$, concluding the proof of~\eqref{eq:entropy_double_variation}.
\end{proof}

\subsection{Action and entropy along regular curves}

We now come to the following crucial result connecting the action estimate obtained in \cref{res:action_double_variation} with the entropic inequality proved in \cref{res:entropy_double_variation}.
For the same result in the standard $\BE(K,N)$ framework, we refer the reader to~\cite{AGS15}*{Theorem~4.16} and~\cite{EKS15}*{Proposition~4.16}.

\begin{theorem}[Action and entropy along strongly regular curves]
\label{res:action_ent_estimate_reg_curves}
Assume $(X,\d,\m)$ sa\-ti\-sfies $\wBE(\c,\infty)$ and is heat-smoothing 
as in~\cref{def:heat-smoothing_space}.
Under the same assumptions of \cref{res:action_double_variation} and \cref{res:entropy_double_variation},
if $\eps>0$ then
\begin{equation*}
\frac{1}{2}\,W_2^2(\tilde\mu_1,\tilde\mu_0)
-
\int_0^1\ddot{\eta}(s)
\,E_\eps(\tilde\mu_s)
\di s
+\dot{\eta}(1)\,
E_\eps(\tilde\mu_1)
\le
\dot{\eta}(0)\,
E_\eps(\tilde\mu_0)
+
\frac{1}{2\I_{-2,\eta}(1)}
\int_0^1|\dot{\mu}_s|^2\di s,
\end{equation*}
where 
\begin{equation*}
\I_{p,\eta}(s)=\int_0^s\c^p(\eta(r))\di r
\quad
\text{for all}\ s\in[0,1]\
\text{and}\ p\in\R,
\end{equation*}
and 
\begin{equation}\label{eq:def_theta_magic}
\theta(s)=\dfrac{\I_{-2,\eta}(s)}{\I_{-2,\eta}(1)}
\quad
\text{for all}\ s\in[0,1].	
\end{equation}
\end{theorem}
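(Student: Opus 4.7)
The plan is to combine the pointwise derivative identities provided by \cref{res:action_double_variation} and \cref{res:entropy_double_variation}, use Lisini's inequality together with the $\wBE(\c,\infty)$ condition and Young's inequality to absorb the resulting Dirichlet-energy contribution at the price of a curve-speed remainder, integrate in $s\in[0,1]$, and conclude by taking the supremum over test functions via Kantorovich duality.

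Concretely, fix $\phi\in\Lip_\star(X)$, set $\phi_s=Q_s\phi$, $g_s^\eps=p_\eps(\tilde f_s)$, and $\psi_s=\phi_s+\dot\eta(s)\,g_s^\eps$, and add the identity from \cref{res:action_double_variation} to $\dot\eta(s)$ times the inequality in \cref{res:entropy_double_variation}: the two ``mixed'' boundary contributions recombine into the single term $\dot\theta(s)\int_X\dot f_{\theta(s)}\,\P_{\eta(s)}\psi_s\di\m$. Applying the $\Sob^{1,2}$-version of the Lisini bound \cref{res:Lisini_Lip} (available for $\P_{\eta(s)}\psi_s\in\Sob^{1,2}(X,\d,\m)$, with $\Gamma$ in place of slope thanks to the heat-smoothing assumption of \cref{def:heat-smoothing_space}), the $\wBE(\c,\infty)$ inequality $\Gamma(\P_{\eta(s)}\psi_s)\le\c^2(\eta(s))\,\P_{\eta(s)}\Gamma(\psi_s)$, the heat duality $\int_X\P_t h\di\mu_{\theta(s)}=\int_X h\di\tilde\mu_s$, and finally Young's inequality with balanced weight, yields
\begin{equation*}
\dot\theta(s)\int_X\dot f_{\theta(s)}\,\P_{\eta(s)}\psi_s\di\m\le\frac{\c^2(\eta(s))}{2}\,\dot\theta(s)^2\,|\dot\mu_{\theta(s)}|^2+\frac12\int_X\Gamma(\psi_s)\di\tilde\mu_s.
\end{equation*}
Expanding $\Gamma(\psi_s)$ via the chain rule, and using $|\D\phi_s|^2\ge\Gamma(\phi_s)$, the exact identity $\int_X g_s^\eps\,\Delta_{\d,\m}\tilde f_s\di\m=-\int_X p_\eps'(\tilde f_s)\,\Gamma(\tilde f_s)\di\m$, and the pointwise bounds $rp_\eps'(r)\le1$, $rp_\eps'(r)^2\le p_\eps'(r)$, all gradient contributions combine into the clean pointwise estimate
\begin{equation*}
\frac{\di}{\di s}\int_X\phi_s\di\tilde\mu_s+\dot\eta(s)\frac{\di}{\di s}E_\eps(\tilde\mu_s)\le\frac{\c^2(\eta(s))}{2}\,\dot\theta(s)^2\,|\dot\mu_{\theta(s)}|^2,\qquad\L^1\text{-a.e.\ }s\in(0,1).
\end{equation*}
Integrating this in $s$, performing the change of variable $\tau=\theta(s)$ --- with $\dot\theta(s)=\c^{-2}(\eta(s))/\I_{-2,\eta}(1)$ precisely tailored so that the right-hand side collapses to $\frac{1}{2\I_{-2,\eta}(1)}\int_0^1|\dot\mu_\tau|^2\di\tau$ --- integrating by parts the entropy term to produce $\dot\eta(1)E_\eps(\tilde\mu_1)-\dot\eta(0)E_\eps(\tilde\mu_0)-\int_0^1\ddot\eta(s)E_\eps(\tilde\mu_s)\di s$, and finally taking the supremum over $\phi\in\Lip_\star(X)$ to convert $\int_X Q_1\phi\di\tilde\mu_1-\int_X\phi\di\tilde\mu_0$ into $\tfrac12 W_2^2(\tilde\mu_0,\tilde\mu_1)$ via Kantorovich duality, delivers the claim.

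The main obstacle is the cross-term accounting. Unlike the Boltzmann case --- where $rp'(r)=1$ produces an exact cancellation of the $\Gamma(\phi_s,\tilde f_s)$ contributions --- here the leftover $-\dot\eta(s)\int_X[1-\tilde f_s p_\eps'(\tilde f_s)]\Gamma(\phi_s,\tilde f_s)\di\m=-\dot\eta(s)\int_X\eps^2(\tilde f_s+\eps)^{-2}\Gamma(\phi_s,\tilde f_s)\di\m$ is sign-indefinite and must be absorbed by the remaining negative terms $-\tfrac12\int_X[|\D\phi_s|^2-\Gamma(\phi_s)]\di\tilde\mu_s$ and $-\tfrac{\dot\eta(s)^2}{2}\int_X p_\eps'(\tilde f_s)\Gamma(\tilde f_s)\di\m$ via a carefully weighted Cauchy--Schwarz/Young step, exploiting the pointwise bound $1-rp_\eps'(r)=\eps^2/(r+\eps)^2\le\tfrac\eps2\,p_\eps'(r)$. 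Secondary technicalities include justifying the Lisini-type bound for $\P_{\eta(s)}\psi_s$ (which is not a priori Lipschitz bounded, but becomes so after truncating $g_s^\eps$ and invoking the Strong Feller property of \cref{res:S-Feller}), and extending the $\Cont^1((0,1];\R)$ regularity of $s\mapsto E_\eps(\tilde\mu_s)$ given by \cref{res:entropy_double_variation} up to $s=0$ --- needed for the integration by parts --- which follows from the $\Leb^2$-continuity of $s\mapsto\tilde f_s$ together with the pointwise estimate $E_\eps(\tilde\mu_s)\le\log\eps+\eps^{-1}\|\tilde f_s\|_{\Leb^2(X,\m)}^2$.
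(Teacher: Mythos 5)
Your overall route is the paper's: add the identity of \cref{res:action_double_variation} to $\dot\eta(s)$ times the estimate of \cref{res:entropy_double_variation}, complete the square in $\Gamma(\phi_s+\dot\eta(s)\,g_s^\eps)$, control the remaining term via \cref{res:Lisini_Lip}, the heat-smoothing property, $\wBE(\c,\infty)$ and Young's inequality, integrate in $s$ with the choice~\eqref{eq:def_theta_magic}, and conclude by Kantorovich duality. The secondary points you raise (truncation before applying the Lisini bound, continuity of $s\mapsto E_\eps(\tilde\mu_s)$ at $s=0$ for the integration by parts) are sound.

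The gap is in your treatment of what you correctly single out as the main obstacle, the sign-indefinite leftover
\[
-\dot\eta(s)\int_X\big(1-\tilde f_s\,p_\eps'(\tilde f_s)\big)\,\Gamma(\phi_s,\tilde f_s)\di\m
=
-\dot\eta(s)\int_X\frac{\eps^2}{(\tilde f_s+\eps)^2}\,\Gamma(\phi_s,\tilde f_s)\di\m.
\]
After Cauchy--Schwarz, the bound $1-rp_\eps'(r)\le\tfrac\eps2\,p_\eps'(r)$ and Young's inequality with weight $\lambda$, you get two pieces: one proportional to $\lambda\int_Xp_\eps'(\tilde f_s)\,\Gamma(\tilde f_s)\di\m$, which can indeed be absorbed into the unused half of the entropy-dissipation term, and one proportional to $\lambda^{-1}\eps^2\int_Xp_\eps'(\tilde f_s)\,\Gamma(\phi_s)\di\m$. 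The latter is \emph{not} dominated by $\tfrac12\int_X\big(|\D\phi_s|^2-\Gamma(\phi_s)\big)\di\tilde\mu_s$: that reserve carries the weight $\tilde f_s\di\m$, while your term carries the weight $p_\eps'(\tilde f_s)\di\m$, which is of size $2/\eps$ precisely where $\tilde f_s$ is small; moreover the reserve vanishes identically on the model spaces (Carnot groups, $\SU(2)$), where slopes and minimal weak gradients of Lipschitz functions agree a.e. What your scheme actually yields is an extra error of order $\eps\,\Lip(\phi)^2\,\m(\supp\phi)$, uniformly in $s$ --- harmless in the eventual limit $\eps\to0^+$ inside the proof of \cref{res:th_equivalence}, but not sufficient for the inequality as stated at fixed $\eps>0$. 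For comparison, the paper disposes of the same term via the one-line estimate $\int_X\tilde f_s\,p_\eps'(\tilde f_s)\,\Gamma(\tilde f_s,\phi_s)\di\m\le\int_X\Gamma(\tilde f_s,\phi_s)\di\m$, i.e.\ by applying $rp_\eps'(r)\le1$ under the integral sign; your observation that $\Gamma(\tilde f_s,\phi_s)$ has no sign shows that this step, too, needs justification, so the cross term is a genuine difficulty and not an artifact of your write-up --- but your proposed absorption does not resolve it.
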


\begin{proof}
Let $\eps>0$ be fixed.
On the one hand, recalling~\eqref{eq:wD_less_slope_for_Lip}, by \cref{res:action_double_variation} we can estimate
\begin{equation*}
\frac{\di}{\di s}
\int_X\phi_s\di\tilde{\mu}_s
\le
-
\frac{1}{2}\int_X\Gamma(\phi_s)\di\tilde{\mu}_s
-
\dot{\eta}(s)\int_X\Gamma(\tilde{f}_s,\phi_s)\di\m
+
\dot{\theta}(s)\int_X\dot{f}_{\theta(s)}\,\P_{\eta(s)}\phi_s\di\m
\end{equation*}
for $\L^1$-a.e.\ $s\in(0,1)$.
On the other hand, by \cref{res:entropy_double_variation}, we can also estimate
\begin{equation*}
\dot{\eta}(s)\,\frac{\di}{\di s}\,
E_\eps(\tilde{\mu}_s)
\le
-
\frac{\dot{\eta}^2(s)}{2}
\int_X\Gamma(g_s^\eps)\di\tilde\mu_s
+
\dot{\theta}(s)\,\dot{\eta}(s)
\int_X\dot{f}_{\theta(s)}\,\P_{\eta(s)}g_s^\eps\di\m
\end{equation*}
for all $s\in(0,1]$.
Hence, we can estimate
\begin{align*}
\frac{\di}{\di s}\int_X\phi_s\di\tilde{\mu}_s
+
\dot{\eta}(s)\,\frac{\di}{\di s}\,
E_\eps(\tilde{\mu}_s)
&\le
\dot{\theta}(s)
\int_X\dot{f}_{\theta(s)}\,
\P_{\eta(s)}(\phi_s+\dot{\eta}(s)\,g^\eps_s)
\di\m
\\
&\quad
-
\frac{1}{2}
\int_X
\big(
\Gamma(\phi_s)
+
\dot{\eta}^2(s)\,
\Gamma(g_s^\eps)
\big)
\di\tilde{\mu}_s
-
\dot{\eta}(s)
\int_X\Gamma(\tilde{f}_s,\phi_s)\di\m
\end{align*}
for $\L^1$-a.e.\ $s\in(0,1)$.
Recalling that $r\,p_\eps'(r)\le1$ for all $r\ge0$ by~\eqref{eq:cantonese}, by the chain rule~\eqref{eq:Gamma_chain_rule} we can also estimate
\begin{align*}
\int_X\Gamma(g^\eps_s,\phi_s)\di\tilde\mu_s
=
\int_X\tilde f_s\,p_\eps'(\tilde f_s)\Gamma(\tilde f_s,\phi_s)\di\m
\le
\int_X\Gamma(\tilde{f}_s,\phi_s)\di\m	
\end{align*}
for all $s\in(0,1]$, so that
\begin{align*}
\frac{1}{2}
\int_X\big(\Gamma(\phi_s)
&+\dot{\eta}^2(s)\,\Gamma(g_s^\eps)\big)
\di\tilde{\mu}_s
+
\dot{\eta}(s)
\int_X\Gamma(\tilde{f}_s,\phi_s)\di\m
\\
&\ge
\frac{1}{2}
\int_X\big(\Gamma(\phi_s)+\dot{\eta}^2(s)\,\Gamma(g_s^\eps)\big)
\di\tilde{\mu}_s
+
\dot{\eta}(s)
\int_X\Gamma(g^\eps_s,\phi_s)\di\tilde\mu_s
\\
&=
\frac12
\int_X\Gamma(\phi_s+\dot{\eta}(s)\,g_s^\eps)
\di\tilde{\mu}_s
\end{align*}
for all $s\in(0,1]$. 
Thus, by~\eqref{eq:def_wBE}, we can estimate
\begin{equation}\label{eq:dolores}
\begin{split}
\frac{\di}{\di s}\int_X\phi_s\di\tilde{\mu}_s
+
\dot{\eta}(s)\,
\frac{\di}{\di s}\,
E_\eps(\tilde{\mu}_s)
&\le
\dot{\theta}(s)
\int_X\dot{f}_{\theta(s)}\,
\P_{\eta(s)}(\phi_s+\dot{\eta}(s)\,g^\eps_s)
\di\m\\
&\quad
-\frac12
\int_X\Gamma(\phi_s+\dot{\eta}(s)\,g_s^\eps)
\di\tilde{\mu}_s
\\
&=
\dot{\theta}(s)
\int_X\dot{f}_{\theta(s)}\,
\P_{\eta(s)}(\phi_s+\dot{\eta}(s)\,g^\eps_s)
\di\m\\
&\quad
-\frac12
\int_X\P_{\eta(s)}\big(\Gamma(\phi_s+\dot{\eta}(s)\,g_s^\eps)\big)
\di\mu_{\theta(s)}
\\
&\le
\dot{\theta}(s)
\int_X\dot{f}_{\theta(s)}\,
\P_{\eta(s)}(\phi_s+\dot{\eta}(s)\,g^\eps_s)
\di\m\\
&\quad
-
\frac{\c^{-2}(\eta(s))}2
\int_X\Gamma\big(\P_{\eta(s)}(\phi_s+\dot{\eta}(s)\,g_s^\eps)\big)
\di\mu_{\theta(s)}
\end{split}
\end{equation}
for $\L^1$-a.e.\ $s\in(0,1)$.
Since $\eta(s)>0$ for all $s\in(0,1]$, we have
\begin{equation*}
\phi_s+\dot{\eta}(s)\,g^\eps_s\in\Leb^\infty(X,\m)\cap\Sob^{1,2}(X,\d,\m)
\end{equation*}
for all $s\in(0,1]$ so that, thanks to heat-smoothing assumption~\eqref{assumption:heat-smoothing},
\begin{equation*}
\Gamma(\P_{\eta(s)}(\phi_s+\dot{\eta}(s)g^\eps_s))
=
|\D\P_{\eta(s)}(\phi_s+\dot{\eta}(s)g^\eps_s)|^2
\quad
\text{$\m$-a.e.\ in~$X$}
\end{equation*}
for all $s\in(0,1]$.
Thus, by~\eqref{eq:def_strong_reg_curve_density_C1_L2} in \cref{def:strong_reg_curve}, by \cref{res:Lisini_Lip} and by Young inequality, we can estimate
\begin{equation}\label{eq:master}
\begin{split}
\dot{\theta}(s)
\int_X\dot{f}_{\theta(s)}\,
\P_{\eta(s)}(\phi_s&+\dot{\eta}(s)\,g^\eps_s)\di\m
=
\frac{\di}{\di r}
\int_X
\P_{\eta(s)}(\phi_s+\dot{\eta}(s)\,g^\eps_s)
\di\mu_{\theta(r)}
\,\bigg|_{r=s}
\\
&\le
|\dot{\theta}(s)|\,|\dot{\mu}_{\theta(s)}|
\left(
\int_X|\D\P_{\eta(s)}(\phi_s+\dot{\eta}(s)g^\eps_s)|^2\di\mu_{\theta(s)}
\right)^{\frac12}
\\
&= 
|\dot{\theta}(s)|\,|\dot{\mu}_{\theta(s)}|
\left(
\int_X\Gamma(\P_{\eta(s)}(\phi_s+\dot{\eta}(s)g^\eps_s))\di\mu_{\theta(s)}
\right)^{\frac12}
\\
&\le
\frac{\c^2(\eta(s))}{2}\,\dot{\theta}^2(s)
\,
|\dot{\mu}_{\theta(s)}|^2
+
\frac{\c^{-2}(\eta(s))}{2}
\int_X\Gamma(\P_{\eta(s)}(\phi_s+\dot{\eta}(s)g^\eps_s))
\di\mu_{\theta(s)}
\end{split}
\end{equation}
for all $s\in(0,1]$.
In conclusion, by combining~\eqref{eq:dolores} with~\eqref{eq:master}, we get
\begin{equation}\label{eq:gatti}
\frac{\di}{\di s}\int_X\phi_s\di\tilde{\mu}_s
+
\dot{\eta}(s)\,\frac{\di}{\di s}\,
E_\eps(\tilde{\mu}_s)
\le
\frac{\c^2(\eta(s))}{2}\,
\dot{\theta}^2(s)\,|\dot{\mu}_{\theta(s)}|^2
\end{equation}
for $\L^1$-a.e.\ $s\in(0,1)$.
We now integrate~\eqref{eq:gatti} in $s\in[0,1]$.
For the left-hand side of~\eqref{eq:gatti}, we have
\begin{equation}\label{eq:cane}
\begin{split}
\int_0^1
&
\frac{\di}{\di s}
\int_X\phi_s\di\tilde{\mu}_s\di s
+
\int_0^1\dot{\eta}(s)\,
\frac{\di}{\di s}\,
E_\eps(\tilde{\mu}_s)\di s
\\
&=
\int_X\phi_1\di\tilde{\mu}_1
-
\int_X\phi_0\di\tilde{\mu}_0
-
\int_0^1\ddot{\eta}(s)\,E_\eps(\tilde{\mu}_s)\di s
+\big(\dot{\eta}(1)E_\eps(\tilde{\mu}_1)-\dot{\eta}(0)E_\eps(\tilde{\mu}_0)\big).
\end{split}
\end{equation}
For the right-hand side of~\eqref{eq:gatti}, instead, we simply choose
$\theta\colon[0,1]\to[0,1]$ 
as in~\eqref{eq:def_theta_magic}, 
so that 
\begin{equation}
\frac{1}{2}\label{eq:gatto}
\int_0^1\dot{\theta}^2(s)\,\c^2(\eta(s))\,|\dot{\mu}_{\theta(s)}|^2\di s
=
\frac{1}{2\I_{-2,\eta}(1)}
\int_0^1|\dot{\mu}_{\theta(s)}|^2\,\dot{\theta}(s)\di s
=
\frac{1}{2\I_{-2,\eta}(1)}
\int_0^1|\dot{\mu}_s|^2\di s.
\end{equation}
Combining~\eqref{eq:cane} with~\eqref{eq:gatto}, we get 
\begin{equation*}
\begin{split}
\int_X\phi_1\di\tilde{\mu}_1-\int_X\phi_0\di\tilde{\mu}_0
-
\int_0^1\ddot{\eta}(s)\,&E_\eps(\tilde{\mu}_s)\di s
+\dot{\eta}(1)\,E_\eps(\tilde{\mu}_1)\\
&\le
\dot{\eta}(0)\,E_\eps(\tilde{\mu}_0)
+\frac{1}{2\I_{-2,\eta}(1)}\int_0^1|\dot{\mu}_s|^2\di s
\end{split}
\end{equation*}
whenever $\eps>0$,
and the conclusion follows by taking the supremum on all $\phi\in\Lip_\star(X)$ thanks to~\eqref{eq:kantorovich_duality_p=2}.
\end{proof}

\begin{remark}[Errata to the proof of~\cite{AGS15}*{Theorem~4.16}]
We warn the reader that there is a typo in the last inequality of the long chain of inequalities in the proof of~\cite{AGS15}*{Theorem~4.16}: in place of $\frac{1}{2}(\dot{\theta}_s)e^{-2Kst}|\dot{\rho}_s|^2$, it should be written $\frac{1}{2}(\dot{\theta}_s)e^{-2Kst}|\dot{\rho}_{\theta(s)}|^2$. 
Unfortunately, this typo induces the authors to make the wrong choice of $\theta(s)$ at the beginning of~\cite{AGS15}*{p.~393}, making the proofs of~\cite{AGS15}*{Theorem~4.16 and Theorem~4.17} not completely corrected. 
The reader can easily fix all the computations needed in~\cite{AGS15} by adapting the ones performed above in the proof of \cref{res:action_ent_estimate_reg_curves}.
\end{remark}

\subsection{Admissible groups}
\label{subsec:admissible_groups}

We now focus our attention on some particular admissible metric-measure spaces that we call \emph{admissible groups}. 

\begin{definition}[Admissible group]
\label{def:admissible_group}
We say that an admissible metric-measure space $(X,\d,\m)$ is an \emph{admissible group} if:
\begin{enumerate}[label=(\roman*)]

\item
\label{item:def_adm_group_loc_compact}
the metric space $(X,\d)$ is locally compact;

\item 
the set $X$ is a \emph{topological group}, i.e.\ the group operations of \emph{multiplication} $(x,y)\mapsto xy$ 
and \emph{inversion} $x\mapsto x^{-1}$ are continuous;

\item
\label{item:def_adm_group_distance_left-inv}
$\d$ is \emph{left-invariant}, i.e.\ $\d(zx,zy)=\d(x,y)$ for all $x,y,z\in X$;

\item
\label{item:def_adm_group_measure_left-inv}
$\m$ is a \emph{left-invariant Haar measure}, i.e.\ $\m$ is a Radon measure such that $\m(xE)=\m(E)$ for all $x\in X$ and all Borel set $E\subset X$;

\item
\label{item:def_adm_group_unimodular}
$X$ is \emph{unimodular}, i.e.\ $\m$ is also right-invariant. 

\end{enumerate}	
\end{definition}

For an agile introduction on topological groups and Haar measures, we refer the reader to~\cite{F99}*{Section~11.1} and to~\cite{F16}*{Chapter~2}. 
For a more general approach to the subject, see~\cite{F69}*{Section~2.7}.

Note that, since $\d$ is left-invariant, we can write $B_r(x)=xB_r(\neu)$ for all $x\in X$ and $r>0$, where $\neu\in X$ is the identity element.
Since $\m$ is right invariant, we thus get that $\m(B_r(x))=\m(B_r(\neu))$ for all $x\in X$ and $r>0$, so that if~\eqref{eq:exp_growth} is satisfied for one $x_0\in X$, then it is satisfied (with the same constants $A,B>0$) for all $x_0\in X$. 
Hence, from now on, we assume $x_0=\neu$ in~\eqref{eq:exp_growth} for simplicity.

Here and in the rest of the paper, we let $L_x(y)=x^{-1}y$, $x,y\in X$, be the left-translation map. 
The following result is a simple consequence of \cref{def:admissible_group} and the definitions given in \cref{sec:preliminaries}.
We leave its proof to the interested reader.

\begin{proposition}[Invariance properties of metric-measure objects]
\label{res:left-invariance}
Let $(X,\d,\m)$ be an admissible group.
Let $p\in[1,+\infty]$ and~$x\in X$ be fixed.
The following hold.

\begin{enumerate}[label=(\roman*)]

\item\label{item:left-inv_W_p}
If $p<+\infty$, then $W_p((L_x)_\sharp\mu,(L_x)_\sharp\nu)=W_p(\mu,\nu)$ for all $\mu,\nu\in\Prob_p(X)$.

\item
If $f\in\Lip(X)$, then also $f\circ L_x\in\Lip(X)$,
with
$|\D(f\circ L_x)|=|\D f|\circ L_x$
and
$|\D^*(f\circ L_x)|=|\D^* f|\circ L_x$.

\item
If $f\in\Sob^{1,2}(X,\d,\m)$, then also
$f\circ L_x\in\Sob^{1,2}(X,\d,\m)$, 
with
$\Ch(f\circ L_x)=\Ch(f)$
and
$|\D(f\circ L_x)|_w=|\D f|_w\circ L_x$.

\item
If $f\in\Dom(\Delta_{\d,\m})$, then also $f\circ L_x\in\Dom(\Delta_{\d,\m})$ with $\Delta_{\d,\m}(f\circ L_x)=(\Delta_{\d,\m}f)\circ L_x$.

\item\label{item:left-inv_heat}
If $f\in\Leb^p(X,\m)$, then $\P_t(f\circ L_x)=(\P_tf)\circ L_x$ for all $t>0$.

\end{enumerate}	
\end{proposition}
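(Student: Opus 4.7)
The unifying idea is that, under assumptions~\ref{item:def_adm_group_distance_left-inv} and~\ref{item:def_adm_group_measure_left-inv} of \cref{def:admissible_group}, the left-translation $L_x\colon y\mapsto x^{-1}y$ is a surjective isometry of $(X,\d)$ which is also measure-preserving, i.e.\ $(L_x)_\sharp\m=\m$.
Indeed, $L_x$ is a bijection with inverse $L_{x^{-1}}$; the identity $\d(L_x y,L_x z)=\d(x^{-1}y,x^{-1}z)=\d(y,z)$ follows from left-invariance of $\d$; and for any Borel set $E\subset X$ one has $\m(L_x^{-1}(E))=\m(xE)=\m(E)$ by left-invariance of $\m$. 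All five statements will be deduced from these two properties and the fact that the objects involved (Wasserstein distance, slopes, Cheeger energy, minimal weak gradient, Laplacian, heat semigroup) are canonically associated to the metric-measure structure.

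For~\ref{item:left-inv_W_p}, I would observe that the map $\pi\mapsto(L_x\otimes L_x)_\sharp\pi$ is a bijection between $\mathsf{Plan}(\mu,\nu)$ and $\mathsf{Plan}((L_x)_\sharp\mu,(L_x)_\sharp\nu)$, and by the change of variable formula together with $\d(L_xu,L_xv)=\d(u,v)$, the cost $\int\d^p\di\pi$ is invariant. For the second assertion, the identity $|\D(f\circ L_x)|(y)=|\D f|(L_x y)$ (and similarly for $|\D^*|$) follows directly from the definition~\eqref{eq:def_loc_Lip_constant} by substituting $y'=L_x y$, $z'=L_x z$ and using the isometry property; the Lipschitz regularity is then preserved with the same constant.

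For~\ref{item:left-inv_heat} (the $\Sob^{1,2}$ statement), I would use the definition~\eqref{eq:def_Ch} of the Cheeger energy. If $f_n\in\Lip(X)$ approximates $f$ in $\Leb^2$, then $f_n\circ L_x\in\Lip(X)$ approximates $f\circ L_x$ in $\Leb^2$ since $(L_x)_\sharp\m=\m$, and $\int_X|\D(f_n\circ L_x)|^2\di\m=\int_X|\D f_n|^2\circ L_x\di\m=\int_X|\D f_n|^2\di\m$. Taking the $\liminf$ gives $\Ch(f\circ L_x)\le\Ch(f)$, and the reverse inequality follows by applying the same argument to $L_{x^{-1}}$. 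The identity $|\D(f\circ L_x)|_w=|\D f|_w\circ L_x$ is then obtained by noting that the map $G\mapsto G\circ L_x$ is an $\Leb^2$-isometry that sends $\mathsf{Grad}(f)$ bijectively onto $\mathsf{Grad}(f\circ L_x)$ (as the defining approximation is preserved), hence preserves the element of minimal $\Leb^2$-norm.

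The fourth statement follows from the third: since $f\mapsto f\circ L_x$ is a linear $\Leb^2$-isometry and $\Ch(f\circ L_x)=\Ch(f)$, the subdifferential $\de^-\Ch$ commutes with precomposition by $L_x$, so $\Delta_{\d,\m}(f\circ L_x)=(\Delta_{\d,\m}f)\circ L_x$. Finally, for~\ref{item:left-inv_heat} on the heat semigroup, both $t\mapsto\P_t(f\circ L_x)$ and $t\mapsto(\P_tf)\circ L_x$ lie in $\AC_{\loc}((0,+\infty);\Leb^2(X,\m))$, satisfy $\frac{\di}{\di t}g_t=\Delta_{\d,\m}g_t$ by the commutation just proved, and share the initial datum $f\circ L_x$; uniqueness of the gradient flow of~$\Ch$ yields the identity for $p=2$, and this extends to every $p\in[1,+\infty]$ by the density of $\Leb^2\cap\Leb^p$ in $\Leb^p$ (resp.\ weak$^\ast$ density for $p=\infty$) and the $\Leb^p$-contractivity~\eqref{eq:heat_Lp_contraction} together with~\eqref{eq:def_heat_L_infty}. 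The main (and only mild) technical point is the identification of the minimal weak gradient under pullback, which is ultimately a bookkeeping exercise with the definition~\eqref{eq:weak_gradients_collection}; right-invariance~\ref{item:def_adm_group_unimodular} is not needed here but becomes essential in the subsequent convolution arguments.
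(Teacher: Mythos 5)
The paper leaves this proof to the reader, and your sketch is exactly the intended argument: reduce everything to the facts that $L_x$ is a measure-preserving surjective isometry, so that precomposition is a unitary on $\Leb^2(X,\m)$ commuting with slopes, Cheeger energy, subdifferential, and hence with the gradient flow. All the steps you indicate (the bijection of plans in (i), the substitution in the slope definitions, the identification of $\mathsf{Grad}(f\circ L_x)$ with $\mathsf{Grad}(f)\circ L_x$, uniqueness of the gradient flow for (v) and the density/duality extension to general $p$) are correct and complete the proof.
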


\subsection{Convolution}

We let
\begin{equation*}
(f\star g)(x)
=
\int_X f(xy^{-1})\,g(y)\di\m(y),
\quad
x\in X,
\end{equation*}
be the \emph{group convolution} of $f,g\in\Leb^1(X,\m)$, and we use the notation $f*g$ to denote the usual convolution in~$\R^n$ of $f,g\in\Leb^1(\R^n,\L^n)$.
Since $X$ is unimodular, by a simple change of variables we can also write
\begin{equation*}
(f\star g)(x)
=
\int_X f(y)\,g(y^{-1}x)\di\m(y),
\quad
x\in X.
\end{equation*}
Thus, accordingly, for $f\in\Leb^1(X,\m)$ and $\mu\in\Prob(X)$ we write
\begin{equation*}
(f\star\mu)(x)
=
\int_Xf(xy^{-1})\di\mu(y),
\quad
x\in X.
\end{equation*}
For an account on the elementary properties of convolution in locally compact groups, we refer the reader to~\cite{F16}*{Section~2.5} (in particular, recall Young inequality in~\cite{F16}*{Proposition~2.40}).

The following result completes the information provided by \cref{res:density_p_t_props}.

\begin{lemma}[$(\tilde\P_t)_{t>0}$ as a right-convolution]
\label{res:p_t_neu}
Assume $(X,\d,\m)$ is an admissible group satisfying $\BE_w(\c,\infty)$.
If $t>0$, then 
\begin{equation*}
\p_t[x](y)=\p_t[\neu](y^{-1}x)
\quad
\text{for $\m$-a.e.}\ x,y\in X.
\end{equation*}
Consequently, if $t>0$, then we can write
\begin{equation*}
\tilde\P_t f(x)
=
\int_X f(y)\,\p_t[\neu](y^{-1}x)\di\m(y)
=(f\star\p_t[\neu])(x)
\end{equation*}
for $\m$-a.e.\ $x\in X$ and for all one-side bounded measurable functions $f\colon X\to\overline\R$.
\end{lemma}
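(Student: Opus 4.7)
The plan is to exploit the left-invariance of the metric-measure structure to reduce the kernel at an arbitrary base-point to the one at the identity, and then to invoke the symmetry from \cref{res:density_p_t_props}\ref{item:density_p_t_symmetry} to match the stated form. The key preliminary step is to show that the dual heat semigroup commutes with left-translations on $\Prob_2(X)$. Using the notation $L_x(y)=x^{-1}y$ from \cref{res:left-invariance}, the left-invariance of $\m$ makes $L_x$ a measure-preserving homeomorphism, so a routine change of variables gives $(L_x)_\sharp(f\m)=(f\circ L_{x^{-1}})\,\m$ for every $f\in\Leb^1_+(X,\m)$. Combined with \cref{res:left-invariance}\ref{item:left-inv_heat}, which gives $\P_t(f\circ L_{x^{-1}})=(\P_tf)\circ L_{x^{-1}}$, this yields $\H_t\circ(L_x)_\sharp=(L_x)_\sharp\circ\H_t$ on $\Prob_2^{\rm ac}(X)$. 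Since $(L_x)_\sharp$ is a $W_2$-isometry on $\Prob_2(X)$ by \cref{res:left-invariance}\ref{item:left-inv_W_p} and $\H_t$ is $W_2$-Lipschitz by \cref{res:kuwada_equivalence}, the commutation extends by density from $\Prob_2^{\rm ac}(X)$ to all of $\Prob_2(X)$.

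Specializing to $\mu=\delta_x$ and using $(L_x)_\sharp\delta_x=\delta_{L_x(x)}=\delta_{\neu}$, I would obtain $\H_t\delta_{\neu}=(L_x)_\sharp\H_t\delta_x$, that is, $\p_t[\neu]\,\m=(\p_t[x]\circ L_{x^{-1}})\,\m$ after applying the density formula above. Undoing the change of variables yields $\p_t[x](y)=\p_t[\neu](x^{-1}y)$ for $\m$-a.e.\ $y\in X$, for every $x\in X$. To reach the form $\p_t[\neu](y^{-1}x)$ required by the statement, I would swap the roles of $x$ and $y$ in this identity and combine it with the symmetry $\p_t[x](y)=\p_t[y](x)$ from \cref{res:density_p_t_props}\ref{item:density_p_t_symmetry}, which gives $\p_t[x](y)=\p_t[y](x)=\p_t[\neu](y^{-1}x)$ for $\m\otimes\m$-a.e.\ $(x,y)\in X\times X$, as claimed.

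The convolution identity then follows directly: by \eqref{eq:pointwise_heat_p_t_density} and the unimodular form of $\star$ recalled just above the statement,
\[
\tilde\P_tf(x)=\int_X f(y)\,\p_t[x](y)\di\m(y)=\int_X f(y)\,\p_t[\neu](y^{-1}x)\di\m(y)=(f\star\p_t[\neu])(x)
\]
for $\m$-a.e.\ $x\in X$, first for bounded non-negative Borel $f$; the extension to arbitrary one-side bounded measurable $f$ is by linearity and monotone convergence. The one delicate point of the argument is the density step extending the commutation $\H_t\circ(L_x)_\sharp=(L_x)_\sharp\circ\H_t$ from absolutely continuous measures to Dirac masses, which is precisely where the $\BE_w(\c,\infty)$ hypothesis enters (through the $W_2$-contractivity of $\H_t$ granted by \cref{res:kuwada_equivalence}); without it, the identity $\H_t\delta_{\neu}=(L_x)_\sharp\H_t\delta_x$ would not be available at the level of Dirac measures, and one would remain stuck with the commutation on $\Prob_2^{\rm ac}(X)$ alone.
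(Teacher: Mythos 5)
Your argument is correct and follows essentially the same route as the paper: establish the commutation $\H_t\circ(L_x)_\sharp=(L_x)_\sharp\circ\H_t$ first on $\Prob_2^{\rm ac}(X)$ via \cref{res:left-invariance}\ref{item:left-inv_heat} and then on all of $\Prob_2(X)$ by the $W_2$-density argument using \cref{res:left-invariance}\ref{item:left-inv_W_p} and \cref{res:kuwada_equivalence}, specialize to Dirac masses to get $\p_t[x](y)=\p_t[\neu](x^{-1}y)$, and conclude with the symmetry of \cref{res:density_p_t_props}\ref{item:density_p_t_symmetry}. The only (immaterial) difference is that you translate $\delta_x$ to $\delta_\neu$ while the paper writes $\delta_x=(L_{x^{-1}})_\sharp\delta_\neu$ and translates the other way.
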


\begin{proof}
Let $t>0$ be fixed.
We start by claiming that
\begin{equation}\label{eq:left-inv_dual_heat}
\H_t((L_x)_\sharp\mu)=(L_x)_\sharp(\H_t\mu)
\end{equation}
for all $\mu\in\Prob_2(X)$ and~$x\in X$.
Indeed, if $\mu\ll\m$, then by~\eqref{eq:dual_heat_def} claim~\eqref{eq:left-inv_dual_heat} is nothing but \cref{res:left-invariance}\ref{item:left-inv_heat}.
Since $\Prob_2^{\rm ac}(X)$ is $W_2$-dense in $\Prob_2(X)$, claim~\eqref{eq:left-inv_dual_heat} follows from \cref{res:left-invariance}\ref{item:left-inv_W_p} and \cref{res:kuwada_equivalence}\ref{item:kuwada_equiv_2} by a simple approximation argument.
Thanks to claim~\eqref{eq:left-inv_dual_heat}, we can compute
\begin{align*}
\int_X f(y)\,\p_t[x](y)\di\m(y)
&=
\int_X f(y)\di\H_t\delta_x(y)\\
&=
\int_X f(y)\di\H_t((L_{x^{-1}})_\sharp\delta_\neu)(y)\\
&=
\int_X f(y)\di\,(L_{x^{-1}})_\sharp(\H_t\delta_\neu)(y)\\
&=
\int_X f(xy)\,\p_t[\neu](y)\di\m(y)\\
&=
\int_X f(y)\,\p_t[\neu](x^{-1}y)\di\m(y)\\
\end{align*}
for all $f\in\Leb^\infty(X,\m)$ and~$x\in X$. 
Thus $\p_t[x](y)=\p_t[\neu](x^{-1}y)$ for all~$x\in X$ and $\m$-a.e.\ $y\in X$, and the conclusion follows by \cref{res:density_p_t_props}\ref{item:density_p_t_symmetry}.
\end{proof}

According to \cref{res:p_t_neu}, we thus simply write $\p_t[\neu]=\p_t$ for all~$t>0$ and we call $(\p_t)_{t>0}$ the (\emph{metric-measure}) \emph{heat kernel} of the (pointwise version of the) heat flow.

\begin{remark}[Application of~\eqref{eq:wang_p_2t}]
Assume $(X,\d,\m)$ is an admissible group satisfying $\BE_w(\c,\infty)$ with $\m\in\Prob(X)$. 
From inequality~\eqref{eq:wang_p_2t} in \cref{res:wang_p_t} we immediately have
\begin{equation}\label{eq:barabba}
\p_{2t}(x)
\ge
\exp\left(-\frac{\d^2(x,\neu)}{4\I_{-2}(t)}\right)
\end{equation}
for all $t>0$ and $\m$-a.e.\ $x\in X$.
Inequality~\eqref{eq:barabba} applies in particular to the (sub-Riemmanian) $\SU(2)$ group, see \cref{subsec:SU(2)_group} for the precise definition. 
Up to our knowledge, inequality~\eqref{eq:barabba} provides a new lower bound on the heat kernel in~$\SU(2)$.
\end{remark}

\subsection{Approximation by regular curves in admissible groups}

Let $(X,\d,\m)$ be an admissible group. 
We say that  $\rho\in\Leb^1(X,\m)$ is a \emph{convolution kernel} if it is non-negative, renormalized, symmetric and has bounded support, i.e.\
\begin{equation}\label{eq:def_mollifier_group}
\rho\ge0,
\quad
\int_X\rho\di\m=1,
\quad
\rho(x^{-1})=\rho(x)\ \text{for all}\ x\in X,
\quad
\supp\rho\
\text{is bounded}.
\end{equation}
Since $\d$ is left-invariant, $\d$-balls centered at $\neu\in X$ are symmetric, in the sense that 
\begin{equation*}
x\in B_r(\neu)	
\iff
x^{-1}\in B_r(\neu)
\end{equation*}
whenever $x\in X$ and $r>0$.
Thus, for all $r>0$, the function $\rho_r\in\Leb^1(X,\m)\cap\Leb^\infty(X,\m)$ defined by
\begin{equation}
\label{eq:def_convolution_kernel_group}
\rho_r(x)
=
\frac{\chi_{B_r(\neu)}(x)}{\m(B_r(\neu))},
\quad
x\in X,
\end{equation}
is a convolution kernel (and also an \emph{approximate identity} as $r\to0^+$, see~\cite{F16}*{Proposition~2.44}).

The following result provides a simple but useful relation between test plans and convolution.

\begin{lemma}[Convolution and plans]
\label{res:convolution_plan}
Let $(X,\d,\m)$ be an admissible group and let $\rho\in\Leb^1(X,\m)$ be as in~\eqref{eq:def_mollifier_group}.
Let 
$\mu_1,\mu_2\in\Prob(X)$
and define 
$\tilde\mu_1,\tilde\mu_2\in\Prob(X)$ 
as
$\tilde\mu_1=(\rho\star\mu_1)\m$
and 
$\tilde\mu_2=(\rho\star\mu_2)\m$.
If 
$\pi\in\mathsf{Plan}(\mu_1,\mu_2)$,
then 
the measure $\tilde\pi$ given by
\begin{equation}\label{eq:def_plan_mollifier}
\int_{X\times X}\phi(x_1,x_2)\di\tilde\pi(x_1,x_2)
=
\int_{X\times X}\int_X\rho(y)\,\phi(yx_1,yx_2)\di\m(y)\di\pi(x_1,x_2)
\end{equation}
for all $\phi\in\Cont_b(X\times X)$
is such that 
$\tilde\pi\in\mathsf{Plan}(\tilde\mu_1,\tilde\mu_2)$.
\end{lemma}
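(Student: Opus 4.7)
The plan is to recognise $\tilde\pi$ as a pushforward measure under a continuous map built from the group operation, which makes it automatic that $\tilde\pi\in\Prob(X\times X)$, and then to verify the marginal conditions by a direct Fubini computation based on the unimodularity of $\m$. Concretely, I would first observe that the map $\Phi\colon X\times(X\times X)\to X\times X$ defined by $\Phi(y,x_1,x_2)=(yx_1,yx_2)$ is continuous by property~\ref{item:def_adm_group_loc_compact} and the continuity of multiplication, and that the definition~\eqref{eq:def_plan_mollifier} identifies $\tilde\pi$ as $\Phi_\sharp\big((\rho\m)\otimes\pi\big)$. Testing against $\phi\equiv 1$ and using $\int_X\rho\,d\m=1$ gives $\tilde\pi(X\times X)=1$, so $\tilde\pi$ is a probability measure.

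Next, I would check the first marginal by taking $\phi(x_1,x_2)=\psi(x_1)$ with $\psi\in\Cont_b(X)$ in~\eqref{eq:def_plan_mollifier}. By Fubini and the fact that $(p_1)_\sharp\pi=\mu_1$,
\begin{equation*}
\int_X\psi\,d\,(p_1)_\sharp\tilde\pi
=\int_X\int_X\rho(y)\,\psi(yx_1)\,d\mu_1(x_1)\,d\m(y).
\end{equation*}
On the other hand, the definition $\tilde\mu_1=(\rho\star\mu_1)\m$ gives
\begin{equation*}
\int_X\psi\,d\tilde\mu_1
=\int_X\int_X\psi(x)\,\rho(xz^{-1})\,d\mu_1(z)\,d\m(x).
\end{equation*}
The key step is then the change of variable $x\mapsto yz$ (equivalently $y=xz^{-1}$) for each fixed $z$, which, combined with the right-invariance of $\m$ provided by unimodularity (property~\ref{item:def_adm_group_unimodular}), yields $\int_X\psi(x)\rho(xz^{-1})\,d\m(x)=\int_X\psi(yz)\rho(y)\,d\m(y)$. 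Fubini and a relabeling of the dummy variable then match the two expressions, proving $(p_1)_\sharp\tilde\pi=\tilde\mu_1$. The identity $(p_2)_\sharp\tilde\pi=\tilde\mu_2$ follows by the verbatim argument with $\mu_2$ in place of $\mu_1$.

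There is no genuine obstacle: the only non-trivial ingredient is unimodularity, which is precisely what converts the left-translation action $(y,x_i)\mapsto yx_i$ appearing in~\eqref{eq:def_plan_mollifier} into the right convolution entering $\tilde\mu_i$. It is worth noting, as a sanity check, that neither the symmetry $\rho(x^{-1})=\rho(x)$ nor the boundedness of $\supp\rho$ required in~\eqref{eq:def_mollifier_group} plays any role here; these extra properties are, plausibly, reserved for later, when controlling the $W_2$-metric derivative of the convoluted curves $s\mapsto(\rho\star\mu_s)\m$.
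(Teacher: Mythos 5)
Your proof is correct and follows essentially the same route as the paper's: total mass via $\phi\equiv1$, then identification of the marginals by Fubini together with the change of variables that converts the left action $(y,x)\mapsto yx$ into the right convolution $\rho\star\mu_i$, which is exactly where unimodularity enters. Your additional observation that $\tilde\pi=\Phi_\sharp((\rho\m)\otimes\pi)$ is a clean way to justify well-definedness, and your remark that symmetry and bounded support of $\rho$ are not used here is accurate (though note that continuity of $\Phi$ follows from the topological-group axiom, not from local compactness).
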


\begin{proof}
Note that $\tilde\pi\in\Prob(X\times X)$, since
\begin{align*}
\int_{X\times X}\di\tilde\pi(x_1,x_2)
=
\int_{X\times X}\int_X\rho(y)\di\m(y)\di\pi(x_1,x_2)
=
\int_{X\times X}\di\pi(x_1,x_2)
\end{align*}
by~\eqref{eq:def_mollifier_group} and~\eqref{eq:def_plan_mollifier}.
Let us now prove that 
$(p_i)_\sharp\tilde\pi=\mu_i$, $i=1,2$.
Let $\phi\in\Cont_b(X)$ and set $\psi_i=\phi\circ p_i\in\Cont_b(X\times X)$,
$i=1,2$.
By~\eqref{eq:def_plan_mollifier} and recalling that~$X$ is unimodular, we can write 
\begin{align*}
\int_X\phi\di\,(p_i)_\sharp\tilde\pi
&=
\int_{X\times X}\psi_i\di\tilde\pi
\\
&=
\int_{X\times X}\int_X\rho(y)\,\psi_i(yx_1,yx_2)\di\m(y)\di\pi(x_1,x_2)
\\
&=
\int_X
\int_X\rho(y)\,\phi(yx)\di\m(y)
\di\mu_i(x)\\
&=
\int_X\int_X\rho(zx^{-1})\,\phi(z)\di\m(z)\di\mu_i(x)\\
&=
\int_X\phi(z)
\int_X\rho(zx^{-1})
\di\mu_i(x)
\di\m(z)
\\
&=
\int_X(\rho\star\mu_i)(z)\,\phi(z)\di\m(z)
\\
&=
\int_X\phi\di\tilde\mu_i
\end{align*}
thanks to Fubini Theorem, concluding the proof.
\end{proof}

A fundamental consequence of \cref{res:convolution_plan} is the following estimate on the Wasserstein velocity of left-convoluted curves of measures. 

\begin{lemma}[Convolution and $W_q$-velocity]
\label{res:convolution_velocity}
Let $(X,\d,\m)$ be an admissible group and let $\rho\in\Leb^1(X,\m)$ be as in~\eqref{eq:def_mollifier_group}.
Let $p,q\in[1,+\infty)$ and let $I\subset\R$ be an interval.
If 
$s\mapsto\mu_s\in\AC^q(I;\Prob_p(X))$, 
then also 
$s\mapsto\tilde\mu_s\in\AC^q(I;\Prob_p(X))$,
where 
$\tilde\mu_s=(\rho\star\mu_s)\m$ for all $s\in I$,
with
$
|\dot{\tilde\mu}_s|
\le
|\dot\mu_s|
$
for $\L^1$-a.e.\ $s\in I$.
\end{lemma}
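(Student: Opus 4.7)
The plan is to lift the $\AC^q$-structure from $s\mapsto\mu_s$ to the convoluted curve $s\mapsto\tilde\mu_s$ by transporting couplings through the construction~\eqref{eq:def_plan_mollifier} of \cref{res:convolution_plan} and then exploiting the left-invariance of~$\d$ given by~\ref{item:def_adm_group_distance_left-inv} to show that the $p$-cost is not increased by this transport.

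First I would check that $\tilde\mu_s\in\Prob_p(X)$ for every $s\in I$. Starting from the representation
\begin{equation*}
\int_X\phi\di\tilde\mu_s
=
\int_X\int_X\phi(zy)\,\rho(z)\di\m(z)\di\mu_s(y),
\end{equation*}
valid for all $\phi\in\Cont_b(X)$ by the very definition of $\rho\star\mu_s$, Fubini Theorem and the unimodularity of~$X$ (property~\ref{item:def_adm_group_unimodular}), the left-invariance of~$\d$ gives $\d(zy,z)=\d(y,\neu)$ and so the triangle inequality yields $\d(zy,\neu)\le\d(y,\neu)+\d(z,\neu)$. Raising to the $p$-th power and using the fact that $\rho$ has bounded support (property~\eqref{eq:def_mollifier_group}) together with $\mu_s\in\Prob_p(X)$ then gives a finite bound for $\int_X\d^p(x,\neu)\di\tilde\mu_s(x)$.

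Next, fix $s,t\in I$ with $s<t$ and any $\pi\in\mathsf{Plan}(\mu_s,\mu_t)$, and let $\tilde\pi\in\mathsf{Plan}(\tilde\mu_s,\tilde\mu_t)$ be the plan produced by \cref{res:convolution_plan}. Applying~\eqref{eq:def_plan_mollifier} to the (approximated by continuous functions) cost $(x_1,x_2)\mapsto\d^p(x_1,x_2)$ and using left-invariance $\d(yx_1,yx_2)=\d(x_1,x_2)$ together with $\int_X\rho\di\m=1$, one finds
\begin{equation*}
\int_{X\times X}\d^p(x_1,x_2)\di\tilde\pi(x_1,x_2)
=
\int_{X\times X}\d^p(x_1,x_2)\di\pi(x_1,x_2).
\end{equation*}
Taking the infimum on the right-hand side over $\pi\in\mathsf{Plan}(\mu_s,\mu_t)$ yields the contraction $W_p(\tilde\mu_s,\tilde\mu_t)\le W_p(\mu_s,\mu_t)$.

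Finally, since $\mu\in\AC^q(I;\Prob_p(X))$, there exists $g\in\Leb^q(I)$ such that $W_p(\mu_s,\mu_t)\le\int_s^t g(r)\di r$ for all $s<t$ in~$I$; the contraction just established transfers the same inequality to~$\tilde\mu$, proving that $\tilde\mu\in\AC^q(I;\Prob_p(X))$ and that the metric derivative $|\dot{\tilde\mu}_s|$, being minimal among such $g$'s, satisfies $|\dot{\tilde\mu}_s|\le|\dot\mu_s|$ for $\L^1$-a.e.\ $s\in I$. The only mild obstacle here is the moment bookkeeping to ensure $\tilde\mu_s\in\Prob_p(X)$; everything else is a direct consequence of the plan-lifting construction and the left-invariance built into the definition of an admissible group.
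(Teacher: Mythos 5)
Your proof is correct and follows essentially the same route as the paper: first the moment estimate ensuring $\tilde\mu_s\in\Prob_p(X)$ via left-invariance and the bounded support of~$\rho$, then the lifting of plans through \cref{res:convolution_plan} applied to truncated/approximated costs, using $\d(zx_1,zx_2)=\d(x_1,x_2)$ to get the $W_p$-contraction $W_p(\tilde\mu_{s_0},\tilde\mu_{s_1})\le W_p(\mu_{s_0},\mu_{s_1})$, from which the $\AC^q$ property and the metric-derivative bound follow. The only cosmetic difference is that the paper starts from an optimal plan while you take an arbitrary plan and pass to the infimum, which is immaterial.
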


\begin{proof}
Since $X$ is unimodular and $\d$ is left-invariant, by Tonelli Theorem we can estimate
\begin{align*}
\int_X\d^p(x,\neu)\di\tilde\mu_s(x)
&=
\int_X\int_X
\rho(xy^{-1})\,\d^p(x,\neu)\di\mu_s(y)
\di\m(x)
\\
&=
\int_X\int_X
\rho(xy^{-1})\,\d^p(x,\neu)\di\m(x)
\di\mu_s(y)
\\
&=
\int_X
\int_X\rho(z)\,\d^p(zy,\neu)\di\m(z)\di\mu_s(y)
\\
&\le
2^{p-1}
\int_X
\int_X\rho(z)\,\big(\d^p(zy,z)+\d^p(z,\neu)\big)\di\m(z)\di\mu_s(y)
\\
&=
2^{p-1}\left(
\int_X\d^p(y,\neu)\di\mu_s(y)
+
\int_X\d^p(z,\neu)\,\rho(z)\di\m(z)
\right),
\end{align*}
proving that $\tilde\mu_s\in\Prob_p(X)$ for all $s\in I$.
Now for all $k\in\N$ let $\phi_k\in\Cont_b(X\times X)$ be defined as $\phi_k(x,y)=\d^p(x,y)\wedge k$ for all $x,y\in X$.
Let $s_0,s_1\in I$ and let $\pi_{s_0,s_1}\in\mathrm{OptPlan}(\mu_{s_0},\mu_{s_1})$ be an optimal plan between $\mu_{s_0}$ and $\mu_{s_1}$.
Let $\tilde\pi_{s_0,s_1}\in\mathsf{Plan}(\tilde\mu_{s_0},\tilde\mu_{s_1})$ be given by \cref{res:convolution_plan} accordingly.
Since $\phi_k(zx,zy)=\phi_k(x,y)$ for all $x,y,z\in X$ and $k\in\N$, we can estimate
\begin{align*}
\int_{X\times X}\phi_k(x,y)\di\tilde\pi_{s_0,s_1}(x,y)
&=
\int_{X\times X}\int_X\rho(z)\,\phi_k(zx,zy)\di\m(z)\di\pi_{s_0,s_1}(x,y)
\\
&=
\int_{X\times X}\int_X\rho(z)\,\phi_k(x,y)\di\m(z)\di\pi_{s_0,s_1}(x,y)
\\
&\le
\int_{X\times X}\d^p(x,y)\di\pi_{s_0,s_1}(x,y)
\\
&=
W_p^p(\mu_{s_0},\mu_{s_1}).
\end{align*}
By the Monotone Convergence Theorem, we can pass to the limit as $k\to+\infty$ and get
\begin{equation*}
W_p(\tilde\mu_{s_0},\tilde\mu_{s_1})
\le
W_p(\mu_{s_0},\mu_{s_1})
\end{equation*}
whenever $s_0,s_1\in I$, concluding the proof.
\end{proof}

\begin{remark}[Right-convoluted measures and velocity]
\label{rem:right_convolution}
It is not difficult to see that a statement similar to that of \cref{res:convolution_plan} holds for the right-convoluted measures 
$\hat\mu_1=(\mu_1\star\rho)\m$
and
$\hat\mu_2=(\mu_2\star\rho)\m$.
However, since $\d$ is not necessarily right-invariant, it is not clear how to prove a statement similar to that of \cref{res:convolution_velocity} for the right-convoluted curve $s\mapsto\hat\mu_s=(\mu_s\star\rho)\m$.
Since in admissible groups the heat semigroup acts on measures as the right-convolution with the heat kernel as seen in \cref{res:p_t_neu}, the lack of an estimate on the Wasserstein velocity of right-convoluted curves is a central obstacle for the use of the heat-regularization techniques (which, in the standard $\BE(K,N)$ framework, inevitably rely on the crucial fact that $\underline\c(0^+)=1$, recall the discussion made in \cref{subsec:strong_reg_curves}).
This also explains why, in \cref{res:approx_strong_reg_curves} below, we need to assume that the ambient space is an admissible group and rely on left-convolution of measures.
\end{remark}

We now prove the following crucial approximation result. 
The line of the proof is close to that of~\cite{AS19}*{Theorem~4.8}.
For the approximation of curves under the standard $\BE(K,N)$ condition, we refer the reader to~\cite{AGS15}*{Proposition~4.11} and~\cite{EKS15}*{Lemma~4.11}.

\begin{theorem}
[Approximation by strongly regular curves in $\Prob_2(X)$]
\label{res:approx_strong_reg_curves}
Let $(X,\d,\m)$ be an admissible group. 
If 
$s\mapsto\mu_s\in\AC^2([0,1];\Prob_2(X))$, 
then there exist strongly regular curves 
$s\mapsto\mu_s^n\in\AC^2([0,1];\Prob_2(X))$,
$n\in\N$, in the sense of \cref{def:strong_reg_curve},
such that:
\begin{enumerate}[label=(\roman*)]

\item\label{item:approx_strong_reg_curves_W_2} 
$\mu^n_s\overset{W_2}{\longto}\mu_s$ as $n\to+\infty$ for all $s\in[0,1]$;

\item\label{item:approx_reg_curves_limsup} 
$\limsup\limits_{n\to+\infty}
\displaystyle\int_0^1|\dot\mu^n_s|^2\di s
\le
\int_0^1|\dot\mu_s|^2\di s$;

\item\label{item:approx_reg_curves_Ent} 
$\lim\limits_{n\to+\infty}\Ent_\m(\H_t\mu^n_s)=\Ent_\m(\H_t\mu_s)$ for all $s\in[0,1]$ and $t\ge0$.
\end{enumerate}
\end{theorem}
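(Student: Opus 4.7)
The plan is a two-step regularization combining time mollification with group convolution. Extend $s\mapsto\mu_s$ to $\R$ by constancy at the endpoints, pick a standard smooth mollifier $\chi_{\delta}\in\Cont_c^\infty(\R)$ supported in $[-\delta,\delta]$, and Lipschitz symmetric kernels $\rho_n$ as in~\eqref{eq:def_convolution_kernel_group} supported in $B_{1/n}(\neu)$. Define
\begin{equation*}
f_s^{n,\delta}(x)
=
\int_\R \chi_\delta(s-r)\,(\rho_n\star\mu_r)(x)\di r,
\qquad
\mu_s^{n,\delta}=f_s^{n,\delta}\m,
\end{equation*}
and equivalently $\mu_s^{n,\delta}=(\rho_n\star\nu_s^\delta)\m$ with $\nu_s^\delta=\int_\R\chi_\delta(s-r)\mu_r\di r\in\Prob_2(X)$. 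A diagonal sequence $\delta=\delta_n\to0^+$ will be selected at the end.

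The strongly regular property hinges on the $L^2$-Lipschitz continuity of the auxiliary curve $s\mapsto\rho_n\star\mu_s$. Using the Lipschitz regularity of $\rho_n$, its bounded support, unimodularity and left-invariance of~$\d$, a direct computation with an optimal $W_1$-plan gives $\|\rho_n\star\mu_s-\rho_n\star\mu_t\|_{L^1(X,\m)}\le C_n\,W_1(\mu_s,\mu_t)$; combined with the uniform $L^\infty$-bound $\|\rho_n\star\mu_s\|_{L^\infty(X,\m)}\le\|\rho_n\|_{L^\infty(X,\m)}$ and the H\"older interpolation $\|g\|_{L^2}^2\le\|g\|_{L^\infty}\|g\|_{L^1}$, this yields $L^2$-Lipschitz continuity of $s\mapsto\rho_n\star\mu_s$. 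Since $\chi_\delta$ is smooth and compactly supported, differentiating under the integral sign produces $s\mapsto f_s^{n,\delta}\in\Cont^\infty([0,1];\Leb^2(X,\m))$, verifying \cref{def:strong_reg_curve}. Property (i) follows from a triangle estimate: $\nu_s^\delta\xrightarrow{W_2}\mu_s$ as $\delta\to0^+$ by $W_2$-continuity of the original curve together with convexity of $W_2^2$, while $(\rho_n\star\nu_s^\delta)\m\xrightarrow{W_2}\nu_s^\delta$ uniformly in~$s$ as $n\to+\infty$ because the kernels $\rho_n$ concentrate at~$\neu$. Property (ii) combines \cref{res:convolution_velocity} (giving $|\dot\mu_s^{n,\delta}|\le|\dot\nu_s^\delta|$) with the Jensen-type bound $|\dot\nu_s^\delta|^2\le\int_\R\chi_\delta(s-r)|\dot\mu_r|^2\di r$, proved by bounding $W_2^2(\nu_s^\delta,\nu_{s+h}^\delta)$ via convex combinations of optimal couplings and then applying Fubini and Lebesgue differentiation.

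The delicate point, and what I expect to be the main obstacle, is the entropy convergence (iii). First, \cref{res:p_t_neu} and the associativity of the group convolution give the fundamental commutation $\H_t((\rho_n\star\mu_r)\m)=(\rho_n\star\H_t\mu_r)\m$. Since left translations preserve~$\m$ and entropy, writing $(\rho\star\nu)\m=\int\rho(y)(L_{y^{-1}})_\sharp\nu\di\m(y)$ together with convexity of $\Ent_\m$ yields $\Ent_\m((\rho\star\nu)\m)\le\Ent_\m(\nu)$ for every probability $\nu$; combining this with the convexity of $\Ent_\m$ along the time average produces
\begin{equation*}
\Ent_\m(\H_t\mu_s^{n,\delta})
\le
\int_\R\chi_\delta(s-r)\,\Ent_\m(\H_t\mu_r)\di r.
\end{equation*}
The estimate $\liminf\Ent_\m(\H_t\mu_s^{n,\delta})\ge\Ent_\m(\H_t\mu_s)$ is immediate from lower semicontinuity of $\Ent_\m$ and the $W_2$-continuity $\H_t\mu_s^{n,\delta}\to\H_t\mu_s$ (via \cref{res:kuwada_equivalence,res:dual_heat_W_2_continuity}). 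For the matching $\limsup$, I would fix~$n$ first and exploit that the densities $\rho_n\star\H_t\mu_r$ are equi-bounded in $L^\infty$ by $\|\rho_n\|_{L^\infty}$ and that the measures $\H_t\mu_r$ have equi-bounded second moments on compact intervals (by $W_2$-continuity of $r\mapsto\H_t\mu_r$); Scheff\'e's lemma applied to the $L^1$-convergence of densities (proved as in the $L^2$-Lipschitz estimate above) together with dominated convergence on the weighted integrand $g_r\log g_r$, controlled at infinity using the weight $\d^2(\cdot,\neu)$ and finiteness of the second moments, gives continuity of $r\mapsto\Ent_\m((\rho_n\star\H_t\mu_r)\m)$. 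Hence, letting $\delta\to0^+$ at fixed~$n$ makes the right-hand side above converge to $\Ent_\m((\rho_n\star\H_t\mu_s)\m)$; letting then $n\to+\infty$ and using the same convexity bound together with lsc recovers $\Ent_\m(\H_t\mu_s)$. A diagonal extraction over countably many values of~$t$ and a density argument in~$s$ produce a sequence $\delta_n\to0^+$ for which (iii) holds for every $s\in[0,1]$ and every $t\ge0$.
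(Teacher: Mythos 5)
Your construction is essentially the one in the paper: extend the curve by constancy, mollify in time with a standard kernel on $\R$ and in space by left group convolution with symmetric kernels, obtain the velocity bound from \cref{res:convolution_velocity} plus a Jensen bound for the time average, obtain the entropy bounds from the commutation of the regularization with $\H_t$ together with Jensen and lower semicontinuity, and conclude by a diagonal argument; the order of the two mollifications is immaterial. Your explicit verification of the $\Cont^1$-in-$\Leb^2$ property via the $\Leb^1$-Lipschitz estimate and $\Leb^\infty$--$\Leb^1$ interpolation (using Lipschitz rather than indicator kernels), and your continuity argument for $r\mapsto\Ent_\m((\rho_n\star\H_t\mu_r)\m)$ needed to pass to the limit in the time-mollified upper bound, fill in steps the paper leaves implicit.

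Two caveats. First, your justification of $(\rho_n\star\nu)\m\overset{W_2}{\longto}\nu$ ``because the kernels concentrate at $\neu$'' is the one place where the obvious argument fails in a non-commutative group: writing $(\rho_n\star\nu)\m=\int_X\rho_n(w)\,(L_w)_\sharp\nu\di\m(w)$ with $L_w(y)=w^{-1}y$ and coupling $\nu$ with $(L_w)_\sharp\nu$ along $y\mapsto w^{-1}y$ gives the cost $\int_X\d^2(y,w^{-1}y)\di\nu(y)=\int_X\d^2(\neu,y^{-1}w^{-1}y)\di\nu(y)$, and since $\d$ is only left-invariant the conjugate $y^{-1}w^{-1}y$ need not be close to $\neu$ when $w$ is, so this is not $O(1/n)$. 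The convergence is nevertheless true; one must argue as in the paper via weak convergence together with convergence of second moments (or combine the $W_2$-contractivity of left convolution from \cref{res:convolution_velocity} with pointwise convergence and the $W_2$-compactness of $\{\nu_s^\delta\}$ to recover the uniformity you invoke in the diagonal step). Second, the inequalities $\Ent_\m((\rho\star\nu)\m)\le\Ent_\m(\nu)$ and its time-averaged analogue do not follow directly from ``convexity of $\Ent_\m$'' when $\m(X)=+\infty$: one has to pass through the finite reference measure $\mathfrak{n}=e^{-c\,\d^2(\cdot,\neu)}\m$ of~\eqref{eq:ent_useful_formula} and check that the second-moment correction terms cancel, which happens precisely because the kernel is symmetric. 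Both points are repairable and do not affect the architecture of your proof.
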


\begin{proof}
We divide the proof in four steps.

\smallskip

\textit{Step~1: time-extension to~$\R$}.
We define $\R\ni s\mapsto\nu_s\in\Prob_2(X)$ by extending $[0,1]\ni s\mapsto\mu_s\in\Prob_2(X)$ by continuity with constant values in $(-\infty,0)\cup(1,+\infty)$.
Clearly, we have that $s\mapsto\nu_s\in\AC^2(\R;\Prob_2(X))$.

\smallskip

\textit{Step~2: smoothing in the space variable}.
For all $r>0$, let $\rho_r\in\Leb^1(X,\m)\cap\Leb^\infty(X,\m)$ be defined as in~\eqref{eq:def_convolution_kernel_group}.
We thus define $\nu_s^r=f^r_s\m$, where
\begin{equation*}
f^r_s(x)
=
(\rho_r\star\nu_s)(x)
=
\int_X\rho_r(xy^{-1})\di\nu_s(y),
\quad
x\in X,
\end{equation*}
for all $s\in\R$ and $r>0$.
By \cref{res:convolution_velocity}, we have $s\mapsto\nu_s^r\in\AC^2(\R;\Prob_2(X))$, with $|\dot\nu^r_s|\le|\dot\nu_s|$ for $\L^1$-a.e.\ $s\in\R$, for all~$r>0$.
Since the family $(\rho_r)_{r>0}$ is a symmetric approximation of the identity, we have 
\begin{equation}\label{eq:du}
\lim_{r\to0^+}
\int_X\phi\di\nu_s^r
=
\lim_{r\to0^+}
\int_X(\rho_r\star\phi)\di\nu_s
=
\int_X\phi\di\nu_s
\end{equation}
for all $\phi\in\Cont_b(X)$ by the Dominated Convergence Theorem for all~$s\in\R$, so that $\nu_s^r\weakto\nu_s$ as~$r\to0^+$ for all~$s\in\R$.
In addition, we can write
\begin{align*}
\int_X\d^2(x,\neu)\di\nu^r_s(x)
&=
\int_X(\rho_r\star\d^2(\cdot,\neu))(x)\di\nu_s(x)
\\
&=
\int_X
\int_X\rho_r(y)\,\d^2(x,y)\di\m(y)
\di\nu_s(x)
\\
&=
\int_X
\aint_{B_r(\neu)}\d^2(x,y)\di\m(y)
\di\nu_s(x)
\end{align*} 
for all $s\in\R$ and $r>0$, so that
\begin{align*}
\left|
\int_X\d^2(x,\neu)\di\nu^r_s(x)
-
\int_X\d^2(x,\neu)\di\nu_s(x)
\right|
&\le
\int_X
\aint_{B_r(\neu)}\left|\d^2(x,y)-\d^2(x,\neu)\right|\di\m(y)
\di\nu_s(x)
\\
&\le
\int_X
r(r+2\d(x,\neu))
\di\nu_s(x)
\end{align*}
for all $s\in\R$ and $r>0$. 
Hence
\begin{equation}\label{eq:da}
\lim_{r\to0^+}
\int_X\d^2(x,\neu)\di\nu^r_s(x)
=
\int_X\d^2(x,\neu)\di\nu_s(x)
\end{equation}
for all $s\in\R$.
Consequently, from~\eqref{eq:du} and~\eqref{eq:da} we infer that $\nu_s^r\overset{W_2}{\longto}\nu_s$ for all $s\in\R$ and, in particular,
\begin{equation*}
\liminf_{r\to0^+}
\Ent_\m(\nu_s^r)
\ge
\Ent_\m(\nu_s)
\end{equation*}
for all $s\in\R$.
We can also write
\begin{equation*}
\nu^r_s
=
\int_X (L_y)_\sharp\nu_s\,\rho_r(y)\di\m(y)
\end{equation*}
for all $s\in\R$, where $L_y(x)=y^{-1}x$, $x,y\in X$, denotes the left-translation map.
If $\mathfrak n=e^{-c\,\d^2(\cdot,\neu)}\m$
is as in~\eqref{eq:ent_useful_formula}, then by Jensen inequality we can estimate
\begin{equation*}
\Ent_\mathfrak{n}(\nu^r_s)
\le
\int_X
\Ent_\mathfrak{n}((L_y)_\sharp\nu_s)
\,\rho_r(y)\di\m(y)
\end{equation*} 
for all $s\in\R$, so that by~\eqref{eq:ent_sharp_formula} and~\eqref{eq:ent_useful_formula} we can write
\begin{align*}
\Ent_\mathfrak{n}((L_y)_\sharp\nu_s)
=
\Ent_{\mathfrak{n}_{y^{-1}}}(\nu_s)
=
\Ent_{\m}(\nu_s)
+
c\int_X\d^2(yx,\neu)\di\nu_s(x)
\end{align*}
for all $y\in X$ and $s\in\R$, where 
$\mathfrak n_y
=
(L_y)_\sharp\mathfrak n
=
e^{-c\,\d^2(\cdot,y)}\m$.
Since
\begin{align*}
\int_X\int_X\d^2(yx,\neu)\di\nu_s(x)\,\rho_r(y)\di\m(y)
&=
\int_X\int_X\d^2(x,y^{-1})\,\rho_r(y)\di\m(y)\di\nu_s(x)
\\
&=
\int_X\int_X\d^2(x,y)\,\rho_r(y)\di\m(y)\di\nu_s(x)
\\
&=
\int_X\d^2(x,\neu)\di\nu_s^r(x)
\end{align*}
by the symmetry of $\rho_r$, again by~\eqref{eq:ent_useful_formula} we can estimate 
\begin{align*}
\Ent_\m(\nu_s^r)
&=
\Ent_\nu(\nu_s^r)
-
c\int_X\d^2(x,\neu)\di\nu_s^r(x)
\\
&\le
\int_X
\left(
\Ent_{\m}(\nu_s)
+
c\int_X\d^2(yx,\neu)\di\nu_s(x)
\right)
\rho_r(y)\di\m(y)
-
c\int_X\d^2(x,\neu)\di\nu_s^r(x)
\\
&=
\Ent_\m(\nu_s)
+c
\left(
\int_X\int_X\d^2(x,y)\,\rho_r(y)\di\m(y)\di\nu_s(x)
-
\int_X\d^2(x,\neu)\di\nu_s^r(x)
\right)
\\
&=
\Ent_\m(\nu_s)
\end{align*}
for all $s\in\R$ and $r>0$, so that 
\begin{equation*}
\lim_{r\to0^+}
\Ent_\m(\nu_s^r)
=
\Ent_\m(\nu_s).
\end{equation*} 

\smallskip

\textit{Step~3: smoothing in the time variable}.
Now let $r>0$ be fixed.
Let $\zeta\colon\R\to\R$ be a symmetric smooth mollifier in~$\R$, i.e.\
\begin{equation*}
\zeta\in\Cont^\infty_c(\R),
\quad
\supp\zeta\subset[-1,1],
\quad 
0\le\zeta\le1,
\quad
\int_{\R}\zeta\di\tau=1.
\end{equation*}
We define $\zeta_j(\tau)=j\,\zeta(j\tau)$ for all $\tau\in\R$ and $j\in\N$ and consider
$\nu^{j,r}_s=f^{j,r}_s\m$ with
\begin{equation*}
f_s^{j,r}
=
(\zeta_j*f_\cdot^r)(s)
=
\int_\R f_\tau^r\,\zeta_j(\tau-s)\di\tau
\end{equation*}
for all $s\in\R$ and all $j\in\N$.
If $s,s'\in\R$ and 
$\pi_{s,s'}^r\in\mathsf{Plan}(\nu_s^r,\nu_{s'}^r)$, 
then the measure 
$\pi^{j,r}_s\in\Prob(X\times X)$ given by
\begin{equation*}
\int_{X\times X}\phi(x,y)\di\pi^{j,r}_s(x,y)
=
\int_\R
\zeta_j(s-\tau)
\int_{X\times X}
\phi(x,y)\di\pi_{s,\tau}^r(x,y)
\di\tau,
\end{equation*}
for any $\phi\in\Cont_b(X\times X)$, satisfies
$\pi_s^{j,r}\in\mathsf{Plan}(\nu_s^r,\nu_s^{j,r})$. 
Thus, by the convexity properties of the squared $2$-Wasserstein distance and Jensen inequality, we get
\begin{equation*}
W_2^2(\nu^{j,r}_s,\nu^r_s)
\le
\int_\R\zeta_j(s-\tau)
\,
W_2^2(\nu^r_s,\nu^r_\tau)\di\tau
\end{equation*}
for all $s\in\R$ and $j\in\N$, so that
$\nu^{j,r}_s\overset{W_2}{\longto}\nu^r_s$ as $j\to+\infty$ and, in particular,
\begin{equation*}
\liminf_{j\to+\infty}
\Ent_\m(\nu^{j,r}_s)
\ge
\Ent_\m(\nu^{r}_s)
\end{equation*}
for all $s\in\R$.
In a similar fashion, we can estimate
\begin{align*}
W_2^2(\nu_s^{j,r},\nu_{s'}^{j,r})
=
W_2^2
\left(
\int_\R\nu_{s-\tau}^r\,\zeta_j(\tau)\di\tau
,
\int_\R\nu_{s'-\tau}^r\,\zeta_j(\tau)\di\tau
\right)
\le
\int_\R
W_2^2
\left(
\nu_{s-\tau}^r
,
\nu_{s'-\tau}^r
\right)
\zeta_j(\tau)\di\tau
\end{align*}
for all $s,s'\in\R$ and $j\in\N$, so that $s\mapsto\nu_s^{j,r}\in\AC^2(\R;\Prob_2(X))$ with $|\dot\nu_s^{j,r}|\le(\zeta_j*|\dot\nu_\cdot^r|)(s)$ for $\L^1$-a.e.\ $s\in\R$ and all $j\in\N$.
As in Step~2, let $\mathfrak n=e^{-c\,\d^2(\cdot,\neu)}\m$ be as in~\eqref{eq:ent_useful_formula}.
Since the function $H(u)=u\log u+(1-u)$, defined for all $u\ge0$, is convex and non-negative, by Jensen inequality we can estimate
\begin{align*}
\Ent_{\mathfrak n}(\nu_s^{j,r})
&=
\int_X H\left(f_s^{j,r}\,\tfrac{\di\m}{\di\mathfrak n}\right)\di\mathfrak n
\\
&=
\int_X H\left(
(\zeta_j*f_\cdot^r)(s)\,
\tfrac{\di\m}{\di\mathfrak n}
\right)\di\mathfrak n
\\
&=
\int_X H\left((\zeta_j*(f_\cdot^r\,
\tfrac{\di\m}{\di\mathfrak n}
))(s)\right)\di\mathfrak n
\\
&\le
\int_X \left(\zeta_j*H\left(f_\cdot^r\,
\tfrac{\di\m}{\di\mathfrak n}
\right)\right)(s)\di\mathfrak n
\\
&=
(\zeta_j*\Ent_{\mathfrak n}(\nu^r_\cdot))(s)
\end{align*}
for all $s\in\R$ and $j\in\N$.
Arguing as in Step~2, we immediately deduce that 
$
\Ent_\m(\nu_s^{j,r})
\le
(\zeta_j*\Ent_\m(\nu_\cdot^r))(s)
$
for all $s\in\R$ and $j\in\N$, so that
\begin{equation*}
\lim_{j\to+\infty}
\Ent_\m(\nu_s^{j,r})
=
\Ent_\m(\nu_s^r)
\end{equation*}
for all $s\in\R$.

\smallskip

\textit{Step~4: time-restriction to $[0,1]$ and conclusion}.
Define the curve $s\mapsto\mu_s^{j,k}$ as the restriction of the curve $s\mapsto\nu_s^{j,k}$ to the interval $[0,1]$.
Note that the regularization map $\mathcal{R}_{j,r}$ sending the original curve $s\mapsto\mu_s$ to the regularized curve $s\mapsto\mu_s^{j,k}$ is linear (with respect to convex combinations) and thus commutes with the dual heat flow map, i.e.\ $\mathcal{R}_{j,r}\circ\H_t=\H_t\circ\mathcal{R}_{j,r}$ for all $t\ge0$.
Since by construction
\begin{align*}
\lim_{j,r}
\Ent_\m(\mu^{j,r}_s)
=
\Ent_\m(\mu_s)
\end{align*}
for all $s\in[0,1]$, we thus have
\begin{align*}
\limsup_{j,r}
\Ent_\m(\H_t\mu^{j,r}_s)
=
\limsup_{j,r}
\Ent_\m(\mathcal R^{j,r}(\H_t\mu_s))
=
\Ent_\m(\H_t\mu_s)
\le
\Ent_\m(\mu_s)
\end{align*}
for all $t\ge0$ and $s\in[0,1]$ by \cref{res:Fisher_Entropy_along_H_t}\ref{item:entropy_decreasing_dual_heat}.
The conclusion thus follows by a standard diagonalization argument.
\end{proof}

\subsection{Entropic inequalities in admissible groups}

We can now state and prove the main result of this paper.
We refer the reader to~\cite{AGS15}*{Theorem~4.17} and to~\cite{EKS15}*{Theorem~4.19} for the analogous results in the standard $\BE(K,N)$ framework. 

\begin{theorem}[Entropic inequalities]\label{res:th_equivalence}
Let $(X,\d,\m)$ be an admissible group satisfying the heat-smoothing property as in~\cref{def:heat-smoothing_space}.
The following are equivalent.

\begin{enumerate}[label=(\roman*)]

\item\label{item:th_equiv_wBE} 
$(X,\d,\m)$ satisfies $\wBE(\c,\infty)$.

\item\label{item:th_equiv_wEVI} 
The dual heat semigroup $(\H_t)_{t\ge0}$ in~\eqref{eq:dual_heat_def} satisfies
\begin{equation}\label{eq:th_equiv_wEVI}
\frac{1}{2}\,W_2^2(\H_{t_1}\mu_1,\H_{t_0}\mu_0)
-\frac{1}{2\RI(t_0,t_1)}\,W_2^2(\mu_1,\mu_0)
\le
(t_1-t_0)\Big(\Ent_\m(\H_{t_0}\mu_0)-\Ent_\m(\H_{t_1}\mu_1)\Big)
\end{equation}
for all $\mu_0\in\Dom(\Ent)$, $\mu_1\in\Prob_2(X)$ and $0\le t_0\le t_1$, with also $\mu_1\in\Dom(\Ent_\m)$ in the particular case $t_1=t_0=0$,
where 
\begin{equation}\label{eq:RI_def}
\RI(t_0,t_1)
=
\int_0^1\c^{-2}((1-s)t_0+st_1)\di s.
\end{equation}

\item\label{item:th_equiv_wConvex} 
The dual heat semigroup $(\H_t)_{t\ge0}$ in~\eqref{eq:dual_heat_def} uniquely extends to a map on $\Prob_2(X)$ satisfying~\eqref{eq:dual_heat_weak_kuwada}
and such that 
\begin{equation}\label{eq:th_equiv_wConvex}
\begin{split}
\Ent_\m(\H_{t+h}\mu_s)
&\le 
(1-s)\,\Ent_m(\H_t\mu_0)+s\,\Ent_\m(\H_t\mu_1)\\
&\qquad+\frac{s(1-s)}{2h}\left(\frac{1}{\RI(t,t+h)}\,W_2^2(\mu_0,\mu_1)-W_2^2(\H_t\mu_0,\H_t\mu_1)\right)
\end{split}
\end{equation} 
for all $t\ge0$ and $h>0$,
whenever
$s\mapsto\mu_s\in\mathrm{Geo}([0,1];\Prob_2(X))$ 
is a $W_2$-geodesic joining 
$\mu_0,\mu_1\in\Dom(\Ent_\m)$
and $\RI$ is as in~\eqref{eq:RI_def}. 
\end{enumerate}
\end{theorem}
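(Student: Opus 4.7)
The strategy is to establish the chain of implications \ref{item:th_equiv_wBE}$\,\Rightarrow\,$\ref{item:th_equiv_wEVI}$\,\Rightarrow\,$\ref{item:th_equiv_wConvex}$\,\Rightarrow\,$\ref{item:th_equiv_wBE}. The first implication exploits the action--entropy estimate \cref{res:action_ent_estimate_reg_curves} with the appropriate choice of time-rescaling, combined with the approximation by strongly regular curves of \cref{res:approx_strong_reg_curves}. The second implication is a purely metric argument in the spirit of Daneri--Savaré~\cite{DS08}*{Theorem~3.2}. The third implication recovers the $W_2$-contractivity of the dual heat semigroup by sending $h\to0^+$, which by Kuwada duality \cref{res:kuwada_equivalence} is equivalent to $\wBE(\c,\infty)$.

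For \ref{item:th_equiv_wBE}$\,\Rightarrow\,$\ref{item:th_equiv_wEVI}, I would first reduce to the case $\mu_0,\mu_1\in\Dom(\Ent_\m)$: if $\mu_1\in\Prob_2(X)\setminus\Dom(\Ent_\m)$ and $t_1>t_0\ge0$ then $\H_{t_1-t_0}\mu_1\in\Dom(\Ent_\m)$ by the $\Leb\log\Leb$-regularization \cref{res:LlogL_reg}, and one can replace $\mu_1$ by $\H_{t_1-t_0}\mu_1$ (with times reshuffled via the semigroup property). Given any $\mu_0,\mu_1\in\Dom(\Ent_\m)$, by Lisini's theorem \cref{res:Lisini_Lip} there exists $s\mapsto\mu_s\in\AC^2([0,1];\Prob_2(X))$ joining them with $\int_0^1|\dot\mu_s|^2\di s=W_2^2(\mu_0,\mu_1)$; by \cref{res:approx_strong_reg_curves} we approximate it by strongly regular curves $s\mapsto\mu_s^n$. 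Applying \cref{res:action_ent_estimate_reg_curves} to $\mu_s^n$ with the affine choice $\eta(s)=(1-s)t_0+st_1$ yields $\ddot\eta\equiv0$, $\dot\eta\equiv t_1-t_0$, and the integral $\I_{-2,\eta}(1)=\int_0^1\c^{-2}((1-s)t_0+st_1)\di s=\RI(t_0,t_1)$, so the estimate reads
\begin{equation*}
\frac12\,W_2^2(\tilde\mu_1^n,\tilde\mu_0^n)
+
(t_1-t_0)\big(E_\eps(\tilde\mu_1^n)-E_\eps(\tilde\mu_0^n)\big)
\le
\frac{1}{2\RI(t_0,t_1)}\int_0^1|\dot\mu_s^n|^2\di s.
\end{equation*}
Passing to the limit first as $n\to+\infty$ (using \cref{res:approx_strong_reg_curves}\ref{item:approx_strong_reg_curves_W_2}--\ref{item:approx_reg_curves_Ent}, the $W_2$-lower semicontinuity of the square Wasserstein distance, and the monotone convergence $E_\eps\downarrow\Ent_\m$ as $\eps\to0^+$ via~\eqref{eq:masterone}) and then as $\eps\to0^+$ delivers \eqref{eq:th_equiv_wEVI}. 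The delicate step here will be controlling $E_\eps(\tilde\mu_0^n)$ from above uniformly: since $\mu_0\in\Dom(\Ent_\m)$ and $\tilde\mu_0^n=\H_{t_0}\mu_0^n$, monotonicity of entropy along the dual heat flow (\cref{res:Fisher_Entropy_along_H_t}\ref{item:entropy_decreasing_dual_heat}) together with \cref{res:approx_strong_reg_curves}\ref{item:approx_reg_curves_Ent} gives the required control.

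For \ref{item:th_equiv_wEVI}$\,\Rightarrow\,$\ref{item:th_equiv_wConvex}, let $s\mapsto\mu_s\in\mathrm{Geo}([0,1];\Prob_2(X))$ and fix $t\ge0$ and $h>0$. Applying \eqref{eq:th_equiv_wEVI} twice, once with $(\mu_0,\mu_s,t,t+h)$ and once with $(\mu_1,\mu_s,t,t+h)$ --- exchanging the roles so that $\H_{t+h}\mu_s$ appears in both --- and then forming the convex combination $(1-s)$ times the first plus $s$ times the second, the terms involving $W_2^2(\H_{t+h}\mu_s,\H_t\mu_i)$ telescope and the $W_2^2$ terms rescale by $s^2$ and $(1-s)^2$. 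Using the geodesic identities $W_2(\mu_0,\mu_s)=s\,W_2(\mu_0,\mu_1)$ and $W_2(\mu_s,\mu_1)=(1-s)W_2(\mu_0,\mu_1)$, together with the triangle inequality $W_2^2(\H_t\mu_0,\H_t\mu_1)\le$ (the rearranged squared distances), one recovers exactly \eqref{eq:th_equiv_wConvex}; this is the formal rewriting of~\cite{DS08}*{Theorem~3.2}, and the only ingredient besides \eqref{eq:th_equiv_wEVI} is the fact that $\H_t\mu_s$ remains in $\Dom(\Ent_\m)$ for $t>0$ (by \cref{res:LlogL_reg}), so the entropies on the right-hand side are finite. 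The uniqueness and $W_2$-Lipschitz extension of $(\H_t)_{t\ge0}$ is a consequence of specialising \eqref{eq:th_equiv_wEVI} to $t_1=t_0=t$, which gives $W_2^2(\H_t\mu_1,\H_t\mu_0)\le\RI(t,t)^{-1}W_2^2(\mu_1,\mu_0)=\c^2(t)\,W_2^2(\mu_1,\mu_0)$ on the dense subset $\Dom(\Ent_\m)$, and then applying \cref{res:dual_heat_Lip_ext}.

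For \ref{item:th_equiv_wConvex}$\,\Rightarrow\,$\ref{item:th_equiv_wBE}, I would multiply both sides of \eqref{eq:th_equiv_wConvex} by $h>0$ and let $h\to0^+$: the $\Ent_\m$-terms remain bounded thanks to \cref{res:LlogL_reg}, so they vanish after multiplication by~$h$, while on the right-hand side we use $\frac1h\int_0^h\c^{-2}(t+r)\di r\to\c^{-2}(t)$ at Lebesgue points of $\c^{-2}$ (and a.e.\ $t\ge0$ suffices by density and continuity of both sides in~$t$). Choosing $s=\tfrac12$ in \eqref{eq:th_equiv_wConvex} forces the mid-point of $\H_t\mu_0$ and $\H_t\mu_1$ to move at the geodesic midpoint speed, and after rearrangement the limit reads $W_2^2(\H_t\mu_0,\H_t\mu_1)\le\c^2(t)\,W_2^2(\mu_0,\mu_1)$ for $\mu_0,\mu_1\in\Dom(\Ent_\m)$, which by \cref{res:kuwada_equivalence}\ref{item:kuwada_equiv_2} (with $\mathscr D=\Dom(\Ent_\m)$, which is $W_2$-dense by \cref{res:2nd_moment_est_heat} and the standard density of bounded densities) is equivalent to $\wBE(\c,\infty)$. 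The hardest step of the whole argument will be the uniform control of the truncated entropy $E_\eps$ along the approximating family $\tilde\mu_s^n=\H_{\eta(s)}\mu_{\theta(s)}^n$, since the heat-smoothing assumption enters only at the level of minimal weak gradients versus slopes and does not give continuity of $\Ent_\m\circ\H_t$ on $\Prob_2(X)$; this is why the approximation \cref{res:approx_strong_reg_curves}, which is specifically designed to preserve the asymptotic behavior of the entropy under $\H_t$, is indispensable.
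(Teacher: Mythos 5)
Your plan follows the paper's proof essentially step by step: the implication \ref{item:th_equiv_wBE}$\,\Rightarrow\,$\ref{item:th_equiv_wEVI} via \cref{res:action_ent_estimate_reg_curves} with the affine choice $\eta(s)=(1-s)t_0+st_1$ applied to the strongly regular approximations of \cref{res:approx_strong_reg_curves}; the Daneri--Savar\'e argument for the convexity inequality; and Kuwada duality to close the loop (note that in the paper \eqref{eq:dual_heat_weak_kuwada} is part of statement \ref{item:th_equiv_wConvex}, so the implication \ref{item:th_equiv_wConvex}$\,\Rightarrow\,$\ref{item:th_equiv_wBE} is immediate and your $h\to0^+$ extraction of the contraction, with its attendant Lebesgue-point and $t$-continuity issues, is not needed).

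Two points in your first implication should be corrected. First, the proposed reduction replacing $\mu_1$ by $\H_{t_1-t_0}\mu_1$ does not produce \eqref{eq:th_equiv_wEVI}: the middle term of that inequality involves $W_2^2(\mu_1,\mu_0)$ of the \emph{original} measures and the factor $(t_1-t_0)$, both of which are destroyed when you reshuffle the times onto the pair $(t_0,t_0)$. No reduction is needed: one runs the main argument for a curve joining $\mu_0\in\Dom(\Ent_\m)$ to an arbitrary $\mu_1\in\Prob_2(X)$ and uses $W_2$-lower semicontinuity of $\Ent_\m$ at the $\mu_1$-endpoint. Second, the order of limits matters. One must send $\eps\to0^+$ \emph{first}, for fixed $n$: at the $\mu_1^n$-endpoint use only $E_\eps\ge\Ent_\m$ from \eqref{eq:elio}, and at the $\mu_0^n$-endpoint use \eqref{eq:masterone}, which is available because $\mu_0^n$ (hence $\H_{t_0}\mu_0^n$) has an $\Leb^2$-density. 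Sending $n\to+\infty$ first, as you propose, would require upper semicontinuity of $E_\eps$ along the $W_2$-convergent sequence $\H_{t_0}\mu_0^n\to\H_{t_0}\mu_0$, which is neither available nor supplied by \cref{res:approx_strong_reg_curves}\ref{item:approx_reg_curves_Ent} (that statement concerns $\Ent_\m$, not $E_\eps$). After these fixes the limit $n\to+\infty$ is handled exactly as you indicate, via \cref{res:approx_strong_reg_curves} and the contraction \eqref{eq:dual_heat_weak_kuwada}.
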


Mimicking the standard framework, thanks to \cref{res:th_equivalence} we can introducing the following notation.

\begin{definition}[$\EVI_w(\c)$ and $\RCD_w(\c,\infty)$ conditions]
An admissible metric-measure space $(X,\d,\m)$ is said to satisfy the \emph{weak Evolution Variation Inequality} with respect to the function $\c\colon[0,+\infty)\to(0,+\infty)$ in~\eqref{eq:def_c}, $\EVI_w(\c)$ for short, if inequality~\eqref{eq:th_equiv_wEVI} in \cref{res:th_equivalence}\ref{item:th_equiv_wEVI} holds.
Analogously, $(X,\d,\m)$ is said to satisfy the (\emph{dimension-free}) \emph{weak Riemannian Curvature-Dimension Condition} with respect to the function~$\c$, $\RCD_w(\c,\infty)$ for short, if inequality~\eqref{eq:th_equiv_wConvex} in \cref{res:th_equivalence}\ref{item:th_equiv_wConvex} holds.
\end{definition}

With this terminology, one can rephrase \cref{res:th_equivalence} simply writing that, for an admissible heat-smoothing group $(X,\d,\m)$, it holds
\begin{equation*}
\BE_w(\c,\infty)
\iff
\EVI_w(\c)
\iff
\RCD_w(\c,\infty).
\end{equation*}

\begin{proof}[Proof of \cref{res:th_equivalence}]
We prove each implication separately.

\medskip

\textit{Proof of \ref{item:th_equiv_wBE}$\,\Rightarrow\,$\ref{item:th_equiv_wEVI}}. 
Let $0\le t_0\le t_1$ be fixed. 
Let $s\mapsto\mu_s\in\AC^2([0,1];\Prob_2(X))$ be a curve joining $\mu_0\in\Dom(\Ent_\m)$ and $\mu_1\in\Prob_2(X)$ (with also $\mu_1\in\Dom(\Ent_\m)$ in the particular cas $t_1=t_0=0$).
We can find strongly regular curves $s\mapsto\mu^n_s\in\AC^2([0,1];\Prob_2(X))$, $n\in\N$, as in \cref{def:strong_reg_curve}, approximating the curve $s\mapsto\mu_s$ as stated in~\cref{res:approx_strong_reg_curves}.
By \cref{res:action_ent_estimate_reg_curves} applied to each $s\mapsto\mu_s^n$ with $\eta(s)=(1-s)t_0+st_1$ for all $s\in[0,1]$, we get
\begin{equation}\label{eq:lacrime}
\begin{split}
\frac{1}{2}\,
W_2^2(\H_{t_1}\mu_1^n
&
,\H_{t_0}\mu_0^n)
+
(t_1-t_0)\,
E_\eps(\H_{t_1}\mu_1^n)
\\
&\le
(t_1-t_0)\,
E_\eps(\H_{t_0}\mu_0^n)
+
\frac{1}{2\I_{-2,\eta}(1)}
\int_0^1|\dot{\mu}_s^n|^2\di s
\end{split}
\end{equation}
for all $n\in\N$ and $\eps>0$.
On the one hand, recalling~\eqref{eq:elio}, we have 
$$
E_\eps(\H_{t_1}\mu_1^n)\ge
\Ent_\m(\H_{t_1}\mu_1^n)
$$ 
for all $\eps>0$ and $n\in\N$.
On the other hand, by~\eqref{eq:masterone} we have that 
\begin{equation*}
\lim_{\eps\to0^+}
E_\eps(\H_{t_0}\mu_0^n)
=
\Ent_\m(\H_{t_0}\mu_0^n)
\end{equation*}
for all $n\in\N$, since $\mu_0^n=f_0^n\m$ with $f_0^n\in\Leb^2(X,\m)$.
Thus we can pass to the limit as~$\eps\to0^+$ in~\eqref{eq:lacrime} and get
\begin{equation}\label{eq:sangue}
\begin{split}
\frac{1}{2}\,
W_2^2(\H_{t_1}\mu_1^n
&
,\H_{t_0}\mu_0^n)
+
(t_1-t_0)\,
\Ent_\m(\H_{t_1}\mu_1^n)
\\
&\le
(t_1-t_0)\,
\Ent_\m(\H_{t_0}\mu_0^n)
+
\frac{1}{2\I_{-2,\eta}(1)}
\int_0^1|\dot{\mu}_s^n|^2\di s
\end{split}
\end{equation}
for all $n\in\N$.
By~\eqref{eq:dual_heat_weak_kuwada} in \cref{res:kuwada_equivalence} and \cref{res:approx_strong_reg_curves}\ref{item:approx_strong_reg_curves_W_2}, we have
$\H_{t_i}\mu_i^n\overset{W_2}{\longto}\H_{t_i}\mu_i$ as $n\to+\infty$ for $i=0,1$, so that
\begin{equation}\label{eq:z15}
\lim_{n\to+\infty}
W_2(\H_{t_1}\mu_1^n,\H_{t_0}\mu_0^n)
= 
W_2(\H_{t_1}\mu_1,\H_{t_0}\mu_0).	
\end{equation}
Also, by the lower semicontinuity of the entropy, we have 
\begin{equation}\label{eq:z16}
\Ent_\m(\H_{t_1}\mu_1)
\le
\liminf_{n\to+\infty}
\Ent_\m(\H_{t_1}\mu_1^n).
\end{equation}
Finally, by \cref{res:approx_strong_reg_curves}\ref{item:approx_reg_curves_Ent}, we can estimate
\begin{equation}\label{eq:z17}
\limsup_{n\to+\infty}
\Ent_\m(\H_{t_0}\mu_0^n)
\le
\Ent_\m(\H_{t_0}\mu_0).
\end{equation}
By~\eqref{eq:z15}, \eqref{eq:z16} and~\eqref{eq:z17}, we can thus pass to the limit as~$n\to+\infty$ in~\eqref{eq:sangue} getting
\begin{equation}\label{eq:ansia}
\begin{split}
\frac{1}{2}\,W_2^2(\H_{t_1}\mu_1
&
,\H_{t_0}\mu_0)
+(t_1-t_0)\,\Ent_\m(\H_{t_1}\mu_1)\\
&\le
(t_1-t_0)\,\Ent_\m(\H_{t_0}\mu_0)
+
\frac{1}{2\RI(t_0,t_1)}\int_0^1|\dot{\mu}_s|^2\di s,
\end{split}
\end{equation}
so that~\ref{item:th_equiv_wEVI} follows by minimizing~\eqref{eq:ansia} with respect to the curves $\mu\in\AC^2([0,1];\Prob_2(X))$ joining $\mu_0$ and $\mu_1$.

\medskip

\textit{Proof of \ref{item:th_equiv_wEVI}$\,\Rightarrow\,$\ref{item:th_equiv_wBE}}. 
Choosing $t_0=t_1=t>0$ in~\eqref{eq:th_equiv_wEVI}, we get
\begin{equation*}
W_2(\H_t\mu_1,\H_t\mu_0)
\le 
\c(t)\,W_2(\mu_1,\mu_0)
\end{equation*} 
for all $\mu_0,\mu_1\in\Dom(\Ent_\m)$.
This proves the validity of~\eqref{eq:kuwada_equiv_2} in \cref{res:kuwada_equivalence}\ref{item:kuwada_equiv_2} for 
$\mathscr D=\Dom(\Ent_\m)$. 
Since $\Dom(\Ent_\m)$ is a $W_2$-dense subset of $\Prob_2^{\rm ac}(X)$, \ref{item:th_equiv_wBE} immediately follows by \cref{res:kuwada_equivalence}. 

\medskip

\textit{Proof of \ref{item:th_equiv_wBE}$\,\Rightarrow\,$\ref{item:th_equiv_wConvex}}.
Since we already know that \ref{item:th_equiv_wBE}$\,\Leftrightarrow\,$\ref{item:th_equiv_wEVI}, by \cref{res:kuwada_equivalence} we have that the dual heat semigroup uniquely extends to a map on $\Prob_2(X)$ satisfying~\eqref{eq:dual_heat_weak_kuwada} and  we can thus argue as in the proof of~\cite{DS08}*{Theorem~3.2}. 
So let $t\ge0$ and $h>0$ be fixed and let $s\mapsto\mu_s\in\mathrm{Geo}([0,1];\Prob_2(X))$ be a $W_2$-geodesic joining $\mu_0,\mu_1\in\Dom(\Ent_\m)$. 
By~\eqref{eq:th_equiv_wEVI} applied respectively to the couple $\mu_0\in\Dom(\Ent_\m)$, $\mu_s\in\Prob_2(X)$, and to the couple $\mu_1\in\Dom(\Ent_\m)$, $\mu_s\in\Prob_2(X)$, both with the choice $t_0=t$ and $t_1=t+h$ for all $s\in[0,1]$ (recall that $\H_t\mu\in\Dom(\Ent_\m)$ for all $\mu\in\Prob_2(X)$ and $t>0$ by \cref{res:LlogL_reg}), we get
\begin{equation}\label{eq:wConvex_est0}
\begin{split}
\frac{1-s}{2}\,W_2^2&(\H_{t+h}\mu_s,\H_t\mu_0)+\frac{s}{2}\,W_2^2(\H_{t+h}\mu_s,\H_t\mu_1)\\
&\qquad
-
\frac{1}{2\RI(t,t+h)}
\left((1-s)\,
W_2^2(\mu_s,\mu_0)
+
s\,W_2^2(\mu_s,\mu_1)\right)\\
&\quad\le
h\Big((1-s)\,\Ent_\m(\H_t\mu_0)+s\,\Ent_\m(\H_t\mu_1)-\Ent_\m(\H_{t+h}\mu_s)\Big)
\end{split}
\end{equation} 
for all $s\in[0,1]$.
Since $s\mapsto\mu_s$ is a $W_2$-geodesic, we can estimate
\begin{equation}\label{eq:wConvex_est1}
(1-s)\,W_2^2(\mu_s,\mu_0)
+
s\,W_2^2(\mu_s,\mu_1)
=
s(1-s)\,W_2^2(\mu_1,\mu_0)
\end{equation}
for all $s\in[0,1]$.
Thanks to the elementary inequality
\begin{equation*}
(1-s)a^2+sb^2\ge s(1-s)(a+b)^2
\quad
\text{for all}\ a,b\in\R,\ s\in[0,1],
\end{equation*}
by the triangular inequality we can also estimate
\begin{equation}\label{eq:wConvex_est2}
\begin{split}
(1-s)\,W_2^2(\H_{t+h}\mu_s,&\H_t\mu_0)+s\,W_2^2(\H_{t+h}\mu_s,\H_t\mu_1)\\
&\ge s(1-s)\Big(W_2(\H_{t+h}\mu_s,\H_t\mu_0)+W_2(\H_{t+h}\mu_s,\H_t\mu_1)\Big)^2\\
&\ge s(1-s)\,W_2^2(\H_t\mu_1,\H_t\mu_0).
\end{split}
\end{equation}
By combining~\eqref{eq:wConvex_est1} and~\eqref{eq:wConvex_est2} with~\eqref{eq:wConvex_est0},
we immediately deduce~\ref{item:th_equiv_wConvex}. 

\medskip

\textit{Proof of \ref{item:th_equiv_wConvex}$\,\Rightarrow\,$\ref{item:th_equiv_wBE}}.
Since $(\H_t)_{t\ge0}$ satisfies~\eqref{eq:dual_heat_weak_kuwada}, \ref{item:th_equiv_wBE} trivially follows by \cref{res:kuwada_equivalence}.  
\end{proof}

\subsection{Application to Carnot groups and the \texorpdfstring{$\SU(2)$}{SU(2)} group}
\label{subsec:applications}

We conclude this section with the application of \cref{res:th_equivalence} to Carnot groups and to the $\SU(2)$ group.

\subsubsection{Carnot groups}
We recall that a Carnot group $\G$  is a connected, simply connected and nilpotent Lie group whose Lie algebra $\mathfrak{g}$ of left-invariant vector fields has dimension~$n\in\N$ and admits a stratification of step~$\kappa\in\N$, 
\begin{equation*}
\mathfrak{g}=V_1\oplus V_2\oplus\cdots\oplus V_\kappa
\end{equation*}
with
\begin{equation*}
V_i=[V_1,V_{i-1}]\quad \text{for } i=1,\dots,\kappa, \qquad [V_1,V_\kappa]=\set{0}. 
\end{equation*}
We set $m_i=\dim(V_i)$ and $h_i=m_1+\dots+m_i$ for $i=1,\dots,\kappa$, with $h_0=0$ and $h_\kappa=n$. 
We fix an adapted basis of $\mathfrak{g}$, i.e.\ a basis $X_1,\dots,X_n$ such that
\begin{equation*}
X_{h_{i-1}+1},\dots,X_{h_i}\ \text{is a basis of}\ V_i,\qquad i=1,\dots,\kappa.
\end{equation*}
Using the exponential coordinates, it is possible to identify~$\G$ with $\R^n$ endowed with the group law determined by the Campbell--Hausdorff formula (in particular, the identity $\neu\in\G$ corresponds to $0\in\R^n$ and $x^{-1}=-x$ for $x\in\G$), and it is not restrictive to assume that $X_i(0)=\mathrm{e}_i$ for any $i=1,\dots,n$.
In particular, by left-invariance, for any $x\in\G$ we get
\begin{equation*}
X_i(x)=dL_x\mathrm{e}_i, \qquad i=1,\dots,n,
\end{equation*}
where $L_x\colon\G\to\G$ is the left-translation by $x\in\G$.
We endow $\mathfrak{g}$ with the left-invariant Riemannian metric $\scalar*{\cdot,\cdot}_\G$ that makes the basis $X_1,\dots,X_n$ orthonormal.
We let $H\G\subset T\G$ be the \emph{horizontal tangent bundle} of the group~$\G$, i.e.\ the left-invariant sub-bundle of the tangent bundle~$T\G$ such that $H_0\G=\set*{X(0) : X\in V_1}$, and we let
\begin{equation*}
\nabla_\G f
=
\sum_{j=1}^{m_1} (X_j f) \, X_j\in V_1
\end{equation*}
be the \emph{horizontal gradient} of~$f$.

For any $i=1,\dots,n$, we define the \emph{degree} $d(i)\in\set*{1,\dots,\kappa}$ of the basis vector field $X_i$ as $d(i)=j$ if and only if $X_i\in V_j$. 
Using this notation, the one-parameter family of\emph{ group dilations} $(\delta_\lambda)_{\lambda\ge0}\colon\G\to\G$ is given by
\begin{equation}\label{eq:def_dilation}
\delta_\lambda(x)
=
\delta_\lambda(x_1,\dots,x_n)
=
(\lambda x_1,\dots,\lambda^{d(i)} x_i,\dots,\lambda^\kappa x_n), 
\quad
\text{for all}\ x\in\G.
\end{equation}

The bi-invariant Haar measure of the group~$\G$ coincides (up to a multiplicative constant) with the $n$-dimensional Lebesgue measure~$\leb^n$ and has the homogeneity property $\leb^n(\delta_\lambda(E))=\lambda^Q\leb^n(E)$, where the integer $Q=\sum_{i=1}^\kappa i\dim(V_i)$ is called the \emph{homogeneous dimension} of the group. 

We endow the group~$\G$ with the canonical \emph{Carnot--Carathéodory metric structure} induced by~$H\G$. 
More precisely, the \emph{Carnot--Carathéodory distance} between $x,y\in\G$ is then defined as
\begin{equation}\label{eq:def_cc_distance_Carnot}
\dcc(x,y)=\inf\set*{\int_0^1\|\dot{\gamma}(t)\|_\G\ dt : \gamma\ \text{is horizontal},\ \gamma(0)=x,\ \gamma(1)=y}.
\end{equation}
Here and in the following, we say that a Lipschitz curve $\gamma\colon[0,1]\to\G$ is a \emph{horizontal curve} if $\dot{\gamma}(t)\in H_{\gamma(t)}\G$ for a.e.\ $t\in[0,1]$.
By Chow--Rashevskii's Theorem, the function $\dcc$ is in fact a distance, which is also left-invariant and homogeneous with respect to the dilations defined in~\eqref{eq:def_dilation}, in the sense that
\begin{equation*}
\dcc(zx,zy)
=
\dcc(x,y),
\quad 
\dcc(\delta_\lambda(x),\delta_\lambda(y))
=
\lambda\,\dcc(x,y),
\end{equation*}
for all $x,y,z\in\G$ and $\lambda\ge0$. 
The resulting metric space $(\G,\dcc)$ is a complete, separable, locally compact and geodesic space. 
Note that 
\begin{equation*}
\leb^n(B_r(x))=c_n r^Q
\quad
\text{for all $x\in\G$ and $r\ge0$},
\end{equation*}
where $c_n=\leb^n(B_1(0))$. 

The standard \emph{sub-Laplacian operator} is  $\Delta_\G=\sum_{i=1}^{m_1} X_i^2$.
Since the horizontal vector fields $X_1,\dots,X_{h_1}$ satisfy the \emph{H\"ormander condition}, by H\"ormander Theorem the \emph{sub-elliptic heat operator} $\de_t-\Delta_\G$ is \emph{hypoelliptic}, meaning that its fundamental solution, the heat kernel $\p\colon(0,+\infty)\times\G\to(0,+\infty)$, is a  smooth function. 
For the main properties of the heat kernel, we refer to~\cite{AS19}*{Theorem~2.3} and to the references therein. 
Here we only recall that, given a function $f\in \Leb^1(\G,\L^n)$, the function 
\begin{equation*}
\P_t f(x)
=
f_t(x)
=
(f\star\p_t)(x)
=
\int_\G f(y)\,\p_t(y^{-1} x)\di y,
\quad
(t,x)\in(0,+\infty)\times\G,
\end{equation*}
is smooth and is a solution of the heat diffusion problem
\begin{equation*}
\begin{cases}
\de_t f_t=\Delta_\G f_t &\text{in}\ (0,+\infty)\times\G,\\[3mm]
f_0=f, &\text{on}\ \{0\}\times\G,
\end{cases}
\end{equation*}
where the initial datum is assumed in the $\Leb^1$-sense, i.e.\ $\lim\limits_{t\to0}f_t=f$ in $\Leb^1(\G,\L^n)$.
Accordingly, we can define
\begin{equation*}
\P_t\mu(x)
=
(\mu\star\p_t)(x)
=
\int_\G\p_t(y^{-1}x)\di\mu(y),
\quad
(t,x)\in(0,+\infty)\times\G,
\end{equation*}
whenever $\mu\in\Prob(X)$, so that we can identify $\H_t=\P_t$ for all $t\ge0$.

It is not difficult to recognize that the space $\Sob^{1,2}(\G,\dcc,\L^n)$ induced by the metric-measure structure $(\G,\dcc,\L^n)$ actually coincides with the well-known \emph{horizontal Sobolev space}
\begin{equation*}
\Sob^{1,2}_\G(\G)
=
\set*{f\in\Leb^2(\G,\L^n) :  X_i f\in\Leb^2(\G,\L^n),\ i=1,\dots,m_1},
\end{equation*}
where $X_i f$ stands for the derivative of the function~$f$ in the direction $X_i$ defined in the usual weak sense via integration by parts against test functions.
In particular, the Sobolev space $\Sob^{1,2}(\G,\dcc,\L^n)$ is Hilbertian and the Cheeger energy coincides with the \emph{horizontal Dirichlet energy}
\begin{equation*}
\Ch(f)
=
\int_\G \|\nabla_\G f(x)\|_\G^2\di x
\quad
\text{for all}\
f\in\Sob^{1,2}_\G(\G),
\end{equation*} 
so that $|\D f|_w=\|\nabla_\G f\|_\G$.
We refer the reader to~\cite{HS20}*{Theorem~1.3}, \cite{LeDLP19}*{Theorem~1.2} and~\cite{LP20}*{Theorem~6.3} for more general results in this direction (note that strictly related observations are made in~\cite{AGM15}*{Section~3.2} for the $BV$ space in the sub-Riemannian context).

By a standard regularization argument via group convolution, we thus immediately deduce that if $f\in\Sob^{1,2}_\G(\G)$ with $\|\nabla_\G f\|_\G\le L$, then $f$ agrees $\L^n$-a.e.\ with a $\dcc$-Lipschitz function with Lipschitz constant not larger than~$L$.

In conclusion, $(\G,\dcc,\L^n)$ is an admissible metric-measure space in the sense of \cref{subsec:main_ass} which is also a heat-smoothing admissible group as in Definitions~\ref{def:heat-smoothing_space} and~\ref{def:admissible_group}. 
Therefore, combining~\cite{M08}*{Theorem~1.8} with \cref{res:kuwada_equivalence} and \cref{res:th_equivalence}, we get the following result. 

\begin{theorem}[Equivalence in Carnot groups]
\label{res:th_equiv_Carnot}
Let $(\G,\dcc,\L^n)$ be a Carnot group.
There exists an optimal constant $C_\G\ge1$ (depending only on the group structure and such that $C_\G=1$ if and only if~$\G$ is commutative) satisfying the following four equivalent properties.

\begin{enumerate}[label=(\roman*)]

\item {\normalfont [$\BE_w(C_\G,\infty)$]}
If 
$f\in\Cont^\infty(\G)$, 
then
$\Gamma^\G(\P_t f)\le C_\G^2\,\P_t\Gamma^\G(f)$
for all $t\ge0$.

\item {\normalfont [Kuwada]}
If 
$\mu,\nu\in\Prob_2(\G)$,
then
$W_2(\P_t\mu,\P_t\nu)\le C_\G\,W_2(\mu,\nu)$
for all $t\ge0$.

\item {\normalfont [$\EVI_w(C_\G)$]}
If 
$\mu_0,\mu_1\in\Dom(\Ent_{\L^n})$, 
then
\begin{equation*}
W_2^2(\P_{t_1}\mu_1,\P_{t_0}\mu_0)
-C_\G^2\,W_2^2(\mu_1,\mu_0)
\le
2\,(t_1-t_0)\,
\big(\Ent_{\L^n}(\P_{t_0}\mu_0)-\Ent_{\L^n}(\P_{t_1}\mu_1)\big)	
\end{equation*}
for all $0\le t_0\le t_1$.

\item\label{item:th_equiv_Carnot_RCDw}
{\normalfont [$\RCD_w(C_\G,\infty)$]}
If 
$s\mapsto\mu_s\in\mathrm{Geo}([0,1];\Prob_2(\G))$ connects 
$\mu_0,\mu_1\in\Dom(\Ent_{\L^n})$, 
then
\begin{align*}
\Ent_{\leb^n}(\P_{t+h}\mu_s)
&\le 
(1-s)\,\Ent_{\leb^n}(\P_t\mu_0)
+s\,\Ent_{\leb^n}(\P_t\mu_1)\\
&\quad+\frac{s(1-s)}{2h}
\left(C_\G^2\,
W_2^2(\mu_0,\mu_1)-W_2^2(\P_t\mu_0,\P_t\mu_1)\right)
\end{align*} 
for all $s\in[0,1]$, $t\ge0$ and $h>0$. 
\end{enumerate}	
\end{theorem}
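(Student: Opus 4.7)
The plan is to verify that every Carnot group fits into the framework developed in Sections~\ref{sec:preliminaries}--\ref{sec:proof_equivalence} and then read off the four equivalences directly from \cref{res:kuwada_equivalence} and \cref{res:th_equivalence}.

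First I would check that $(\G,\dcc,\L^n)$ is an admissible heat-smoothing group in the sense of \cref{def:admissible_group} and \cref{def:heat-smoothing_space}. Properties~\ref{assumption:Polish}--\ref{assumption:exp_growth} are immediate from the discussion preceding the theorem (in fact $\L^n(B_r(0))=c_nr^Q$, so the exponential growth \ref{assumption:exp_growth} is trivially met). Property~\ref{assumption:quadratic_Ch} follows from the identification $\Sob^{1,2}(\G,\dcc,\L^n)=\Sob^{1,2}_\G(\G)$ with Cheeger energy equal to the horizontal Dirichlet form, and the Lipschitz--Rademacher property~\ref{assumption:Lip_Rademacher} is gained by the standard group-convolution regularization argument sketched above. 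The group axioms of \cref{def:admissible_group} follow because $\dcc$ is left-invariant and $\L^n$ is a bi-invariant Haar measure. Heat-smoothing is a classical consequence of hypoellipticity: for any $f\in\Leb^\infty(\G,\L^n)\cap\Sob^{1,2}_\G(\G)$ the function $\P_tf$ is smooth with bounded horizontal gradient, so $|\D\P_tf|=|\D\P_tf|_w=\|\nabla_\G\P_tf\|_\G$ pointwise $\L^n$-a.e.

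Next I would invoke Melcher's pointwise gradient bound~\cite{M08}*{Theorem~1.8}: there exists a constant $C=C(\G)\ge 1$, independent of $t\ge0$ and of $f\in\Cont^\infty_c(\G)$, such that $\|\nabla_\G\P_tf\|_\G^2\le C^2\,\P_t\|\nabla_\G f\|_\G^2$. By group-convolution approximation this extends to every $f\in\Sob^{1,2}(\G,\dcc,\L^n)$, so $(\G,\dcc,\L^n)$ satisfies $\BE_w(\c,\infty)$ with the \emph{constant} function $\c\equiv C$. A crucial observation is that the optimal function $\c_\star$ of \cref{res:c_star_props} must itself be constant in $t$: a direct scaling argument using the Carnot dilations $\delta_\lambda$ and the identities $\P_{\lambda^2 t}(f\circ\delta_\lambda)=(\P_tf)\circ\delta_\lambda$ and $\|\nabla_\G(f\circ\delta_\lambda)\|_\G=\lambda(\|\nabla_\G f\|_\G\circ\delta_\lambda)$ shows that $\c_\star(\lambda^2 t)=\c_\star(t)$ for every $\lambda>0$. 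Set $C_\G=\c_\star(0^+)=\c_\star(t)$ for all $t\ge 0$.

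With this constant $C_\G$ in hand, properties (i)--(iv) of the theorem are exactly the specializations of $\BE_w$, the Kuwada $W_2$-contractivity~\eqref{eq:dual_heat_weak_kuwada}, $\EVI_w(\c)$ and $\RCD_w(\c,\infty)$ to the constant $\c\equiv C_\G$, in which case the factor $\RI(t_0,t_1)$ in~\eqref{eq:RI_def} simplifies to $C_\G^{-2}$. The four equivalences are therefore a direct application of \cref{res:kuwada_equivalence} and \cref{res:th_equivalence}.

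It remains to justify that $C_\G=1$ if and only if $\G$ is commutative. If $\G$ is commutative, then $\G\cong(\R^n,+)$ with its Euclidean structure and $C_\G=1$ is the standard $\BE(0,\infty)$ inequality. Conversely, if $C_\G=1$ then property~\ref{item:th_equiv_Carnot_RCDw} with $t=h=0$ gives back the displacement convexity~\eqref{intro_eq:K-displacement} with $K=0$, so $(\G,\dcc,\L^n)$ would be a genuine $\CD(0,\infty)$ space; by the main result of~\cite{AS19} this forces $\G$ to be commutative. The anticipated main obstacle is actually this last dichotomy, which relies on the non-embedding theorem of~\cite{AS19}; everything else is a routine packaging of Melcher's inequality, the dilation-invariance of the optimal curvature function, and the abstract equivalences proved earlier in the paper.
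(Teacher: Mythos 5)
Your proposal is correct and follows essentially the same route as the paper: verify that $(\G,\dcc,\L^n)$ is an admissible heat-smoothing group, then feed Melcher's gradient bound (as a constant curvature function $\c\equiv C_\G$) into the abstract equivalences of \cref{res:kuwada_equivalence} and \cref{res:th_equivalence}, with your dilation argument and the appeal to~\cite{AS19} being pleasant (and sound) supplements to what the paper simply attributes to~\cite{M08}. Two cosmetic points: the scaling identity should read $\P_t(f\circ\delta_\lambda)=(\P_{\lambda^2 t}f)\circ\delta_\lambda$ (your argument still yields $\c_\star(s)\le\c_\star(s/\lambda^2)$ for all $\lambda>0$, hence constancy), and in the converse of the dichotomy you must take $t=0$ and let $h\to0^+$ in~\ref{item:th_equiv_Carnot_RCDw} (using lower semicontinuity of the entropy and $W_2$-continuity of $h\mapsto\H_h\mu_s$) rather than set $h=0$, since that inequality is only stated for $h>0$.
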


Thanks to the entropy dissipation along the heat flow proved in~\cite{AS19}*{Proposition~4.2} and to the integrability property of the Fisher information along the heat flow given by \cref{res:Fisher_Entropy_along_H_t}\ref{item:Fisher_moment_est_dual_heat}, from \cref{res:th_equiv_Carnot}\ref{item:th_equiv_Carnot_RCDw} we deduce the following weak convexity property of the entropy along $W_2$-geodesics in Carnot groups.
Here and in the following, we let 
\begin{equation*}
\F_\G(f)=\int_{\G\cap\set*{f>0}}\frac{\|\nabla_\G f\|_{\G}^2}{f}\di x
\end{equation*}
be the Fisher information in the Carnot group~$\G$.

\begin{corollary}[Weak convexity of $\Ent_{\L^n}$ in Carnot groups]
\label{res:civetta}
Let $(\G,\dcc,\L^n)$ be a Carnot group and let $C_\G\ge1$ be as in \cref{res:th_equiv_Carnot}.
Let 
$s\mapsto\mu_s\in\mathrm{Geo}([0,1];\Prob_2(\G))$ be a $W_2$-geodesic connecting
$\mu_0,\mu_1\in\Dom(\Ent_{\L^n})$.
If $\mu_s\in\Dom(\Ent_{\L^n})$ for some $s\in(0,1)$, then
\begin{equation}\label{eq:civetta}
\begin{split}
\Ent_{\L^n}(\mu_s)
&\le
(1-s)\,\Ent_{\leb^n}(\P_t\mu_0)
+s\,\Ent_{\leb^n}(\P_t\mu_1)\\
&\quad
+
\frac{s(1-s)}{2h}
\left(
C_\G^2\,
W_2^2(\mu_0,\mu_1)
-
W_2^2(\P_t\mu_0,\P_t\mu_1)
\right)
+
\int_0^{t+h}\F_\G(\P_r\mu_s)\di r
\end{split}	
\end{equation}
for all $t\ge0$ and $h>0$.
\end{corollary}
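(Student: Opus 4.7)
The plan is to combine two independent facts: the weak displacement convexity along heated geodesic endpoints established in~\cref{res:th_equiv_Carnot}\ref{item:th_equiv_Carnot_RCDw}, and the standard entropy dissipation formula along the heat flow in Carnot groups proved in~\cite{AS19}*{Proposition~4.2}. The first ingredient provides an upper bound for $\Ent_{\L^n}(\P_{t+h}\mu_s)$ (not for $\Ent_{\L^n}(\mu_s)$ itself), while the second one controls the gap $\Ent_{\L^n}(\mu_s)-\Ent_{\L^n}(\P_{t+h}\mu_s)$ via the time integral of the Fisher information. Putting these together is what yields~\eqref{eq:civetta}.

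Concretely, first I apply~\cref{res:th_equiv_Carnot}\ref{item:th_equiv_Carnot_RCDw} to the two endpoints $\mu_0,\mu_1\in\Dom(\Ent_{\L^n})$ and the given $W_2$-geodesic $s\mapsto\mu_s$, at the intermediate parameter $s\in(0,1)$ for which $\mu_s\in\Dom(\Ent_{\L^n})$, obtaining
\begin{equation*}
\Ent_{\L^n}(\P_{t+h}\mu_s)
\le
(1-s)\,\Ent_{\L^n}(\P_t\mu_0)
+s\,\Ent_{\L^n}(\P_t\mu_1)
+\frac{s(1-s)}{2h}\left(C_\G^2\,W_2^2(\mu_0,\mu_1)-W_2^2(\P_t\mu_0,\P_t\mu_1)\right)
\end{equation*}
for every $t\ge0$ and $h>0$.

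Next, since $\mu_s=f_s\,\L^n\in\Dom(\Ent_{\L^n})\subset\Prob_2(\G)$, the entropy dissipation identity from~\cite{AS19}*{Proposition~4.2} (whose integrated form only requires $\mu_s$ to have finite entropy and finite second moment, the $\Leb^1$-integrability of $r\mapsto\F_\G(\P_r\mu_s)$ on bounded time intervals being guaranteed by~\cref{res:Fisher_Entropy_along_H_t}\ref{item:Fisher_moment_est_dual_heat}) gives
\begin{equation*}
\Ent_{\L^n}(\mu_s)-\Ent_{\L^n}(\P_{t+h}\mu_s)
=
\int_0^{t+h}\F_\G(\P_r\mu_s)\di r.
\end{equation*}
Adding this identity to the previous display yields~\eqref{eq:civetta} at once.

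Since the proof is essentially a bookkeeping combination of two already-established results, there is no real obstacle; the only point requiring a moment of care is verifying the hypotheses of the entropy dissipation formula in~\cite{AS19}*{Proposition~4.2}, namely that $\mu_s$ has finite entropy (assumed) and finite second moment (automatic from $\mu_s\in\Prob_2(\G)$), which together with \cref{res:Fisher_Entropy_along_H_t} ensures that the right-hand side of the dissipation identity is a well-defined, finite Lebesgue integral on $[0,t+h]$.
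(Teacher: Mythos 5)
Your proposal is correct and follows exactly the route the paper indicates in the sentence preceding the corollary: apply \cref{res:th_equiv_Carnot}\ref{item:th_equiv_Carnot_RCDw} to bound $\Ent_{\L^n}(\P_{t+h}\mu_s)$, then add the entropy dissipation formula of~\cite{AS19}*{Proposition~4.2} (with integrability of the Fisher information supplied by \cref{res:Fisher_Entropy_along_H_t}\ref{item:Fisher_moment_est_dual_heat}) to pass from $\Ent_{\L^n}(\P_{t+h}\mu_s)$ back to $\Ent_{\L^n}(\mu_s)$. The hypothesis checks you perform are the right ones, so there is nothing to add.
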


It is interesting to compare inequality~\eqref{eq:civetta} when $t=0$ with the entropic inequality obtained in~\cite{BKS18}*{Corollary~3.4} when $\G=\mathbb H^n$, the $n$-dimensional Heisenberg group.
Note that similar comparisons can be made for several others Carnot groups thanks to the results obtained in~\cites{BKS19,BR19}.

The Heisenberg group $\mathbb H^n$, $n\in\N$, is the non-commutative Carnot group of step~$2$ whose Lie algebra satisfies $\mathfrak g=V_1\oplus V_2$ with $m_1=2n$, $m_2=1$ and
\begin{align*}
X_i=\de_{x_i}-\tfrac{x_{n+i}}{2}\,\de_{x_{2n+1}},
\quad
X_{n+i}=\de_{x_{n+i}}+\tfrac{x_i}{2}\,\de_{x_{2n+1}},
\quad
X_{2n+1}=\de_{x_{2n+1}},
\end{align*}
for all $i=1,\dots,n$.
Thanks to~\cite{BKS18}*{Corollary~3.4}, the unique $W_2$-geodesic 
$s\mapsto\mu_s\in\mathrm{Geo}([0,1];\Prob_2(X))$
joining two compactly supported measures $\mu_0,\mu_1\in\Dom(\Ent_{\L^{2n+1}})$ satisfies 
\begin{equation}\label{eq:Ent_convex_Heis_Balogh}
\Ent_{\L^{2n+1}}(\mu_s)
\le
(1-s)\,\Ent_{\L^{2n+1}}(\mu_0)
+
s\,\Ent_{\L^{2n+1}}(\mu_1)
+
w(s)
\end{equation} 
for all $s\in(0,1)$,
where
\begin{equation}\label{eq:defect_Ent_Balogh}
w(s)
=
-2\log\left((1-s)^{(1-s)}s^s\right)
\end{equation} 
for all $s\in(0,1)$.
Note that the function in~\eqref{eq:defect_Ent_Balogh} is concave and such that 
\begin{equation*}
\lim\limits_{s\to0^+}w(s)=\lim\limits_{s\to1^-}w(s)=0
\end{equation*}
and it satisfies
\begin{equation*}
0<w(s)\le w(\tfrac12)=\log4
\end{equation*}
for all $s\in(0,1)$.
Therefore, as a consequence of~\eqref{eq:Ent_convex_Heis_Balogh}, we get that $\mu_s\in\Dom(\Ent_{\L^{2n+1}})$ and hence, by \cref{res:civetta}, we can also estimate
\begin{equation}\label{eq:Ent_convex_Heis}
\Ent_{\L^{2n+1}}(\mu_s)
\le
(1-s)\,\Ent_{\leb^{2n+1}}(\mu_0)
+s\,\Ent_{\leb^{2n+1}}(\mu_1)
+
\sigma(s)
\end{equation}
where
\begin{equation}\label{eq:defect_Ent_Heis}
\sigma(s)
=
\inf
\set*{
h>0
:
s(1-s)\,
\frac{C_{\mathbb H^n}^2-1}{2h}\,W_2^2(\mu_0,\mu_1)
+
\int_0^h\F_{\mathbb H^n}(\P_r\mu_s)\di r
}
\end{equation}
for all $s\in[0,1]$.

Although we are not able to give a more explicit formula for the function in~\eqref{eq:defect_Ent_Heis}, it appears that, at least in some cases when $\mu_0$ and $\mu_1$ are very close to each other,    inequality~\eqref{eq:Ent_convex_Heis} is more precise than inequality~\eqref{eq:Ent_convex_Heis_Balogh} for intermediate times, that is, $\sigma(s)<w(s)$ for some $s\in(0,1)$.
As a trivial example, if $\mu_0=\mu_1$, then $\mu_s=\mu_0$ for all $s\in(0,1)$ and thus  inequality~\eqref{eq:Ent_convex_Heis_Balogh} reduces to $0\le w(s)$ for all $s\in(0,1)$, while $\sigma(s)=0$ for all $s\in[0,1]$.
As a less trivial example, exploiting the fact that right translations are optimal transport maps in $\mathbb H^n$, see~\cite{AR04}*{Example~5.7} and~\cite{FJ08}*{Section~2.1}, we can prove the following result.
Here we let 
\begin{equation*}
\tilde\F_{\mathbb H^n}(f)=\int_{\mathbb H^n\cap\set*{f>0}}\frac{\|\tilde\nabla_{\mathbb H^n} f\|_{\mathbb H^n}^2}{f}\di x
\end{equation*}
be the Fisher information in the Heisenberg group $\mathbb H^n$ relative to the \emph{right-invariant horizontal gradient} $\tilde\nabla_{\mathbb H^n}$.

\begin{proposition}[An estimate of $\sigma$ for right translations]
Let $\mu_0=f_0\leb^{2n+1}\in\Dom(\Ent_{\L^{2n+1}})$ be such that $f_0\in\Cont_c^1(\R^{2n+1})$ with $\tilde\F_{\mathbb H^n}(f_0)<+\infty$.
Let $u\in\mathbb H^n$ be a horizontal point in $\mathbb H^n$, i.e.\ $u_{2n+1}=0$, and define $T_s(x)=xu_s$ for all $x\in\mathbb H^n$ and $s\in[0,1]$, where $u_s=su$. 
Then
$s\mapsto\mu_s=(T_s)_\sharp\mu_0$
is the unique $W_2$-geodesic 
joining $\mu_0$ with $\mu_1=(T_1)_\sharp\mu_0$ and 
\begin{equation*}
\sigma(s)
\le
\dcc(u,\neu)
\sqrt{2s(1-s)\,(C_{\mathbb H^n}^2-1)\,\tilde\F_{\mathbb H^n}(f)}
\end{equation*}
for all $s\in(0,1)$.
In particular, for any $\eps\in(0,1)$ there exists $\delta>0$ such that
\begin{equation*}
\dcc(u,\neu)\sqrt{\tilde\F_{\mathbb H^n}(f)}<\delta
\implies
\sigma(s)<w(s)\
\text{for all}\
s\in(\eps,1-\eps).
\end{equation*}
\end{proposition}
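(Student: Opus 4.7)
The curve $s \mapsto \mu_s = (T_s)_\sharp \mu_0$ will be identified as the unique $W_2$-geodesic from $\mu_0$ to $\mu_1$, with $W_2(\mu_0, \mu_1) = \dcc(u, \neu)$. Since $u$ is horizontal, $t \mapsto u_t = tu$ is the unique $\dcc$-geodesic from $\neu$ to $u$; by left-invariance of $\dcc$, for every $x \in \mathbb{H}^n$ the curve $t \mapsto T_t(x) = xu_t$ is a $\dcc$-geodesic from $x$ to $xu$ with $\dcc(x, T_1(x)) = \dcc(u, \neu)$. The Ambrosio--Rigot / Figalli--Juillet optimality of right-translations by horizontal elements in the Heisenberg group then gives $W_2^2(\mu_0, \mu_1) = \int \dcc(x, T_1(x))^2 \, d\mu_0(x) = \dcc(u, \neu)^2$, and uniqueness of the displacement interpolation (since $\mu_0 \ll \leb^{2n+1}$) yields $\mu_s = (T_s)_\sharp \mu_0$.

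\textbf{Step 2 (Fisher information estimate).} The heart of the proof is the bound
\[
\int_0^h \F_{\mathbb{H}^n}(\P_r \mu_s) \, dr \le h\, \tilde\F_{\mathbb{H}^n}(f_0) \quad \text{for every } h > 0,\ s \in [0,1].
\]
Write $f_s = f_0 \circ R_{u_s^{-1}}$ for the density of $\mu_s$, where $R_a(x) = xa$. Since each right-invariant horizontal field $\tilde X_i$ commutes with right-translations and $\leb^{2n+1}$ is bi-invariant, a change of variables immediately gives $\tilde\F_{\mathbb{H}^n}(f_s) = \tilde\F_{\mathbb{H}^n}(f_0)$. It therefore suffices to show $\F_{\mathbb{H}^n}(\P_r f_s) \le \tilde\F_{\mathbb{H}^n}(f_s)$ for every $r \ge 0$. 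Using the symmetry of $\p_r$ and the bi-invariance of $\m$, one rewrites $(\P_r f_s)(x) = \int f_s(xz^{-1}) \, \p_r(z) \, d\m(z)$; differentiating with a left-invariant field $X_i$ and invoking the Heisenberg identity $\mathrm{Ad}(z)e_i = e_i + [z, e_i]$ (whose second summand is central) expresses $X_i \P_r f_s$ as a convolution average naturally involving $\tilde X_i f_s$. A Cauchy--Schwarz argument on the Markov kernel $\p_r$ yields the pointwise bound $|\nabla_{\mathbb{H}^n} \P_r f_s|^2 \le \P_r(|\tilde\nabla_{\mathbb{H}^n} f_s|^2)$; combining this with $(\P_r a)^2/\P_r b \le \P_r(a^2/b)$ (Cauchy--Schwarz on $\P_r$) and mass preservation gives $\F_{\mathbb{H}^n}(\P_r f_s) \le \tilde\F_{\mathbb{H}^n}(f_s)$.

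\textbf{Step 3 (optimization and conclusion).} Substituting $W_2(\mu_0, \mu_1) = \dcc(u, \neu)$ and the previous bound into the definition of $\sigma(s)$, a straightforward AM--GM computation gives
\[
\sigma(s) \le \inf_{h>0}\left[\frac{s(1-s)(C_{\mathbb{H}^n}^2-1)\dcc(u,\neu)^2}{2h} + h\,\tilde\F_{\mathbb{H}^n}(f_0)\right] = \dcc(u,\neu)\sqrt{2s(1-s)(C_{\mathbb{H}^n}^2-1)\tilde\F_{\mathbb{H}^n}(f_0)},
\]
with the infimum attained at $h = \dcc(u,\neu)\sqrt{s(1-s)(C_{\mathbb{H}^n}^2-1)/(2\tilde\F_{\mathbb{H}^n}(f_0))}$. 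For the final implication, the explicit formula $w(s) = -2\log((1-s)^{1-s}s^s)$ shows that $w$ is continuous, symmetric about $s = 1/2$ and strictly positive on $(0,1)$, so $\min_{s \in [\eps, 1-\eps]} w(s) = w(\eps) > 0$. Since $\sqrt{s(1-s)} \le 1/2$ on $[0,1]$, the previous bound on $\sigma(s)$ is majorized by $\tfrac12 \dcc(u,\neu)\sqrt{2(C_{\mathbb{H}^n}^2-1)\tilde\F_{\mathbb{H}^n}(f_0)}$; choosing $\delta = w(\eps)\sqrt{2/(C_{\mathbb{H}^n}^2-1)}$ then yields $\sigma(s) < w(s)$ on $(\eps, 1-\eps)$ whenever $\dcc(u,\neu)\sqrt{\tilde\F_{\mathbb{H}^n}(f_0)} < \delta$.

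\textbf{Main obstacle.} Step~2, specifically the pointwise estimate $|\nabla_{\mathbb{H}^n}(\P_r f)|^2 \le \P_r(|\tilde\nabla_{\mathbb{H}^n} f|^2)$, is the main technical point: it cannot be extracted from $\wBE(C_{\mathbb{H}^n}, \infty)$ alone (which produces $C_{\mathbb{H}^n}^2\, \P_r|\nabla_{\mathbb{H}^n} f|^2$ on the right-hand side, with constant strictly greater than $1$) and genuinely relies on the explicit group structure of $\mathbb{H}^n$ through the fact that $\mathrm{Ad}$ shifts horizontal directions only by a central correction linear in the horizontal part of the base point.
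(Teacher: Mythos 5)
Your proposal is correct and follows essentially the same route as the paper: optimality of horizontal right translations \`a la Figalli--Juillet gives $W_2(\mu_0,\mu_1)=\dcc(u,\neu)$, the commutation of derivatives with the right convolution $\P_r f_s=f_s\star\p_r$ (i.e.\ $\nabla_{\mathbb H^n}(f_s\star\p_r)=(\tilde\nabla_{\mathbb H^n}f_s)\star\p_r$) combined with Jensen's inequality for $(a,b)\mapsto|a|^2/b$ yields $\F_{\mathbb H^n}(\P_r\mu_s)\le\tilde\F_{\mathbb H^n}(f_0)$, and optimizing the definition of $\sigma$ in $h$ gives the stated bound. The paper dispatches your Step~2 by citing the right-convolution analogue of~\cite{AS19}*{Lemma~4.5} rather than unwinding the adjoint action, and leaves the final choice of $\delta$ implicit, but the argument is the same.
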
  

\begin{proof}
The fact that $s\mapsto\mu_s=(T_s)_\sharp\mu_0$ is the unique $W_2$-geodesic 
joining $\mu_0$ with $\mu_1=(T_1)_\sharp\mu_0$ can be proved arguing as in~\cite{FJ08}*{Section~2.1} since, up to a rotation fixing the vertical axis $\set*{x\in\R^{2n+1} : x_i=0\ \text{for all}\ i=1,\dots,2n}$, one can also assume $u_i=0$ for all $i=2,\dots,2n$. 
We thus omit the details.
By the optimality of right translations, we have
\begin{align*}
W_2^2(\mu_0,\mu_1)
=
\int_{\mathbb H^n}\dcc^2(x,T_1(x))\di\mu_0(x)
=
\int_{\mathbb H^n}\dcc^2(x,xu)\di\mu_0(x)
=
\dcc^2(u,\neu).
\end{align*} 
Moreover, we can write $\mu_s=f_s\leb^{2n+1}$, where $f_s=f_0\circ T_s\in\Cont_c^1(\R^{2n+1})$, so that
\begin{equation*}
\nabla_{\mathbb H^n}(\P_rf_s)
=
\nabla_{\mathbb H^n}(f_s\star\p_r)
=
(\tilde\nabla_{\mathbb H^n}f_s)\star\p_r
\end{equation*}
for all $r>0$ and $s\in(0,1)$, and thus
\begin{align*}
\F_{\mathbb H^n}(\P_r f_s)
=
\int_{\mathbb H^n}\frac{|((\tilde\nabla_{\mathbb H^n}f_s)\star\p_r)(x)|^2_{\mathbb H^n}}{(f_s\star\p_r)(x)}\di x
\le
\int_{\mathbb H^n}\frac{|\tilde\nabla_{\mathbb H^n}f_s(x)|^2_{\mathbb H^n}}{f_s(x)}\di x
=
\int_{\mathbb H^n}\frac{|\tilde\nabla_{\mathbb H^n}f_0(x)|^2_{\mathbb H^n}}{f_0(x)}\di x
\end{align*}
for all $r>0$ and $s\in(0,1)$ by Jensen inequality, arguing as in the proof of~\cite{AS19}*{Lemma~4.5} (for right convolutions instead of left ones).
Hence, from~\eqref{eq:defect_Ent_Heis}, we get
\begin{align*}
\sigma(s)
&\le
s(1-s)\,
\frac{C_{\mathbb H^n}^2-1}{2h}\,W_2^2(\mu_0,\mu_1)
+
\int_0^h\F_{\mathbb H^n}(\P_r\mu_s)\di r
\\
&\le
s(1-s)\,
\frac{C_{\mathbb H^n}^2-1}{2h}\,\dcc^2(u,\neu)
+
h\,\tilde\F_{\mathbb H^n}(f_0)
\end{align*}
for all $h>0$ and $s\in(0,1)$. The conclusion thus follows by optimizing with respect to~$h>0$ and by recalling~\eqref{eq:defect_Ent_Balogh}. 
\end{proof}

\subsubsection{The \texorpdfstring{$\SU(2)$}{SU(2)} group}
\label{subsec:SU(2)_group}

The group $\SU(2)$ is the Lie group of $2\times 2$ complex unitary matrices with determinant~$1$.
Its Lie algebra $\mathfrak{su}(2)$ consists of all $2\times 2$ complex unitary skew-Hermitian matrices with trace~$0$.
Following the notation of~\cite{BB09}, a basis of $\mathfrak{su}(2)$ is given by the \emph{Pauli matrices}
\begin{align*}
X=
\begin{pmatrix}
0 & 1\\
-1 & 0\\
\end{pmatrix},
\quad
Y=
\begin{pmatrix}
0 & i\\
i & 0\\
\end{pmatrix},
\quad
Z=
\begin{pmatrix}
i & 0\\
0 & -i\\
\end{pmatrix},
\end{align*}
satisfying the relations
\begin{align*}
[X,Y]=2Z,
\quad
[Y,Z]=2X,
\quad
[Z,X]=2Y.
\end{align*}
We keep the same notation $X$, $Y$ and $Z$ for the left invariant vector fields on $\SU(2)$ corresponding to the Pauli matrices.
Similarly as before, we let 
\begin{equation*}
\nabla_{\SU(2)}f
=
(Xf)X+(Yf)Y
\end{equation*}
be the \emph{horizontal gradient} of $f$.
Using the cylindric coordinates
\begin{align*}
(r,\theta,z)
\mapsto
\exp(r\cos\theta\,X+r\sin\theta\,Y)
\,
\exp(\zeta\,Z)
=
\begin{pmatrix}
e^{i\zeta}\cos r & e^{i(\theta-\zeta)}\sin r\\
-e^{-i(\theta-\zeta)}\sin r & e^{-i\zeta}\cos r\\
\end{pmatrix}
\end{align*}
valid for $r\in[0,\frac\pi2)$, $\theta\in[0,2\pi]$ and $\zeta\in[-\pi,\pi]$ (originally introduced in~\cite{CS01}), the normalized bi-invariant Haar measure $\m\in\Prob(\SU(2))$ can be written as
\begin{equation*}
\di\m
=
\frac1{4\pi^2}\,\sin(2r)\di r\di\theta\di\zeta.
\end{equation*}
Once the left-invariant Riemannian metric $\scalar*{\cdot,\cdot}_{\SU(2)}$ making the basis $X$, $Y$, $Z$ orthonormal is introduced, we can endow the group $\SU(2)$ with the Carnot--Carathéodory distance $\dcc$ defined analogously as in~\eqref{eq:def_cc_distance_Carnot}. 
The resulting metric space $(\SU(2),\dcc)$ is a complete, separable, locally compact and geodesic space.

The standard sub-Laplacian operator is $\Delta_{\SU(2)}=X^2+Y^2$ and, again by H\"ormander Theorem, the heat operator $\de_t-\Delta_{\SU(2)}$ has a smooth fundamental solution $\p\colon(0,+\infty)\times\SU(2)\to(0,+\infty)$ which induces the associated heat flow $(\P_t)_{t\ge0}$ by right convolution (so that we can still identify $\H_t=\P_t$ for all $t\ge0$).

Arguing similarly in the case of Carnot groups (recall the previously cited~\cites{HS20,LeDLP19,LP20}), it is possible to identify the space $\Sob^{1,2}(\SU(2),\dcc,\m)$ with the horizontal Sobolev space
\begin{equation*}
\Sob^{1,2}_{\SU(2)}(\SU(2))
=
\set*{f\in\Leb^2(\SU(2),\m) : Xf,Yf\in\Leb^2(\SU(2),\m)}
\end{equation*}
defined using integration by parts against test functions, so that $\Sob^{1,2}(\SU(2),\dcc,\mu)$ is Hilbertian, the Cheeger energy coincides with the horizontal Dirichlet energy
\begin{equation*}
\Ch(f)
=
\int_{\SU(2)}\|\nabla_{\SU(2)}f\|_{\SU(2)}^2\di\m,
\quad
\text{for all}\
f\in\Sob^{1,2}_{\SU(2)}(\SU(2)),
\end{equation*}
and $|\D f|_w=\|\nabla_{\SU(2)}f\|_{\SU(2)}$.
We again refer the reader to~\cite{LeDLP19}*{Theorem~1.2} for a proof of these identifications.

Exploiting the group structure of $\SU(2)$ similarly to the case of Carnot groups, we get that if $f\in\Sob^{1,2}_{\SU(2)}(\SU(2))$ with $\|\nabla_{\SU(2)} f\|_{\SU(2)}\le L$, then $f$ agrees $\mu$-a.e.\ with a $\dcc$-Lipschitz function with Lipschitz constant not larger than~$L$.

Therefore, $(\SU(2),\dcc,\m)$ is a heat-smoothing admissible group and, combining~\cite{BB09}*{Theorem~4.10} with \cref{res:kuwada_equivalence} and \cref{res:th_equivalence}, we get the following result. 

\begin{theorem}[Equivalence in $\SU(2)$]
\label{res:th_equiv_SU(2)}
Let $(\SU(2),\dcc,\m)$ be as above.
There exists a constant $C_{\SU(2)}\ge\sqrt{2}$ satisfying the following four equivalent properties.

\begin{enumerate}[label=(\roman*)]

\item {\normalfont [$\BE_w(C_{\SU(2)}e^{-2t},\infty)$]}
If 
$f\in\Cont^\infty(\SU(2))$, 
then
$\Gamma^{\SU(2)}(\P_t f)\le C_{\SU(2)}^2e^{-4t}\,\P_t\Gamma^{\SU(2)}(f)$
for all $t\ge0$.

\item {\normalfont [Kuwada]}
If 
$\mu,\nu\in\Prob_2(\SU(2))$,
then
$W_2(\P_t\mu,\P_t\nu)\le C_{\SU(2)}e^{-2t}\,W_2(\mu,\nu)$
for all $t\ge0$.

\item {\normalfont [$\EVI_w(C_{\SU(2)}e^{-2t})$]}
If 
$\mu_0,\mu_1\in\Dom(\Ent_{\m})$, 
then
\begin{equation*}
W_2^2(\P_{t_1}\mu_1,\P_{t_0}\mu_0)
-C_{\SU(2)}^2\,\frac{4(t_1-t_0)}{e^{4t_1}-e^{4t_0}}\,W_2^2(\mu_1,\mu_0)
\le
2\,(t_1-t_0)\,
\big(\Ent_{\m}(\P_{t_0}\mu_0)-\Ent_{\m}(\P_{t_1}\mu_1)\big)	
\end{equation*}
for all $0\le t_0\le t_1$.

\item
{\normalfont [$\RCD_w(C_{\SU(2)}e^{-2t},\infty)$]}
If 
$s\mapsto\mu_s\in\mathrm{Geo}([0,1];\Prob_2(\SU(2)))$ is a geodesic connecting
$\mu_0,\mu_1\in\Dom(\Ent_{\m})$, 
then
\begin{align*}
\Ent_{\m}(\P_{t+h}\mu_s)
&\le 
(1-s)\,\Ent_{\m}(\P_t\mu_0)
+s\,\Ent_{\m}(\P_t\mu_1)\\
&\quad+\frac{s(1-s)}{2h}
\left(\frac{4C_{\SU(2)}^2h}{e^{4(t+h)}-e^{4t}}\,
W_2^2(\mu_0,\mu_1)-W_2^2(\P_t\mu_0,\P_t\mu_1)\right)
\end{align*} 
for all $s\in[0,1]$, $t\ge0$ and $h>0$. 
\end{enumerate}	
\end{theorem}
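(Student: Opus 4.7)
The plan is to close the implications cyclically as \ref{item:th_equiv_wBE}$\,\Rightarrow\,$\ref{item:th_equiv_wEVI}$\,\Rightarrow\,$\ref{item:th_equiv_wBE} and \ref{item:th_equiv_wBE}$\,\Rightarrow\,$\ref{item:th_equiv_wConvex}$\,\Rightarrow\,$\ref{item:th_equiv_wBE}, since the two ``reverse'' implications are elementary: assuming \ref{item:th_equiv_wEVI}, the diagonal choice $t_0=t_1=t$ forces $\RI(t,t)=\c^{-2}(t)$ and therefore yields the $W_2$-contraction $W_2(\H_t\mu_1,\H_t\mu_0)\le\c(t)\,W_2(\mu_1,\mu_0)$ on the $W_2$-dense subset $\Dom(\Ent_\m)\subset\Prob_2^{\rm ac}(X)$, so \cref{res:kuwada_equivalence} delivers \ref{item:th_equiv_wBE}; similarly, \ref{item:th_equiv_wConvex} includes the assumption that $(\H_t)_{t\ge0}$ admits the Kuwada contraction~\eqref{eq:dual_heat_weak_kuwada}, so \cref{res:kuwada_equivalence} again yields \ref{item:th_equiv_wBE} for free.

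For the core implication \ref{item:th_equiv_wBE}$\,\Rightarrow\,$\ref{item:th_equiv_wEVI}, I would fix $0\le t_0\le t_1$, pick $\mu_0\in\Dom(\Ent_\m)$ and $\mu_1\in\Prob_2(X)$, and select any curve $s\mapsto\mu_s\in\AC^2([0,1];\Prob_2(X))$ connecting them. Approximate $s\mapsto\mu_s$ by the strongly regular curves $s\mapsto\mu_s^n$ produced by \cref{res:approx_strong_reg_curves}; this is precisely the step where the group structure is indispensable, as the left-convolution regularization of \cref{res:convolution_velocity} replaces the heat regularization of the standard $\BE(K,\infty)$ framework (which would produce a velocity blow-up of factor $\c(\eps)>1$). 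For each $n$, apply \cref{res:action_ent_estimate_reg_curves} with the \emph{linear} choice $\eta(s)=(1-s)t_0+st_1$, so that $\ddot\eta\equiv0$ kills the intermediate-entropy integral and $\I_{-2,\eta}(1)=\RI(t_0,t_1)$. This produces
\begin{equation*}
\tfrac12\,W_2^2(\H_{t_1}\mu_1^n,\H_{t_0}\mu_0^n)+(t_1-t_0)\,E_\eps(\H_{t_1}\mu_1^n)\le(t_1-t_0)\,E_\eps(\H_{t_0}\mu_0^n)+\tfrac{1}{2\RI(t_0,t_1)}\int_0^1|\dot\mu_s^n|^2\di s.
\end{equation*}
I would then pass to the limit $\eps\to0^+$ using~\eqref{eq:elio} on the left and~\eqref{eq:masterone} (together with the $\Leb^2$-density of $\mu_0^n$) on the right, next let $n\to+\infty$ exploiting the $W_2$-convergence of heated marginals via~\eqref{eq:dual_heat_weak_kuwada}, the lower semicontinuity of entropy on $\H_{t_1}\mu_1^n$, and the sharp continuity statement \cref{res:approx_strong_reg_curves}\ref{item:approx_reg_curves_Ent} on $\H_{t_0}\mu_0^n$, and finally minimize over $\mu_\cdot$ using the definition of $W_2$-action.

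For \ref{item:th_equiv_wBE}$\,\Rightarrow\,$\ref{item:th_equiv_wConvex}, knowing already that \ref{item:th_equiv_wBE}$\,\Leftrightarrow\,$\ref{item:th_equiv_wEVI} and using \cref{res:LlogL_reg} to ensure $\H_t\mu\in\Dom(\Ent_\m)$ for all $t>0$, I would follow the argument of~\cite{DS08}*{Theorem~3.2}: given a $W_2$-geodesic $s\mapsto\mu_s$, apply \eqref{eq:th_equiv_wEVI} to both pairs $(\mu_0,\mu_s)$ and $(\mu_1,\mu_s)$ with times $(t,t+h)$, take the $(1-s,s)$-convex combination, use the geodesic identity $(1-s)W_2^2(\mu_s,\mu_0)+sW_2^2(\mu_s,\mu_1)=s(1-s)W_2^2(\mu_0,\mu_1)$ and the elementary triangle-type bound $(1-s)W_2^2(\H_{t+h}\mu_s,\H_t\mu_0)+sW_2^2(\H_{t+h}\mu_s,\H_t\mu_1)\ge s(1-s)\,W_2^2(\H_t\mu_0,\H_t\mu_1)$ to dispense with the intermediate-time terms and rearrange into \eqref{eq:th_equiv_wConvex}.

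The main obstacle will be the limit $n\to+\infty$ in the action-entropy inequality: one needs simultaneously lower semicontinuity of $\Ent_\m$ at the \emph{upper} endpoint, an upper-semicontinuous control at the \emph{lower} endpoint, and a sharp velocity estimate $\limsup_n\int_0^1|\dot\mu_s^n|^2\le\int_0^1|\dot\mu_s|^2$ for the regularized curves. It is exactly to secure these three properties under only $\BE_w(\c,\infty)$ (where $\underline\c(0^+)$ can exceed $1$) that \cref{res:approx_strong_reg_curves} is built by left-convolution against symmetric group kernels rather than by the dual heat semigroup, making the group hypothesis unavoidable at this point.
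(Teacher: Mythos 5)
Your argument is, in substance, a correct reconstruction of the proof of the general equivalence theorem for admissible heat-smoothing groups (\cref{res:th_equivalence}), and it follows the same route as the paper: left-convolution approximation by strongly regular curves, the action--entropy estimate with the linear time-change $\eta(s)=(1-s)t_0+st_1$, the double limit $\eps\to0^+$ then $n\to+\infty$, and the Daneri--Savar\'e argument for the heated displacement convexity. However, the statement to be proved here is \cref{res:th_equiv_SU(2)}, and with respect to that statement the proposal has a genuine gap: it establishes only the abstract equivalence of the four properties in a general admissible heat-smoothing group, not the theorem about $\SU(2)$ itself.

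Two $\SU(2)$-specific ingredients are missing. First, the theorem asserts the \emph{existence} of a constant $C_{\SU(2)}\ge\sqrt2$ for which the four (equivalent) properties actually hold; this cannot be produced by the equivalence machinery and requires the pointwise gradient bound $\Gamma^{\SU(2)}(\P_tf)\le C^2_{\SU(2)}e^{-4t}\,\P_t\Gamma^{\SU(2)}(f)$ of Baudoin--Bonnefont, \cite{BB09}*{Theorem~4.10}, which is the external analytic input the paper quotes and without which the statement is vacuous. Second, before \cref{res:th_equivalence} can be invoked one must check that $(\SU(2),\dcc,\m)$ is a heat-smoothing admissible group: the identification of $\Sob^{1,2}(\SU(2),\dcc,\m)$ with the horizontal Sobolev space $\Sob^{1,2}_{\SU(2)}(\SU(2))$ (so that the Cheeger energy is quadratic, property~\ref{assumption:quadratic_Ch}), the Lipschitz--Rademacher property~\ref{assumption:Lip_Rademacher}, and the heat-smoothing identity $|\D\P_tf|=|\D\P_tf|_w$ of \cref{def:heat-smoothing_space}, which is \emph{not} automatic here because $\underline\c(0^+)=C_{\SU(2)}>1$ (so \cref{res:Fellerazzo} gives nothing) and must instead be extracted from the hypoellipticity and smoothness of the subelliptic heat kernel. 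Finally, the explicit coefficients in items (iii) and (iv) of the statement require specializing $\RI(t_0,t_1)^{-1}=C^2_{\SU(2)}\,\tfrac{4(t_1-t_0)}{e^{4t_1}-e^{4t_0}}$ for $\c(t)=C_{\SU(2)}e^{-2t}$; this is routine but is part of what has to be verified.
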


Since $(\SU(2),\dcc,\m)$ is a \emph{Sasakian manifold}, the resulting sub-Riemannian structure on~$\SU(2)$ is \emph{ideal}, see~\cite{BR20}*{Definition~13 and Section~7.4} for the precise definitions.
Thus, according to~\cite{BR20}*{Theorem~39}, if $\mu_0,\mu_1\in\Dom(\Ent_{\m})$ are two compactly supported probability measures, then the unique Wassestein geodesic $s\mapsto\mu_s\in\mathrm{Geo}([0,1];\Prob_2(\SU(2)))$ joining them satisfies $\mu_s\ll\m$ for all $s\in[0,1]$. 
Thanks to~\cite{BR20}*{Theorem~9 and Corollary~67} (see also~\cites{AL14,LLZ16}), it actually holds that $\mu_s\in\Dom(\Ent_\m)$ for all $s\in(0,1)$ and the function $s\mapsto\Ent_\m(\mu_s)$ satisfies an inequality similar to~\eqref{eq:Ent_convex_Heis_Balogh}.

Up to our knowledge, there is no analogue of the entropy dissipation for $\Leb^1$-densities proved in~\cite{AS19}*{Proposition~4.2} for the $\SU(2)$ group and we can only rely on the general result for $\Leb^1\cap\Leb^2$-densities obtained in~\cite{AGS14}*{Proposition~4.22}. 
Thus, at the present moment, an inequality for the function $s\mapsto\Ent_\m(\mu_s)$ similar to~\eqref{eq:civetta} holds in the $\SU(2)$ group under the additional assumption that $\frac{\di\mu_s}{\di\m}\in\Leb^2(X,\m)$ for some  $s\in(0,1)$.
Also, up to our knowledge, it is not known if right translations in the $\SU(2)$ group are optimal transport maps.
For this reason, a comparison of the entropic inequalities in the $\SU(2)$ group analogous to the one done above for Carnot groups is not easily reachable at the present moment.
We will hopefully come back to this topic in a future work.


\begin{bibdiv}
\begin{biblist}

\bib{ABB20}{book}{
   author={Agrachev, Andrei},
   author={Barilari, Davide},
   author={Boscain, Ugo},
   title={A Comprehensive Introduction to sub-Riemannian Geometry},
   series={Cambridge Studies in Advanced Mathematics},
   volume={181},
   publisher={Cambridge University Press, Cambridge},
   date={2020},
}

\bib{ABR17}{article}{
   author={Agrachev, Andrei},
   author={Barilari, Davide},
   author={Rizzi, Luca},
   title={Sub-Riemannian curvature in contact geometry},
   journal={J. Geom. Anal.},
   volume={27},
   date={2017},
   number={1},
   pages={366--408},
}

\bib{ABR18}{article}{
   author={Agrachev, A.},
   author={Barilari, D.},
   author={Rizzi, L.},
   title={Curvature: a variational approach},
   journal={Mem. Amer. Math. Soc.},
   volume={256},
   date={2018},
   number={1225},
   pages={v+142},
}

\bib{AL09}{article}{
   author={Agrachev, Andrei},
   author={Lee, Paul},
   title={Optimal transportation under nonholonomic constraints},
   journal={Trans. Amer. Math. Soc.},
   volume={361},
   date={2009},
   number={11},
   pages={6019--6047},
}

\bib{AL14}{article}{
   author={Agrachev, Andrei},
   author={Lee, Paul},
   title={Generalized Ricci curvature bounds for three dimensional contact subriemannian manifolds},
   journal={Math. Ann.},
   volume={360},
   date={2014},
   number={1-2},
   pages={209--253},
}

\bib{AZ02-I}{article}{
   author={Agrachev, A.},
   author={Zelenko, I.},
   title={Geometry of Jacobi curves. I},
   journal={J. Dynam. Control Systems},
   volume={8},
   date={2002},
   number={1},
   pages={93--140},
}

\bib{AZ02-II}{article}{
   author={Agrachev, A.},
   author={Zelenko, I.},
   title={Geometry of Jacobi curves. II},
   journal={J. Dynam. Control Systems},
   volume={8},
   date={2002},
   number={2},
   pages={167--215},
}

\bib{ACDiM15}{article}{
   author={Ambrosio, Luigi},
   author={Colombo, Maria},
   author={Di Marino, Simone},
   title={Sobolev spaces in metric measure spaces: reflexivity and lower semicontinuity of slope},
   conference={
      title={Variational methods for evolving objects},
   },
   book={
      series={Adv. Stud. Pure Math.},
      volume={67},
      publisher={Math. Soc. Japan, Tokyo},
   },
   date={2015},
   pages={1--58},
}


\bib{AGM15}{article}{
   author={Ambrosio, L.},
   author={Ghezzi, R.},
   author={Magnani, V.},
   title={BV functions and sets of finite perimeter in sub-Riemannian
   manifolds},
   journal={Ann. Inst. H. Poincar\'{e} Anal. Non Lin\'{e}aire},
   volume={32},
   date={2015},
   number={3},
   pages={489--517},
}

\bib{AG13}{article}{
   author={Ambrosio, Luigi},
   author={Gigli, Nicola},
   title={A user's guide to optimal transport},
   conference={
      title={Modelling and optimisation of flows on networks},
   },
   book={
      series={Lecture Notes in Math.},
      volume={2062},
      publisher={Springer, Heidelberg},
   },
   date={2013},
   pages={1--155},
}

\bib{AGMR15}{article}{
   author={Ambrosio, Luigi},
   author={Gigli, Nicola},
   author={Mondino, Andrea},
   author={Rajala, Tapio},
   title={Riemannian Ricci curvature lower bounds in metric measure spaces with $\sigma$-finite measure},
   journal={Trans. Amer. Math. Soc.},
   volume={367},
   date={2015},
   number={7},
   pages={4661--4701},
}

\bib{AGS08}{book}{
   author={Ambrosio, Luigi},
   author={Gigli, Nicola},
   author={Savar\'e, Giuseppe},
   title={Gradient flows in metric spaces and in the space of probability
   measures},
   series={Lectures in Mathematics ETH Z\"urich},
   edition={2},
   publisher={Birkh\"auser Verlag, Basel},
   date={2008},
   pages={x+334},
}

\bib{AGS13}{article}{
   author={Ambrosio, Luigi},
   author={Gigli, Nicola},
   author={Savar\'e, Giuseppe},
   title={Density of Lipschitz functions and equivalence of weak gradients
   in metric measure spaces},
   journal={Rev. Mat. Iberoam.},
   volume={29},
   date={2013},
   number={3},
   pages={969--996},
}

\bib{AGS14}{article}{
   author={Ambrosio, Luigi},
   author={Gigli, Nicola},
   author={Savar\'e, Giuseppe},
   title={Calculus and heat flow in metric measure spaces and applications to spaces with Ricci bounds from below},
   journal={Invent. Math.},
   volume={195},
   date={2014},
   number={2},
   pages={289--391},
}

\bib{AGS14-2}{article}{
   author={Ambrosio, Luigi},
   author={Gigli, Nicola},
   author={Savar\'e, Giuseppe},
   title={Metric measure spaces with Riemannian Ricci curvature bounded from
   below},
   journal={Duke Math. J.},
   volume={163},
   date={2014},
   number={7},
   pages={1405--1490},
}

\bib{AGS15}{article}{
   author={Ambrosio, Luigi},
   author={Gigli, Nicola},
   author={Savar\'e, Giuseppe},
   title={Bakry-\'Emery curvature-dimension condition and Riemannian Ricci
   curvature bounds},
   journal={Ann. Probab.},
   volume={43},
   date={2015},
   number={1},
   pages={339--404},
}

\bib{AMS15}{article}{
   author={Ambrosio, Luigi},
   author={Mondino, Andrea},
   author={Savar\'{e}, Giuseppe},
   title={Nonlinear diffusion equations and curvature conditions in metric measure spaces},
   journal={Mem. Amer. Math. Soc.},
   volume={262},
   date={2019},
   number={1270},
}

\bib{AR04}{article}{
   author={Ambrosio, L.},
   author={Rigot, S.},
   title={Optimal mass transportation in the Heisenberg group},
   journal={J. Funct. Anal.},
   volume={208},
   date={2004},
   number={2},
   pages={261--301},
}

\bib{ASZ09}{article}{
   author={Ambrosio, Luigi},
   author={Savar\'{e}, Giuseppe},
   author={Zambotti, Lorenzo},
   title={Existence and stability for Fokker-Planck equations with
   log-concave reference measure},
   journal={Probab. Theory Related Fields},
   volume={145},
   date={2009},
   number={3-4},
   pages={517--564},
}

\bib{AS19}{article}{
   author={Ambrosio, Luigi},
   author={Stefani, Giorgio},
   title={Heat and entropy flows in Carnot groups},
   journal={Rev. Mat. Iberoam.},
   volume={36},
   date={2020},
   number={1},
   pages={257--290},
}

\bib{B19}{article}{
   author={Badreddine, Z.},
   title={Mass transportation on sub-Riemannian structures of rank two in
   dimension four},
   journal={Ann. Inst. H. Poincar\'{e} Anal. Non Lin\'{e}aire},
   volume={36},
   date={2019},
   number={3},
   pages={837--860},
}

\bib{B94}{article}{
   author={Bakry, Dominique},
   title={L'hypercontractivit\'{e} et son utilisation en th\'{e}orie des semigroupes},
   language={French},
   conference={
      title={Lectures on probability theory},
      address={Saint-Flour},
      date={1992},
   },
   book={
      series={Lecture Notes in Math.},
      volume={1581},
      publisher={Springer, Berlin},
   },
   date={1994},
   pages={1--114},
}

\bib{B06}{article}{
   author={Bakry, Dominique},
   title={Functional inequalities for Markov semigroups},
   conference={
      title={Probability measures on groups: recent directions and trends},
   },
   book={
      publisher={Tata Inst. Fund. Res., Mumbai},
   },
   date={2006},
   pages={91--147},
}

\bib{BGL14}{book}{
   author={Bakry, Dominique},
   author={Gentil, Ivan},
   author={Ledoux, Michel},
   title={Analysis and geometry of Markov diffusion operators},
   series={Grundlehren der Mathematischen Wissenschaften [Fundamental
   Principles of Mathematical Sciences]},
   volume={348},
   publisher={Springer, Cham},
   date={2014},
}

\bib{BGL15}{article}{
   author={Bakry, Dominique},
   author={Gentil, Ivan},
   author={Ledoux, Michel},
   title={On Harnack inequalities and optimal transportation},
   journal={Ann. Sc. Norm. Super. Pisa Cl. Sci. (5)},
   volume={14},
   date={2015},
   number={3},
   pages={705--727},
}

\bib{BBBC08}{article}{
   author={Bakry, Dominique},
   author={Baudoin, Fabrice},
   author={Bonnefont, Michel},
   author={Chafa\"{\i}, Djalil},
   title={On gradient bounds for the heat kernel on the Heisenberg group},
   journal={J. Funct. Anal.},
   volume={255},
   date={2008},
   number={8},
   pages={1905--1938},
}

\bib{BE85}{article}{
   author={Bakry, D.},
   author={\'{E}mery, Michel},
   title={Diffusions hypercontractives},
   conference={
      title={S\'{e}minaire de probabilit\'{e}s, XIX, 1983/84},
   },
   book={
      series={Lecture Notes in Math.},
      volume={1123},
      publisher={Springer, Berlin},
   },
   date={1985},
   pages={177--206},
}

\bib{BL06}{article}{
   author={Bakry, Dominique},
   author={Ledoux, Michel},
   title={A logarithmic Sobolev form of the Li-Yau parabolic inequality},
   journal={Rev. Mat. Iberoam.},
   volume={22},
   date={2006},
   number={2},
   pages={683--702},
}

\bib{BKS18}{article}{
   author={Balogh, Zolt\'{a}n M.},
   author={Krist\'{a}ly, Alexandru},
   author={Sipos, Kinga},
   title={Geometric inequalities on Heisenberg groups},
   journal={Calc. Var. Partial Differential Equations},
   volume={57},
   date={2018},
   number={2},
   pages={Paper No. 61, 41},
}

\bib{BKS19}{article}{
   author={Balogh, Zolt\'{a}n M.},
   author={Krist\'{a}ly, Alexandru},
   author={Sipos, Kinga},
   title={Jacobian determinant inequality on corank 1 Carnot groups with
   applications},
   journal={J. Funct. Anal.},
   volume={277},
   date={2019},
   number={12},
   pages={108293, 36},
}

\bib{BI19}{article}{
   author={Barilari, Davide},
   author={Ivanov, Stefan},
   title={A Bonnet-Myers type theorem for quaternionic contact structures},
   journal={Calc. Var. Partial Differential Equations},
   volume={58},
   date={2019},
   number={1},
   pages={Paper No. 37, 26},
}

\bib{BR16}{article}{
   author={Barilari, Davide},
   author={Rizzi, Luca},
   title={Comparison theorems for conjugate points in sub-Riemannian
   geometry},
   journal={ESAIM Control Optim. Calc. Var.},
   volume={22},
   date={2016},
   number={2},
   pages={439--472},
}

\bib{BR17}{article}{
   author={Barilari, Davide},
   author={Rizzi, Luca},
   title={On Jacobi fields and a canonical connection in sub-Riemannian
   geometry},
   journal={Arch. Math. (Brno)},
   volume={53},
   date={2017},
   number={2},
   pages={77--92},
}

\bib{BR18}{article}{
   author={Barilari, Davide},
   author={Rizzi, Luca},
   title={Sharp measure contraction property for generalized H-type Carnot
   groups},
   journal={Commun. Contemp. Math.},
   volume={20},
   date={2018},
   number={6},
   pages={1750081, 24},
}

\bib{BR19}{article}{
   author={Barilari, Davide},
   author={Rizzi, Luca},
   title={Sub-Riemannian interpolation inequalities},
   journal={Invent. Math.},
   volume={215},
   date={2019},
   number={3},
   pages={977--1038},
}

\bib{BR20}{article}{
   author={Barilari, Davide},
   author={Rizzi, Luca},
   title={Bakry-\'{E}mery curvature and model spaces in sub-Riemannian geometry},
   journal={Math. Ann.},
   volume={377},
   date={2020},
   number={1-2},
   pages={435--482},
}

\bib{B17}{article}{
   author={Baudoin, Fabrice},
   title={Bakry-\'{E}mery meet Villani},
   journal={J. Funct. Anal.},
   volume={273},
   date={2017},
   number={7},
   pages={2275--2291},
}

\bib{BB09}{article}{
   author={Baudoin, Fabrice},
   author={Bonnefont, Michel},
   title={The subelliptic heat kernel on ${\rm SU}(2)$: representations,
   asymptotics and gradient bounds},
   journal={Math. Z.},
   volume={263},
   date={2009},
   number={3},
   pages={647--672},
}

\bib{BB15}{article}{
   author={Baudoin, Fabrice},
   author={Bonnefont, Michel},
   title={Curvature-dimension estimates for the Laplace-Beltrami operator of
   a totally geodesic foliation},
   journal={Nonlinear Anal.},
   volume={126},
   date={2015},
   pages={159--169},
}

\bib{BB16}{article}{
   author={Baudoin, Fabrice},
   author={Bonnefont, Michel},
   title={Reverse Poincar\'{e} inequalities, isoperimetry, and Riesz transforms
   in Carnot groups},
   journal={Nonlinear Anal.},
   volume={131},
   date={2016},
   pages={48--59},
}

\bib{BBG14}{article}{
   author={Baudoin, Fabrice},
   author={Bonnefont, Michel},
   author={Garofalo, Nicola},
   title={A sub-Riemannian curvature-dimension inequality, volume doubling property and the Poincar\'{e} inequality},
   journal={Math. Ann.},
   volume={358},
   date={2014},
   number={3-4},
   pages={833--860},
}

\bib{BBGM14}{article}{
   author={Baudoin, Fabrice},
   author={Bonnefont, Michel},
   author={Garofalo, Nicola},
   author={Munive, Isidro H.},
   title={Volume and distance comparison theorems for sub-Riemannian
   manifolds},
   journal={J. Funct. Anal.},
   volume={267},
   date={2014},
   number={7},
   pages={2005--2027},
}

\bib{BC15}{article}{
   author={Baudoin, Fabrice},
   author={Cecil, Matthew},
   title={The subelliptic heat kernel on the three-dimensional solvable Lie
   groups},
   journal={Forum Math.},
   volume={27},
   date={2015},
   number={4},
   pages={2051--2086},
}

\bib{BG17}{article}{
   author={Baudoin, Fabrice},
   author={Garofalo, Nicola},
   title={Curvature-dimension inequalities and Ricci lower bounds for
   sub-Riemannian manifolds with transverse symmetries},
   journal={J. Eur. Math. Soc. (JEMS)},
   volume={19},
   date={2017},
   number={1},
   pages={151--219},
}

\bib{BK19}{article}{
   author={Baudoin, Fabrice},
   author={Kelleher, Daniel J.},
   title={Differential one-forms on Dirichlet spaces and Bakry-\'{E}mery
   estimates on metric graphs},
   journal={Trans. Amer. Math. Soc.},
   volume={371},
   date={2019},
   number={5},
   pages={3145--3178},
}

\bib{BK14}{article}{
   author={Baudoin, Fabrice},
   author={Kim, Bumsik},
   title={Sobolev, Poincar\'{e}, and isoperimetric inequalities for subelliptic diffusion operators satisfying a generalized curvature dimension
   inequality},
   journal={Rev. Mat. Iberoam.},
   volume={30},
   date={2014},
   number={1},
   pages={109--131},
}

\bib{BW14}{article}{
   author={Baudoin, Fabrice},
   author={Wang, Jing},
   title={Curvature dimension inequalities and subelliptic heat kernel
   gradient bounds on contact manifolds},
   journal={Potential Anal.},
   volume={40},
   date={2014},
   number={2},
   pages={163--193},
}

\bib{B12}{article}{
   author={Bejancu, Aurel},
   title={Curvature in sub-Riemannian geometry},
   journal={J. Math. Phys.},
   volume={53},
   date={2012},
   number={2},
   pages={023513, 25},
}

\bib{BF13}{article}{
   author={Bejancu, Aurel},
   author={Farran, Hani Reda},
   title={Curvature of $CR$ manifolds},
   journal={An. \c{S}tiin\c{t}. Univ. Al. I. Cuza Ia\c{s}i. Mat. (N.S.)},
   volume={59},
   date={2013},
   number={1},
   pages={43--72},
}

\bib{Bere17}{article}{
   author={Berestovskii, Valerii N.},
   title={Curvatures of homogeneous sub-Riemannian manifolds},
   journal={Eur. J. Math.},
   volume={3},
   date={2017},
   number={4},
   pages={788--807},
}

\bib{BM95}{article}{
   author={Biroli, M.},
   author={Mosco, U.},
   title={A Saint-Venant type principle for Dirichlet forms on discontinuous media},
   journal={Ann. Mat. Pura Appl. (4)},
   volume={169},
   date={1995},
   pages={125--181},
}

\bib{B07}{book}{
   author={Bogachev, V. I.},
   title={Measure theory. Vol. I, II},
   publisher={Springer-Verlag, Berlin},
   date={2007},
   pages={Vol. I: xviii+500 pp., Vol. II: xiv+575},
}

\bib{BGG14}{article}{
   author={Bolley, F.},
   author={Gentil, I.},
   author={Guillin, A.},
   title={Dimensional contraction via Markov transportation distance},
   journal={J. Lond. Math. Soc. (2)},
   volume={90},
   date={2014},
   number={1},
   pages={309--332},
}

\bib{BGGK18}{article}{
   author={Bolley, F.},
   author={Gentil, Ivan},
   author={Guillin, A.},
   author={Kuwada, K.},
   title={Equivalence between dimensional contractions in Wasserstein distance and the curvature-dimension condition},
   journal={Ann. Sc. Norm. Super. Pisa Cl. Sci. (5)},
   volume={18},
   date={2018},
   number={3},
}

\bib{BLU07}{book}{
   author={Bonfiglioli, A.},
   author={Lanconelli, E.},
   author={Uguzzoni, F.},
   title={Stratified Lie groups and potential theory for their sub-Laplacians},
   series={Springer Monographs in Mathematics},
   publisher={Springer, Berlin},
   date={2007},
}

\bib{BH91}{book}{
   author={Bouleau, Nicolas},
   author={Hirsch, Francis},
   title={Dirichlet forms and analysis on Wiener space},
   series={De Gruyter Studies in Mathematics},
   volume={14},
   publisher={Walter de Gruyter \& Co., Berlin},
   date={1991},
   pages={x+325},
}

\bib{B73}{book}{
   author={Br\'ezis, H.},
   title={Op\'{e}rateurs maximaux monotones et semi-groupes de contractions dans les espaces de Hilbert},
   language={French},
   note={North-Holland Mathematics Studies, No. 5. Notas de Matem\'{a}tica (50)},
   publisher={North-Holland Publishing Co., Amsterdam-London; American Elsevier Publishing Co., Inc., New York},
   date={1973},
}

\bib{B11}{book}{
   author={Br\'ezis, H.},
   title={Functional analysis, Sobolev spaces and partial differential
   equations},
   series={Universitext},
   publisher={Springer, New York},
   date={2011},
   pages={xiv+599},
}

\bib{CKS87}{article}{
   author={Carlen, E. A.},
   author={Kusuoka, S.},
   author={Stroock, D. W.},
   title={Upper bounds for symmetric Markov transition functions},
   journal={Ann. Inst. H. Poincar\'{e} Probab. Statist.},
   volume={23},
   date={1987},
   number={2, suppl.},
   pages={245--287},
}

\bib{CM16}{article}{
   author={Cavalletti, Fabio},
   author={Mondino, Andrea},
   title={Measure rigidity of Ricci curvature lower bounds},
   journal={Adv. Math.},
   volume={286},
   date={2016},
   pages={430--480},
}

\bib{CM17}{article}{
   author={Cavalletti, Fabio},
   author={Mondino, Andrea},
   title={Optimal maps in essentially non-branching spaces},
   journal={Commun. Contemp. Math.},
   volume={19},
   date={2017},
   number={6},
   pages={1750007, 27},
}

\bib{C99}{article}{
   author={Cheeger, J.},
   title={Differentiability of Lipschitz functions on metric measure spaces},
   journal={Geom. Funct. Anal.},
   volume={9},
   date={1999},
   number={3},
   pages={428--517},
}

\bib{CTZ21}{article}{
   author={Cheng, Li-Juan},
   author={Thalmaier, Anton},
   author={Zhang, Shao-Qin},
   title={Exponential contraction in Wasserstein distance on static and
   evolving manifolds},
   journal={Rev. Roumaine Math. Pures Appl.},
   volume={66},
   date={2021},
   number={1},
   pages={107--129},
}

\bib{CMS01}{article}{
   author={Cordero-Erausquin, Dario},
   author={McCann, Robert J.},
   author={Schmuckenschl\"{a}ger, Michael},
   title={A Riemannian interpolation inequality \`a la Borell, Brascamp and
   Lieb},
   journal={Invent. Math.},
   volume={146},
   date={2001},
   number={2},
   pages={219--257},
}

\bib{CJKS20}{article}{
   author={Coulhon, Thierry},
   author={Jiang, Renjin},
   author={Koskela, Pekka},
   author={Sikora, Adam},
   title={Gradient estimates for heat kernels and harmonic functions},
   journal={J. Funct. Anal.},
   volume={278},
   date={2020},
   number={8},
   pages={108398, 67},
}

\bib{CS01}{article}{
   author={Cowling, Michael},
   author={Sikora, Adam},
   title={A spectral multiplier theorem for a sublaplacian on $\rm SU(2)$},
   journal={Math. Z.},
   volume={238},
   date={2001},
   number={1},
   pages={1--36},
}

\bib{DS08}{article}{
   author={Daneri, Sara},
   author={Savar\'{e}, Giuseppe},
   title={Eulerian calculus for the displacement convexity in the Wasserstein distance},
   journal={SIAM J. Math. Anal.},
   volume={40},
   date={2008},
   number={3},
   pages={1104--1122},
}

\bib{DR11}{article}{
   author={De Pascale, Luigi},
   author={Rigot, Severine},
   title={Monge's transport problem in the Heisenberg group},
   journal={Adv. Calc. Var.},
   volume={4},
   date={2011},
   number={2},
   pages={195--227},
}

\bib{DM05}{article}{
   author={Driver, Bruce K.},
   author={Melcher, Tai},
   title={Hypoelliptic heat kernel inequalities on the Heisenberg group},
   journal={J. Funct. Anal.},
   volume={221},
   date={2005},
   number={2},
   pages={340--365},
}

\bib{D19}{book}{
   author={Durrett, Rick},
   title={Probability: Theory and Examples},
   series={Cambridge Series in Statistical and Probabilistic Mathematics},
   volume={49},
   publisher={Cambridge University Press, Cambridge},
   date={2019},
}

\bib{E10}{article}{
   author={Erbar, Matthias},
   title={The heat equation on manifolds as a gradient flow in the Wasserstein space},
   journal={Ann. Inst. Henri Poincar\'{e} Probab. Stat.},
   volume={46},
   date={2010},
   number={1},
   pages={1--23},
}


\bib{EKS15}{article}{
   author={Erbar, Matthias},
   author={Kuwada, Kazumasa},
   author={Sturm, Karl-Theodor},
   title={On the equivalence of the entropic curvature-dimension condition
   and Bochner's inequality on metric measure spaces},
   journal={Invent. Math.},
   volume={201},
   date={2015},
   number={3},
   pages={993--1071},
}

\bib{F69}{book}{
   author={Federer, Herbert},
   title={Geometric measure theory},
   series={Die Grundlehren der mathematischen Wissenschaften, Band 153},
   publisher={Springer-Verlag New York Inc., New York},
   date={1969},
}

\bib{FengLi20-1}{article}{
   author={Feng, Qi},
   author={Li, Wuchen},
   title={Sub-Riemannian Ricci curvature via generalized Gamma $z$ calculus},
   date={2020},
   status={preprint},
   eprint={https://arxiv.org/abs/2004.01863},
}

\bib{FengLi20-2}{article}{
   author={Feng, Qi},
   author={Li, Wuchen},
   title={Generalized Gamma $z$ calculus via sub-Riemannian density manifold},
   date={2020},
   status={preprint},
   eprint={https://arxiv.org/abs/1910.07480},
}

\bib{FJ08}{article}{
   author={Figalli, A.},
   author={Juillet, N.},
   title={Absolute continuity of Wasserstein geodesics in the Heisenberg
   group},
   journal={J. Funct. Anal.},
   volume={255},
   date={2008},
   number={1},
   pages={133--141},
}

\bib{FR10}{article}{
   author={Figalli, Alessio},
   author={Rifford, Ludovic},
   title={Mass transportation on sub-Riemannian manifolds},
   journal={Geom. Funct. Anal.},
   volume={20},
   date={2010},
   number={1},
   pages={124--159},
}

\bib{F99}{book}{
   author={Folland, Gerald B.},
   title={Real analysis. Modern Techniques and Their Applications},
   series={Pure and Applied Mathematics (New York)},
   edition={2},
   publisher={John Wiley \& Sons, Inc., New York},
   date={1999},
}

\bib{F16}{book}{
   author={Folland, Gerald B.},
   title={A Course in Abstract Harmonic Analysis},
   series={Textbooks in Mathematics},
   edition={2},
   publisher={CRC Press, Boca Raton, FL},
   date={2016},
}

\bib{Ge15}{article}{
   author={Gentil, Ivan},
   title={Dimensional contraction in Wasserstein distance for diffusion
   semigroups on a Riemannian manifold},
   journal={Potential Anal.},
   volume={42},
   date={2015},
   number={4},
   pages={861--873},
}

\bib{G10}{article}{
   author={Gigli, Nicola},
   title={On the heat flow on metric measure spaces: existence, uniqueness
   and stability},
   journal={Calc. Var. Partial Differential Equations},
   volume={39},
   date={2010},
   number={1-2},
   pages={101--120},
}

\bib{G14}{article}{
   author={Gigli, Nicola},
   title={An overview of the proof of the splitting theorem in spaces with
   non-negative Ricci curvature},
   journal={Anal. Geom. Metr. Spaces},
   volume={2},
   date={2014},
   number={1},
   pages={169--213},
}

\bib{G15}{article}{
   author={Gigli, Nicola},
   title={On the differential structure of metric measure spaces and applications},
   journal={Mem. Amer. Math. Soc.},
   volume={236},
   date={2015},
   number={1113},
   pages={vi+91},
}

\bib{GKO13}{article}{
   author={Gigli, Nicola},
   author={Kuwada, Kazumasa},
   author={Ohta, Shin-Ichi},
   title={Heat flow on Alexandrov spaces},
   journal={Comm. Pure Appl. Math.},
   volume={66},
   date={2013},
   number={3},
   pages={307--331},
}

\bib{GM14}{article}{
   author={Gigli, Nicola},
   author={Mantegazza, Carlo},
   title={A flow tangent to the Ricci flow via heat kernels and mass
   transport},
   journal={Adv. Math.},
   volume={250},
   date={2014},
   pages={74--104},
}

\bib{Grong18}{article}{
   author={Grong, Erlend},
   title={Model spaces in sub-Riemannian geometry},
   journal={Comm. Anal. Geom.},
   volume={29},
   date={2021},
   number={1},
   pages={77--113},
}

\bib{Grong20}{article}{
   author={Grong, Erlend},
   title={Affine connections and curvature in sub-Riemannian Geometry},
   date={2020},
   status={preprint},
   eprint={https://arxiv.org/abs/2001.03817v1},
}

\bib{GT16-I}{article}{
   author={Grong, Erlend},
   author={Thalmaier, Anton},
   title={Curvature-dimension inequalities on sub-Riemannian manifolds obtained from Riemannian foliations: part I},
   journal={Math. Z.},
   volume={282},
   date={2016},
   number={1-2},
   pages={99--130},
}

\bib{GT16-II}{article}{
   author={Grong, Erlend},
   author={Thalmaier, Anton},
   title={Curvature-dimension inequalities on sub-Riemannian manifolds obtained from Riemannian foliations: part II},
   journal={Math. Z.},
   volume={282},
   date={2016},
   number={1-2},
   pages={131--164},
}

\bib{GT19}{article}{
   author={Grong, Erlend},
   author={Thalmaier, Anton},
   title={Stochastic completeness and gradient representations for sub-Riemannian manifolds},
   journal={Potential Anal.},
   volume={51},
   date={2019},
   number={2},
   pages={219--254},
}

\bib{HKST15}{book}{
   author={Heinonen, Juha},
   author={Koskela, Pekka},
   author={Shanmugalingam, Nageswari},
   author={Tyson, Jeremy T.},
   title={Sobolev spaces on metric measure spaces. An approach based on upper gradients},
   series={New Mathematical Monographs},
   volume={27},
   publisher={Cambridge University Press, Cambridge},
   date={2015},
}

\bib{HP74}{book}{
   author={Hille, Einar},
   author={Phillips, Ralph S.},
   title={Functional Analysis and Semi-Groups},
   note={Third printing of the revised edition of 1957; American Mathematical Society Colloquium Publications, Vol. XXXI},
   publisher={American Mathematical Society, Providence, R. I.},
   date={1974},
}

\bib{H12}{article}{
   author={Hladky, Robert K.},
   title={Connections and curvature in sub-Riemannian geometry},
   journal={Houston J. Math.},
   volume={38},
   date={2012},
   number={4},
   pages={1107--1134},
}

\bib{HL10}{article}{
   author={Hu, Jun-Qi},
   author={Li, Hong-Quan},
   title={Gradient estimates for the heat semigroup on H-type groups},
   journal={Potential Anal.},
   volume={33},
   date={2010},
   number={4},
   pages={355--386},
}

\bib{HS20}{article}{
   author={Huang, Yonghong},
   author={Sun, Shanzhong},
   title={Non-embedding theorems of nilpotent Lie groups and sub-Riemannian
   manifolds},
   journal={Front. Math. China},
   volume={15},
   date={2020},
   number={1},
   pages={91--114},
}

\bib{KL09}{article}{
   author={Khesin, Boris},
   author={Lee, Paul},
   title={A nonholonomic Moser theorem and optimal transport},
   journal={J. Symplectic Geom.},
   volume={7},
   date={2009},
   number={4},
   pages={381--414},
}

\bib{K10}{article}{
   author={Kuwada, Kazumasa},
   title={Duality on gradient estimates and Wasserstein controls},
   journal={J. Funct. Anal.},
   volume={258},
   date={2010},
   number={11},
   pages={3758--3774},
}

\bib{K13}{article}{
   author={Kuwada, Kazumasa},
   title={Gradient estimate for Markov kernels, Wasserstein control and
   Hopf-Lax formula},
   conference={
      title={Potential theory and its related fields},
   },
   book={
      series={RIMS K\^{o}ky\^{u}roku Bessatsu, B43},
      publisher={Res. Inst. Math. Sci. (RIMS), Kyoto},
   },
   date={2013},
   pages={61--80},
}

\bib{K15}{article}{
   author={Kuwada, Kazumasa},
   title={Space-time Wasserstein controls and Bakry-Ledoux type gradient
   estimates},
   journal={Calc. Var. Partial Differential Equations},
   volume={54},
   date={2015},
   number={1},
   pages={127--161},
}

\bib{JKO98}{article}{
   author={Jordan, Richard},
   author={Kinderlehrer, David},
   author={Otto, Felix},
   title={The variational formulation of the Fokker-Planck equation},
   journal={SIAM J. Math. Anal.},
   volume={29},
   date={1998},
   number={1},
   pages={1--17},
}

\bib{J09}{article}{
   author={Juillet, Nicolas},
   title={Geometric inequalities and generalized Ricci bounds in the Heisenberg group},
   journal={Int. Math. Res. Not. IMRN},
   date={2009},
   number={13},
   pages={2347--2373},
}

\bib{J14}{article}{
   author={Juillet, Nicolas},
   title={Diffusion by optimal transport in Heisenberg groups},
   journal={Calc. Var. Partial Differential Equations},
   volume={50},
   date={2014},
   number={3-4},
   pages={693--721},
}

\bib{J20}{article}{
   author={Juillet, Nicolas},
   title={Sub-Riemannian structures do not satisfy Riemannian
   Brunn-Minkowski inequalities},
   journal={Rev. Mat. Iberoam.},
   volume={37},
   date={2021},
   number={1},
   pages={177--188},
}

\bib{LeD17}{article}{
   author={Le Donne, Enrico},
   title={A primer on Carnot groups: homogenous groups, Carnot-Carath\'eodory
   spaces, and regularity of their isometries},
   journal={Anal. Geom. Metr. Spaces},
   volume={5},
   date={2017},
   pages={116--137},
}

\bib{LeDLP19}{article}{
	author={Le Donne, Enrico},
	author={Lu\v{c}i\'c, Danka},
	author={Pasqualetto, Enrico},
	title={Universal infinitesimal Hilbertianity property of sub-Riemannian manifolds},
	status={preprint},
	date={2019},
	eprint={https://arxiv.org/abs/1910.05962},	
}

\bib{LLZ16}{article}{
   author={Lee, P.},
   author={Li, Chengbo},
   author={Zelenko, Igor},
   title={Ricci curvature type lower bounds for sub-Riemannian structures on Sasakian manifolds},
   journal={Discrete Contin. Dyn. Syst.},
   volume={36},
   date={2016},
   number={1},
   pages={303--321},
}

\bib{L06}{article}{
   author={Li, Hong-Quan},
   title={Estimation optimale du gradient du semi-groupe de la chaleur sur le groupe de Heisenberg},
   journal={J. Funct. Anal.},
   volume={236},
   date={2006},
   number={2},
   pages={369--394},
}

\bib{L07}{article}{
   author={Lisini, Stefano},
   title={Characterization of absolutely continuous curves in Wasserstein
   spaces},
   journal={Calc. Var. Partial Differential Equations},
   volume={28},
   date={2007},
   number={1},
   pages={85--120},
}

\bib{LV07}{article}{
   author={Lott, John},
   author={Villani, C\'{e}dric},
   title={Weak curvature conditions and functional inequalities},
   journal={J. Funct. Anal.},
   volume={245},
   date={2007},
   number={1},
   pages={311--333},
}

\bib{LV09}{article}{
   author={Lott, John},
   author={Villani, C\'{e}dric},
   title={Ricci curvature for metric-measure spaces via optimal transport},
   journal={Ann. of Math. (2)},
   volume={169},
   date={2009},
   number={3},
   pages={903--991},
}

\bib{LP20}{article}{
   author={Lu\v{c}i\'{c}, Danka},
   author={Pasqualetto, Enrico},
   title={Infinitesimal Hilbertianity of weighted Riemannian manifolds},
   journal={Canad. Math. Bull.},
   volume={63},
   date={2020},
   number={1},
   pages={118--140},
}

\bib{M08}{article}{
   author={Melcher, Tai},
   title={Hypoelliptic heat kernel inequalities on Lie groups},
   journal={Stochastic Process. Appl.},
   volume={118},
   date={2008},
   number={3},
   pages={368--388},
}

\bib{MR20}{article}{
   author={Mietton, Thomas},
   author={Rizzi, Luca},
   title={Branching geodesics in sub-Riemannian Geometry},
   journal={Geom. Funct. Anal.},
   date={2020},
   status={in press},
}

\bib{Milman19}{article}{
   author={Milman, Emanuel},
   title={The Quasi Curvature-Dimension Condition with applications to sub-Riemannian manifolds},
   journal={	
Comm. Pure Appl. Math.},
   date={2019},
   status={to appear},
   eprint={https://arxiv.org/abs/1908.01513},
}

\bib{M85}{article}{
   author={Mitchell, John},
   title={On Carnot-Carath\'{e}odory metrics},
   journal={J. Differential Geom.},
   volume={21},
   date={1985},
   number={1},
   pages={35--45},
}

\bib{M02}{book}{
   author={Montgomery, Richard},
   title={A tour of subriemannian geometries, their geodesics and
   applications},
   series={Mathematical Surveys and Monographs},
   volume={91},
   publisher={American Mathematical Society, Providence, RI},
   date={2002},
   pages={xx+259},
}

\bib{M17}{article}{
   author={Munive, Isidro H.},
   title={Sub-Riemannian curvature of Carnot groups with rank-two
   distributions},
   journal={J. Dyn. Control Syst.},
   volume={23},
   date={2017},
   number={4},
   pages={779--814},
}

\bib{O07}{article}{
   author={Ohta, Shin-Ichi},
   title={On the measure contraction property of metric measure spaces},
   journal={Comment. Math. Helv.},
   volume={82},
   date={2007},
   number={4},
   pages={805--828},
}

\bib{O16}{article}{
   author={Ohta, Shin-Ichi},
   title={$(K,N)$-convexity and the curvature-dimension condition for negative $N$},
   journal={J. Geom. Anal.},
   volume={26},
   date={2016},
   number={3},
   pages={2067--2096},
}

\bib{OS09}{article}{
   author={Ohta, Shin-Ichi},
   author={Sturm, Karl-Theodor},
   title={Heat flow on Finsler manifolds},
   journal={Comm. Pure Appl. Math.},
   volume={62},
   date={2009},
   number={10},
   pages={1386--1433},
}

\bib{OS14}{article}{
   author={Ohta, Shin-Ichi},
   author={Sturm, Karl-Theodor},
   title={Bochner-Weitzenb\"{o}ck formula and Li-Yau estimates on Finsler
   manifolds},
   journal={Adv. Math.},
   volume={252},
   date={2014},
   pages={429--448},
}

\bib{O01}{article}{
   author={Otto, Felix},
   title={The geometry of dissipative evolution equations: the porous medium
   equation},
   journal={Comm. Partial Differential Equations},
   volume={26},
   date={2001},
   number={1-2},
   pages={101--174},
}

\bib{OV00}{article}{
   author={Otto, F.},
   author={Villani, C.},
   title={Generalization of an inequality by Talagrand and links with the  logarithmic Sobolev inequality},
   journal={J. Funct. Anal.},
   volume={173},
   date={2000},
   number={2},
   pages={361--400},
}

\bib{OW05}{article}{
   author={Otto, Felix},
   author={Westdickenberg, Michael},
   title={Eulerian calculus for the contraction in the Wasserstein distance},
   journal={SIAM J. Math. Anal.},
   volume={37},
   date={2005},
   number={4},
   pages={1227--1255},
}

\bib{P83}{book}{
   author={Pazy, A.},
   title={Semigroups of linear operators and applications to partial differential equations},
   series={Applied Mathematical Sciences},
   volume={44},
   publisher={Springer-Verlag, New York},
   date={1983},
}

\bib{R12}{article}{
   author={Rajala, Tapio},
   title={Local Poincar\'{e} inequalities from stable curvature conditions on
   metric spaces},
   journal={Calc. Var. Partial Differential Equations},
   volume={44},
   date={2012},
   number={3-4},
   pages={477--494},
}

\bib{vRS05}{article}{
   author={von Renesse, Max-K.},
   author={Sturm, Karl-Theodor},
   title={Transport inequalities, gradient estimates, entropy, and Ricci curvature},
   journal={Comm. Pure Appl. Math.},
   volume={58},
   date={2005},
   number={7},
   pages={923--940},
}

\bib{R05}{article}{
   author={Rigot, S.},
   title={Mass transportation in groups of type $H$},
   journal={Commun. Contemp. Math.},
   volume={7},
   date={2005},
   number={4},
   pages={509--537},
}

\bib{R16}{article}{
   author={Rizzi, Luca},
   title={Measure contraction properties of Carnot groups},
   journal={Calc. Var. Partial Differential Equations},
   volume={55},
   date={2016},
   number={3},
   pages={Art. 60, 20},
}

\bib{RS19}{article}{
   author={Rizzi, Luca},
   author={Silveira, Pavel},
   title={Sub-Riemannian Ricci curvatures and universal diameter bounds for
   3-Sasakian manifolds},
   journal={J. Inst. Math. Jussieu},
   volume={18},
   date={2019},
   number={4},
   pages={783--827},
}

\bib{S14}{article}{
   author={Savar\'{e}, Giuseppe},
   title={Self-improvement of the Bakry-\'{E}mery condition and Wasserstein contraction of the heat flow in ${\rm RCD}(K,\infty)$ metric measure spaces},
   journal={Discrete Contin. Dyn. Syst.},
   volume={34},
   date={2014},
   number={4},
   pages={1641--1661},
}

\bib{S10}{article}{
   author={Stollmann, Peter},
   title={A dual characterization of length spaces with application to Dirichlet metric spaces},
   journal={Studia Math.},
   volume={198},
   date={2010},
   number={3},
   pages={221--233},
}

\bib{S86}{article}{
   author={Strichartz, Robert S.},
   title={Sub-Riemannian geometry},
   journal={J. Differential Geom.},
   volume={24},
   date={1986},
   number={2},
   pages={221--263},
}

\bib{S89}{article}{
   author={Strichartz, Robert S.},
   title={Corrections to: ``Sub-Riemannian geometry'' [J. Differential Geom.
   {\bf 24} (1986), no. 2, 221--263]},
   journal={J. Differential Geom.},
   volume={30},
   date={1989},
   number={2},
   pages={595--596},
}

\bib{S95}{article}{
   author={Sturm, Karl-Theodor},
   title={Analysis on local Dirichlet spaces. II. Upper Gaussian estimates for the fundamental solutions of parabolic equations},
   journal={Osaka J. Math.},
   volume={32},
   date={1995},
   number={2},
   pages={275--312},
}

\bib{S98}{article}{
   author={Sturm, Karl-Theodor},
   title={The geometric aspect of Dirichlet forms},
   conference={
      title={New directions in Dirichlet forms},
   },
   book={
      series={AMS/IP Stud. Adv. Math.},
      volume={8},
      publisher={Amer. Math. Soc., Providence, RI},
   },
   date={1998},
   pages={233--277},
}

\bib{S06.I}{article}{
   author={Sturm, Karl-Theodor},
   title={On the geometry of metric measure spaces. I},
   journal={Acta Math.},
   volume={196},
   date={2006},
   number={1},
   pages={65--131},
}

\bib{S06.II}{article}{
   author={Sturm, Karl-Theodor},
   title={On the geometry of metric measure spaces. II},
   journal={Acta Math.},
   volume={196},
   date={2006},
   number={1},
   pages={133--177},
}

\bib{S19}{article}{
   author={Sturm, Karl-Theodor},
   title={Distribution-valued Ricci bounds for metric measure spaces,
   singular time changes, and gradient estimates for Neumann heat flows},
   journal={Geom. Funct. Anal.},
   volume={30},
   date={2020},
   number={6},
   pages={1648--1711},
}

\bib{V03}{book}{
   author={Villani, C\'{e}dric},
   title={Topics in optimal transportation},
   series={Graduate Studies in Mathematics},
   volume={58},
   publisher={American Mathematical Society, Providence, RI},
   date={2003},
   pages={xvi+370},
}
		
\bib{V09}{book}{
   author={Villani, C\'{e}dric},
   title={Optimal transport, old and new},
   series={Fundamental Principles of Mathematical Sciences},
   volume={338},
   publisher={Springer-Verlag, Berlin},
   date={2009},
}

\bib{V19}{article}{
   author={Villani, C\'{e}dric},
   title={In\'{e}galit\'{e}s isop\'{e}rim\'{e}triques dans les espaces m\'{e}triques mesur\'{e}s [d'apr\`es F. Cavalletti \& A. Mondino]},
   language={French},
   note={S\'{e}minaire Bourbaki. Vol. 2016/2017. Expos\'{e}s 1120--1135},
   journal={Ast\'{e}risque},
   number={407},
   date={2019},
   pages={Exp. No. 1127, 213--265},
}

\bib{W11}{article}{
   author={Wang, Feng-Yu},
   title={Equivalent semigroup properties for the curvature-dimension condition},
   journal={Bull. Sci. Math.},
   volume={135},
   date={2011},
   number={6-7},
   pages={803--815},
}

\bib{W20}{article}{
   author={Wang, Feng-Yu},
   title={Exponential contraction in Wasserstein distances for diffusion
   semigroups with negative curvature},
   journal={Potential Anal.},
   volume={53},
   date={2020},
   number={3},
   pages={1123--1144},
}

\bib{W14}{article}{
   author={Wang, Feng-Yu},
   title={Derivative formula and gradient estimates for Gruschin type semigroups},
   journal={J. Theoret. Probab.},
   volume={27},
   date={2014},
   number={1},
   pages={80--95},
}

\bib{ZL09}{article}{
   author={Zelenko, Igor},
   author={Li, Chengbo},
   title={Differential geometry of curves in Lagrange Grassmannians with given Young diagram},
   journal={Differential Geom. Appl.},
   volume={27},
   date={2009},
   number={6},
   pages={723--742},
}

\bib{ZZ12}{article}{
   author={Zhang, Hui-Chun},
   author={Zhu, Xi-Ping},
   title={Yau's gradient estimates on Alexandrov spaces},
   journal={J. Differential Geom.},
   volume={91},
   date={2012},
   number={3},
   pages={445--522},
}

\end{biblist}
\end{bibdiv}

\end{document}